\documentclass[11pt,twoside]{article}
\usepackage[a4paper,textwidth=16.5cm,textheight=24.5cm,left=2.25cm,right=2.25cm,top=3cm,headheight=15pt]{geometry}
\usepackage{amsfonts}
\usepackage{amsmath,amssymb}
\usepackage{multicol}
\usepackage{graphics}
\usepackage[numbers]{natbib}
\usepackage{stmaryrd}
\usepackage{ulem}
\usepackage{bm}
 \usepackage[colorlinks=true,linkcolor=blue,anchorcolor=blue,citecolor=blue,bookmarks=true]{hyperref}
\usepackage{color}
\usepackage{graphicx}
\usepackage{float}  
 \usepackage{graphicx}
\usepackage{subcaption}
\usepackage{accents}

\allowdisplaybreaks [2]

\newcommand{\N}{\mathbb{N}}
\newcommand{\R}{\mathbb{R}}

\newcommand{\vt}{\vartheta}

\def\a{\alpha}
\def\b{\beta}
\def\e{\varepsilon}
\def\D{\Delta}
\def\d{\delta}
\def\g{\gamma}

\def\m{\mu}
\def\n{\nu}

\def\o{\omega}
\def\O{\Omega}
\def\p{\partial}
\def\r{\rho}
\def\vr{\varrho}
\def\S{\Sigma}
\def\s{\sigma}
\def\vs{\varsigma}
\def\vphi{\varphi}
\def\ov{\overline}
\def\un{\underline}

\def\si{\sin}
\def\c{\cos}

\def\li{\lim\limits}
\def\lii{\liminf\limits}

\usepackage{comment}
\usepackage{cases}
\usepackage{indentfirst}
\usepackage{amsmath}
\usepackage{enumerate}
\usepackage{enumitem}
\usepackage{amsthm}

\usepackage{mathtools}
\newcommand{\usim}[1]{\underaccent{\mathrlap{\sim}\hphantom{#1}}{#1}}

\newtheorem{theorem}{Theorem}[section]
\newtheorem{lemma}[theorem]{Lemma}

\newtheorem{proposition}[theorem]{Proposition}
\newtheorem{claim}[theorem]{Claim}
\numberwithin{equation}{section}

\theoremstyle{definition}
\newtheorem{remark}[theorem]{Remark}
\newtheorem{definition}[theorem]{Definition}
\renewenvironment{proof}[1][Proof]
{\par\noindent\textbf{#1.}\hspace{0.5em}}
{\hfill$\Box$\par}

\allowdisplaybreaks \numberwithin{equation}{section}
\begin{document}

\author{\bf Yaqian Xu$^1$, Shigui Ruan$^2$\thanks{Email: ruan@math.miami.edu} and Zhi-Cheng Wang$^1$\thanks{Email: wangzhch@lzu.edu.cn} \\
$^1$School of Mathematics and Statistics, Lanzhou University,
 \\
Lanzhou, Gansu 730000, People's Republic
of China\\
$^2$ Department of Mathematics, University of Miami,\\
 Coral Gables, FL 33146, USA}
\title{\textbf{Asymptotic spreading speeds of a road-field reaction-diffusion predator-prey model\thanks{Research of Y. Xu and Z.-C. Wang was partially supported by  National Natural Science Foundation of China (12471164). Research of S. Ruan was partially supported by the National Science Foundation (DMS-2424605).} } }
 \date{}
\maketitle
 \begin{abstract}
This paper is devoted to studying asymptotic spreading speeds of a road-field reaction-diffusion predator-prey system, in which predators diffuse faster on the straight line $\R\times \{0\}$ than in the half plane $\R\times (0,+\infty)$. We give a sharp characterization of the spreading properties of solutions of the system with compactly supported initial values by appealing to the invasion speed of the prey when predators are absent, and the invasion speed of predators when the prey is saturated. Notably, since the line may accelerate the expansion of predators in a cone of directions, the invasion speed of predators varies with the direction. To overcome the difficulty arising from the coupling between the predator-prey interaction and the road-field interaction, we first derive pointwise comparisons between predators and the prey and then reduce the components of predators in the system to a truncated field-road system depending on the invasion speed of the prey. By using known results for the single-species field-road model, we further show the propagation properties of the truncated system. This step is very crucial. Once the propagation properties of the truncated system are established, we can obtain the expected spreading properties of solutions to the original system by applying the comparison principle and contradiction arguments, and constructing appropriate Lyapunov functionals.

 \textbf{Keywords}: Spreading speed; asymptotic behavior; line with fast diffusion; predator-prey model

 \textbf{MSC (2010)}: 35B40, 
 35K45, 
 35K57, 
 92D25 
  \end{abstract}


\section{Introduction}\label{introduction}
It is well-known that fast diffusion of species on roads, rivers, pipelines, and corridors can have a driving impact on their population dynamics, invasion speeds, and balance of the entire ecosystem. For example, wolves travel two to three times faster on human-made linear features, such as roads, pipelines and resource exploration lines (i.e. seismic lines), compared to moving through a natural forest \cite{DM,mckenzie+2012}. The pine processionary moth extends its geographical distribution rapidly through human-mediated transport of pupae in soil or trees, causing long-distance jumps far beyond its natural flight range \cite{robinet+2012}. Human-mediated dispersal also accelerates the spread of the yellow-legged hornet by carrying pregnant founder queens over hundreds of miles through vehicles \cite{RS}.

Another example is that {\it Aedes albopictus}, the most invasive mosquito species native to the tropical and subtropical areas of Southeast Asia, has spread to many countries and territories recently via international travel and the transport of goods \cite{bonizzoni+2013,Ha,Kr}. Once imported into a country, the mosquito would invade along roads by vehicles. For example, the spread and expansion of {\it Ae. albopictus} in the whole state of Florida was facilitated by the movement of used tires along the interstate highways \cite{LB,MM,TZ}. {\it Ae. albopictus} was observed for the first time in 2004 in Catalonia, northeastern Spain, and by 2014 the species was well-established in municipalities along the coastline of mainland Spain \cite{Co}. It was reported that {\it Aedes} species had a high infestation rate in garages trading used tires along the highways, providing a conduit for rapid dispersal across Panama \cite{Be}.

To study the effect of strong diffusion of a single species on a line, Berestycki et al.~\cite{berestycki+2013} proposed the following road-field model:
\begin{equation}\label{berestycki+2013 1.1}
    \begin{cases}
        \partial_tv-d\Delta v=f(v), &(x,y)\in\O_0,t>0,\\
		\partial_tw-D\partial_{xx}w=\nu v(x,0,t)-\mu w, &x\in\mathbb{R},t>0,\\
         -d\partial_{y}v(x,0,t)=\mu w(x,t)-\nu v(x,0,t), \qquad&x\in\mathbb{R},t>0,
    \end{cases}
\end{equation}
where $w$ and $v$ represent the densities of the same species on the {\it road} $\R\times\{0\}$ and in the {\it field} $\O_0:=\R\times \R_+$, respectively. Parameters $\m$ and $\n$ quantify the interaction strength of this population between the two spatial domains. The reaction term $f$ is of KPP type, that is
$$
f(0)=f(1)=0,\qquad 0<f(s)\leq f^{\prime}(0)s,~\forall s\in (0,1),\qquad f<0 ~\text{in}~(1,+\infty).
$$
In \cite{berestycki+2013}, the authors found that the system admits an asymptotic spreading speed in the direction of the road and the propagation is enhanced by the road if $D>2d$.
 The elevated threshold for $\frac{D}{d}$ ($2$ versus the anticipated $1$) stems from the absence of reproduction on the road. If there are additionally reaction term and drift term on the line, then system \eqref{berestycki+2013 1.1} changes into the following system
\begin{equation}\label{12}
\begin{cases}
    \partial_tv-d\Delta v=f(v),&(x,y)\in\O_0,t>0,\\
	\partial_tw-D\partial_{xx}w+q\p_x w=\nu v(x,0,t)-\mu w(x,t)+g(w),&x\in\mathbb{R},t>0,\\
     -d\partial_{y}v(x,0,t)=\mu w(x,t)-\nu v(x,0,t),&x\in\mathbb{R},t>0,
\end{cases}
\end{equation}
where $q\in\R$ and $g \in C^1([0, +\infty))$ satisfies
\[
	g(0) = 0,\qquad  \exists S > 0,~~ s.t. ~~ g(S) \leq 0,\qquad s\mapsto \frac{g(s)}{s} \text{ is nonincreasing}.
\]
In this case, the thresholds of $\frac{D}{d}$ to accelerate species propagation along $\mathbf{e}_{1}=(1,0)$ and $-\mathbf{e}_{1}=(-1,0)$ are $2-\frac{g^{\prime}(0)}{f^{\prime}(0)}-\frac{q}{\sqrt{df^{\prime}(0)}}$ and $2-\frac{g^{\prime}(0)}{f^{\prime}(0)}+\frac{q}{\sqrt{df^{\prime}(0)}}$, respectively (see \cite{berestycki+2013a}).
Note that the authors in \cite{berestycki+2013} and \cite{berestycki+2013a} only studied the spreading speeds along the directions $\pm\mathbf{e}_{1}$. Subsequently, in \cite{berestycki+2016}, they extended their investigation to all planar directions and proved that the line enhances the propagation in all directions that deviate from the normal to the road by more than a certain angle (see Lemma \ref{berestycki+2016 thm2.1}). Define the solution $(v(x,y,t),w(x,t))=\left(\vphi(x-ct,y),\psi(x-ct)\right)$ of \eqref{berestycki+2013 1.1} with
\[
	\vphi(-\infty,y)=1,~\vphi(+\infty,y)=0 \text{ locally uniformly in } y\geq 0,\qquad\psi(-\infty)=\dfrac{\n}{\m}, ~\psi(+\infty)=0, 
\]
as traveling front solution. In \cite{berestycki+2016a}, they showed that the existence of such a solution depends on whether $c$ exceeds the invasion speed along the road.
Similar results also hold for \eqref{berestycki+2013 1.1} incorporating both a drift term and a pure death term on the road, corresponding to \eqref{12} with $g(w)=\r w$ for some constant $\r<0$. This modified system admits a unique, positive, bounded and steady solution $(v_s(y),w_s)$ that is independent of $x$. In this scenario, $\vphi(\cdot)$ and $\psi(\cdot)$ satisfy $\vphi(-\infty,y)=v_s(y)$ locally uniformly in $y\geq 0$ and $\psi(-\infty)=w_s$.

Recently, this road-field model and its variants have been rigorously characterized.
By exploring the features of {\it generalized principal eigenvalues}, Giletti et al. \cite{giletti+2015} extended the results in \cite{berestycki+2013} to a more general framework, where the exchange terms $\nu$ and $\mu$ depend periodically on spatial variable $x$.
While Zhang \cite{zhang+2021} considered the case when the reaction term $f$ is spatially periodic in $x$, and established the existence of the asymptotic speed, which is also the minimal speed of pulsating waves. 
Based on the notion of generalized principal eigenvalue of \eqref{berestycki+2013 1.1} linearized at $(v,w)=(0,0)$, Berestycki et al.~\cite{berestycki+2020} investigated the  effect of a line on an ecological niche. Later they developed the properties of generalized principle eigenvalues for heterogeneous road-field systems \cite{berestycki+2020a}.
The fields in the previous articles are all half-plane, whereas the effect of the road on the propagation over different fields can be found in several studies. For example, Tellini \cite{tellini+2016}, Rossi et al. \cite{rossi+2017}, and Bogosel et al. \cite{bogosel+2021} considered the case of cylindrical domains. The fields they studied became increasingly general. The long time behavior of the species in conical fields was studied in \cite{ducasse+2018,henderson+2024a}. In particular, Henderson and Lam \cite{henderson+2024a} applied the Hamilton-Jacobi method to the road-field model and obtained very interesting results. To model the phenomenon that transportation networks promote epidemic transmission, Berestycki et al.~\cite{berestycki+2021} modified the classical SIR model into the so-called SIRT model by dividing the infected individuals into two groups---those on the road ($T$) and those in the field ($I$), as represented by the following system
\begin{equation}\label{SIRT}
\begin{cases}
            \partial_tI-d\Delta I+\alpha I=\beta SI, \qquad & (x,y)\in\O_0,t>0,\\
           \partial_tS=-\beta SI, \qquad &(x,y)\in\O_0,t>0,\\
            \partial_tT-D\partial_{xx}T=\nu I(x,0,t)-\mu T,\qquad &x\in\mathbb{R},t>0,\\
           -d\partial_yI(x,0,t)=\mu T(x,t)-\nu I(x,0,t),\qquad&x\in\mathbb{R},t>0.
        \end{cases}
\end{equation}
We also refer the readers to \cite{berestycki+2014a,berestycki+2015b,berestycki+2024} for nonlocal dispersal systems, to \cite{dietrich+2015,dietrich+2016,dietrich+2017} for more general reaction terms, and to \cite{pauthier+2015,pauthier+2016} for nonlocal exchange terms.

Roads can also expand the territorial range of predators, exposing prey to higher survival risks. For instance, seismic lines may be created during gas and oil exploration, McKenzie et al. \cite{mckenzie+2012} and Dickie et al. \cite{dickie+2016} found that wolves ({\it Canis lupus}) travelled faster along these lines. Dingoes and foxes showed a strong preference for activity along roads and tended to travel along them \cite{raiter+2018}. DeMars and Boutin \cite{DB} found that roads increased predator selection of peatlands. For more references on predator-prey interactions disrupted by roads, we refer to the review of Quiles and Barrientos \cite{QB}.
The literature cited in previous paragraphs has focused on the single species case. To the best of the authors' knowledge, no studies have examined the dynamics of road-field predator-prey systems. In \eqref{SIRT}, the susceptible and infected individuals in the field, i.e. $S$ and $I$, display dynamics analogous to prey-predator interactions. But there is no diffusion term in the $S$-equation, which allows the system to be transformed via
\[
	v(x,y,t):=\int_0^tI(x,y,s)\mathrm{d}s,\quad w(x,t):=\int_0^tT(x,s)\mathrm{d}s
\]
into
\[\left\{
	\begin{aligned}
		&\partial_t v-d\Delta v=f(v)+I_0(x,y), \qquad &&(x,y)\in\O_0,t>0,&\\
		&\partial_t w-D\partial_{xx}w=\nu v(x,0,t)-\mu w+T_0(x), \qquad &&x\in\mathbb{R},t>0,&\\
		&-d\partial_y v(x,0,t)=\mu w(x,t)-\nu v(x,0,t),\qquad &&x\in\mathbb{R},t>0.&
	\end{aligned}
\right.\]
The last system is identical to \eqref{berestycki+2013 1.1}, except for the “source” terms $I_0$ and $T_0$.

To explore the predator-prey system coupled with a second-order parabolic equation on a straight line, where fast diffusion of predators can potentially occur, we consider the following system
\begin{equation}\label{prey-predator-road}
	\left\{
		\begin{aligned}
			&\partial_{t}u-d_{1}\Delta u=u(1-u-a v),&&\qquad(x,y)\in \O_0,t>0,&\\
			&\partial_{t}v-d_{2}\Delta v=v(-1+bu-v),&&\qquad(x,y)\in \O_0,t>0,&\\
			&\partial_{t}w-Dw_{xx}=-\mu w+\nu v(x,0,t),&&\qquad x\in \mathbb{R},t>0,& \\
			&\p_y u(x,0,t)=0,&& \qquad x\in \mathbb{R},t>0,&\\
			&-d_{2}\partial_{y}v(x,0,t)=\mu w(x,t)-\nu v(x,0,t),&&\qquad x\in \mathbb{R},t>0,&
			\end{aligned}
	\right.
\end{equation}
with
\[
  u(x,y,0)=u_0(x,y),v(x,y,0)=v_0(x,y),~\forall(x,y)\in \ov{\O}_0, \qquad w(x,0)=w_0(x),~\forall x\in\R.
\]
Here, $u(x,y,t)$ and $v(x,y,t)$ represent the population density of the prey and predators respectively at time $t$ and location $(x,y)$ in the field, while $w(x,t)$ represents the population density of predators at time $t$ and location $x$ on the road. 
Parameters $d_{1}, d_{2}, D, \mu, \nu>0$ and $a>0, b>1$
are given constants. We complete this system by specifying initial conditions:
\begin{enumerate}[label=\textbf{(H\arabic*)}]
	\item \label{hy-initial} The initial datum $u_{0}(x,y),v_{0}(x,y),w_{0}(x)$ are compactly supported and continuous functions. Moreover, $0\le u_{0}\le 1,\, 0\le v_{0}\le b-1,\, 0\le w_{0}\le \frac{\nu}{\mu}(b-1)$ and $u_0\not\equiv 0,(v_0,w_0)\not\equiv(0,0)$.
\end{enumerate}

If the influence of the road is neglected, system \eqref{prey-predator-road} reduces to the classical reaction-diffusion predator-prey system
\[
	\left\{
		\begin{aligned}
			&\partial_{t}u-d_{1}\Delta u=u(1-u-a v),&&\qquad(x,y)\in \O_0,t>0,&\\
			&\partial_{t}v-d_{2}\Delta v=v(-1+bu-v),&&\qquad(x,y)\in \O_0,t>0,&\\
			&\p_y u(x,0,t)=\p_y v(x,0,t)=0,&& \qquad x\in \mathbb{R},t>0.&
			\end{aligned}
	\right.
\]
Ducrot et al.~\cite{ducrot+2019} investigated the spreading properties of diffusive predator-prey systems deeply.
Ducrot and Jin \cite{ducrot+2023a} gave an easier proof for time heterogeneous predator-prey systems by deriving pointwise comparisons between the two species. See also \cite{ducrot+2016,ducrot+2021,guo+2022} and the references therein for more results on diffuse predator-prey systems. 

The rest of this paper is organized as follows. Section \ref{preliminaries} presents some preliminaries and the main results of this paper.
In Section \ref{upper}, we investigate the spreading behavior of \eqref{prey-predator-road} under initial conditions \ref{hy-initial}.
Finally, Section \ref{positive} provides a short argument for the case where $u(x,y,0)$ admits a positive lower bound.

\section{Preliminaries and Main Results}\label{preliminaries}
In this section, we first provide some preliminaries---proving the well-posedness of system \eqref{prey-predator-road} and reviewing existing results. Then we present the main results of this paper. Throughout this paper, if $\xi=(\xi_1,\xi_2,\cdots,\xi_n), \zeta=(\zeta_1,\zeta_2,\cdots,\zeta_n)\in\R^n$, then we write $\xi\geq\zeta$ whenever $\xi_{i}\geq\zeta_{i}$ holds for $i=1,2,\cdots,n$, and $\xi>\zeta$ whenever $\xi_{i}>\zeta_{i}$ holds for $i=1,2,\cdots,n$.
\subsection{Well-Posedness}
We first prove the well-posedness of the following system
	\begin{equation}\label{drift}
		\left\{\begin{array}{ll}
			\partial_{t}u-d_1\Delta u =u(1-u-av),&\qquad(x,y)\in \O_0,t>0,\\
			\partial_{t}v-d_2\Delta v =v(-1+bu-v),&\qquad(x,y)\in \O_0,t>0,\\
			\partial_{t}w-D\p_{xx}w =-\mu w+ \n v(x,0,t),&\qquad x\in \mathbb{R},t>0,\\
			\p_y u(x,0,t)=0,& \qquad x\in \mathbb{R},t>0,\\
			-d_2\partial_{y}v(x,0,t)=\mu w-\n v(x,0,t),&\qquad x\in \mathbb{R},t>0,\\
			u(x,y,0)=u_0(x,y), v(x,y,0)=v_0(x,y),w(x,0)=w_0(x), &\qquad(x,y)\in \ov{\O}_0.
		\end{array}\right.
	\end{equation}
 The initial functions $u_0(x,y),\, v_0(x,y),\, w_0(x)$ satisfy the following assumption:
\begin{enumerate}[label=\textbf{(H\arabic*)}]
\setcounter{enumi}{1}
	\item\label{h2} The initial data $u_{0}(x,y),\, v_{0}(x,y),\, w_{0}(x)$ are uniformly continuous functions. Moreover, $u_0\not\equiv 0, (v_0,w_0)\not\equiv (0,0)$ and $(0,0,0)\le (u_{0}, v_{0}, w_{0})\le \left(1,b-1,\frac{\nu}{\mu}(b-1)\right)$.
\end{enumerate}
For notational simplicity, we denote $f_1(u,v):=u(1-u-av)$ and $f_2(u,v):=v(-1+bu-v)$.
\begin{lemma}\label{lem 23}
	System \eqref{drift} has a unique nonnegative solution $(u,v,w)$ on $\ov{\O}_0\times[0,+\infty)$, and $(u,v,w)\leq \left(1,b-1,\frac{\nu}{\mu}(b-1)\right)$ on $\ov{\O}_0\times [0,+\infty)$. In addition,
\[
	\renewcommand{\arraystretch}{1.25}
\begin{array}{c}
	u,v\in C^{2+\alpha,1+\frac{\alpha}{2}}\left(\O_0\times (0,+\infty)\right)\cap C^{1+\alpha,\frac{1+\alpha}{2}}\left(\ov{\O}_0\times(0,+\infty)\right)\cap C\left(\ov{\O}_0\times [0,+\infty)\right),\\
	w\in C^{2+\alpha,1+\frac{\alpha}{2}}(\R\times (0,+\infty))\cap C\left(\R\times [0,+\infty)\right).
\end{array}
\]

\end{lemma}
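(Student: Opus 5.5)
We plan to prove Lemma~\ref{lem 23} by a monotone iteration (upper/lower solution) argument, using the quasi-monotone structure that appears once the prey equation is decoupled in the right way. The key observation is that \eqref{drift} is not cooperative: $f_1$ is decreasing in $v$ while $f_2$ is increasing in $u$. However, the predator subsystem $(v,w)$ with the road--field exchange is cooperative, since $\nu v$ enters the $w$-equation and $\mu w$ enters the boundary condition for $v$ with positive signs. We therefore work with the ordered pair of constant states
\[
(\underline u,\underline v,\underline w)=(0,0,0),\qquad (\overline u,\overline v,\overline w)=\Bigl(1,b-1,\tfrac{\nu}{\mu}(b-1)\Bigr),
\]
which form a pair of coupled (mixed quasi-monotone) lower/upper solutions. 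To check this, note that $1$ satisfies $0\ge f_1(1,v)$ for $v\ge0$, and $b-1$ satisfies $0\ge f_2(u,b-1)=(b-1)(-1+bu-(b-1))$ for $u\le 1$. The boundary relation $\mu\overline w=\nu\overline v$ makes both the road equation and the flux condition hold with equality, and $0$ is trivially a lower solution for every component.

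The existence step goes as follows. Choose $K$ larger than the Lipschitz constants of $f_1,f_2$ on the box $[0,1]\times[0,b-1]$. Starting from $(\overline u,\overline v,\overline w)$ and $(0,0,0)$, we define the standard mixed monotone iteration: $u^{k}$ solves the Neumann problem with right-hand side $K u^{k-1}+f_1(u^{k-1},\hat v^{k-1})$, where $\hat v$ is taken from the \emph{opposite} sequence, and $(v^k,w^k)$ solves the linear road--field problem with right-hand side $K v^{k-1}+f_2(u^{k-1},v^{k-1})$. Each step requires solving a linear problem: a heat equation in the half plane with Neumann condition for $u$, and the linear road--field system for $(v,w)$. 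The latter is well posed, with a comparison principle, by the results of \cite{berestycki+2013}; alternatively it can be obtained by a fixed point on the trace $v(x,0,t)$ via the Neumann heat kernel together with Duhamel's formula for $w$. The comparison principle for the linear road--field system then shows that the sequences are monotone and stay in the box. Passing to the limit, and using Schauder and $L^p$ interior and boundary estimates for the oblique and Neumann problems (locally in time and on bounded subdomains), we obtain a solution with the stated regularity. Continuity up to $t=0$ comes from uniform continuity of the initial data (assumption \ref{h2}) and the heat-kernel representation.

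For uniqueness, the bounds, and nonnegativity, we take two bounded solutions in the box and estimate the difference. Because of the mixed signs we cannot compare the solutions directly. Instead we pass to the variables $(u,-v,-w)$ on small time intervals, or use a Gronwall estimate on the Duhamel representation: the sup norm of the difference on $[0,T]$ is bounded by $CT$ times itself, which forces the difference to vanish, and iterating over time intervals gives uniqueness. The same argument shows that any solution stays in the box, since by uniqueness it coincides with the one built between the ordered states. Nonnegativity also follows directly from the strong maximum principle applied separately to $u$ (factor $u$) and to the cooperative $(v,w)$ system.

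We expect the main obstacle to be the linear road--field problem: establishing its solvability, its comparison principle, and the $C^{1+\alpha}$ regularity up to $y=0$ for the coupled dynamic boundary condition. We would invoke \cite{berestycki+2013} for the comparison principle, and boundary Schauder estimates with the trace $w$ treated as a $C^{\alpha}$ datum in a bootstrap.
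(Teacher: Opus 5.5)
Your strategy is the paper's: a mixed monotone iteration between $(0,0,0)$ and $(1,b-1,\frac{\nu}{\mu}(b-1))$, with $f_1$ evaluated at the opposite sequence's $v$-iterate and $f_2$ at the same sequence's iterates. (The paper also lags the road--field coupling: it uses $\overline{w}^{k-1}$ in the exchange condition for $\overline{v}^k$ and $\overline{v}^{k-1}(x,0,t)$ in the equation for $\overline{w}^k$. Each step is then a scalar Robin problem plus a one-dimensional heat equation, and you never need to solve the coupled linear road--field problem.)

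\textbf{The gap.} It is in the sentence ``passing to the limit \dots\ we obtain a solution''. Let $(\hat u,\hat v,\hat w)$ be the limit of the upper sequence and $(\check u,\check v,\check w)$ the limit of the lower one. For a mixed quasi-monotone iteration these are \emph{not} solutions of \eqref{drift}. They only satisfy the coupled quasi-solution system
\[
\partial_t\hat u-d_1\Delta\hat u=f_1(\hat u,\check v),\qquad \partial_t\check u-d_1\Delta\check u=f_1(\check u,\hat v),
\]
\[
\partial_t\hat v-d_2\Delta\hat v=f_2(\hat u,\hat v),\qquad \partial_t\check v-d_2\Delta\check v=f_2(\check u,\check v),
\]
together with the road equations. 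Existence therefore needs the extra step $(\hat u,\hat v,\hat w)=(\check u,\check v,\check w)$. Your uniqueness argument does not supply it as written, because it compares two \emph{solutions} and neither limit is yet known to be one.

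\textbf{How the paper closes it.} In Step 4 it sets $(U,V,W):=(\hat u-\check u,\hat v-\check v,\hat w-\check w)\ge 0$. This triple vanishes at $t=0$ and satisfies:
\begin{itemize}
\item $\partial_tU-d_1\Delta U\le l(U+V)$ and $\partial_tV-d_2\Delta V\le l(U+V)$;
\item $\partial_tW-D\partial_{xx}W+\mu W=\nu V(x,0,t)$;
\item $\nu V-d_2\partial_yV=\mu W$ and $\partial_yU=0$ on $y=0$.
\end{itemize}
This is a cooperative linear system, so its comparison principle (Lemma~\ref{comparison-cooperate}) forces $(U,V,W)\le 0$. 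Your Gronwall/Duhamel estimate would work equally well here, since $|f_1(\hat u,\check v)-f_1(\check u,\hat v)|\le l(|U|+|V|)$, but you must state and carry out this step explicitly.

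\textbf{Two smaller points.}
\begin{itemize}
\item The alternative of passing to $(u,-v,-w)$ fails. With $\tilde v=-v$ the predator reaction becomes $\tilde v(-1+bu+\tilde v)$, which is nonincreasing in $u$ for $\tilde v\le 0$. A predator--prey system cannot be made cooperative by sign changes, so only the Gronwall route remains.
\item To conclude that \emph{every} nonnegative bounded solution lies in the box, you must run the Gronwall uniqueness among all bounded solutions, not only among those already in the box; otherwise the argument is circular. The paper instead gets the bounds directly: it compares $u$ with the logistic equation, then $(v,w)$ with \eqref{v-sup-1}, and then traps any such solution between the iterates.
\end{itemize}
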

\begin{proof}
Set $\left(\overline{u}^{0},\overline{v}^{0},\overline{w}^{0}\right)=\left(1,b-1,\frac{\nu}{\mu}(b-1)\right)$ and $\left(\un{u}^0,\un{v}^0,\un{w}^0\right)=(0,0,0)$. To simplify notation, define operators $\mathcal{L}_{i}~(i=1,2,3)$ by     $ \mathcal{L}_{1}u:=\partial_{t}u-d_1\Delta u+lu$,  
$ \mathcal{L}_{2}v:=\partial_{t}v-d_2\Delta v+l v$ and $\mathcal{L}_{3}w:=\partial_{t}w-D\p_{xx}w +\mu w$,
wherein $l$ is the Lipschitz constant of both $f_1$ and $f_2$ on $\left[0,1\right]\times\left[0,b-1\right]$. 

\vspace{1.25ex}
\noindent\textit{Step 1. 
Construction of sequences $\left\{(\ov{u}^k,\ov{v}^k,\ov{w}^k)\right\}$ and $\left\{\left(\underline{u}^{k},\underline{v}^{k},\underline{w}^{k}\right)\right\}$.}
\vspace{1.25ex}

Starting from initial iterations $\left(\overline{u}^{0},\overline{v}^{0},\overline{w}^{0}\right)$ and $\left(\underline{u}^{0},\underline{v}^{0},\underline{w}^{0}\right)$, we construct two sequences $\left\{(\ov{u}^k,\ov{v}^k,\ov{w}^k)\right\}_{k}$ and $\left\{\left(\underline{u}^{k},\underline{v}^{k},\underline{w}^{k}\right)\right\}_{k}$ from the following iteration process

\begin{subequations}
	\begin{equation}\label{u-sup}
	\left\{\begin{array}{ll}
        \mathcal{L}_{1}\overline{u}^{k}=f_{1}(\overline{u}^{k-1},\underline{v}^{k-1})+l \overline{u}^{k-1},&\qquad (x,y)\in \O_0,t>0,\\
		\mathcal{L}_{1}\underline{u}^{k}=f_{1}(\underline{u}^{k-1},\overline{v}^{k-1})+l\un{u}^{k-1},&\qquad (x,y)\in \O_0,t>0,\\
		\p_y\ov{u}^{k}(x,0,t)=\p_y\un{u}^{k}(x,0,t)= 0, & \qquad x\in \mathbb{R},t>0, \\
		\overline{u}^{k}(x,y,0)=\underline{u}^{k}(x,y,0)=u_{0}(x,y), &\qquad (x,y)\in \ov{\O}_0,
	\end{array}\right.
\end{equation}
\begin{equation}\label{v-sup}
	\left\{\begin{array}{ll}
        \mathcal{L}_{2}\overline{v}^{k}=f_{2}(\overline{u}^{k-1},\overline{v}^{k-1})+l \overline{v}^{k-1},\quad & (x,y)\in \O_0,t>0, \\
		\mathcal{L}_{2}\underline{v}^{k}=f_{2}(\underline{u}^{k-1},\underline{v}^{k-1})+l\underline{v}^{k-1},\quad & (x,y)\in \O_0,t>0, \\
        \n\overline{v}^{k}(x,0,t)-d_2\partial_{y}\overline{v}^{k}(x,0,t)=\mu \overline{w}^{k-1}, \quad & x\in \mathbb{R},t>0, \\
		\n\un{v}^{k}(x,0,t)-d_2\partial_{y} \un{v}^{k}(x,0,t)=\mu\un{w}^{k-1}, \qquad & x\in \mathbb{R},t>0, \\
		\overline{v}^{k}(x,y,0)= \underline{v}^{k}(x,y,0)=v_{0}(x,y), \quad &(x,y)\in \ov{\O}_0,
	\end{array}\right.
\end{equation}
\begin{equation}\label{w-sup}
	\left\{ \begin{array}{ll}
		\mathcal{L}_{3} \overline{w}^{k}=\n\overline{v}^{k-1}(x,0,t),&\quad x\in \mathbb{R},t>0,\\
		\mathcal{L}_{3} \underline{w}^{k}=\n\underline{v}^{k-1}(x,0,t),&\quad x\in \mathbb{R},t>0,\\
		\overline{w}^{k}(x,0)=\underline{w}^{k}(x,0)=w_{0}(x),& \quad x\in \mathbb{R},
		\end{array}\right.
\end{equation}
\end{subequations}
For any $k=1,2,\cdots$, the existence of triplets $\left(\ov{u}^k,\ov{v}^k,\ov{w}^k\right)$ and $\left(\un{u}^k,\un{v}^k,\un{w}^k\right)$ can be obtained by similar arguments as those in \cite[Lemmas 7.2.1 and 7.3.3]{pao+1993}. In particular, one has $\ov{u}^k,\ov{v}^k, \un{u}^k,\un{v}^k\in C^{2,1}\left(\O_0\times(0,+\infty) \right)\cap C\left(\overline{\Omega}_0\times[0,+\infty)\right)$, $\overline{w}^k, \underline{w}^k\in C^{2,1}\left(\R \times(0,+\infty)\right)\cap C\left(\R\times[0,+\infty)\right)$. 

\vspace{1.25ex}
\noindent\textit{Step 2.
Monotonicity of $\left\{(\ov{u}^k,\ov{v}^k,\ov{w}^k)\right\}$ and $\left\{\left(\underline{u}^{k},\underline{v}^{k},\underline{w}^{k}\right)\right\}$.}
\vspace{1.25ex}

The two sequences satisfy the following monotone property:

\vspace{1.5ex}
   \noindent {\bf Claim.} {\it The two sequences $\left\{(\overline{u}^{k},\overline{v}^{k},\overline{w}^{k})\right\}_{k}$ and $\left\{(\underline{u}^{k},\underline{v}^{k},\underline{w}^{k})\right\}_{k}$ given by (\ref{u-sup})-(\ref{w-sup}) possess the monotone property
   	\begin{equation}\label{monotone sequences}
   		\begin{array}{c}
		\underline{u}^{k}\le \underline{u}^{k+1}\le \overline{u}^{k+1}\le\overline{u}^{k},\quad\underline{v}^{k}\le \underline{v}^{k+1}\le \overline{v}^{k+1}\le\overline{v}^{k}\quad \text{\rm in }\ov{\O}_0\times [0,+\infty), \\
		\underline{w}_{k}\le \underline{w}_{k+1}\le \overline{w}_{k+1}\le\overline{w}_{k}\quad \text{\rm in }\mathbb{R}\times [0,+\infty),
	\end{array}
   	\end{equation}
	where $k=0,1,2,\cdots$.}
\vspace{1.5ex}

\noindent {\it Proof of the Claim.}
Set $(\tilde{u}^{0},\tilde{v}^{0},\tilde{w}^{0})=(\overline{u}^{0}-\overline{u}^{1},\overline{v}^{0}-\overline{v}^{1},\overline{w}^{0}-\overline{w}^{1})$. For $\tilde{u}^0$ and $\tilde{w}^0$, one has
\[
\left\{\begin{array}{ll} 
	\mathcal{L}_{1}\tilde{u}^{0}= 0,\quad & (x,y)\in \O_0,t>0, \\
	\p_y\tilde{u}^{0}(x,0,t)=0,  & x\in \mathbb{R},t>0,  \\
	\tilde{u}^{0}(x,y,0)\ge 0, &  (x,y)\in \ov{\O}_0
\end{array}\right.
\quad
	\left\{\begin{array}{ll}
		\mathcal{L}_{3}\tilde{w}^{0}=\m\overline{w}^0-\n\overline{v}^0(x,0,t)= 0, \quad&x\in \mathbb{R},t>0,\\
		\tilde{w}^{0}(x,0)=\overline{w}^0(x,0)-w_{0}(x)\ge 0, & x\in \mathbb{R}. 
	\end{array}\right.
\]
	In view of the comparison principle, we have $\tilde{u}^{0}\geq 0$ and $\tilde{w}^{0}\ge 0$.
	It then follows that
$$\left\{
	\begin{array}{ll} 
        \mathcal{L}_{2}\tilde{v}^{0}=l\overline{v}^0-\ov{v}^0(-1+b\overline{u}^0-\overline{v}^0)-l\overline{v}^0= 0, & (x,y)\in \O_0,t>0,\\
		\n\tilde{v}^{0}(x,0,t)-d_2\partial_{y}\tilde{v}^{0}(x,0,t)= 0, \qquad & x\in \mathbb{R},t>0,\\
        \tilde{v}^{0}(x,y,0)\ge 0 &  (x,y)\in \ov{\O}_0.
    \end{array}
\right.$$
Applying the comparison principle yields $\overline{v}^{0}\ge\overline{v}^{1}$. Using the same argument, one gets $\underline{w}^{0}\le \underline{w}^{1}, \underline{u}^{0}\le\underline{u}^{1}$, and $\underline{v}^{0}\le \underline{v}^{1}$. Let $\left(\hat{u}^{1},\hat{v}^{1},\hat{w}^{1}\right)=\left(\overline{u}^{1}-\underline{u}^{1},\overline{v}^{1}-\underline{v}^{1},\overline{w}^{1}-\underline{w}^{1}\right)$. By the monotonicity of $f_{1}(u,v)+l u$ and $f_{2}(u,v)+l v$, one has
$$\left\{
    \begin{array}{ll}
        \mathcal{L}_{1}\hat{u}^{1}=\left[ f_{1}(\overline{u}^0,\underline{v}^0)+l\overline{u}^0 \right] -\left[ f_{1}(\underline{u}^{0},\overline{v}^0)+l\underline{u}^0 \right] \ge 0, & (x,y)\in \O_0,t>0, \\
        \p_y\hat{u}^{1}(x,0,t)=0, & x\in \mathbb{R},t>0, \\
        \hat{u}^{1}(x,y,0)=0, & (x,y)\in \ov{\O}_0,
    \end{array}
\right.$$
$$\left\{
    \begin{array}{ll}
        \mathcal{L}_{2}\hat{v}^{1}=\left[ f_{2}(\overline{u}^0,\overline{v}^0) +l  \overline{v}^0\right] -\left[ f_{2}(\underline{u}^0,\underline{v}^0)+l\underline{v}^0 \right] \ge 0, & (x,y)\in \O_0,t>0, \\
        \n\hat{v}^{1}(x,0,t)-d_2\partial_{y}\hat{v}^{1}(x,0,t)=\mu [\overline{w}^0-\underline{w}^0]\ge 0, & x\in \mathbb{R},t>0, \\
        \hat{v}^{1}(x,y,0)=0, & (x,y)\in \ov{\O}_0,
    \end{array}
\right.$$
and
$$\left\{
\begin{array}{ll}
	\mathcal{L}_{3}\hat{w}^{1}=\n\overline{v}^0> 0,\quad & x\in \mathbb{R},t>0, \\
	\hat{w}^{1}(x,0)=0, & x\in \mathbb{R}.
\end{array}
\right.$$
These imply that $(\hat{u}^{1},\hat{v}^{1},\hat{w}^{1})\ge (0,0,0)$, which leads to $\left(\underline{u}^{0},\underline{v}^{0},\underline{w}^{0}\right)\leq \left(\underline{u}^{1},\underline{v}^{1},\underline{w}^{1}\right)\leq \left(\overline{u}^{1},\overline{v}^{1},\overline{w}^{1}\right)\leq \left(\overline{u}^{0},\overline{v}^{0},\overline{w}^{0}\right)$.
Assuming by induction that $\left(\underline{u}^{k-1},\underline{v}^{k-1},\underline{w}^{k-1}\right)\leq \left(\underline{u}^{k},\underline{v}^{k},\underline{w}^{k}\right)\leq \left(\overline{u}^{k},\overline{v}^{k},\overline{w}^{k}\right)\leq \left(\overline{u}^{k-1},\overline{v}^{k-1},\overline{w}^{k-1}\right)$
for some $k=1,2,3,\cdots$ and repeating the above processes, we have (\ref{monotone sequences}) and complete the proof of the claim.

\vspace{1.25ex}
\noindent\textit{Step 3.
Convergence of $\left\{(\ov{u}^k,\ov{v}^k,\ov{w}^k)\right\}$ and $\left\{\left(\underline{u}^{k},\underline{v}^{k},\underline{w}^{k}\right)\right\}$.}
\vspace{1.25ex}

Property \eqref{monotone sequences} ensures the following pointwise limits
\[
    (\hat{u},\hat{v},\hat{w})=\li_{ k \to \infty }(\overline{u}^{k},\overline{v}^{k},\overline{w}^{k}),\quad (\check{u},\check{v},\check{w})=\li_{ k \to \infty } (\underline{u}^{k},\underline{v}^{k},\underline{w}^{k}).
\]
Then we discuss the regularity and boundedness of these limit functions. Denote $F_1(u,v):=f_1(u,v)+l u$ and $F_2(u,v):=f_2(u,v)+l v$.
Since
\[
	(0,0,0)\leq \left(\un{u}^{k-1},\, \un{v}^{k-1},\, \un{w}^{k-1}\right)\leq \left(\ov{u}^{k-1},\, \ov{v}^{k-1},\, \ov{w}^{k-1}\right)\leq \left(1,b-1,\frac{\nu}{\mu}(b-1)\right),
\]
we have that
	$F_{1}\left(\ov{u}^{k-1},\, \un{v}^{k-1}\right),~ F_{2}\left(\ov{u}^{k-1},\, \ov{v}^{k-1}\right),~ F_{1}\left(\un{u}^{k-1},\ov{v}^{k-1}\right),~ F_{2}\left(\un{u}^{k-1},\, \un{v}^{k-1}\right)$
are uniformly bounded in $L^p_{loc}\left(\ov{\O}_0\times [0,+\infty)\right)$ for $p>4$ and $k=1,2,\cdots$. By using the global $L^p$ estimate for bounded domian \cite[Theorem 1.6]{wang+2021}, we obtain the uniform boundedness of $\ov{u}^k,\, \ov{v}^k$ and $\un{u}^k,\, \un{v}^k$ in $W^{2,1}_{p}\left(Q\times[\d,T]\right)$ for any bounded $C^{2}$ subdomain $Q\subset \O_0$  and constants $0<\d<T$.
Then the imbedding theorem \cite[Theorem 7.26]{gilbarg+2001} shows that  $\ov{u}^k,\ov{v}^k$ and $\un{u}^k,\un{v}^k$ are uniformly bounded in $C^{1+\a,\frac{1+\a}{2}}\left(\ov{Q}\times[\d,T]\right)$. Hence, we have
\[
\li_{k\to\infty}\left(\ov{u}^k,\ov{v}^k\right)=\left(\hat{u},\hat{v}\right),\quad \li_{k\to\infty}\left(\un{u}^k,\un{v}^k\right)=\left(\check{u},\check{v}\right)
\]
in $C^{1+\alpha,\frac{1+\alpha}{2}}_{loc}\left(\overline{\O}_0\times (0,+\infty)\right)$, and
$F_{1}\left(\ov{u}^{k},\un{v}^{k}\right),\, F_{2}\left(\ov{u}^{k},\ov{v}^{k}\right),\, F_{1}\left(\un{u}^{k},\ov{v}^{k}\right),\, F_{2}\left(\un{u}^{k},\un{v}^{k}\right)$ are uniformly bounded in $ C^{\a,\frac{\a}{2}}\left(\ov{Q}\times[\d,T]\right)$.
Together with the Schauder estimate \cite[Theorem 1.19]{wang+2021}, this boundedness gives that
\[
  \ov{u}^k,\, \ov{v}^k,\, \un{u}^k,\, \un{v}^k  \text{ are uniformly bounded in } C_{loc}^{2+\a,1+\frac{\a}{2}}\left(\O_0\times(0,+\infty)\right),
\]
since $\ov{u}^k,\,\un{u}^k,\, \ov{v}^k$ and $\un{v}^k$ satisfy \eqref{u-sup}-\eqref{v-sup}. Analogously, $\ov{w}^k$ and $\un{w}^k$ are uniformly bounded in $C_{loc}^{2+\a,1+\frac{\a}{2}}\left(\R\times(0,+\infty)\right)$.
Therefore,
\[
    \li_{k\to\infty}\left(\ov{u}^k,\ov{v}^k,\ov{w}^k\right)=\left(\hat{u},\hat{v},\hat{w}\right),\quad \li_{k\to\infty}\left(\un{u}^k,\un{v}^k,\un{w}^k\right)=\left(\check{u},\check{v},\check{w}\right)
\]
locally uniformly in $\left(C^{2+\alpha,1+\frac{\alpha}{2}}\left(\O_0\times(0,+\infty)\right)\right)^2\times C^{2+\alpha,1+\frac{\alpha}{2}}\left(\R\times(0,+\infty)\right)$. On the other hand, since for any $k=0,1,2,\cdots$, there holds
\[
	\un{u}^k\leq \check{u}\leq \hat{u}\leq \ov{u}^k\text{ in } \ov{\O}_0\times (0,+\infty), \qquad \un{u}^k(x,y,0)= \ov{u}^k(x,y,0)=u_0(x,y) \text{ on } \ov{\O}_0,
\]
one has
\[
	\displaystyle u_0(x,y)=\lim_{t \to 0_+}\un{u}^k(t,x,y)\leq \liminf_{t \to 0_+}\check{u}(t,x,y)\leq \limsup_{t \to 0_+}\hat{u}(t,x,y)\leq \lim_{t \to 0_+}\ov{u}^k(t,x,y)=u_0(x,y).
\]
We remark that these limits are uniform in $(x,y)$. The inequality indicates that $\check{u}$ and $\hat{u}$ are continuous at $t=0$ for any $(x,y)\in\ov{\O}_0$. Consequently, we obtain
\[
	\check{u},\hat{u}\in C^{2+\alpha,1+\frac{\alpha}{2}}(\O_0\times (0,+\infty))\cap C^{1+\alpha,\frac{1+\alpha}{2}}(\ov{\O}_0\times(0,+\infty))\cap C\left(\ov{\O}_0\times [0,+\infty)\right),
\]
and hence, $	\check{u}(\cdot,0)=\hat{u}(\cdot,0)=u_0(\cdot)$  on $\ov{\O}_0$.
Likewise, one has
\[
\renewcommand{\arraystretch}{1.25}
\begin{array}{l}
	\check{v},\hat{v}\in C^{2+\alpha,1+\frac{\alpha}{2}}(\O_0\times (0,+\infty))\cap C^{1+\alpha,\frac{1+\alpha}{2}}(\ov{\O}_0\times(0,+\infty))\cap C\left(\ov{\O}_0\times [0,+\infty)\right),\\
	\check{w},\hat{w}\in C^{2+\alpha,1+\frac{\alpha}{2}}(\R\times (0,+\infty))\cap C\left(\R\times [0,+\infty)\right),\\
	\check{v}(\cdot,0)=\hat{v}(\cdot,0)=v_0(\cdot) ~{\rm on}~ \ov{\O}_0 \quad {\rm and}\quad\check{w}(\cdot)=\hat{w}(\cdot)=w_0(\cdot)~{\rm in}~\R.
\end{array}
\]

\vspace{1.25ex}
\noindent\textit{Step 4.
Existence of solutions.}
\vspace{1.25ex}

Clearly, $\check{u},\, \check{v},\, \check{w}$ and $\hat{u},\, \hat{v},\, \hat{w}$ slove the problem
\[\left\{
\begin{array}{lll}
	\partial_{t}\hat{u}-d_1\Delta \hat{u} =f_1\left(\hat{u},\check{v}\right),&\partial_{t}\check{u}-d_1\Delta \check{u} =f_1\left(\check{u},\hat{v}\right),&\quad(x,y)\in \O_0,t>0,\\
	\partial_{t}\check{v}-d_2\Delta \check{v} =f_2(\check{u},\check{v}),&\partial_{t}\check{v}-d_2\Delta \check{v} =f_2\left(\check{u},\check{v}\right),&\quad(x,y)\in \O_0,t>0,\\
	\partial_{t}\hat{w}-D\D_{x}\hat{w} =\nu\hat{v}(x,0,t)-\mu \hat{w}, & \partial_{t}\check{w}-D\D_{x}\check{w} =\nu\check{v}(x,0,t)-\mu \check{w},&\quad x\in \mathbb{R},t>0.
\end{array}
\right.\]
with $\hat{u}(x,y,0)=\check{u}(x,y,0)=u_{0}(x,y)$, $\hat{v}(x,y,0)=\check{v}(x,y,0)=v_{0}(x,y)$, $\hat{w}(x,0)=\check{w}(x,0)=w_{0}(x)$ for $(x,y)\in \ov{\O}_0$.
Moreover, \eqref{monotone sequences} yields that $(0,0,0)\leq\left(\check{u},\check{v},\check{w}\right)\leq \left(\hat{u},\hat{v},\hat{w}\right)\leq\left(\ov{u}^0,\ov{v}^0,\ov{w}^0\right)$.
We establish the existence of a solution of \eqref{drift} by showing $\left(\hat{u},\hat{v},\hat{w}\right)=\left(\check{u},\check{v},\check{w}\right)=:(u,v,w)$. Indeed, $(U,V,W):=\left(\hat{u}-\check{u},\hat{v}-\check{v},\hat{w}-\check{w}\right)\geq (0,0,0)$ satisfies
\[
	\left\{\begin{array}{lll}
		\partial_t U-d_1\D U \leq l (U+V), & \p_t V-d_2\Delta V \leq l(U+V),
		& \quad (x,y)\in \O_0,t>0,\\
		\partial_{t}W-D\p_{xx}W +\mu W=\n V(x,0,t), \quad & &\quad x\in \mathbb{R},t>0,\\
		\n V(x,0,t)-d_2\partial_{y}V(x,0,t)=\mu W(x,t), &\p_y U(x,0,t)= 0, &\quad  x\in \mathbb{R},t>0
	\end{array}\right.
\]
with
		$U(x,y,0)=0,V(x,y,0)=0, W(x,0)=0$, for $(x,y)\in \ov{\O}_0$.
This system satisfies the comparison principle (see Lemma \ref{comparison-cooperate}), which implies $(U,V,W)\leq (0,0,0)$, and thus $(U,V,W)=(0,0,0)$.


\vspace{1.25ex}
\noindent\textit{Step 5.
Uniqueness of the solution.}
\vspace{1.25ex}

Now we deal with the uniqueness issue. Let $\ov{u}$ be the solution of
\begin{equation}\label{u-sup-1}
	\left\{\begin{aligned}
		&\partial_{t}\ov{u}-d_1\Delta \ov{u}=\ov{u}(1-\ov{u}),\qquad &&(x,y)\in \O_0,t>0,&\\
		&\p_y\ov{u}(x,0,t)=0, && x\in \R,t>0&
	\end{aligned}\right.
\end{equation}
with $\ov{u}(x,y,0)=u_0(x,y)$ for any $(x,y)\in\ov{\O}_0$. For any nonnegative bounded solution $(u,v,w)$ of \eqref{drift}, there holds
$u(x,y,t)\leq \ov{u}(x,y,t)\leq 1$ for all $(x,y)\in \ov{\O}_0$ and $ t\geq 0$. 
It gives that $(v,w)$ is a subsolution of
\begin{equation}\label{v-sup-1}
	\left\{
		\begin{aligned}
			&\partial_{t}\ov{v}-d_2\Delta \ov{v}=\ov{v}(b-1-\ov{v}),&&\qquad(x,y)\in \O_0,t>0,&\\
			&\partial_{t}\ov{w}-D\ov{w}_{xx}=-\mu \ov{w}+\n \ov{v}(x,0,t),&&\qquad x\in \mathbb{R},t>0,& \\
			&-d_2\partial_{y}\ov{v}(x,0,t)=\mu \ov{w}(x,t)- \n \ov{v}(x,0,t),&&\qquad x\in \mathbb{R},t>0&
			\end{aligned}
	\right.
\end{equation}
with $\ov{v}(x,y,0)=v_0(x,y),\, \ov{w}(x,0)=w_0(x)$ for all $(x,y)\in \ov{\O}_0$. Then because $(b-1)\left(1,\frac{\nu}{\mu}\right)$ is a supersolution of \eqref{v-sup-1}, we have
$
(v,w)\leq \left(\ov{v},\ov{w}\right)\leq (b-1)\left(1,\frac{\nu}{\mu}\right)$.
Therefore, we get $(u,v,w)\leq \left(\ov{u}^0,\ov{v}^0,\ov{w}^0\right)$. By similar arguments as for the monotonicity of $\left\{(\overline{u}^{k},\overline{v}^{k},\overline{w}^{k})\right\}_{k}$ and $\left\{(\underline{u}^{k},\underline{v}^{k},\underline{w}^{k})\right\}_{k}$, we can show
\[
\un{u}^k\leq u\leq \ov{u}^k,~\un{v}^k\leq v\leq \ov{v}^k \text{ in }\ov{\O}_0\times [0,+\infty), \qquad \un{w}^k\leq w\leq \ov{w}^k\text{ in }\mathbb{R}\times [0,+\infty),
\]
for all $k=0,1,2,\cdots$.
This derives $\check{u}\leq u\leq \hat{u},~\check{v}\leq v\leq \hat{v},~\check{w}\leq w\leq \hat{w}$, and thereby
$	(u,v,w)\equiv\left(\check{u},\check{v},\check{w}\right)\equiv\left(\hat{u},\hat{v},\hat{w}\right)$.
The proof is completed.
\end{proof}

\subsection{Preliminaries}
 Let us first review the results obtained by  Berestycki et al.~\cite{berestycki+2013,berestycki+2013a,berestycki+2016}. For simplicity and generality, we take the following system as an example, drawing mainly on the work in \cite{berestycki+2016}:
\begin{equation}\label{predator sys}
	\left\{
		\begin{aligned}
			&\partial_{t}v-d\Delta v=v(\d-kv),&&\qquad(x,y)\in \O_0,t>0,&\\
			&\partial_{t}w-Dw_{xx}=-\mu w+\n v(x,0,t),&&\qquad x\in \mathbb{R},t>0,& \\
			&-d\partial_{y}v(x,0,t)=\mu w(x,t)- \n v(x,0,t),&&\qquad x\in \mathbb{R},t>0,&
			\end{aligned}
	\right.
\end{equation}
where $d,\, D,\, \d,\, k$ are positive constants, and $\O_0=\R\times (0,+\infty)$. The initial condition $(v_0,w_0)\not\equiv (0,0)$, and $v_0,\,w_0$ are nonnegative, compactly supported.
In \cite{berestycki+2016}, Berestycki et al. introduced the concept of  the {\it asymptotic expansion shape} $\mathcal{W}_{\d}$ to characterize the above spreading results. The asymptotic expansion shape is a closed set such that for any solution $(v,w)$ of \eqref{predator sys} emerging from a nontrivial and compactly supported initial datum, there hold 
\[
	\begin{array}{c}
			\displaystyle\li_{t\to+\infty}\sup_{\substack{(x,y)\in\ov{\O}_0\\{\rm dist}\left(\frac{1}{t}(x,y),\mathcal{W}_{\d}\right)>\e}}v(x,y,t)=0,\\
			\displaystyle\li_{t\to+\infty}\sup_{\substack{(x,y)\in\ov{\O}_0\\{\rm dist}\left(\frac{1}{t}(x,y),\ov{\O}_0\setminus\mathcal{W}_{\d}\right)>\e}}\left|v(x,y,t)-\frac{\delta}{k}\right|=0,
	\end{array}
	\qquad \forall \e>0.
\]
\begin{lemma}[{\cite[Theorem 2.1]{berestycki+2016}}]\label{berestycki+2016 thm2.1}
	\begin{enumerate}[label=\rm{(\roman*)}]
		\item  {\rm (Spreading in $\O_0$)} Problem \eqref{predator sys} admits an asymptotic expansion shape $\mathcal{W}_{\d}$.	
		\item  {\rm (Shape of $\mathcal{W}_{\d}$)} The set $\mathcal{W}_{\d}$ is convex and is of the form
			$$
        			\mathcal{W}_{\d}=\left\{r(\sin\vartheta,\cos\vartheta):-\frac{\pi}{2}\leq\vartheta\leq\frac{\pi}{2},~0\leq r\leq \vr_{\d}^{*}(\vartheta)\right\}.
        		$$
        		Here, $\vr_{\d}^{*}\in C^1\left(\left[-\frac{\pi}{2},\frac{\pi}{2}\right]\right)$ is even, and there is $\vartheta_0\in\left(0,\frac{\pi}{2}\right]$ such that
        		$$
        			\vr_{\d}^{*}=2\sqrt{d\d}\text{ in }[0,\vartheta_0],\quad \left(\vr_{\d}^{*}\right)^{\prime}>0\text{ in }(\vartheta_0,\pi/2].
        		$$
        		Moreover, 
        		$\mathcal{W}_{\d}$ contains the set
        		$$
        			\underline {\mathcal{W} }_{\d}: = \text{conv}\left (  \overline {B}_{2\sqrt{d\d}} \cup \left[ - \vr_{\d}^{*} \left( \frac{\pi}{2}\right) ,  \vr_{\d}^{*} \left(\frac{\pi}{2}\right) \right] \times \{ 0\} \right )
        		$$
        and the inclusion is strict if $D>2d$. Here and in what follows, $B_r:=B_r(0,0)\cap \ov{\O}_0$ for any $r>0$.
	\end{enumerate}
\end{lemma}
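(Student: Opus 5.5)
The plan is to follow the strategy of Berestycki, Roquejoffre and Rossi: characterize the spreading speed in each direction through exponential solutions of the linearized problem, and then identify $\mathcal{W}_{\d}$ as a Freidlin--G\"artner-type envelope.

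First, replacing $(v,w)$ by $\frac{k}{\d}(v,w)$, I reduce \eqref{predator sys} to $k=\d$, so that the positive steady state is $\frac{\d}{k}\left(1,\frac{\n}{\m}\right)$ and only the linear growth rate $\d$ matters. I would also use a Liouville-type statement. The only bounded, nonnegative, nontrivial stationary solution of \eqref{predator sys} should be $\frac{\d}{k}\left(1,\frac{\n}{\m}\right)$. To prove it, I would slide a compactly supported subsolution built from a principal eigenfunction on a large half-disc, and use the KPP structure (monotonicity of $s\mapsto (\d-ks)$) to get uniqueness. Convergence to the steady state on the invaded region then follows once a positive lower bound is established there.

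Second, for the upper bound I consider the linearized system and look for plane-wave solutions of the form
\[
(v,w)=e^{-\a(x\sin\vt'+y\cos\vt')+\lambda t}\,(\phi(y),\ \gamma).
\]
I distinguish two families of such solutions. The pure field waves have $\phi$ exponential and give the speed $2\sqrt{d\d}$. The road-coupled waves decay in $y$ and have $\lambda=\lambda(\a)$ given by the dispersion relation of the road-field exchange. Taking minima of these exponential supersolutions with $\frac{\d}{k}\left(1,\frac{\n}{\m}\right)$ and using the comparison principle yields, for each direction $\vt$, a speed
\[
\vr^*(\vt)=\inf_{\vt'}\frac{c(\vt')}{\cos(\vt-\vt')},
\]
where $c(\vt')$ is the minimal plane-wave speed in direction $\vt'$. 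This is precisely the radial function of the convex set $\bigcap_{\vt'}\{\xi:\xi\cdot e_{\vt'}\le c(\vt')\}$, and convexity of $\mathcal{W}_{\d}$ follows immediately.

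Third, for the lower bound I need, for each $\vt$ and each $r<\vr^*(\vt)$, a compactly supported subsolution moving with velocity $r e_\vt$. The idea is to take the linearized problem in the moving frame, perturb it by a small penalization $\d-\e$, and use the principal eigenfunction on a large truncated domain, which includes a segment of the road. Positivity of the corresponding generalized principal eigenvalue is to be read off from the plane-wave dispersion relation via complex exponents. I expect this to be \emph{the main obstacle}. In oblique directions the optimal mechanism mixes fast transport along the road with field diffusion into the half plane, and showing that the truncated eigenvalue problem is solvable exactly up to $\vr^*(\vt)$ requires a careful analysis of the dispersion curves. I would first try to handle the extreme cases, $\vt=\pm\frac{\pi}{2}$ using the road speed and $\vt$ near $0$ using the field speed. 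I would then use the convex-hull structure to interpolate, placing subsolutions successively along the road and then in the field.

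Finally, the shape properties come from the explicit formula. The supremum of road speeds is attained only for directions within the cone $|\vt|>\vt_0$, so $\vr^*_{\d}=2\sqrt{d\d}$ on $[0,\vt_0]$, and there $\vr^*_{\d}$ is $C^1$ and strictly increasing, which I would check by computing the envelope of the tangent lines. Since both $\ov{B}_{2\sqrt{d\d}}$ and the road segment lie in $\mathcal{W}_{\d}$, convexity gives $\underline{\mathcal{W}}_{\d}\subset\mathcal{W}_{\d}$. When $D>2d$, the road speed exceeds $2\sqrt{d\d}$, and the road-coupled waves in slightly oblique directions produce support lines lying strictly outside those of the convex hull, so the inclusion is strict.
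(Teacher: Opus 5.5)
The upper-bound half of your plan is the right mechanism: exponential supersolutions $\bigl(\gamma e^{-(\alpha,\beta)\cdot((x,y)-ct\xi)},\,e^{-(\alpha,\beta)\cdot((x,0)-ct\xi)}\bigr)$ with $\xi=(\sin\vartheta,\cos\vartheta)$, admissible iff $c(\alpha,\beta)\cdot\xi\ge\delta+d(\alpha^2+\beta^2)$ and $c(\alpha,\beta)\cdot\xi\ge D\alpha^2-\frac{\mu d\beta}{\nu+d\beta}$ with $\beta>-\frac{\nu}{d}$, give a Freidlin--G\"artner infimum whose associated set is an intersection of half-planes, hence convex.

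The gap is the lower bound in the cone $\vartheta_0<|\vartheta|<\frac{\pi}{2}$. This cone is nonempty as soon as $D>2d$, which is exactly where the statement has content. Your eigenfunction construction works only for $\vartheta=\pm\frac{\pi}{2}$. For $\cos\vartheta>0$, in the frame moving with velocity $r(\sin\vartheta,\cos\vartheta)$ the road is the receding line $\{y=-rt\cos\vartheta\}$. So there is no stationary eigenvalue problem on a truncated domain containing a road segment. Any profile stationary in that frame has support at positive distance from the road for large $t$. From then on it is a subsolution of the field equation alone and cannot travel faster than $2\sqrt{d\delta}$.

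Your fallback is a Huygens construction: road subsolutions for $\vartheta=\pm\frac{\pi}{2}$, field subsolutions near $\vartheta=0$, then field subsolutions launched from the invaded part of the road. Moving at speed $\varrho^*_\delta(\frac{\pi}{2})$ along the road and then at $2\sqrt{d\delta}$ in the field reaches, up to $o(t)$, exactly $t\,\underline{\mathcal{W}}_\delta$. The statement asserts $\underline{\mathcal{W}}_\delta\subsetneq\mathcal{W}_\delta$ when $D>2d$, and your own last paragraph argues this. So you would only get that the invaded region lies between $t\,\underline{\mathcal{W}}_\delta$ and $t\,\mathcal{W}_\delta$. That proves neither (i) nor the shape description in (ii), and the plan is internally inconsistent on this point.

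What is missing is a family of compactly supported generalized subsolutions with three properties:
\begin{itemize}
\item they are genuinely time dependent;
\item they stay coupled to the road for all times;
\item a point of the field component moves with velocity $c(\sin\vartheta,\cos\vartheta)$, for $c$ arbitrarily close to $\varrho^*_\delta(\vartheta)$.
\end{itemize}
They must be built around the optimal exponents of your supersolution problem, i.e.\ where the two constraints above become tangent in the $(\alpha,\beta)$-plane. The reason is what they must capture: the exponentially small population that the road pushes ahead of its own front leaks into the field and then grows there at rate $\delta$. This is the effect that beats the convex hull.

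This is precisely the content of Lemma~\ref{generalized_subsolution}(ii) (Lemma 4.1 of Berestycki--Roquejoffre--Rossi). The paper itself gives no proof of the present statement; it imports it, together with those subsolutions, from that work. So the step you flag as the ``main obstacle'' carries essentially all the difficulty, and the route you propose for it cannot succeed.
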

\begin{remark}\label{remark26}
	The spread of \eqref{predator sys} along the road was first investigated by Berestycki et al. {\rm \cite{berestycki+2013}}. It corresponds to the case  $\vt=\pm \frac{\pi}{2}$ in Lemma \ref{berestycki+2016 thm2.1}. More precisely, there hold 
        \[
			\li_{t\to+\infty}\sup_{\substack{|x|\geq ct\\y\geq 0}}(v(x,y,t),w(x,t))=(0,0),\qquad \forall c>\vr_{\d}^*\left(\frac{\pi}{2}\right),
        \]
        and 
        \[
        \li_{t\to+\infty}\inf_{\substack{|x|\leq ct\\0\leq y<l}}(v(x,y,t),w(x,t))=\frac{\delta}{k}\left(1,\dfrac{\nu}{\mu}\right),\qquad \forall c< \vr_{\d}^*\left(\frac{\pi}{2}\right),\ l>0.
        \]
\end{remark}

\begin{remark}\label{monotone}
The proof of {\rm \cite[Theorem 2.1]{berestycki+2016}} demonstrated that $\vr_{\d}^*(\vt)$ is strictly increasing in $\d$. 
Moreover, $\vr_{\d}^*(\cdot)$ is increasing on $\left[0,\frac{\pi}{2}\right]$ and is an even function. Therefore, we have
\[
	\vr_{\d}^*(\vt)\leq\vr_{\d}^*\left(\frac{\pi}{2}\right)\to 0 \text{ as } \d\to 0,\qquad \forall\vt\in\left[-\frac{\pi}{2},\frac{\pi}{2}\right].
\]
Specifically, the limit $\vr_{\d}^*(\vt)\to 0 \text{ as } \d\to 0$ is uniform in $\vt\in\left[-\frac{\pi}{2},\frac{\pi}{2}\right].$
\end{remark}

Actually, for any $\vt\in\left[-\frac{\pi}{2},\frac{\pi}{2}\right]$, $\vr_{\d}^{*}(\vt)$ is the critical value of $c$ such that the following linearized system of \eqref{predator sys} at $v=0$, namely
\begin{equation}\label{linearized}
	\left\{
		\begin{aligned}
			&\partial_{t}v-d\Delta v=\d v,&&\qquad(x,y)\in \O_0,t>0,&\\
			&\partial_{t}w-Dw_{xx}=-\mu w+\n v(x,0,t),&&\qquad x\in \mathbb{R},t>0,& \\
			&-d\partial_{y}v(x,0,t)=\mu w(x,t)- \n v(x,0,t),&&\qquad x\in \mathbb{R},t>0,&
			\end{aligned}
	\right.
\end{equation}
admits an exponential solution of the form
\begin{equation}\label{exponential}
	\left(\gamma e^{-(\alpha,\beta)\cdot((x,y)-ct\xi)},~e^{-(\alpha,\beta)\cdot((x,0)-ct\xi)}\right)
\end{equation}
 with  $\alpha,\beta\in \R,\gamma>0$ and $\xi=(\sin\vt,\cos\vt)$. Therefore, for any $\vt\in\left[-\frac{\pi}{2},\frac{\pi}{2}\right]$, $\vr^{*}_{\delta}(\vt)$ is independent of $k$. 
For any $\vt\in\left[-\frac{\pi}{2},\frac{\pi}{2}\right]$, the upper bound of propagation velocity in the direction $\xi=(\sin \vt,\cos \vt)$ can be proven by establishing the planar wave \eqref{exponential} for the linearlized system \eqref{linearized}.
However, due to the interaction between $v$ and $w$, the commonly used generalized sub- and supersolutions---the supremum of subsolutions and the infimum of supersolutions---are not applicable to system \eqref{predator sys}. Therefore, a {\it generalized subsolution} is introduced as follows:
\begin{definition}[{\cite[Definition 4.2]{berestycki+2016}}]
	A pair $(\un{v},\un{w})$ is a generalized subsolution of \eqref{predator sys}
	if $\un{v},\, \un{w}$ are continuous and satisfy the following properties:
	\begin{enumerate}[label=\rm{(\roman*)}]
		\item for any $x\in\mathbb{R},t>0$, there is a function $w$ such that $w\leq\underline{w}$ in a neighbourhood of $(x,t)$ and, at $(x,t)$ {\rm (}in the classical sense{\rm )},
		\[
			w=\underline{w},\qquad\partial_tw-D\partial_{xx}w+\mu w\leq\nu\underline{v}(x,0,t);		
		\]
          \item for any $(x,y)\in\ov{\O}_0,t>0$, there is a function $v$ such that $v\leq\underline v$ in a neighbourhood of $(x,y,t)$ and, at $(x,y,t)$,
          \[
				v=\underline{v},\qquad
				\left\{\begin{aligned}
					&\partial_tv-d\Delta v\leq v(\delta- kv), && \mathrm{~if~}y>0,\\
					&-d\partial_y v(x,0,t)+\nu v(x,0,t)\leq\mu\underline{w}(x,t), && \mathrm{~if~}y=0.
				\end{aligned}
				\right.
          \]
	\end{enumerate}
\end{definition}

\begin{lemma}\label{generalized_subsolution} For any $\vt\in\left[-\frac{\pi}{2},\frac{\pi}{2}\right]$, let $\vr^{*}_{\delta}(\vt)$ be the spreading speed of \eqref{predator sys} along the direction $(\si\vt,\c\vt)$ given by Lemma \ref{berestycki+2016 thm2.1}. Then we have
\begin{itemize}
	\item[{\rm (i)}]{\rm\cite[Lemmas 6.1 and 6.2]{berestycki+2013}} There is $\varepsilon\left(\frac{\pi}{2}\right)>0$ such that for any $c\in\left(\vr^{*}_{\delta}(\frac{\pi}{2})-\varepsilon(\frac{\pi}{2}), \vr^{*}_{\delta}(\frac{\pi}{2})\right)$, system
\begin{equation}\label{predator sys drift}
	\left\{
		\begin{aligned}
			&\partial_{t}v-d\Delta v+c\p_x v=v(\d-kv),&&\qquad(x,y)\in \O_0,t>0,&\\
			&\partial_{t}w-Dw_{xx}+c\p_x w=-\mu w+\n v(x,0,t),&&\qquad x\in \mathbb{R},t>0,& \\
			&-d\partial_{y}v(x,0,t)=\mu w(x,t)- \n v(x,0,t),&&\qquad x\in \mathbb{R},t>0&
			\end{aligned}
	\right.
\end{equation}
	admits a nonnegative, compactly supported, generalized stationary subsolution $\left(\underline{\phi},\underline{\psi}\right)\not\equiv(0,0)$.
	\item[{\rm(ii)}]{\rm \cite[Lemma 4.1]{berestycki+2016}} For any $\vt\in\left(-\frac{\pi}{2},\frac{\pi}{2}\right)$ and $\e>0$, there exists $c\in \left(\vr^{*}_{\delta}(\vt)-\varepsilon,\vr^{*}_{\delta}(\vt)\right)$ and a pair $(\underline{v},\, \underline{w})$ of nonnegative functions  with the following properties: $\un{v}(x,y,0),\, \un{w}(x,0)$ are compactly supported,
	\[
		\exists\, (\hat{x},\hat{y})\in\ov{\O}_0,~ \forall\, t\geq 0,~ \un{v}(\hat{x}+ct\si\vt,\hat{y}+ct\c\vt,t)=\un{v}(\hat{x},\hat{y},0)>0,
	\]
	 and for any $\kappa\in(0,1]$, $\kappa(\underline{v},\, \underline{w})$ is a generalized subsolution of \eqref{predator sys}.
\end{itemize}
\end{lemma}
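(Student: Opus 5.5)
This lemma is quoted from \cite{berestycki+2013} and \cite{berestycki+2016}. Our plan is to recover both parts from the exponential solutions \eqref{exponential} of the linearized system \eqref{linearized}. In each case we lose the exponential solution just below the critical speed and gain an oscillating one, then truncate it to a compactly supported bump and use the nonlinearity $-kv^2$ through a small multiplicative constant.

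For (i), we pass to the moving frame $x\mapsto x-ct$, which produces \eqref{predator sys drift}. We look for solutions of its linearization at $0$ of the form
\[
(v,w)=\left(e^{\alpha x}\gamma e^{-\beta y},\, e^{\alpha x}\right),\qquad \alpha\in\mathbb{C}.
\]
The field equation gives $d(\alpha^2+\beta^2)-c\alpha+\delta=0$. The road equation gives $D\alpha^2-c\alpha-\mu+\nu\gamma=0$. The exchange condition gives $d\beta\gamma=\mu-\nu\gamma$.

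For $c$ slightly below $\vr^*_\delta(\frac{\pi}{2})$, the two curves that define $\vr^*_\delta(\frac{\pi}{2})$ as the tangency speed no longer meet in the real domain. By a perturbation (Rouché-type) argument they meet at a complex $\alpha=\alpha_1+i\alpha_2$ with $\alpha_2\neq0$ small, and the corresponding $\beta$ has $\mathrm{Re}\,\beta>0$. Taking real parts, on the strip $|x|\le \pi/\alpha_2$, $0\le y\le L$ we obtain positive functions that vanish on the lateral sides, after possibly also cutting in $y$ via $\sin$ of $y$. Multiplying by $e^{-\alpha_1 x}$ gives positive solutions of the linear problem with zero boundary on a bounded box.

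We then set $(\underline\phi,\underline\psi)=\kappa\cdot\max\{\cdot,0\}$. For $\kappa$ small, the term $-kv^2$ is absorbed by taking $\delta-\eta$ in place of $\delta$. The maximum with $0$ is admissible because the positive part of a subsolution, glued to zero where it vanishes, satisfies the generalized subsolution definition pointwise.

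For (ii), with $\vartheta\in(-\frac{\pi}{2},\frac{\pi}{2})$ and $c<\vr^*_\delta(\vt)$ close to it, we run the same construction in the direction $\xi=(\sin\vt,\cos\vt)$. We seek $e^{-(\alpha,\beta)\cdot((x,y)-ct\xi)}$ with complex $(\alpha,\beta)$ solving the dispersion system, and take real parts truncated to a compact set that translates with velocity $c\xi$. The conditions $\underline v(\hat x+ct\sin\vt,\hat y+ct\cos\vt,t)=\underline v(\hat x,\hat y,0)>0$ then hold automatically. Homogeneity of the linear inequalities and $\kappa\le1$ make $\kappa(\underline v,\underline w)$ a subsolution, since $-k\kappa^2v^2\ge -k\kappa v^2$ is dominated by the room left by using $\delta-\eta$.

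The main obstacle is the second part. Near directions where the road is not enhancing (where $\vt\le\vartheta_0$), the support of the bump may leave the road. Conversely, a bump touching the road must handle the exchange condition, which moves with speed $c\sin\vt$ along the road. The delicate step is to show that the complex root exists with $\mathrm{Re}\,\beta>0$, so that the support is bounded in $y$ and the boundary inequality at $y=0$ holds with the correct sign. For this we first try a $\beta$-parametrized implicit function argument at the double root defining $\vr^*_\delta(\vt)$.
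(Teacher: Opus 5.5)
The paper does not prove this lemma; both parts are quoted from Berestycki--Roquejoffre--Rossi. Your sketch therefore has to stand on its own, and it has two genuine gaps.

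\emph{Part (i): the complex root does not give compact support in $y$.} Write $\alpha=\alpha_1+i\alpha_2$, $\beta=\beta_1+i\beta_2$, $\gamma=|\gamma|e^{i\theta}$. Then $\mathrm{Re}(\gamma e^{\alpha x-\beta y})=|\gamma|e^{\alpha_1x-\beta_1y}\cos(\alpha_2x-\beta_2y+\theta)$. Its positivity set is a union of strips bounded by the parallel lines $\alpha_2x-\beta_2y+\theta\in\frac{\pi}{2}+\pi\mathbb{Z}$.
\begin{itemize}
\item Since $\alpha_2\neq0$, each strip meets every line $\{y=\mathrm{const}\}$, so it is unbounded in $y$. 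It is bounded by vertical lines only if $\beta_2=0$.
\item The condition $\mathrm{Re}\,\beta>0$ gives exponential decay, never bounded support.
\item Multiplying by a cut-off $s(y)$ is not admissible. If $A$ solves the linearized equation, then $-d\Delta(As)+c\partial_x(As)-(\delta-\eta)As=-2d\,\partial_yA\,s'-dA\,s''$. The cross term has no sign and cannot be absorbed near the zero set of $s$, where $As\to0$ but $\partial_yA\,s'$ does not. The cut-off also changes the exchange relation at $y=0$.
\end{itemize}
What is missing is to build the truncation into the linear problem itself. For example, impose $v=0$ on $\{y=L\}$ and use $\delta-\eta$ in place of $\delta$; this changes the $y$-profile of the field component and hence the road dispersion relation. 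You must then prove that the critical speed of the truncated problem tends to $\varrho^*_\delta(\frac{\pi}{2})$ as $L\to\infty$ and $\eta\to0$. Only this continuity statement gives complex roots for \emph{every} $c$ in a left neighbourhood of $\varrho^*_\delta(\frac{\pi}{2})$, and it is where $\varepsilon(\frac{\pi}{2})$ comes from. Once $y$ is confined to $(0,L)$, the oscillation in $x$ coming from $\mathrm{Im}\,\alpha\neq0$ can cut out bounded positivity components.

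\emph{Part (ii): a compact bump translating rigidly with velocity $c\xi$, $\xi=(\sin\vartheta,\cos\vartheta)$, cannot work in the directions that matter.} Your field component is a compactly supported profile $V(X-ct\xi)$ with $X=(x,y)$.
\begin{itemize}
\item Since $\cos\vartheta>0$, its support leaves the road after finite time. From then on $V$ must satisfy $-d\Delta V-c\xi\cdot\nabla V\le\delta V$ at every point of its support.
\item With $W=e^{\frac{c}{2d}\xi\cdot X}V$ this reads $-d\Delta W+\frac{c^2}{4d}W\le\delta W$. Evaluating at a maximum point of $W$ forces $c\le2\sqrt{d\delta}$.
\item For $\vartheta_0<|\vartheta|<\frac{\pi}{2}$, a nonempty range as soon as $D>2d$, one has $\varrho^*_\delta(\vartheta)>2\sqrt{d\delta}$. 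So every $c$ close to $\varrho^*_\delta(\vartheta)$ violates this, and the object whose properties you say hold ``automatically'' does not exist.
\end{itemize}
More generally, comparison with $e^{\delta t}$ times the heat kernel shows that a subsolution whose field part stays away from the road from some time on cannot keep a positive value at a point moving with speed $c>2\sqrt{d\delta}$. This is why the statement asks only for one marked point $(\hat x+ct\sin\vartheta,\hat y+ct\cos\vartheta)$ with constant value. The field support must stay connected to the road, whose component travels horizontally at its own speed. It must therefore grow in time, stretching from the road to the marked point, while the subsolution stays bounded so that the $\kappa$-uniformity via the $\delta-\eta$ margin survives. As in (i), $\mathrm{Re}\,\beta>0$ would not bound the support anyway.

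Conversely, the directions $|\vartheta|\le\vartheta_0$ that you flag as the obstacle are the easy ones. There $\varrho^*_\delta(\vartheta)=2\sqrt{d\delta}$, and the classical Dirichlet-ball KPP subsolution, started above the road and moving with velocity $c\xi$, never meets the road, so the road component can be taken $\equiv0$.
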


By symmetry, we assume that $\vt\in\left[0,\frac{\pi}{2}\right]$.
It follows from  the proof of Lemma \ref{generalized_subsolution} (see \cite{berestycki+2013, berestycki+2016}) that   
the following proposition  holds by defining
 	\[
	\left(\un{\Phi}_{\delta,\vt,c}(x,y,t),\un{\Psi}_{\delta,\vt,c}(x,t)\right):=\left\{\begin{aligned}
			& \left(\un{v}(x,y,t),\un{w}(x,t)\right), && \text {if}~ 0\leq\vt<\frac{\pi}{2},& \\
			& \left(\un{\phi}(-x+ct,y),\un{\psi}(-x+ct)\right), && \text {if}~ \vt=\frac{\pi}{2},&
	\end{aligned}\right.
	\]
	where $\left(\un{v},\un{w}\right)$ and $\left(\un{\phi},\un{\psi}\right)$ are defined in Lemma \ref{generalized_subsolution}.

\begin{proposition}\label{uniform_epsilon}
	For any $\vt\in\left[-\frac{\pi}{2},\frac{\pi}{2}\right]$, $\delta>0$ and $\e>0$, there is $c\in\left(\vr^{*}_{\delta}(\vt)-\e, \vr^{*}_{\delta}(\vt)\right)$ such that system \eqref{predator sys} admits a nonnegative generalized subsolution $\left(\un{\Phi}_{\delta,\vt,c},\un{\Psi}_{\delta,\vt,c}\right)$ satisfying that:
	\begin{enumerate}[label={\rm (\alph*)}]
		\item \label{a} $\un{\Phi}_{\delta,\vt,c}(\cdot,\cdot,0)$ and $\un{\Psi}_{\delta,\vt,c}(\cdot,0)$ are compactly supported.
		\item \label{b} There exists $l_{\delta,\vt,c}\geq 0$ such that
		\[
			\begin{aligned}
				& {\rm supp}~ \un{\Phi}_{\delta,\vt,c}(\cdot,\cdot,t)={\rm supp}~ \un{\Phi}_{\delta,\vt,c}(\cdot,\cdot,0)+c(\si\vt,\c\vt)t,\\
				& {\rm supp}~ \un{\Psi}_{\delta,\vt}(\cdot,t)={\rm supp}~ \un{\Psi}_{\delta,\vt,c}(\cdot,0)+cl_{\delta,\vt,c}t,
			\end{aligned}
			\qquad \forall\, t\geq 0.
		\]
		\item \label{c} There is $(\hat{x},\hat{y})\in\ov{\O}_0$ such that
		$$
			\un{\Phi}_{\delta,\vt,c}\left(\hat{x}+ct\sin\vt,\hat{y}+ct\cos\vt,t\right)=\un{\Phi}_{\delta,\vt,c}\left({\hat{x},\hat{y},0}\right)>0,\qquad \forall t\geq 0.
		$$
		\item In particular,
		\[
			\left(\un{\Phi}_{\delta,\frac{\pi}{2},c}(-x+ct,y,t),\un{\Psi}_{\delta,\frac{\pi}{2},c}(-x+ct,t)\right) ~and ~\left(\un{\Phi}_{\delta,-\frac{\pi}{2},c}(x-ct,y,t),\un{\Psi}_{\delta,-\frac{\pi}{2},c}(x-ct,t)\right)
		\]
		are  nontrivial generalized stationary subsolution of \eqref{predator sys drift}.
	\end{enumerate}
\end{proposition}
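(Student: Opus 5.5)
The proof will be assembled from Lemma \ref{generalized_subsolution}, splitting into the cases $\vt\in(-\frac{\pi}{2},\frac{\pi}{2})$ and $\vt=\pm\frac{\pi}{2}$, and using the symmetry $x\mapsto -x$ to reduce to $\vt\in[0,\frac{\pi}{2}]$. This symmetry is legitimate because \eqref{predator sys} is invariant under $x\mapsto-x$. A reflected generalized subsolution is again a generalized subsolution, since the test functions in the definition can be reflected as well. Also, $\vr_\d^*$ is even by Lemma \ref{berestycki+2016 thm2.1}.

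\emph{Case $\vt\in[0,\frac{\pi}{2})$.} Given $\e>0$, I apply Lemma \ref{generalized_subsolution}(ii) to obtain $c\in(\vr_\d^*(\vt)-\e,\vr_\d^*(\vt))$ and $(\un v,\un w)$, and set $(\un\Phi,\un\Psi)=(\un v,\un w)$ with $\kappa=1$. Items (a) and (c) are then stated directly in that lemma. For (b), I go back into the construction of \cite[Lemma 4.1]{berestycki+2016}. There the subsolution is built from a compactly supported profile that is translated in time: $\un v(x,y,t)=V(x-ct\si\vt,\,y-ct\c\vt)$ for $\un v$, while $\un w$ is a translate moving along the road with horizontal speed $c\si\vt$. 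One takes $(\un v,\un w)$ to be the positive part of a truncated, modulated exponential solution of the form \eqref{exponential}, multiplied by a cut-off, and the whole construction moves rigidly with velocity $c\xi$. This gives the support identities in (b) with $l_{\d,\vt,c}=\si\vt$. The positivity point $(\hat x,\hat y)$ in (c) is any interior point of the support of the profile, and because the translation is rigid its value is preserved.

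\emph{Case $\vt=\frac{\pi}{2}$.} I take $c$ from Lemma \ref{generalized_subsolution}(i). This requires $\e\le\e(\frac{\pi}{2})$; for larger $\e$ the same $c$ still works, since the interval only grows. I then define $(\un\Phi,\un\Psi)(x,y,t)=(\un\phi(-x+ct,y),\un\psi(-x+ct))$. A change of variables shows that a stationary generalized subsolution of the moving-frame system \eqref{predator sys drift} yields a generalized subsolution of \eqref{predator sys}. The sign convention is arranged (reflecting $x$ if needed) so that the support translates by $ct$ in the direction $(1,0)=(\si\frac{\pi}{2},\c\frac{\pi}{2})$. Compact support of $(\un\phi,\un\psi)$ gives (a), the rigid translation gives (b) with $l=1$, and any point where $\un\phi>0$ gives (c). Item (d) is just the restatement of the stationarity in the moving frame, obtained by undoing the change of variables, and its $-\frac{\pi}{2}$ counterpart comes from reflection.

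The main obstacle is item (b) in the oblique case. Lemma \ref{generalized_subsolution}(ii) as quoted does not record that the subsolution is an exact rigid translate, so I will need to check this structure inside the proof of \cite[Lemma 4.1]{berestycki+2016}. In particular I must verify that the road component $\un w$ is supported on a moving interval whose speed is exactly $c\si\vt$. If the cited construction instead only gives supports contained in translates, I would redefine $(\un\Phi,\un\Psi)$ as the time-translate of the $t=0$ profile. Being a subsolution is preserved under this redefinition, because the cited construction verifies the subsolution inequalities in the moving frame $(x,y)-ct\xi$.
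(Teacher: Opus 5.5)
Your treatment of $\vartheta=\pm\frac{\pi}{2}$ and of item (d) matches the paper. The paper's proof only defines $(\un{\Phi},\un{\Psi})$ case by case: the pair from Lemma \ref{generalized_subsolution}(ii) for $0\le\vartheta<\frac{\pi}{2}$, and $(\un{\phi}(-x+ct,y),\un{\psi}(-x+ct))$ for $\vartheta=\frac{\pi}{2}$. It then uses reflection and appeals to \cite{berestycki+2013,berestycki+2016}. For $\vartheta=\frac{\pi}{2}$ everything checks directly, with $l=1$.

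\textbf{The gap is the oblique case.} A closer reading of \cite[Lemma 4.1]{berestycki+2016} cannot close it. Write $\xi=(\sin\vartheta,\cos\vartheta)$.

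\emph{The fallback fails.} System \eqref{predator sys} is not invariant under translations with a vertical component. In the frame $(x,y)-ct\xi$ the road sits at $y=-ct\cos\vartheta$, so there is no stationary problem in which the inequalities could have been verified.

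\emph{The rigid structure itself is impossible in the enhanced cone.} Suppose $\cos\vartheta>0$, ${\rm supp}\,\un{\Phi}(\cdot,\cdot,t)=K+ct\xi$ with $K$ compact, and (c) holds. For $t\ge t_1>0$ the support stays at distance at least $ct_1\cos\vartheta$ from the road. So $\un{\Phi}$, extended by $0$ to $\{y<0\}$, is a generalized subsolution of $\partial_t v-d\Delta v=\delta v$ in $\R^2\times(t_1,+\infty)$, since $v(\delta-kv)\le\delta v$. Comparison with the heat kernel gives
\[
\un{\Phi}(\hat{x}+ct\sin\vartheta,\hat{y}+ct\cos\vartheta,t)\le \frac{C}{t-t_1}\,e^{(\delta-\frac{c^{2}}{4d})(t-t_1)},\qquad t>t_1,
\]
so (c) forces $c<2\sqrt{d\delta}$. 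But Lemma \ref{berestycki+2016 thm2.1} gives $\vr^{*}_{\delta}(\vartheta)>2\sqrt{d\delta}$ for $\vartheta_0<|\vartheta|<\frac{\pi}{2}$, and this range is nonempty when $D>2d$. Taking $\e<\vr^{*}_{\delta}(\vartheta)-2\sqrt{d\delta}$ makes every admissible $c$ exceed $2\sqrt{d\delta}$. Hence in this road-enhanced cone no pair satisfies (a), (c) and the field-support identity of (b) at once. A subsolution as in Lemma \ref{generalized_subsolution}(ii) must stay in contact with the road, so its support has to stretch from the road to $(\hat{x},\hat{y})+ct\xi$ rather than translate rigidly.

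This is consistent with \eqref{exponential}. Its components are $\gamma e^{-\alpha(x-clt)-\beta y}$ and $e^{-\alpha(x-clt)}$ with $l=\sin\vartheta+\frac{\beta}{\alpha}\cos\vartheta$. The road trace therefore moves at speed $cl$, not $c\sin\vartheta$, which is presumably why the statement carries a separate $l_{\delta,\vartheta,c}$.

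\textbf{What survives.} Your rigid picture is realizable only for $|\vartheta|\le\vartheta_0$, where $\vr^{*}_{\delta}(\vartheta)=2\sqrt{d\delta}$. There you may take $\un{\Psi}\equiv0$ and a small multiple of $e^{-\frac{c}{2d}\xi\cdot Y}\phi_R(Y)$, with $Y=(x,y)-X_0-ct\xi$ and $\phi_R$ the Dirichlet principal eigenfunction of $-\Delta$ in a large ball inside $\O_0$. This is a subsolution for $c<2\sqrt{d\delta}$.

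The paper's own proof offers nothing beyond the same appeal to the references, so it does not fill this gap either. In the enhanced cone, the identity for ${\rm supp}\,\un{\Phi}$ in (b) has to be replaced by whatever support bound the actual construction provides. In Lemma \ref{a2} it is used only, through \eqref{truncated_support}, to keep ${\rm supp}\,\un{\Phi}(\cdot,\cdot,t)$ inside $\tilde{\omega}_t$.
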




\subsection{Main Results} 

For any $\vt\in\left[-\frac{\pi}{2},\frac{\pi}{2}\right]$, let $\vr^{*}_{b-1}(\vt)$ be the spreading speed of \eqref{predator sys} with $d=d_2$ and $\d=b-1$ along the direction $(\sin \vt, \cos \vt)$, determined in Lemma \ref{berestycki+2016 thm2.1}.
For the remainder of this paper, we denote $c^{*}:=2\sqrt{ d_1 }$, $\mathcal{V}:=\mathcal{W}_{b-1}\cap \ov{B}_{c^{*}}$, $(u^{*},v^{*},w^{*}):=\left(\frac{a+1}{ab+1},\frac{b-1}{ab+1},\frac{\n(b-1)}{\mu(ab+1)}\right)$, $\O_l:=\R\times(l,+\infty)$ for any $l\in (-\infty,+\infty)$, and $\ov{\O}_{l}={\O}_{l}=\R\times\R$ for $l=-\infty$. We now state our main results of this paper.
\begin{theorem}\label{main}
Let $(u,v,w)\equiv (u(x,y,t),v(x,y,t),w(x,t))$ be the solution of \eqref{prey-predator-road} emerging from $(u_{0},v_{0},w_{0})$. Assume that the initial data $(u_0,v_0,w_0)$ satisfies {\rm \ref{hy-initial}}. Then we have the following results:
\begin{enumerate}[label=\rm{(\roman*)}]
	\item\label{u die} For any $c>c^{*}$, there holds
		\begin{equation}\label{u die eq}
			\li_{ t \to \infty }\sup_{\substack{(x,y)\in\ov{\O}_0\\|(x,y)|\ge ct}} u(x,y,t)=0.
		\end{equation}
	\item\label{survive} For any $\e>0$, there holds
		\begin{equation}\label{plane converge}
			\li_{ t \to \infty }\sup_{\substack{(x,y)\in\ov{\O}_0\\ {\rm dist}\left(\frac{1}{t}(x,y),\ov{\O}_0\setminus \mathcal{V}\right)>\e}}|u(x,y,t)-u^{*}|+|v(x,y,t)-v^{*}|=0
		\end{equation}
		and
		\begin{equation}\label{v die}
		\li_{ t \to \infty }\sup_{\substack{(x,y)\in\ov{\O}_0\\{\rm dist}\left(\frac{1}{t}(x,y), \mathcal{V}\right)>\e}}v(x,y,t)=0.
		\end{equation}
		If $B_{c^*}\setminus \mathcal{V}\not= \emptyset$, we have
		\begin{equation}\label{u best}
			\li_{ t \to \infty }\sup_{\substack{(x,y)\in\ov{\O}_0\\{\rm dist}\left(\frac{1}{t}(x,y),\mathcal{V}\cup\left(\ov{\O}_0\setminus B_{c^*}\right)\right)>\e}}|u(x,y,t)-1|=0.
		\end{equation}
		
		\item \label{w}
		For any $0\leq c< \min\left\{c^{*},\vr^{*}_{b-1}\left(\frac{\pi}{2}\right)\right\}$, there holds
		\begin{equation}\label{w converges}
			\li_{ t \to \infty }\sup_{|x|\leq ct}|w(x,t)-w^{*}|=0.
		\end{equation}
		For any $c> \min\left\{c^{*},\vr^{*}_{b-1}\left(\frac{\pi}{2}\right)\right\}$, there holds
		\begin{equation}\label{w die}
		\li_{ t \to \infty }\sup_{|x|\geq ct}w(x,t)=0.
		\end{equation}
\end{enumerate}
\end{theorem}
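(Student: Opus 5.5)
Part \ref{u die} is immediate by comparison. By Lemma \ref{lem 23}, $u\le \ov{u}$, where $\ov{u}$ solves the scalar Fisher--KPP problem \eqref{u-sup-1} in the half plane with Neumann boundary condition and compactly supported data. Reflecting evenly across $y=0$ turns this into a KPP equation in $\R^2$, whose spreading speed is $2\sqrt{d_1}=c^*$. This gives \eqref{u die eq}.

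For the upper bounds on the predators, I would first compare $(v,w)$ with the solution $(\ov{v},\ov{w})$ of \eqref{v-sup-1}. Lemma \ref{berestycki+2016 thm2.1} and Remark \ref{remark26} with $\d=b-1$ then give that $v$ vanishes outside $\mathcal{W}_{b-1}$, and that $w$ vanishes for $|x|\ge ct$ with $c>\vr^*_{b-1}(\pi/2)$. To obtain the sharper region $\mathcal{V}$, I would use the cutoff from \ref{u die}: for any small $\eta>0$, for $t\ge T$ we have $u\le\eta$ outside $B_{(c^*+\eta)t}$. There the predator growth rate is $-1+b\eta<0$. So $(v,w)$ is a subsolution of a truncated road-field system with growth $\d=b-1$ inside a slowly expanding ball and $\d<0$ outside it. A supersolution should be built by combining, in each direction $\xi=(\sin\vt,\cos\vt)$, the exponential planar solution \eqref{exponential} with a decaying profile beyond speed $c^*$. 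Alternatively, one can argue by contradiction using a Lyapunov/energy bound, since predators cannot outrun the region where prey is present. For direction $\vt$ this yields speed at most $\min\{\vr^*_{b-1}(\vt),c^*\}$, which proves \eqref{v die}. Taking $\vt=\pm\pi/2$ gives \eqref{w die}, because $w$ is controlled by $v(x,0,t)$ through the road equation.

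For the lower bounds, I would first establish pointwise comparisons: $v$ is small where $u$ is small, and conversely $u$ is bounded below by a positive constant on the region where it spreads. Uniform persistence of $u$ inside $B_{(c^*-\e)t}$ would follow from a Ducrot--Jin type argument \cite{ducrot+2023a}: at points where $v$ is small, $u$ grows like KPP. Inside that ball, $u\ge u_-$ for some $u_->1/b$ on the relevant region. Hence $(v,w)$ is a supersolution of \eqref{predator sys} with $\d=bu_- -1$, close to $b-1$ after a bootstrap argument, restricted to the cone. Lemma \ref{generalized_subsolution} and Proposition \ref{uniform_epsilon} provide compactly supported generalized subsolutions moving at speed $c<\vr^*_\d(\vt)$. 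Placing them under $(v,w)$ and truncating at $c^*$ yields $\liminf v>0$ on $\{\mathrm{dist}(\cdot/t,\ov{\O}_0\setminus\mathcal{V})>\e\}$. The uniform-in-$\vt$ continuity in $\d$ from Remark \ref{monotone} makes this uniform over directions.

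Convergence to $(u^*,v^*)$ would then be shown by contradiction. Take a sequence of points in the interior region, shift along it, and pass to the limit to get an entire solution with $u,v$ bounded below. If the points stay at bounded distance from the road, the limit retains the road component. The Lyapunov functional
\[
\int \Big(u-u^*-u^*\ln\tfrac{u}{u^*}\Big)+\tfrac{a}{b}\Big(v-v^*-v^*\ln\tfrac{v}{v^*}\Big)
\]
is localized, with a road term $\frac{a}{b}\frac{\mu}{\nu}\big(w-w^*-w^*\ln\frac{w}{w^*}\big)$ chosen so that the boundary exchange terms cancel. It forces the entire solution to equal $(u^*,v^*,w^*)$. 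Since \eqref{w converges} concerns $w$, it then follows from the road equation. For \eqref{u best}, in the region inside $B_{c^*}$ but outside $\mathcal{V}$, we have $v\to0$, so $u$ is a supersolution of KPP with growth $1-\eta$; combining this with the persistence above gives $u\to1$. I expect the main obstacle to be the truncated-system upper bound: establishing that the predator speed in each direction is exactly $\min\{\vr^*_{b-1}(\vt),c^*\}$ when road-enhanced spreading is cut off by the prey front, uniformly near the road.
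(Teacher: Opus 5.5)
There is a genuine gap in your lower bound for the predators. You bound $u$ below by a constant $u_->1/b$ and compare $(v,w)$ with \eqref{predator sys} for $\d=bu_--1$. You then invoke an unspecified bootstrap to push $\d$ toward $b-1$, and no constant can achieve this. The compactly supported subsolutions of Proposition \ref{uniform_epsilon} travel at a speed $c$ strictly below the true front speed $\min\{c^*,\vr^*_{b-1}(\vt)\}$. So for large $t$ their support lies in the coexistence zone, where $u\to u^*$. Hence any admissible constant satisfies $u_-\le u^*$, that is, $\d\le bu^*-1=\frac{b-1}{ab+1}<b-1$. By the strict monotonicity of $\vr^*_\d$ in $\d$ (Remark \ref{monotone}), you only recover $\min\{c^*,\vr^*_\d(\vt)\}$. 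This falls strictly short of $\min\{c^*,\vr^*_{b-1}(\vt)\}$ in every direction with $\vr^*_\d(\vt)<c^*$. Moreover, a Ducrot--Jin argument only gives $u\ge\eta$ for some small $\eta>0$, as in Theorem \ref{persistence}, not $u\ge u_->1/b$.

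The missing idea is a \emph{state-dependent} converse comparison. For $c<c^*$ and $\s>0$ there are $M_\s,T_\s$ with $1-u\le\s+M_\s v$ for $t\ge T_\s$ and $|(x,y)|\le ct$. This is Lemma \ref{lem43}, proved by compactness using persistence of $u$ and the fact that an entire solution with $v\equiv0$ and $\inf u>0$ has $u\equiv1$. It gives
\[
\p_t v-d_2\Delta v\ge \big[-1+b(1-\s-M_\s v)\chi-v\big]v ,
\]
a KPP nonlinearity with linear rate $b(1-\s)-1\to b-1$ at $v=0$. The large values of $v$ behind the front enter only through the saturation coefficient $bM_\s+1$, not through the growth rate. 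You then still need the spreading theory for the truncated road-field system \eqref{350} (Theorem \ref{thm65}). In directions whose road-field speed exceeds the cutoff speed $c^*-\e/2$, the moving subsolution must be built with a reduced growth rate, so that its speed is just below the cutoff and its support stays where $\chi=1$. The road direction is handled by stationary subsolutions in moving frames. Your phrase ``truncating at $c^*$'' hides exactly this work.

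Conversely, the step you call the main obstacle is not one: the upper bound needs no supersolution of a truncated system. Use the estimates $v\le\s+M_\s u$ and $w\le\s+M_\s u(\cdot,0,\cdot)$ of Lemma \ref{predator die without prey}, which you list yourself among your tools. Together with part (i), they give $v,w\to0$ outside $B_{ct}$ for every $c>c^*$. Intersecting with the $\mathcal{W}_{b-1}$ bound from comparison with \eqref{39} then yields \eqref{v die} and \eqref{w die}.

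A smaller point concerns your Lyapunov functional: the road term must carry the same weight $\frac{a}{b}$ as the $v$-term, not $\frac{a}{b}\frac{\mu}{\nu}$. Using $\nu v^*=\mu w^*$, a relative weight $k$ produces exchange contributions $\frac{a}{b}(\mu w-\nu v)\big[(1-\frac{v^*}{v})-k(1-\frac{w^*}{w})\big]$. On the line $\mu w=\nu v$ the bracket equals $(1-k)(1-\frac{v^*}{v})$, so the product changes sign unless $k=1$. With $k=1$ it equals $-\frac{a}{b}\frac{w^*}{\nu vw}(\mu w-\nu v)^2$, so these terms are dissipative rather than cancelling.
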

\begin{figure}[htb]
    \centering
    \begin{subfigure}[b]{0.49\textwidth}
        \centering
        \includegraphics[width=\textwidth]{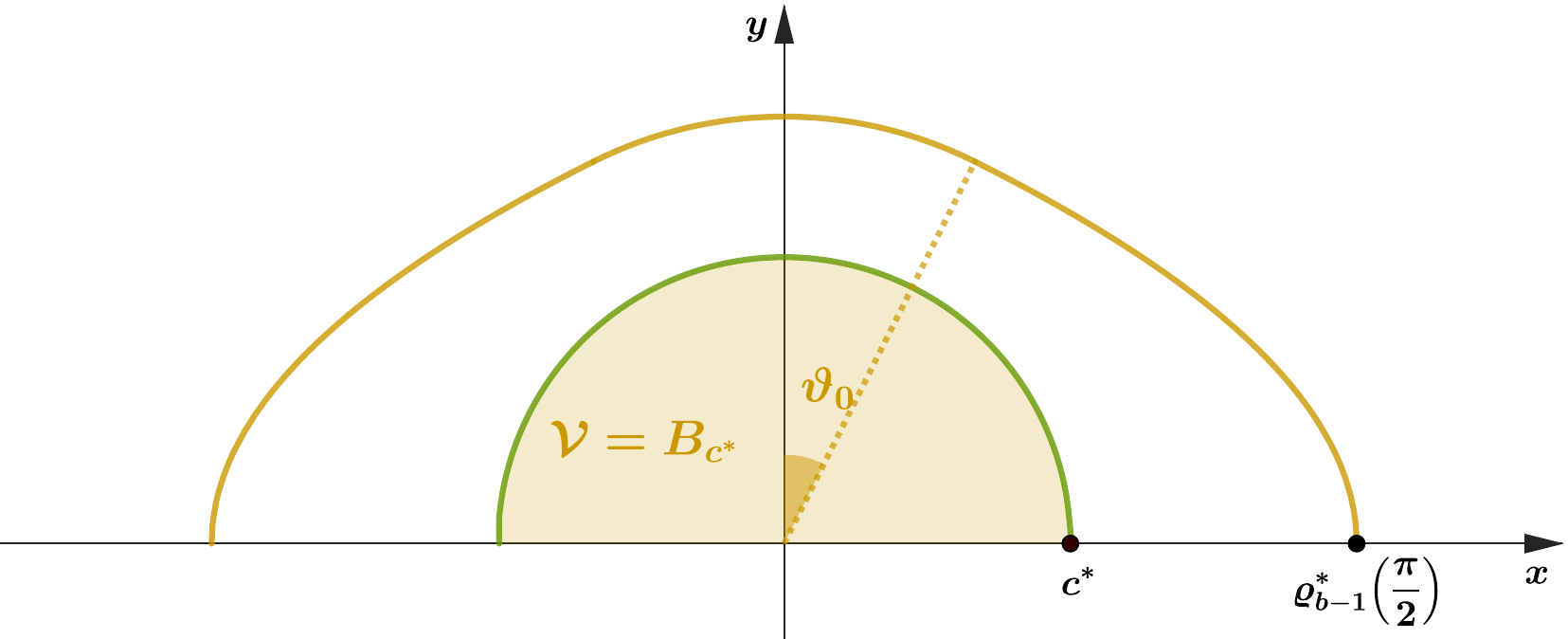}
        \caption*{\footnotesize{$c^*<\vr^*_{b-1}(0)$}}
    \end{subfigure}
    \hfill  
    \begin{subfigure}[b]{0.49\textwidth}
        \centering
        \includegraphics[width=\textwidth]{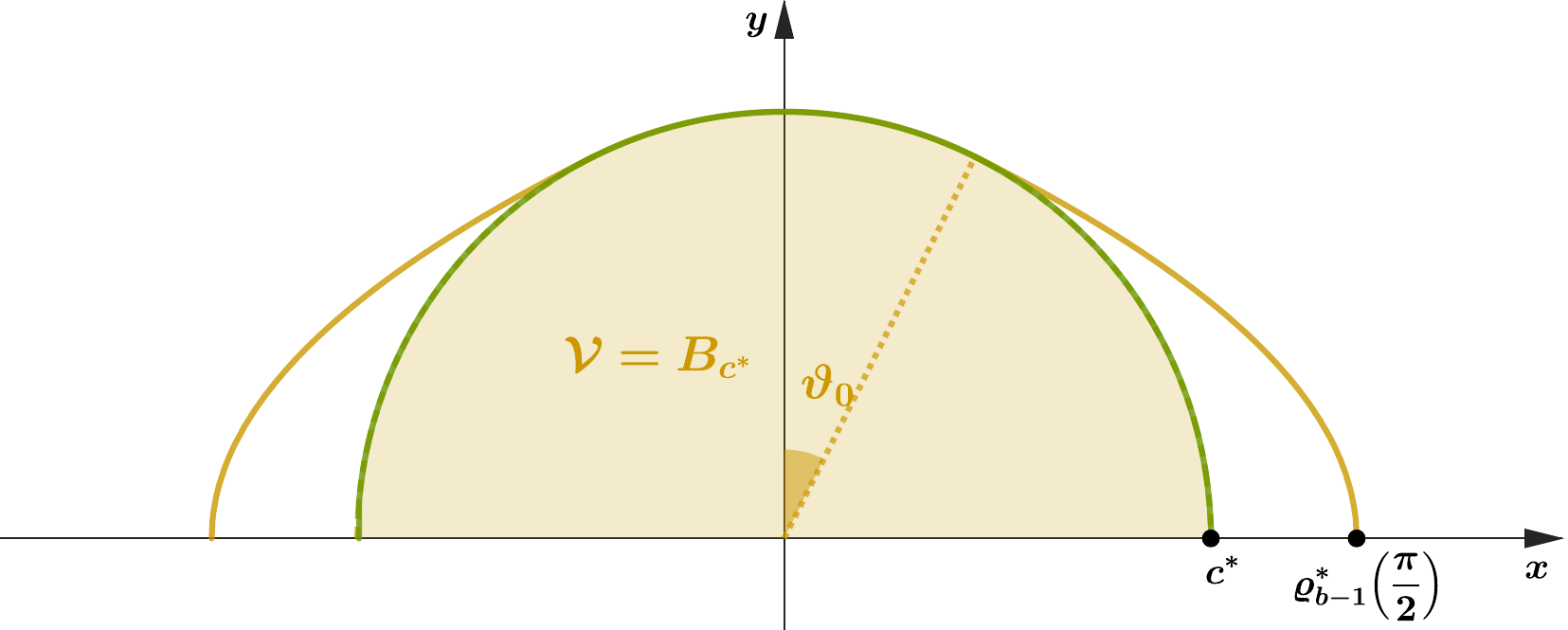}
        \caption*{\footnotesize{$c^*=\vr^*_{b-1}(0)$}}
    \end{subfigure}
    
    \vspace{\baselineskip}  
    
    \begin{subfigure}[b]{0.49\textwidth}
        \centering
        \includegraphics[width=\textwidth]{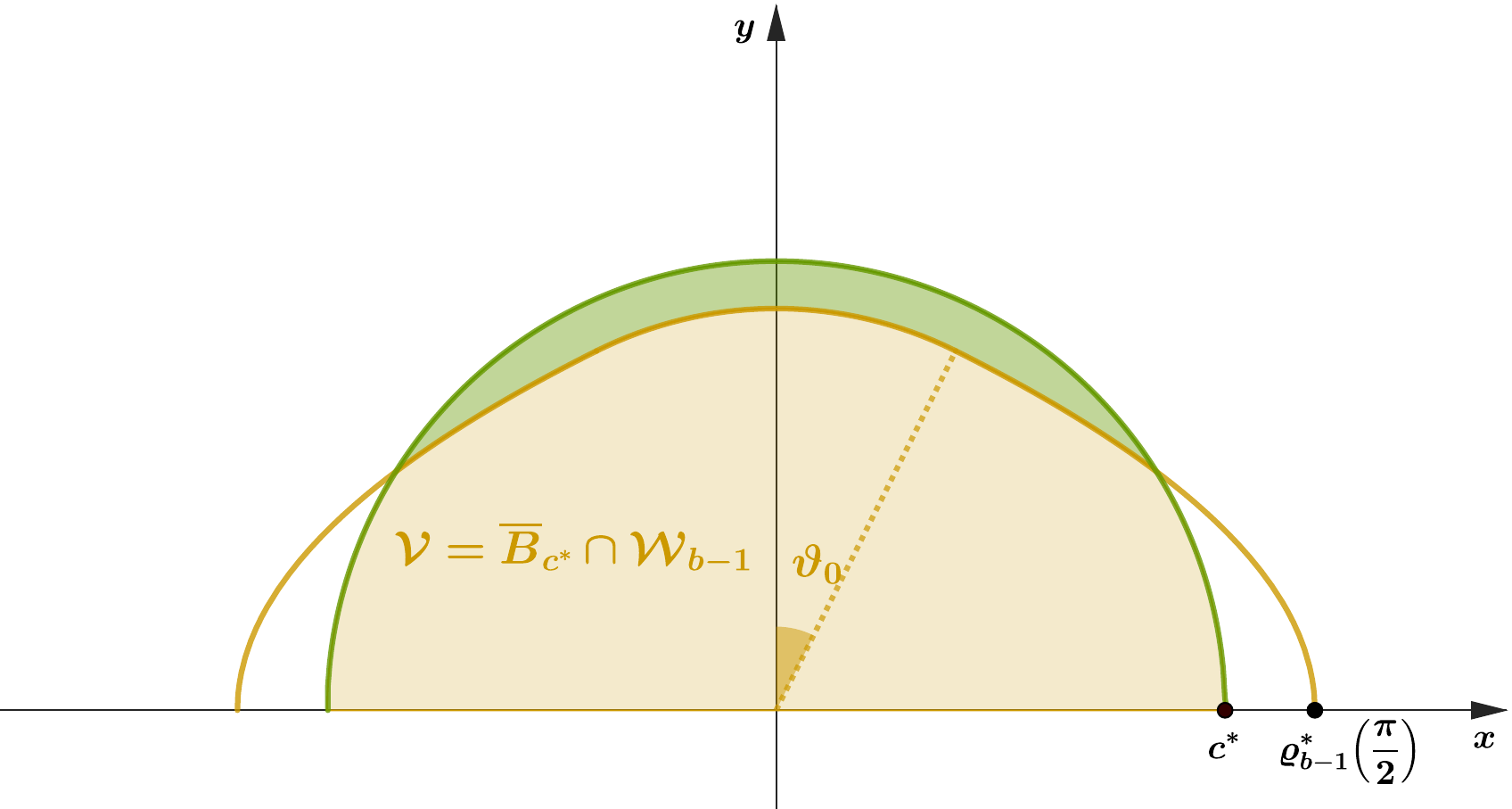}
        \caption*{\footnotesize{$\vr^*_{b-1}(0)<c^*<\vr^*_{b-1}\left(\frac{\pi}{2}\right)$}}
    \end{subfigure}
    \hfill
    \begin{subfigure}[b]{0.49\textwidth}
        \centering
        \includegraphics[width=\textwidth]{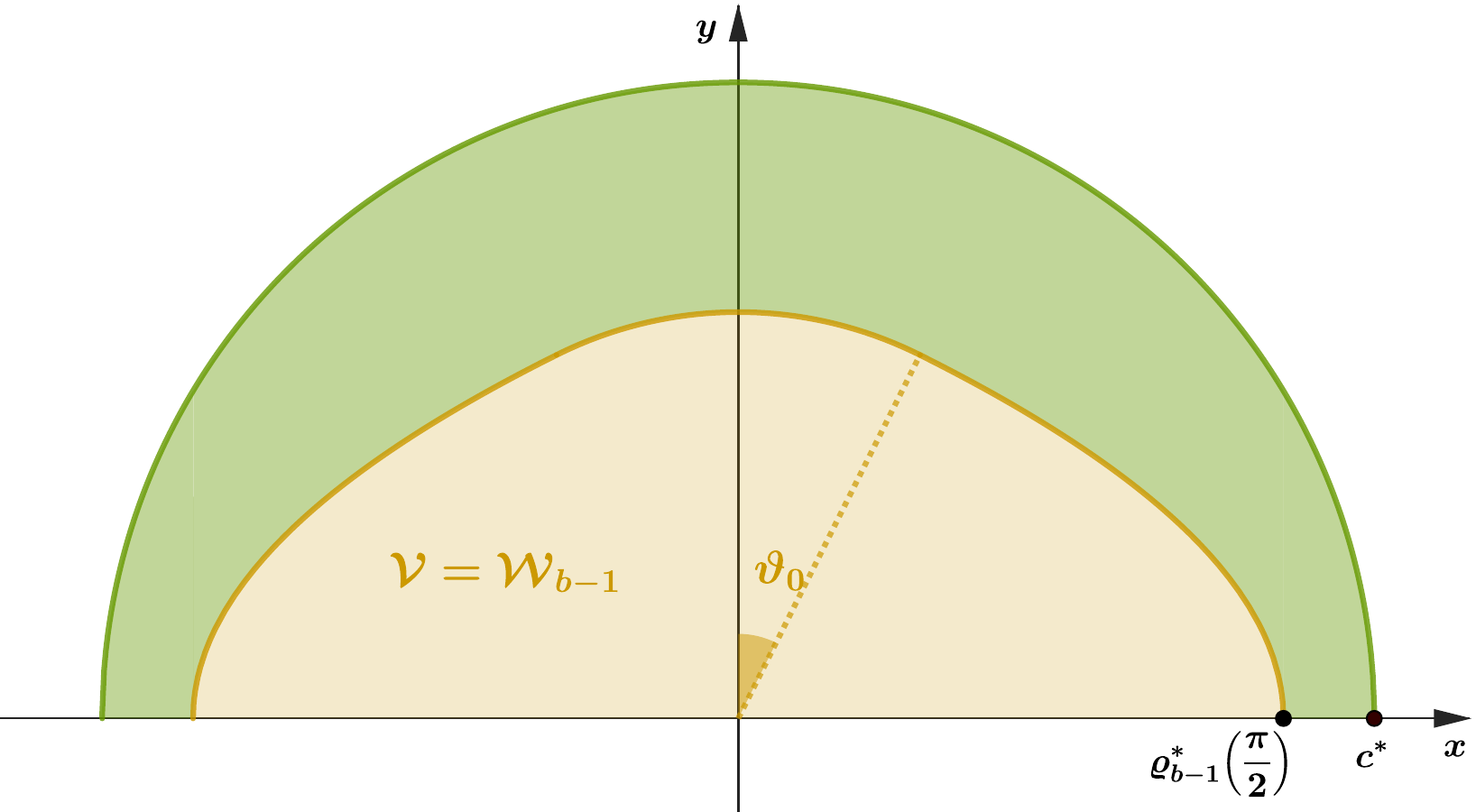}
        \caption*{\footnotesize{$c^*>\vr^*_{b-1}\left(\frac{\pi}{2}\right)$}}
    \end{subfigure}
    \caption{When the initial data satisfies \ref{hy-initial}, the {\it asymptotic expansion shape} of the prey and predators in \eqref{prey-predator-road} are $B_{c^*}$ and $\mathcal{V}$, respectively. If $B_{c^*}\setminus \mathcal{V}\not=\emptyset$ and $\frac{1}{t}(x,y)\in B_{c^*}\setminus \mathcal{V}$, $u(x,y,t)$ ultimately reaches $1$.}
     \label{fig2}
\end{figure}

Under Assumption \ref{hy-initial}, predators and the prey invade the field simultaneously (see Figure \ref{fig2}). However, if the prey initially admits a positive lower bound, the invading speed of predators is not affected by the prey.
\begin{enumerate}[label=\textbf{(H\arabic*$^\prime$)}] 
	\item\label{hy-initial-positive} $u_{0}(x,y),v_{0}(x,y)$ and $w_{0}(x)$ are uniformly continuous functions. $(v_{0}(x,y),w_{0}(x))$ is compactly supported and nontrivial. Moreover, for some fixed $\varepsilon_0>0$, we have $(\varepsilon_0,0,0)\le \left(u_{0}, v_0, w_0\right)\le \left(1,b-1, \frac{\nu}{\mu}(b-1)\right)$.
\end{enumerate}
\begin{theorem}\label{positive initial}
	Let $(u,v,w)\equiv (u(x,y,t),v(x,y,t),w(x,t))$ be the solution of \eqref{prey-predator-road} emerging from $(u_{0},v_{0},w_{0})$. Assume that the initial data $(u_0,v_0,w_0)$ satisfies {\rm \ref{hy-initial-positive}}. Then we have the following results:
	\begin{enumerate}[label=\rm{(\roman*)}]
		\item For any $\e>0$, there holds
		\begin{equation}\label{positive-coexistence}
			\li_{ t \to \infty }\sup_{\substack{(x,y)\in\ov{\O}_0\\ {\rm dist}\left(\frac{1}{t}(x,y),\ov{\O}_0\setminus \mathcal{W}_{b-1}\right)>\e}}|u(x,y,t)-u^{*}|+|v(x,y,t)-v^{*}|=0
		\end{equation}
		and
		\begin{equation}\label{positive-vdie}
			\li_{ t \to \infty }\sup_{\substack{(x,y)\in\ov{\O}_0\\{\rm dist}\left(\frac{1}{t}(x,y), \mathcal{W}_{b-1}\right)>\e}}|u(x,y,t)-1|+v(x,y,t)=0.
		\end{equation}
		\item For any $0\leq c< \vr^{*}_{b-1}\left(\frac{\pi}{2}\right)$, there holds
		\begin{equation}\label{positive-w converge}
			\li_{ t \to \infty }\sup_{|x|\leq ct}|w(x,t)-w^{*}|=0.
		\end{equation}
		For any $c>\vr^{*}_{b-1}\left(\frac{\pi}{2}\right)$, there holds
		\begin{equation}\label{positive-w die}
		\li_{ t \to \infty }\sup_{|x|\geq ct}w(x,t)=0.
		\end{equation}
	\end{enumerate}
\end{theorem}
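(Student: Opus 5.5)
The plan is to reduce Theorem \ref{positive initial} to the single-species road-field results of Lemma \ref{berestycki+2016 thm2.1} and Remark \ref{remark26}, by trapping $(v,w)$ between two road-field systems of the form \eqref{predator sys} whose growth rates $\d$ are close to $b-1$. Convergence to $(u^*,v^*,w^*)$ inside the expansion set will then come from a Lyapunov functional argument.

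\textbf{Step 1 (upper bounds).} By comparison with \eqref{u-sup-1} we have $u\le \ov{u}\le 1$. Hence $(v,w)$ is a subsolution of \eqref{v-sup-1}, which is \eqref{predator sys} with $d=d_2$, $\d=b-1$, $k=1$. Lemma \ref{berestycki+2016 thm2.1} and Remark \ref{remark26} then give $v\to0$ uniformly on $\{\mathrm{dist}(\frac1t(x,y),\mathcal{W}_{b-1})>\e\}$ and $w\to 0$ on $\{|x|\ge ct\}$ for $c>\vr^*_{b-1}(\frac{\pi}{2})$. This proves \eqref{positive-w die} and the $v$-part of \eqref{positive-vdie}. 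For the $u$-part of \eqref{positive-vdie}, fix a slightly smaller cone region where $v\le \eta$ for $t\ge T$. On that region $u$ is a supersolution of $u_t-d_1\Delta u\ge u(1-a\eta-u)$, and we use $u\ge \e_1>0$, which we justify next. A standard compactness/sub-solution argument on moving balls then yields $u\to1$ there.

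\textbf{Step 2 (uniform positivity of $u$).} Since $u_0\ge\e_0$ and $v\le b-1$, $u$ is a supersolution of $u_t-d_1\Delta u = u(1-a(b-1)-u)$. This crude bound is useless when $a(b-1)\ge1$, so I would do the following instead. First, since $v$ is eventually bounded by $\ov v\le b-1+o(1)$ and $u\le 1$, the equation for $v$ gives $\limsup v\le b-1$. Then use the pointwise comparison idea of \cite{ducrot+2023a}: wherever $u$ is small, $v$ decays at rate close to $1$, while $u$ grows at rate $\approx 1-av$. Concretely, I expect to show that $\liminf_{t\to\infty}\inf_{\ov{\O}_0}u\ge \e_1>0$ by contradiction. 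Take a sequence of points where $u\to0$ and pass to the limit of translates (using Lemma \ref{lem 23} regularity). In the limit entire solution $u\equiv0$ on a neighborhood; by the strong maximum principle $u\equiv 0$ for all earlier times, which contradicts $u\ge$ a positive lower solution on finite time intervals. To make this quantitative uniformly in time, I would use a Harnack-type inequality for $u$ together with the observation that $v$ cannot stay large where $u$ is small, since $-1+bu<0$ there.

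\textbf{Step 3 (lower bounds for predators).} Given $\e_1$, the predator pair satisfies $v_t-d_2\Delta v\ge v(-1+b\e_1-v)$, but this only gives rate $b\e_1-1$, which may be negative. So I bootstrap. On compact sets that are far behind $\mathcal{W}_{b-1}$'s complement, $v$ is small at early times near the front, so $u\ge 1-\eta$ ahead of the predator front by the $u$-part of Step 1. There $(v,w)$ is a supersolution of \eqref{predator sys} with $\d=b(1-\eta)-1$. I then place the generalized subsolutions $\kappa(\un\Phi_{\d,\vt,c},\un\Psi_{\d,\vt,c})$ of Proposition \ref{uniform_epsilon} with $c$ close to $\vr^*_{\d}(\vt)$, and let $\eta\to0$ using the monotonicity and continuity in $\d$ from Remark \ref{monotone}. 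This gives spreading of $v$ at least at speed $\vr^*_{b-1}(\vt)-\e$ in every direction, with $\liminf v>0$ uniformly in the interior cone, and likewise for $w$ on $|x|\le ct$.

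\textbf{Step 4 (convergence to coexistence).} On the inner region, where both $u$ and $v$ are uniformly positive, I take limits of translates along any sequence to obtain entire solutions defined on the whole plane or on the half plane with the road coupling. Both species are bounded below positively. I then apply the Lyapunov functional
\[
\int \Big(u-u^*-u^*\ln\frac{u}{u^*}\Big)+\frac{a}{b}\Big(v-v^*-v^*\ln\frac{v}{v^*}\Big)+\frac{a\mu}{b\nu}\Big(w-w^*-w^*\ln\frac{w}{w^*}\Big),
\]
whose weights are chosen so that the road--field exchange terms combine into a nonpositive contribution; the convexity of $s-1-\ln s$ handles this. With a local-in-space version (cut-off) and the usual LaSalle argument for entire solutions, this forces $(u,v,w)\equiv(u^*,v^*,w^*)$, which yields \eqref{positive-coexistence} and \eqref{positive-w converge}.

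The main obstacle I expect is Step 3's lower bound when $b\e_1<1$. The difficulty is coupling the prey being near $1$ ahead of the predators with the subsolution construction, since the region where $u\ge1-\eta$ is itself defined through the predator front. This requires a careful contradiction argument on the spreading speed.
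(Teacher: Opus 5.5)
Your Step 1 matches the paper, but Steps 2 and 3 do not go through as written, and they are where the paper's real argument lives.

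\textbf{Step 2.} The contradiction via limits of translates fails. Along $t_n\to\infty$ the only a priori lower bound is $u\ge\varepsilon_0e^{-a(b-1)t}$, and it degenerates. So the limit $(\tilde u,\tilde v,\tilde w)\equiv(0,0,0)$ is a legitimate entire solution, and no contradiction arises; a Harnack inequality does not help either. Your closing remark that $v$ cannot stay large where $u$ is small is the right mechanism, and it is already available as Lemma~\ref{predator die without prey}, which holds for all data satisfying \ref{h2}, hence for \ref{hy-initial-positive}. Choose $\delta<1/a$. Then $v\le\delta+M_\delta u$ for $t\ge T_\delta$. So $u(\cdot,\cdot,\cdot+T_\delta)$ is a supersolution of the Neumann problem $\partial_t\underline u-d_1\Delta\underline u=[1-a\delta-(1+aM_\delta)\underline u]\underline u$. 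Any constant below $\min\{\varepsilon_0e^{-a(b-1)T_\delta},\frac{1-a\delta}{1+aM_\delta}\}$ is a subsolution of that problem. This is Theorem~\ref{lem51}, and it gives $\inf u\ge\eta>0$ for all time. The same bound also supplies the positivity you need for the $u$-part of \eqref{positive-vdie}.

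\textbf{Step 3.} The bootstrap is circular, as you suspect, and it invokes the wrong region. Step 1 only gives $u\to1$ where $\mathrm{dist}(\frac1t(x,y),\mathcal W_{b-1})>\varepsilon$, i.e.\ outside $\mathcal W_{b-1}$. A subsolution $\kappa(\underline\Phi_{\delta,\vartheta,c},\underline\Psi_{\delta,\vartheta,c})$ with $c<\varrho^*_{b-1}(\vartheta)$ travels inside $\mathcal W_{b-1}$. At a first touching point you need $-1+bu-v\ge\delta-kv$ with $v$ equal to the small subsolution value. In other words, you need $u$ close to $1$ wherever $v$ is small, uniformly in space.

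The paper gets this from a second pointwise comparison, Lemma~\ref{lem52}. Once $\inf u\ge\eta$, a compactness argument works: a limit with $\tilde v(0,0,0)=0$ forces $\tilde v\equiv0$, and then $\tilde u\ge\eta$ solves the logistic equation, so $\tilde u\equiv1$. This yields $1-u\le\sigma+M_\sigma v$ on all of $\overline\Omega_0$ for $t\ge T_\sigma$. Hence $-1+bu-v\ge b(1-\sigma)-1-(bM_\sigma+1)v$ everywhere. So $(v,w)$ is a supersolution of \eqref{predator sys} with $d=d_2$, $\delta=b(1-\sigma)-1$, $k=bM_\sigma+1$, with no truncation and no reference to where the predator front is. Since $\varrho^*_\delta$ does not depend on $k$, Lemma~\ref{berestycki+2016 thm2.1} together with $\sigma\to0$ gives $v\ge\varpi$ on $\{\mathrm{dist}(\frac1t(x,y),\overline\Omega_0\setminus\mathcal W_{b-1})>\varepsilon\}$ (Theorem~\ref{thm43}). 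The bound for $w$ along the road follows the same way, or via Lemma~\ref{lem313}. No contradiction argument on spreading speeds is needed.

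\textbf{Step 4.} Your weights are off. With weight $\alpha$ on the $v$-term and $\beta$ on the $w$-term, the exchange terms contribute $(\nu v-\mu w)\bigl[\beta(1-\frac{w^*}{w})-\alpha(1-\frac{v^*}{v})\bigr]$. This is sign-definite only if $\beta=\alpha$; then it equals $-\alpha\frac{v^*}{\mu vw}(\nu v-\mu w)^2$, using $w^*=\frac{\nu}{\mu}v^*$. With your weights $\frac ab$ and $\frac{a\mu}{b\nu}$ it changes sign unless $\mu=\nu$. Take equal weights, as in Lemma~\ref{lem converge}.
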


\section{\texorpdfstring{Spreading Speeds for \eqref{prey-predator-road} with \ref{hy-initial}}{Spreading Speeds for (1.4) with (H1)}}\label{upper}
In this section, we first provide two key lemmas: one concerning the pointwise estimates between predators and the prey; and the other concerning the asymptotic expansion shape of a truncated system. Then, for system \eqref{prey-predator-road} satisfying \ref{hy-initial}, we estimate the upper bounds of spreading speeds. Finally, we provide lower estimates for these speeds.

\subsection{Important Lemmas}
\subsubsection{Pointwise comparisons between predators and the prey}
The following lemma states that the predator cannot survive without the prey.
\begin{lemma}\label{predator die without prey}
	Let $(u,v,w)$ be the classical solution of \eqref{prey-predator-road} with $(u,v,w)=(u_0,v_0,w_0)$ at $t=0$. Then, for any $\sigma>0$, there exist $M_{\sigma}>0$ and $T_{\sigma}>0$ such that for any $(u_0,v_0,w_0)$ satisfying {\rm \ref{h2}}, we have 
	\begin{equation}\label{v die without u}
		v(x,y,t) \le \sigma+ M_{\sigma}u(x,y,t),\quad \forall (x,y)\in \ov{\O}_0,\ t\ge T_{\sigma}
	\end{equation}
	and
	\begin{equation}\label{w die without u}
		w(x,t) \le \sigma+ M_{\sigma}u(x,0,t), \quad \forall (x,y)\in \ov{\O}_0,\ t\ge T_{\sigma}.
	\end{equation}
	
\end{lemma}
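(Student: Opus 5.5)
We argue by contradiction, using a compactness (limiting entire solution) argument in the spirit of Ducrot and Jin, together with a strong maximum principle for the prey and an exponentially decaying supersolution for the predator subsystem when no prey is present. Suppose \eqref{v die without u} fails for some $\sigma>0$. Then there are initial data $(u_0^n,v_0^n,w_0^n)$ satisfying \ref{h2}, times $t_n\to+\infty$, constants $M_n\to+\infty$ and points $(x_n,y_n)\in\ov{\O}_0$ such that the corresponding solutions satisfy
\[
v_n(x_n,y_n,t_n)>\sigma+M_nu_n(x_n,y_n,t_n).
\]
Since $v_n\le b-1$ by Lemma \ref{lem 23}, this forces $u_n(x_n,y_n,t_n)\le (b-1)/M_n\to0$, while $v_n(x_n,y_n,t_n)\ge\sigma$. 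For \eqref{w die without u} we take the analogous sequence with $w_n(x_n,t_n)>\sigma+M_nu_n(x_n,0,t_n)$ and $y_n=0$.

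\textbf{Passage to the limit.} We shift the solutions: $\tilde u_n(x,y,t)=u_n(x+x_n,y,t+t_n)$, and similarly for $\tilde v_n,\tilde w_n$. All solutions lie in the box $[0,1]\times[0,b-1]\times[0,\frac{\nu}{\mu}(b-1)]$, uniformly in $n$ and in the initial data. Hence the interior and boundary parabolic $L^p$/Schauder estimates used in Lemma \ref{lem 23}, applied on $[-t_n+1,\infty)$, give bounds that are independent of $n$. Up to a subsequence we distinguish two cases.
\begin{itemize}
\item If $y_n\to y_\infty<\infty$, then $(\tilde u_n,\tilde v_n,\tilde w_n)$ converges locally uniformly, with derivatives, to an entire-in-time solution $(u_\infty,v_\infty,w_\infty)$ of \eqref{prey-predator-road} on $\ov{\O}_0\times\R$.
\item If $y_n\to\infty$, we also shift in $y$ by $y_n$. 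The limit is then an entire solution $(u_\infty,v_\infty)$ of the predator--prey system on $\R^2\times\R$, with no road.
\end{itemize}
In both cases $u_\infty\ge0$, $u_\infty(0,y_\infty,0)=0$ (with $y_\infty=0$ in the second case), and $v_\infty(0,y_\infty,0)\ge\sigma$ (respectively $w_\infty(0,0)\ge\sigma$ in the road case).

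\textbf{The prey vanishes.} The limit $u_\infty$ satisfies the linear equation $\partial_tu_\infty-d_1\Delta u_\infty=c(x,y,t)u_\infty$ with a bounded coefficient $c$. When the road is present it also satisfies the Neumann condition $\partial_yu_\infty=0$ on $y=0$. By the strong maximum principle, together with the Hopf lemma at $y=0$, the interior-or-boundary zero forces $u_\infty\equiv0$ for all $t\le0$. Then $(v_\infty,w_\infty)$ solves, for $t\le0$, the pure-death road--field system: the field equation reads $\partial_tv-d_2\Delta v=v(-1-v)\le -v$, and the road and exchange conditions are unchanged.

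\textbf{Exponential decay of the predator.} This is the step I expect to be the main obstacle, because the exchange condition prevents the naive supersolution $(b-1)e^{-\kappa t}(1,\nu/\mu)$ from working: the road inequality and the boundary inequality then pull in opposite directions. I would instead use the ansatz
\[
\ov v=Ke^{-\kappa(t+T)}\phi(y),\qquad \ov w=Ke^{-\kappa(t+T)}\theta,\qquad \phi(y)=1+\eta e^{-y},
\]
with
\[
\theta=\frac{\nu\phi(0)}{\mu-\kappa},\qquad \eta\ \text{chosen so that}\ -d_2\phi'(0)+\nu\phi(0)\ge\mu\theta .
\]
Note that $\mu\theta-\nu\phi(0)=O(\kappa)$, so $\eta=O(\kappa)$. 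The field inequality $(1-\kappa)\phi\ge d_2\phi''$ then holds for $\kappa$ small. Taking $K$ large enough to dominate the uniform bounds at time $-T$, the comparison principle (Lemma \ref{comparison-cooperate}) gives $v_\infty,w_\infty\le CKe^{-\kappa(t+T)}$ on $[-T,0]$. Since $T$ is arbitrary, letting $T\to\infty$ yields $v_\infty\equiv w_\infty\equiv0$ at $t=0$, contradicting $\sigma>0$. In the whole-plane case the plain ODE supersolution $(b-1)e^{-(t+T)}$ suffices.

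\textbf{Uniformity.} The contradiction sequence ranged over all admissible initial data and all large times simultaneously. Therefore $M_\sigma$ and $T_\sigma$ can be chosen independent of the data, as the statement requires.
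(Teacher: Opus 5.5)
Your argument is correct. Its skeleton is the paper's: contradiction over all admissible data, compactness, the strong maximum principle plus Hopf at the Neumann boundary to get $u_\infty\equiv 0$, a time-decaying supersolution for the resulting pure-death road--field system, and sending the initial time to $-\infty$. There are two genuine differences.

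The first is the supersolution. The paper takes $\overline v=(b-1)e^{\beta_1(t-t_0)}\bigl[\tfrac{\mu}{\nu+d_2\beta_2}e^{-\beta_1x-\beta_2(y+z_0)}+1\bigr]$ and $\overline w=(b-1)e^{\beta_1(t-t_0)}\bigl(e^{-\beta_1x}+\tfrac{\nu}{\mu}\bigr)$ with $\beta_1<0$. These are planar-exponential terms added to precisely the naive pair $(1,\nu/\mu)$ you discarded. Substituting, the road inequality reads $\bigl(\beta_1-D\beta_1^2+\tfrac{\mu d_2\beta_2}{\nu+d_2\beta_2}\bigr)e^{-\beta_1x}+\tfrac{\nu\beta_1}{\mu}\ge 0$. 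The constraints imposed there only make the bracket nonnegative, so the negative constant $\nu\beta_1/\mu$ is uncompensated as $x\to-\infty$. Your diagnosis therefore applies to that construction as well. Your $x$-independent pair uses the profile $\phi(y)=1+\eta e^{-y}$ with $\eta=O(\kappa)$ and the raised road level $\theta=\nu\phi(0)/(\mu-\kappa)$. It satisfies the field, road and exchange inequalities everywhere, and it dominates the data at time $-T$ with $K=b-1$ independent of $T$. It is the sounder choice.

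The second difference is how you get \eqref{w die without u}. The paper first uses \eqref{v die without u} to get $\tilde v(0,0,0)=0$. It then applies the strong maximum principle and Hopf with the Robin condition (at a boundary zero, $\partial_y\tilde v=-\tfrac{\mu}{d_2}\tilde w\le 0$) to get $\tilde v\equiv 0$. Finally it compares $\tilde w$ with $\tfrac{\nu}{\mu}(b-1)e^{-\mu(t+t_0)}$. You instead rerun the contradiction directly with $u_\infty(0,0,0)=0$ and $w_\infty(0,0)\ge\sigma$, and let the same supersolution kill $w_\infty$. Your route is shorter and treats the two estimates symmetrically. The paper's cascade reduces the road estimate to scalar arguments once the field estimate is known, but it rests entirely on \eqref{v die without u}, hence on the supersolution step above.
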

\begin{proof}
	We first prove \eqref{v die without u} by using a contradiction argument. Suppose that there are $\left(u_{0,n},v_{0,n},w_{0,n}\right)$ satisfying \ref{h2}, $(x_{n},y_{n},t_{n})\in \overline{\Omega}_0\times(0,+\infty)$ and $\sigma_{0}>0$ such that
$$
t_{n}\to \infty \text{ as } n \to \infty\qquad \text{and}\qquad v^{n}(x_{n},y_{n},t_{n})> \sigma _{0} + n u^{n}(x_{n},y_{n},t_{n}),~ \forall n\in \mathbb{N},
$$
where $\left(u^{n},v^{n},w^{n}\right)$ is the solution of \eqref{prey-predator-road} with $\left(u^{n},v^{n},w^{n}\right)=\left(u_{0,n},v_{0,n},w_{0,n}\right)$ at $t=0$. Denote $u_{n}(x,y,t)=u^{n}(x+x_{n},y+y_{n},t+t_{n}),~v_{n}(x,y,t)=v^{n}(x+x_{n},y+y_{n},t+t_{n}),~w_{n}(x,t)=w^{n}(x+x_{n},t+t_{n})$ for any $(x,y)\in \ov{\Omega}_{-y_n}$ and $t\geq -t_n$. In the following, we proceed by dividing the problem into two alternative cases.

\vspace{1.25ex}
\noindent\textit{Case 1.} 
$\{y_{n}\}$ admits a bounded convergent subsequence $\left\{ y_{n_k} \right\}$ with $\li_{k\to\infty}y_{n_k}=z_{0}\geq 0$.
\vspace{1.25ex}
	
Note that $0\leq u^{n}(x,y,t)\leq 1,~0\leq v^{n}(x,y,t)\leq b-1,~0\leq w^{n}(x,t)\leq \dfrac{\nu}{\mu}(b-1)$ for any $(x,y)\in\ov{\O}_0$ and $t\geq 0$ (see Lemma \ref{lem 23}). By the Arzel\`{a}-Ascoli theorem, one can extract a subsequence of $\left(u_{n_k}(x,y,t),v_{n_k}(x,y,t),w_{n_k}(x,t)\right)$ (still denoted by $\left(u_{n_k}(x,y,t),v_{n_k}(x,y,t),w_{n_k}(x,t)\right)$) such that
\[
	\lim_{k\to\infty}u_{n_k}(x,y,t)=\tilde{u}(x,y,t),\quad
	\lim_{k\to\infty}v_{n_k}(x,y,t)=\tilde{v}(x,y,t)
	\]
	 in $C^{2,1}_{loc}\left(\O_{-z_0}\times\R\right)\cap C_{loc}^{1}\left(\ov{\O}_{-z_0}\times \R\right)$
and $ \displaystyle \lim_{k\to\infty}w_{n_k}(x,t)=\tilde{w}(x,t)$ 
in $C^{2,1}_{loc}(\R\times \R)$.
Therefore, we have that $\tilde{u}$ satisfies
\[
	\left\{ \begin{aligned}
		&\partial_{t}\tilde{u}-d_1\Delta \tilde{u}=\tilde{u}(1-\tilde{u}-a\tilde{v})\geq \tilde{u}(1-\tilde{u}-a(b-1)),&&\quad (x,y)\in\O_{-z_{0}},t\in\R,&\\
		&\partial_{y}\tilde{u}(x,-z_{0},t)=0,&&\quad x\in \mathbb{R},t\in \mathbb{R},&\\
		&\tilde{u}(x,y,t)\geq 0,&&\quad (x,y)\in\ov{\O}_{-z_0},t\in\R.&
	\end{aligned} \right.
\]
In addition, there holds
	$\tilde{u}(0,0,0)\leq \li_{k\to \infty}\frac{1}{n_{k}}\left( v(x_{n_{k}},y_{n_{k}},t_{n_{k}})-\sigma_{0} \right)=0$.
Then we can conclude that $\tilde{u}\equiv 0$ on $\ov{\O}_{-z_0}\times \R$. Indeed, if $z_0>0$, we reach this conclusion directly by using the strong maximum principle. If $z_0=0$ and $\tilde{u}\not\equiv 0$, the strong maximum principle implies that $\tilde{u}(x,y,t)>0$ for any $(x,y)\in\O_{-z_0},t\in\R$. It follows that $\p_y\tilde{u}(0,-z_0,0)>0$. This is a contradiction to the boundary condition $\p_y\tilde{u}(x,-z_0,t)=0$ for all $x\in\R$ and $t\in\R$.
Thus
$(\tilde{v},\tilde{w})$ satisfies
\begin{equation}\label{limit system1}
	\left\{ \begin{aligned}
		&\partial_{t}\tilde{v}-d_2\Delta \tilde{v}=\tilde{v}(-1-\tilde{v}),&&\quad (x,y)\in\O_{-z_{0}},t\in\R,&\\
		&\partial_{t} \tilde{w}-D\tilde{w}_{x x}=-\mu \tilde{w}+\n \tilde{v}(x,-z_{0},t),&&\quad x\in \mathbb{R},t\in \mathbb{R},&\\
		&-d_2\partial_{y}\tilde{v}(x,-z_{0},t)=\mu \tilde{w}(x,t)-\n \tilde{v}(x,-z_{0},t),&&\quad x\in \mathbb{R},t\in \mathbb{R}.&
	\end{aligned} \right.
\end{equation}
For any given $t_{0}<0$, we intend to find a supersolution $(\ov{v},\ov{w})$ of the above system of the form
\[
	\begin{aligned}
		& \ov{v}= (b-1) e^{ \beta_{1}(t-t_{0}) }\left[\frac{\m}{\n+d_2\b_{2}} e^{ -\beta_{1}x-\beta_{2}(y+z_{0}) }+1\right],\vspace{1ex}\\
		& \ov{w}= (b-1) e^{ \beta_{1}(t-t_{0}) } \left(e^{ -\beta_{1}x }+\frac{\n}{\mu} \right),
	\end{aligned}
	\qquad (x,y)\in\ov{\O}_{-z_0},\, t\geq t_0,
\]
where $\beta_{1}<0$ and $\beta_{2}>-\frac{\nu}{d_2}$ are constants that will be chosen appropriately. Substituting these exponential functions into \eqref{limit system1}, it is sufficient to take $\b_1,\b_2$ satisfying
\begin{equation}\label{v die upper}
	 d_2\left(\beta_{1}^{2}+\beta_{2}^{2}\right)-\beta_{1}\leq 1,\quad 
		 D\beta_{1}^{2}-\beta_{1}\leq \frac{\m d_2\b_{2}}{\n+d_2\b_{2}},\quad
	 \beta_1<0,\quad \beta_2>- \frac{\nu}{d_2}.
\end{equation}
In the $\beta_{2}\beta_{1}$-plane, $d_2(\beta_{1}^{2}+\beta_{2}^{2})-\beta_{1}\leq 1$ is a closed disc of radius $r:=\frac{\sqrt{4d_2+1}}{2d_2}$ and centered at $C:=\left(0,\frac{1}{2d_2}\right)$,
and $D\beta_{1}^{2}-\beta_{1}\leq \frac{\m d_2\b_{2}}{\n+d_2\b_{2}}$ with $\beta_2>- \frac{\nu}{d_2}$ represents the closed region to the right of a parabolic-like curve for $\beta_{2}\geq \beta_0:=-\frac{\n}{d_2(1+4D\m)}$. Denote the disc and the region determined by \eqref{v die upper} as $\Sigma$ and $\Gamma$. Observe that $r> \frac{1}{2d_2}=|C|$ and the boundary of $\Gamma$ passes through the origin, we have $\Sigma\cap \Gamma\cap\{(\b_2,\b_1):\b_1<0,\, \beta_{2}>-\frac{\nu}{d_2}\}\not= \emptyset$, see Figure \ref{fig3}. 
\begin{figure}[!ht]
	\centering
		\centering
		\includegraphics[width=0.5\linewidth]{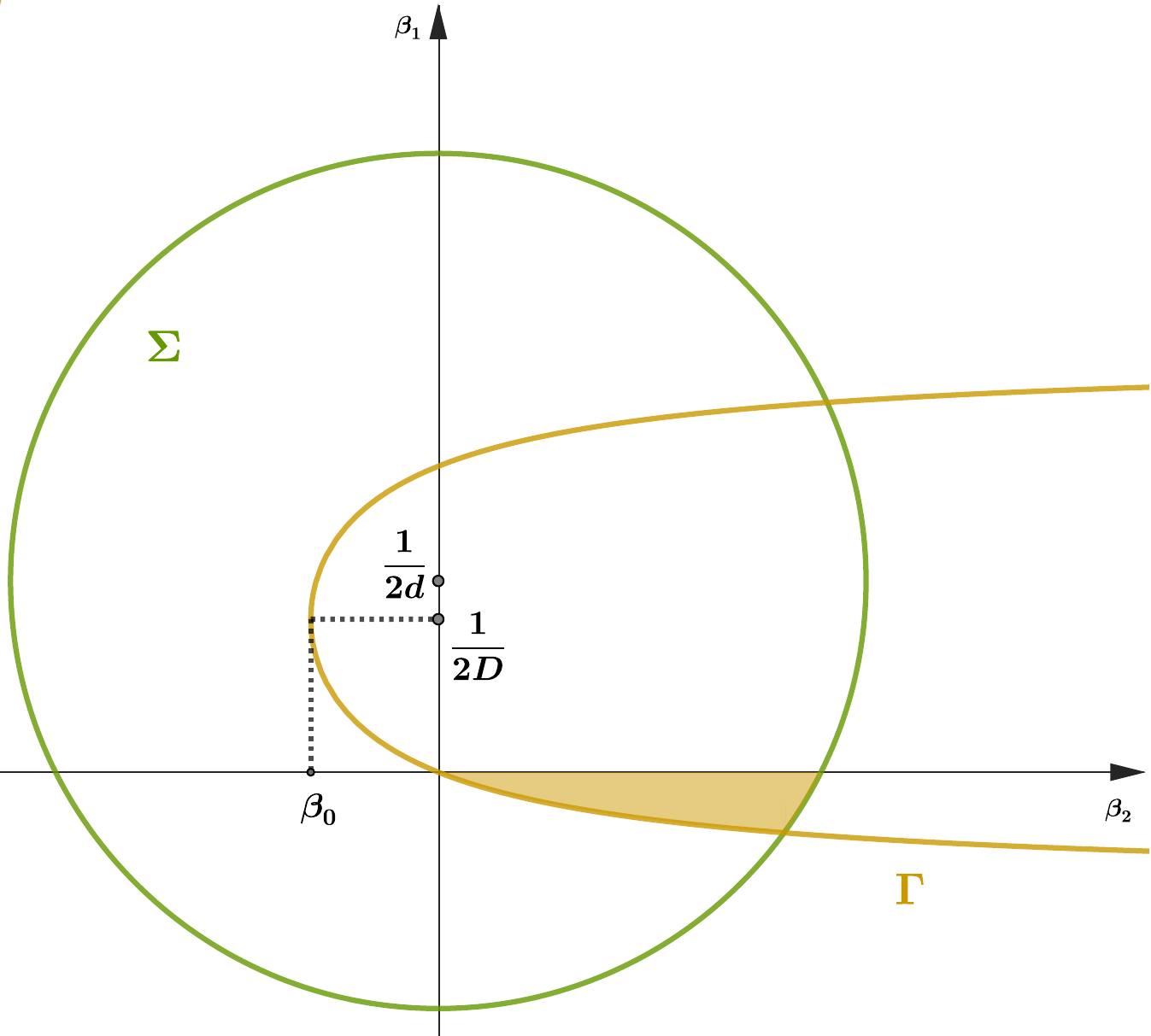}
	\caption{The shaded region corresponds to $\Sigma\cap \Gamma\cap\{(\b_2,\b_1):\b_1<0,\, \beta_{2}>-\frac{\nu}{d_2}\}$, points lying in this region satisfy \eqref{v die upper}.}
	\label{fig3}
\end{figure}

From the construction of $\left(\ov{v},\ov{w}\right)$ and Lemma \ref{lem 23}, one has $\ov{v}(x,y,t_{0})>  b-1\geq\tilde{v}(x,y,t_{0})$ for all $(x,y)\in \ov{\O}_{-z_0}$ and $\ov{w}(x,t_{0})>\frac{\n}{\mu} (b-1)\geq \tilde{w}(x,t_{0})$ for any $x\in\R$. Then the comparison principle for \eqref{limit system1} (\cite[Proposition 3.2]{berestycki+2013}) yields that $\tilde{v}(x,y,t)< \ov{v}(x,y,t)$ and $\tilde{w}(x,t)< \ov{w}(x,t)$ for all $(x,y)\in \ov{\O}_{-z_0},t\geq t_0$. In particular, $\tilde{v}(x,y,0)< \ov{v}(x,y,0)= (b-1)e^{ -\beta_{1}t_{0}}\left[\frac{\m}{\n+d_2\b_{2}} e^{ -\beta_{1}x-\beta_{2}(y+z_{0}) }+1\right]$. Letting $t_0\to-\infty$, we have
\[
\tilde{v}(x,y,0)\leq \li_{ t_{0} \to -\infty } \ov{v}(x,y,0)=0\qquad \text{ locally uniformly in } (x,y)\in\ov{\O}_{-z_0},
\]
which contradicts
\begin{equation}\label{218}
	\tilde{v}(0,0,0)= \li_{ k \to \infty } v^{n_k}(x_{n_{k}},y_{n_{k}},t_{n_{k}})\ge \sigma_{0}>0.
\end{equation}

	\vspace{1.25ex}
	\noindent\textit{Case 2.} 
$\li_{n\to \infty}y_{n}=+\infty$.
	\vspace{1.25ex}
	
Up to a subsequence, we have a triplet $\left(\tilde{u}(x,y,t),\tilde{v}(x,y,t),\tilde{w}(x,t)\right)$ such that $\li_{n\to \infty}u_{n}(x,y,t)= \tilde{u}(x,y,t)$, $\li_{n\to \infty}v_{n}(x,y,t)=\tilde{v}(x,y,t)$ uniformly in $C^{2,1}_{loc}(\R^2\times\R)$ and $\li_{n\to \infty}w_{n}(x,t)=\tilde{w}(x,t)$ uniformly in $C^{2,1}_{loc}(\R\times\R)$. Similarly to {\it Case 1}, we have $\tilde{u}\equiv 0$. Hence $\tilde{v}$ is an entire solution of the equation
\begin{equation}\label{limit system}
\partial_{t}\tilde{v}-d_2\Delta \tilde{v}=\tilde{v}(-1-\tilde{v}), \qquad (x,y)\in\R^{2},t\in \mathbb{R}.
\end{equation}
For any $t_0<0$, we take $\ov{v}(t)= (b-1)e^{-(t-t_{0})}$. Then $\ov{v}(t_0)= b-1\geq\tilde{v}(x,y,t_0)$, and thus $\ov{v}(t)\geq\tilde{v}(x,y,t)$ for all $(x,y,t)\in \R^2\times[t_0,+\infty)$. Since $\li_{t_0\to -\infty}\ov{v}(t)= 0$ locally uniformly in $t\in\R$, we have $\tilde{v}(0,0,0)\leq\displaystyle \lim_{t_0\to-\infty}\ov{v}(0)=0$ and get a contradiction with \eqref{218} again.

Next, we prove \eqref{w die without u}. Suppose by contradiction again that there exist $\sigma_{0}\in\left(0,\frac{\nu}{\mu}(b-1)\right)$, $\left(u_{0,n},v_{0,n},w_{0,n}\right)$ satisfying \ref{h2} and $x_{n}, t_{n}$ such that
\begin{equation}\label{37}
	\lim_{n \to \infty}t_{n}=+\infty,\qquad w^{n}(x_{n},t_{n})> \sigma_{0}+n u^{n}(x_{n},0,t_{n}), \quad \forall n\in \mathbb{N}.
\end{equation}
Then $\displaystyle \lim_{n \to \infty}u^{n}(x_n,0,t_n)\leq \lim_{n \to \infty}\frac{1}{n}\left(\frac{\nu}{\mu}(b-1)-\s_0\right)=0$. Therefore, applying \eqref{v die without u} yields that for any $\e>0$, there exists $M_{\e}>0$ such that
	$\displaystyle \limsup_{n \to \infty}v^{n}(x_n,0,t_n)\leq \e+M_{\e}\lim_{n \to \infty}u^{n}(x_n,0,t_n)=\e$.
It implies that 
\begin{equation}\label{38}
	\displaystyle \lim_{n \to \infty}v^{n}(x_n,0,t_n)=0
\end{equation}
by the arbitrariness of $\e>0$.
	By Arzel\`{a}-Ascoli theorem again, we have that there exist functions $\tilde u,\tilde v\in C^{2,1}\left(\O_0\times\R\right)\cap C^1\left(\ov{\O}_0\times\R\right)$, $\tilde w\in C^{2,1}(\R\times \R)$ and subsequences of $\{x_n\}$, $\{t_n\}$ and $\left\{(u_{0,n},v_{0,n},w_{0,n})\right\}$ (still denoted by $\{x_n\}$, $\{t_n\}$ and $\left\{(u_{0,n},v_{0,n},w_{0,n})\right\}$) such that
\begin{equation*}
	\li_{n\to\infty} u^{n}(x+x_{n},y,t+t_{n})=\tilde{u}(x,y,t),\qquad
	\li_{n\to\infty} v^{n}(x+x_{n},y,t+t_{n})=\tilde{v}(x,y,t)
\end{equation*}
	in $C_{loc}^{2,1}(\O_0\times\R)\cap C^1_{loc}\left(\ov{\O}_0\times\R\right)$, and
$ \li_{n\to\infty} w^{n}(x+x_{n},t+t_{n}) = \tilde{w}(x,t)$
 in $C_{loc}^{2,1}(\R\times\R)$. Hence, $(\tilde{u},\tilde{v},\tilde{w})$ is an entire solution of \eqref{prey-predator-road}. On the other hand, \eqref{38} implies that $\tilde{v}(0,0,0)=0$ and thereby
\begin{equation}\label{trivial}
	\tilde{v}(x,y,t)\equiv 0,\qquad \forall (x,y)\in\ov{\O}_0,t\in\R.
\end{equation}
If \eqref{trivial} is not valid, then the maximum principle yields $\tilde{v}(x,y,t)>0,~\forall (x,y,t)\in\O_0\times \R$. We are led to $\p_y\tilde{v}(0,0,0)>0$, which contradicts $\p_{y}\tilde{v}(0,0,0)=-\frac{\mu}{d_2}\tilde{w}(0,0)\leq 0$. Therefore, $\tilde{w}$ satisfies
\[
	\partial_{t}\tilde{w}-D \partial_{x x}\tilde{w}=-\mu \tilde{w},\qquad \forall (x,t)\in \mathbb{R}^2.
\]
For any $t_{0}>0$, $\ov{w}(t)= \frac{\n}{\mu}(b-1)e^{ -\mu(t+t_{0}) }$ is a supersolution of the above equation. Note that $0\leq \tilde w\leq\frac{\nu}{\mu}(b-1)=\ov{w}(-t_0)$, thus from the comparison principle, one has
$\tilde{w}(x,t)\le \overline{w}(t)$  for all $x\in \mathbb{R}$ and $t\ge -t_{0}$.
Letting $t_0\to +\infty$, we get $\tilde{w}(x,0)\equiv 0$.
While by \eqref{37}, we have $\tilde{w}(0,0)\geq\sigma_{0}>0$. There is a contradiction and thus we have completed the proof.
\end{proof}
\subsubsection{A truncated system}\label{truncated_sys}
Take $d>0,\, D>0,\, M_1>1,\, M_2>0,\, T_0> 0$ and $\vs_{*}>0$. For any $0\leq\rho<M_1-1$ and $\vt\in\left[-\frac{\pi}{2},\frac{\pi}{2}\right]$, the spreading velocity of the system
\begin{equation}\label{61}
	\left\{
		\begin{aligned}
			&\p_tv-d\D v=\left[M_1-1-\rho-(1+M_2)v\right]v,&& \quad(x,y)\in\O_{0},t>T_0, &\\
			&\p_t w-D \p_{xx}w=-\m w+\n v(x,0,t),&&\quad x\in\R,t>T_0, &\\
			& -d\p_y v(x,0,t)=\m w-\n v(x,0,t),&&\quad x\in\R,t>T_0&
			\end{aligned}
		\right.
\end{equation}
with nonnegative, bounded, compactly supported and nontrivial initial data $(v(x,y,T_0),w(x,T_0))$ in direction $(\sin\vt,\cos\vt)$ is determined by its linearization at $(0,0)$. We denote this speed by $\vs_{\rho}(\vt)$.
By Lemma \ref{berestycki+2016 thm2.1} and Remark \ref{monotone}, we konw that $\vs_{\rho}(\cdot)\geq \vs_{\rho}(0)=2\sqrt{d(M_1-1-\rho)}$ on $\left[-\frac{\pi}{2},\frac{\pi}{2}\right]$ and that for any $\vt\in\left[-\frac{\pi}{2},\frac{\pi}{2}\right]$, $\vs_{\rho}(\vt)$ is decreasing in $\rho$. For $t>T_0$ and $0<m<\vs_{*}T_0$, call
\[
	\o_{t}:=\left\{(x,y)\in \ov{\O}_{0}:
		(x,y)\cdot(\sin\vt,\cos\vt)\leq \vs_{*}t,\quad\forall \vt\in\left[-\frac{\pi}{2},\frac{\pi}{2}\right]
	\right\}
\]
and
\[
	\tilde{\o}_{t}:=\left\{(x,y)\in\o_{t}:
		(x,y)\cdot(\sin\vt,\cos\vt)\leq \vs_{*}t-m,\quad\forall \vt\in\left[-\frac{\pi}{2},\frac{\pi}{2}\right]
	\right\}.
\]
Define $\chi:\O_{0}\times(T_0,+\infty)\mapsto [0,1]$ as
	$
		\chi(x,y,t)=\psi\left(|(x,y)|-\vs_{*}t+m\right),
	$
	where $\psi:\R\to[0,1]$ is a smooth function satisfying
	\[
		\psi(s)=\left\{
			\begin{aligned}
			 	&1,&&s\leq 0,&\\
				&0,&&s\geq m,&
			\end{aligned}
		\right.
		\qquad \text{and} \qquad
		\psi^{\prime}(\cdot)\leq 0.
	\]
 Then $\chi(x,y,t)$  is a smooth function satisfying the following properties:
 \begin{itemize}
	\item For any $t>T_0$,
	\[
		\chi(x,y,t)=\left\{
			\begin{aligned}
				 & 1, && (x,y)\in\tilde{\o}_{t},\\
				 & 0, &&(x,y)\in\O_{0}\setminus\o_{t}.
			\end{aligned}
		\right.
	\]
	\item For all $(x,y)\in\O_{0}$, $\chi(x,y,\cdot)$ is nondecreasing in $(T_0,+\infty)$.
	\item For all $y>0, t>T_0$, $\chi(\cdot,y,t)$ is symmetric about the vertical axis.
 \end{itemize}

Define $c(\vt)=\min\left\{ \vs_{*},\vs_{0}(\vt) \right\}$. In this subsection, we present a brief proof that $c(\vt)$ is a lower bound for the spreading speed in the direction $(\sin\vt,\cos\vt)$ of the following truncated system
\begin{equation}\label{62}
\left\{
	\begin{aligned}
		&\p_tv-d\D v=\big[-1+(M_1-M_2 v)\chi-v\big]v,&& \quad(x,y)\in\O_{0},t>T_0, &\\
		&\p_t w-D \p_{xx}w=-\m w+\n v(x,0,t),&&\quad x\in\R,t>T_0, &\\
		& -d\p_y v(x,0,t)=\m w-\n v(x,0,t),&&\quad x\in\R,t>T_0,&
	\end{aligned}
\right.
\end{equation}
under the following assumption:
\begin{enumerate}[label=\textbf{(A\arabic*)}]
	\item\label{a1} $(v(x,y,T_0),w(x,T_0))$ is nonnegative, bounded and nontrivial.
\end{enumerate}

Fix $p,\, q,\, s\in\mathbb{R}$, consider the following system:
	\begin{equation}\label{sub-drift}
		\left\{
			\begin{aligned}
				&\p_tv-d\D v+p\p_x v=\big[-1+(M_1-M_2 v)\chi(x-st,y,t)-v\big]v,&& (x,y)\in\O_{0},t>T_0, &\\
				&\p_t w-D \p_{xx}w+q\p_x w=-\m w+\n v(x,0,t),&&x\in\R,t>T_0, &\\
				& -d\p_y v(x,0,t)=\m w(x,t)-\n v(x,0,t),&&x\in\R,t>T_0,&
			\end{aligned}
		\right.
	\end{equation}
 
\begin{definition}\label{truncated-generalized-sub}
	A pair $(\un{v},\un{w})$ {\rm (resp. $(\ov{v},\ov{w})$)} is a generalized subsolution {\rm (resp. supersolution)} of \eqref{sub-drift}
	if $\un{v},\, \un{w}$ {\rm (resp. $\ov{v},\, \ov{w}$)} are continuous and satisfy the following properties:
	\begin{enumerate}[label=\rm{(\roman*)}]
		\item for any $x\in\mathbb{R},\, t>T_0$, there is a function $w$ such that $w\leq\underline{w}$ {\rm (resp. $w\geq\overline{w}$)} in a neighbourhood of $(x,t)$ and, at $(x,t)$ {\rm (}in the classical sense{\rm )},
		\[
			w=\underline{w}\, (w=\ov{w}),\qquad\partial_tw-D\partial_{xx}w+q\p_x w+\mu w\leq\nu\underline{v}(x,0,t)\,(\geq\nu\overline{v}(x,0,t));		
		\]
          \item for any $(x,y)\in\ov{\O}_{0},t>T_0$, there is a function $v$ such that $v\leq\underline v$ {\rm (resp. $v\geq\ov v$)} in a neighbourhood of $(x,y,t)$ and, at $(x,y,t)$,
          \[
		  	\begin{aligned}
		 		&v=\underline{v}\, ({\rm resp.}~v=\overline{v}),\\
				&\left\{\begin{aligned}
					&\partial_tv-d\Delta v+p\p_x v\leq (\geq)\big[-1+(M_1-M_2 v)\chi(x-st,y,t)-v\big]v, && \mathrm{~if~}y>0,\\
					&-d\partial_y v(x,0,t)+\nu v(x,0,t)\leq\mu\underline{w}\, (\geq\mu\overline{w}), && \mathrm{~if~}y=0.
				\end{aligned}
				\right.
		  	\end{aligned}
          \]
	\end{enumerate}
\end{definition}

The comparison principle, well-posedness and generalized comparison principle for \eqref{sub-drift} hold. Namely, we have the following three lemmas.

\begin{lemma}\label{lem62}
	Let $(\un{v},\un{w})$ and $(\ov{v},\ov{w})$ be respectively a generalized subsolution and a generalized supersolution  of \eqref{sub-drift} satisfying $(\un{v},\un{w})\left|_{t=T_0}\leq (\ov{v},\ov{w})\right|_{t=T_0}$. In addition, $(\un{v},\un{w})\geq (0,0)$, $(\ov{v},\ov{w})\geq (0,0)$ and $(\un{v},\un{w})$ is bounded from above. Then $\left(\ov{v},\ov{w}\right)\geq\left(\un{v},\un{w}\right)$ for all $t>T_0$.
\end{lemma}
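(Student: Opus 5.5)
We argue as in the comparison principle of \cite[Proposition 3.2]{berestycki+2013}, adapted to generalized sub/supersolutions as in \cite[Proposition 4.3]{berestycki+2016}. The plan is to make the problem linear through an exponential change of variables and then argue by contradiction at a first touching point.

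First we reduce to a setting where the difference is bounded. Since $(\un v,\un w)$ is bounded and nonnegative, we may replace $(\ov v,\ov w)$ by $\bigl(\min\{\ov v,K\},\min\{\ov w,\tfrac{\nu}{\mu}K\}\bigr)$ for a large constant $K$. For $K\ge \max\{M_1, \sup\un v, \tfrac{\mu}{\nu}\sup \un w\}$ the constant pair $(K,\tfrac{\nu}{\mu}K)$ is a supersolution. Indeed $[-1+(M_1-M_2K)\chi-K]K\le 0$, the road equation balances exactly, and so does the boundary exchange condition. The minimum of two generalized supersolutions is again a generalized supersolution, because the touching-function definition is stable under minima. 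So we may assume both pairs are bounded.

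Next we fix $T>T_0$, set $\lambda$ larger than the Lipschitz constant $L$ of $g(x,y,t,v):=[-1+(M_1-M_2v)\chi(x-st,y,t)-v]v$ on $[0,K]$, and introduce
\[
V:=e^{-\lambda t}(\un v-\ov v)\,h(x,y),\qquad W:=e^{-\lambda t}(\un w-\ov w)\,h(x,0),
\]
with the weight $h(x,y)=(1+x^2+y^2)^{-1}$. This weight forces $V,W\to0$ at infinity while changing the operators only by bounded lower-order terms. We then suppose by contradiction that $\max\{\sup V^+,\ \tfrac{\mu}{\nu}\sup W^+\}>0$ on $[T_0,T]$, and pick the first time, at a point, where this positive maximum is reached.

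The contradiction splits into three cases according to where the maximum sits.
\begin{enumerate}[label=\rm{(\roman*)}]
\item The maximum is attained by $V$ at an interior point $y>0$. We take the smooth test functions $v\le\un v$ and $v'\ge\ov v$ that touch at that point, as supplied by Definition \ref{truncated-generalized-sub}. Subtracting their differential inequalities, the drift and the weight-derivative terms are $O(1)\cdot V$ with constants independent of $\lambda$. This gives $0\le \partial_t V-d\Delta V+\dots\le (L+C-\lambda)V<0$, a contradiction.
\item The maximum is attained by $V$ at a boundary point $y=0$. We use $-d\partial_y(\un v-\ov v)+\nu(\un v-\ov v)\le\mu(\un w-\ov w)$. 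Because of the normalization $\mu W\le \nu V$ at the maximum (with the $h$ factors matched at $y=0$), we get $-d\partial_yV\le0$ up to an $O(1)V$ term coming from $\partial_y h$. Then, as in \cite{berestycki+2013}, a Hopf-type argument applies. Alternatively, we replace $h$ by $h\,(1+\eta y)$ with small $\eta$ so that the boundary inequality becomes strict. Either way this is incompatible with $V$ having a maximum there.
\item The maximum is attained by $W$. The road equation at the maximum gives $0\le \partial_tW-D\partial_{xx}W+q\partial_xW\le -(\lambda+\mu-C)W+\nu V(x,0,t)$. Combined with $\nu V\le\mu W$, this yields $0\le(C-\lambda)W<0$.
\end{enumerate}

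The main obstacle is the boundary case (ii), where the two components interact. The key is choosing the relative weighting $\tfrac{\mu}{\nu}$ between $V$ and $W$ and the $y$-weight so that the exchange terms cancel with a strict sign. We first try the weight $(1+\eta y)$. Since $T$ is arbitrary, obtaining the contradiction in all three cases yields $(\un v,\un w)\le(\ov v,\ov w)$ for all $t>T_0$.
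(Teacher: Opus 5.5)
Your argument is correct. It is the same first-contact maximum-principle argument the paper has in mind. The paper omits the proof and refers to \cite[Propositions 3.1, 3.2]{berestycki+2013} and \cite[Proposition 4.3]{berestycki+2016}, and it carries out the classical analogue in Lemma \ref{comparison-cooperate}. In the generalized setting, the test functions of Definition \ref{truncated-generalized-sub} replace derivatives at the contact point, exactly as you do.

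The technical device differs. In the paper-style argument one multiplies by $e^{-\widetilde L t}$ and adds a perturbation $\e(\Lambda(x)+\Lambda(y)+t+1)$ that blows up at infinity, with the factor $\frac{\nu}{\mu}$ on the road so the exchange condition is preserved. One then splits according to which component touches first: the road case is contradicted by the strict road inequality, and the field-boundary case occurs only while the road component is still strictly separated, so the exchange condition gives a strict sign directly. You instead use a decaying weight and a single joint maximum normalized by $\mu/\nu$. This packages the coupling into one quantity, at the cost of tracking the weight-derivative terms, which stay bounded relative to $V$ and $W$ as you say.

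Two small remarks:
\begin{enumerate}
\item The truncation by $K$ is unnecessary, since only $\un v-\ov v\le\sup\un v$ is ever used.
\item In case (ii) no Hopf-type argument is needed, and one would not be readily available for merely continuous generalized solutions anyway. Indeed $\partial_y h(x,0)=0$. For the touching combination $\phi=e^{-\lambda t}(v_1-v_2)h$ built from the test functions, $\partial_y\phi\le 0$ at the boundary maximum, and the two boundary inequalities then give $\nu V\le\mu W$ at that point. Hence $\frac{\mu}{\nu}W$ also attains the joint maximum there, and your case (iii) yields the contradiction. Your $(1+\eta y)$ weight also works, with any fixed $\eta>0$ chosen before $\lambda$.
\end{enumerate}
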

\begin{lemma}\label{lem33}
	Let nonnegative pairs $\left(\ov{v},\ov{w}\right)$ and $\left(\un{v},\un{w}\right)$ be respectively a supersolution and a subsolution of \eqref{sub-drift} satisfying $\left(\ov{v},\ov{w}\right)\geq \left(\un{v},\un{w}\right)$ at $t=T_0$. Assume further that $(\un{v},\un{w})$ is bounded from above. Then
	\begin{equation}\label{comparison-trun}
		\left(\ov{v},\ov{w}\right)\geq \left(\un{v},\un{w}\right), \qquad \forall\, (x,y)\in\ov{\O}_{0},\, t\geq T_0.
	\end{equation}
	And either $(\un{v},\un{w})<(\ov{v},\ov{w})$ for all $t>T_0$ or there exists $T>T_0$ such that $(\un{v},\un{w})=(\ov{v},\ov{w})$ for $t\in[T_0,T]$.
\end{lemma}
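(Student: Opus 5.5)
The statement to prove is Lemma~\ref{lem33}: the comparison principle \eqref{comparison-trun} for \eqref{sub-drift}, together with the strong-comparison dichotomy. The plan is to derive the weak comparison from Lemma~\ref{lem62}, and then obtain the dichotomy from the strong maximum principle and the Hopf lemma applied to the linear system satisfied by the differences.

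\textbf{Weak comparison.} A classical supersolution (resp.\ subsolution) is in particular a generalized one: in Definition~\ref{truncated-generalized-sub} take the test function equal to the function itself. Hence Lemma~\ref{lem62} applies and gives \eqref{comparison-trun} directly.

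\textbf{Linear system for the differences.} Set $V:=\ov v-\un v\ge 0$ and $W:=\ov w-\un w\ge 0$. Write $F(v,x,y,t):=[-1+(M_1-M_2v)\chi(x-st,y,t)-v]v$. This $F$ is smooth in $v$, and since $\chi\in[0,1]$ and both pairs are bounded on compact time intervals, it is locally Lipschitz in $v$ uniformly in $(x,y,t)$. Subtracting the equations then yields
\[
\p_tV-d\Delta V+p\p_xV-h V\ge 0,\qquad \p_tW-D\p_{xx}W+q\p_xW+\mu W\ge \nu V(x,0,t),\qquad -d\p_yV+\nu V\ge \mu W \ \text{on } y=0,
\]
where $h(x,y,t)=\int_0^1\p_vF(\un v+\tau V,x,y,t)\,d\tau$ is locally bounded.

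\textbf{The dichotomy.} Suppose $V(x_0,y_0,t_1)=0$ for some $t_1>T_0$; the argument is the same if instead $W(x_0,t_1)=0$. We distinguish two cases.
\begin{itemize}
\item If $y_0>0$, the strong parabolic maximum principle for $V$ gives $V\equiv0$ in $\O_0\times(T_0,t_1]$. By continuity $V\equiv 0$ also on $y=0$. The boundary inequality then gives $W\le 0$, so $W\equiv0$ on $\R\times(T_0,t_1]$.
\item If $y_0=0$ and $V\not\equiv 0$ near that point, the strong maximum principle gives $V>0$ inside, and the Hopf lemma gives $\p_yV(x_0,0,t_1)>0$. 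Then $-d\p_yV+\nu V<0$ there, contradicting $-d\p_yV+\nu V\ge\mu W\ge0$. So again $V\equiv0$, and hence $W\equiv0$, up to time $t_1$.
\item If instead $W(x_0,t_1)=0$, the strong maximum principle for the $W$-equation (with source $\nu V\ge0$) forces $W\equiv0$ for $t\le t_1$ and $V(\cdot,0,\cdot)\equiv0$. We then return to the previous case.
\end{itemize}
Let $T$ be the supremum of the times $t_1$ at which some contact occurs. The argument above shows $(\un v,\un w)=(\ov v,\ov w)$ on $[T_0,T]$, and strict inequality holds for $t>T$. If $T>T_0$ we are in the second alternative. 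If no contact occurs at any $t>T_0$, we are in the first alternative.

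\textbf{Main obstacle.} The Hopf step at $y=0$ is the delicate point, because it requires $V$ to be $C^1$ up to the boundary. The regularity of Lemma~\ref{lem 23}, transposed to this system, provides it. The intended reading of the "either/or" in the statement should also be matched: contact at some positive time propagates back to $T_0$ in the sense of "identical up to that time."
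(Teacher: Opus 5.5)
Your proposal is correct. The paper omits this proof and defers to \cite{berestycki+2016,berestycki+2013}. The intended argument is the one written out for Lemma~\ref{comparison-cooperate} in Appendix~A, which has two halves:
\begin{itemize}
\item The weak comparison is proved directly. One rescales by $e^{-\widetilde{L}t}$ and adds $\varepsilon(\Lambda(x)+\Lambda(y)+t+1)$ to the supersolution, so that a first contact time exists on the unbounded domain. Interior, boundary and road contacts are then excluded one by one.
\item The dichotomy follows from the strong maximum principle, Hopf's lemma at $y=0$, and the fact that vanishing of one of $V$, $W$ forces vanishing of the other through the exchange terms.
\end{itemize}
Your dichotomy argument is essentially the second half.

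The genuine difference is the weak part. You obtain \eqref{comparison-trun} at no cost from Lemma~\ref{lem62}, since a classical sub/supersolution is a generalized one with itself as test function. This is legitimate, because Lemma~\ref{lem62} is stated first and is strictly stronger. But it only relocates the work: the omitted proof of Lemma~\ref{lem62}, as in \cite[Proposition 4.3]{berestycki+2016}, is precisely that penalization argument. Your route is therefore non-circular only because Lemma~\ref{lem62} is established independently of Lemma~\ref{lem33}. Your route buys brevity; the direct route buys self-containedness.

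Two minor points.
\begin{itemize}
\item In the boundary-contact case, make the Hopf step precise. Set $t^*:=\sup\{t\in(T_0,t_1]: V\equiv 0 \text{ on } \Omega_0\times(T_0,t]\}$, with $t^*:=T_0$ if this set is empty. The interior strong maximum principle gives $V>0$ in $\Omega_0\times(t^*,t_1]$. If $t^*<t_1$, Hopf would act in a parabolic neighbourhood below $(x_0,0,t_1)$ and contradict $-d\partial_yV+\nu V\ge\mu W\ge 0$. Hence $t^*=t_1$.
\item The existence of $\partial_yV$ at $y=0$ needed for this comes from the definition of sub/supersolution, since the boundary inequalities hold classically. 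It does not come from Lemma~\ref{lem 23}, which concerns solutions of a different system.
\end{itemize}
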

\begin{lemma}\label{lem61}
	Assume that {\rm\ref{a1}} holds. Then problem \eqref{sub-drift} admits a unique nonnegative solution. Additionally, the solution satisfies
\[
	\begin{aligned}
	 	& v(x,y,t)\leq \max\left\{\sup_{\ov{\O}_{0}}v(x,y,T_0),\, \frac{\m}{\n}\sup_{\R}w(x,T_0),\, \frac{M_1-1}{M_2+1}\right\}=:\ov{V},\quad &&\forall (x,y)\in\ov{\O}_{0},t\geq T_0&\\
		& w(x,t)\leq \frac{\n}{\m}\ov{V}=:\ov{W}, \quad &&\forall x\in\mathbb{R},t\geq T_0.&
	\end{aligned}
\]
\end{lemma}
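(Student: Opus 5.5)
The proof follows the monotone iteration scheme of Lemma \ref{lem 23}, with uniqueness and bounds obtained from Lemma \ref{lem62} (or Lemma \ref{lem33}). The key structural fact is that \eqref{sub-drift} is a scalar KPP-type equation for $v$ coupled linearly and cooperatively to $w$. Set $g(x,y,t,v):=\big[-1+(M_1-M_2v)\chi(x-st,y,t)-v\big]v$. Since $0\le\chi\le1$ is smooth and bounded, $g$ is $C^1$ in $v$, and on $[0,\ov V]$ it is Lipschitz uniformly in $(x,y,t)$, with some constant $l$. Hence $v\mapsto g+lv$ is nondecreasing on $[0,\ov V]$. The drift terms $p\partial_xv$ and $q\partial_xw$ do not affect the linear parabolic theory or the comparison principle.

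\textbf{Step 1: ordered pair of sub- and supersolutions.} I first show that $(0,0)$ is a solution and that the constant pair $(\ov V,\ov W)=(\ov V,\frac{\n}{\m}\ov V)$ is a supersolution of \eqref{sub-drift} dominating the initial data at $t=T_0$.
\begin{itemize}
\item The $w$-equation becomes $\m\ov W-\n\ov V=0$.
\item The boundary condition becomes $\n\ov V-\m\ov W=0$.
\item In the field, $\ov V\ge\frac{M_1-1}{M_2+1}$ gives $g(\ov V)=\ov V\big[-1+(M_1-M_2\ov V)\chi-\ov V\big]\le0$. If $M_1-M_2\ov V\ge0$, then $\chi\le1$ yields $g(\ov V)\le\ov V\big[M_1-1-(M_2+1)\ov V\big]\le0$. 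Otherwise the bracket is trivially negative.
\end{itemize}
The initial ordering holds by the definition of $\ov V$, since $\ov V\ge\sup v(\cdot,T_0)$ and $\ov W\ge\sup w(\cdot,T_0)$.

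\textbf{Step 2: monotone iteration.} Exactly as in \eqref{v-sup}--\eqref{w-sup}, define two sequences $(\ov v^k,\ov w^k)$ and $(\un v^k,\un w^k)$, started from $(\ov V,\ov W)$ and $(0,0)$ respectively. At each step:
\begin{itemize}
\item $\mathcal L_2 v^k=g(v^{k-1})+lv^{k-1}$, with the Robin condition $\n v^k-d\partial_yv^k=\m w^{k-1}$;
\item $\mathcal L_3w^k=\n v^{k-1}(x,0,t)$;
\item here the operators $\mathcal L_2,\mathcal L_3$ include the drifts $p\partial_x$ and $q\partial_x$.
\end{itemize}
Each step is a decoupled linear problem, solvable as in \cite{pao+1993}. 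The comparison principle for the linear problems, together with the monotonicity of $g+lv$, gives the analogue of \eqref{monotone sequences}. The interior $L^p$ and Schauder estimates then give convergence in $C^{2+\a,1+\a/2}_{loc}$, and continuity at $t=T_0$ follows from the squeezing argument of Step 3 of Lemma \ref{lem 23}. The limits $(\hat v,\hat w)\ge(\check v,\check w)$ both solve \eqref{sub-drift}.

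\textbf{Step 3: uniqueness and bounds.} Applying Lemma \ref{lem62} to the two limit solutions in both orders gives $\hat v=\check v$. Indeed, both are bounded and nonnegative, and classical solutions are in particular generalized sub- and supersolutions. More generally, for any nonnegative bounded solution $(v,w)$, comparison with $(\ov V,\ov W)$ gives the stated bounds $v\le\ov V$ and $w\le\ov W$, and comparison with $(0,0)$ gives nonnegativity. This yields uniqueness within the class of nonnegative bounded solutions.

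\textbf{Main obstacle.} The delicate point is justifying the comparison principle on the unbounded half-plane with the dynamic road coupling and nonconstant coefficients. I will rely on Lemma \ref{lem62}, which the paper states before this result, rather than proving it. Its hypothesis that the subsolution is bounded from above is met because every object appearing in the argument lies between $0$ and $\ov V$.
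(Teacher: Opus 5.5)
Your proposal is correct and follows essentially the route the paper intends: the paper omits this proof and defers to the standard existence and comparison arguments of Berestycki et al. Your monotone iteration modeled on Lemma~\ref{lem 23}, the constant supersolution $(\overline{V},\frac{\nu}{\mu}\overline{V})$ that dominates the data, and Lemma~\ref{lem62} for uniqueness and the bounds carry out exactly that argument. The one point to flag is that attaining the initial data (continuity at $t=T_0$) uses continuity of $(v,w)(\cdot,T_0)$; \ref{a1} does not state this, although it holds in every application in the paper.
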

The three lemmas can be proved by  arguments similar to those for Proposition 4.3 in \cite{berestycki+2016} and Propositions 3.1 and 3.2 in \cite{berestycki+2013}. Here we omit the proofs.  
In the following, we consider the propagation dynamics of \eqref{62}.
\begin{theorem}\label{thm63}
	Let $(v,w)$ be the solution of \eqref{62} under {\rm\ref{a1}}. Then for any  $0<c<c(\frac{\pi}{2})$ 
	and $l>0$, there holds
	\[
		\limsup_{t\to\infty}\sup_{\substack{|x|\leq c(t-T_0)\\0\leq y<l}}(v(x,y,t),w(x,t))\leq\frac{M_1-1}{M_2+1}\left(1,\frac{\n}{\m}\right).
	\]
\end{theorem}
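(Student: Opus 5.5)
The bound only concerns the supremum, so I would use a comparison argument and ignore the truncation, exploiting $0\le\chi\le 1$. The expected estimate is even uniform in $x\in\R$, not only for $|x|\le c(t-T_0)$. The idea is to dominate $(v,w)$ by a solution of an untruncated road--field system whose reaction has the unique positive zero $\frac{M_1-1}{M_2+1}$, started from constant data, and to show that this solution decays monotonically to the constant steady state.

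\textbf{Step 1 (a KPP-type majorant).} Define
\[
h(s):=s\big[-1+\max\{M_1-M_2s,0\}-s\big],\qquad s\ge 0 .
\]
Since $\chi\in[0,1]$, we have $\big[-1+(M_1-M_2v)\chi-v\big]v\le h(v)$ for all $v\ge0$. The function $h$ is Lipschitz on bounded sets, and $s\mapsto h(s)/s$ is nonincreasing. Moreover $h>0$ exactly on $\big(0,\frac{M_1-1}{M_2+1}\big)$, $h<0$ beyond this value, and $h'\big(\frac{M_1-1}{M_2+1}\big)<0$. Let $(V,W)$ solve the system \eqref{61} with $\rho=0$ and the reaction replaced by $h$, with constant initial data $(V,W)(T_0)=(\ov V,\ov W)$, where $\ov V,\ov W$ are the constants from Lemma \ref{lem61}. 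Because the data are independent of $x$, so is $(V,W)$, i.e.\ $V=V(y,t)$ and $W=W(t)$. Since $(\ov V,\ov W)=\ov V(1,\frac{\n}{\m})$ is a stationary supersolution (note $h(\ov V)\le0$, and the exchange condition holds with equality), the comparison principle (Lemma \ref{lem33}, which applies equally to this untruncated system) shows two things. First, $(V,W)$ is nonincreasing in $t$. Second, $(v,w)\le(V,W)$ for all $t\ge T_0$, since $(V,W)$ is a supersolution of \eqref{62} dominating $(v,w)$ at $t=T_0$.

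\textbf{Step 2 (identify the limit).} Being monotone and bounded below by $0$, $(V,W)$ converges as $t\to\infty$, locally uniformly in $y\in[0,\infty)$ by parabolic estimates, to a stationary pair $(V_\infty(y),W_\infty)$. This pair satisfies
\[
dV_\infty''+h(V_\infty)=0 \text{ on } (0,\infty),\qquad \m W_\infty=\n V_\infty(0),\qquad -dV_\infty'(0)=\m W_\infty-\n V_\infty(0)=0 .
\]
Thus $V_\infty$ is a bounded nonnegative solution of the ODE with $V_\infty'(0)=0$. I would then carry out a phase-plane argument for $dV''=-h(V)$. The positive equilibrium $\frac{M_1-1}{M_2+1}$ is a saddle because $h'<0$ there. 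The equilibrium $0$ is a center, but the periodic orbits around it leave $\{V\ge0\}$. Any other orbit through $\{V'=0\}$ is unbounded or hits negative values. Hence the only bounded nonnegative solutions with $V'(0)=0$ are the constants $0$ and $\frac{M_1-1}{M_2+1}$. In either case
\[
(V_\infty,W_\infty)\le\frac{M_1-1}{M_2+1}\left(1,\frac{\n}{\m}\right).
\]
This classification is the step I expect to need the most care: one must make sure the boundary condition really forces $V_\infty'(0)=0$, and that the phase-plane picture correctly excludes nonconstant bounded orbits.

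\textbf{Step 3 (conclusion).} For $0\le y<l$ the convergence in Step 2 is uniform, since $[0,l]$ is compact and there is no $x$-dependence. Combining this with $(v,w)\le(V,W)$ gives
\[
\limsup_{t\to\infty}\sup_{x\in\R,\,0\le y<l}(v,w)\le\frac{M_1-1}{M_2+1}\left(1,\frac{\n}{\m}\right),
\]
which in particular contains the claimed estimate on $\{|x|\le c(t-T_0)\}$ for any $0<c<c(\frac{\pi}{2})$. If the ODE classification turned out to be delicate, a fallback would be the following. For any $\eta>0$ the constant pair $\big(\frac{M_1-1}{M_2+1}+\eta\big)(1,\frac{\n}{\m})$ is a strict supersolution. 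One could then use a squeezing argument on $\sup_y V(\cdot,t)$ via the strong maximum principle to show that $\limsup_t\sup V\le\frac{M_1-1}{M_2+1}+\eta$.
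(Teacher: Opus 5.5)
Your proof is correct, but it takes a different route from the paper. The paper argues by contradiction and compactness. It translates along $(x_n,t_n)$ with $|x_n|\le c(t_n-T_0)$, and uses $c<c(\frac{\pi}{2})\le\varsigma_*$ only to get $\chi(x+x_n,y,t+t_n)\to 1$ locally uniformly. The limit is then an entire solution of the untruncated system \eqref{313} lying below $(\overline{V},\overline{W})$. The paper places it below the solution of \eqref{314} issued from $(\overline{V},\overline{W})$ at an arbitrarily early time, and identifies that solution's large-time limit by citing the Liouville-type result \cite[Proposition 4.1]{berestycki+2013}.

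You instead compare once and globally with an $x$-independent supersolution. The majorant $h$ is the right device. The naive bound ``truncated reaction $\le$ untruncated reaction'' is false where $v>M_1/M_2$, since there $\chi<1$ increases the reaction, and under {\rm\ref{a1}} the data may exceed $M_1/M_2$. The paper avoids this only because $\chi\to1$ in its limit. Your Neumann condition $V_\infty'(0)=0$ does follow from $\mu W_\infty=\nu V_\infty(0)$, so the step you flagged is sound. The classification is clean with the energy $\frac{d}{2}(V')^2+\int_0^V h$:
\begin{itemize}
\item for $V(0)\in\bigl(0,\frac{M_1-1}{M_2+1}\bigr)$ the solution is concave and reaches $0$ with nonzero slope, so it becomes negative;
\item for $V(0)>\frac{M_1-1}{M_2+1}$ it is convex, increasing and unbounded;
\item the two remaining values give constants by uniqueness.
\end{itemize}
The fallback is therefore unnecessary.

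Your route gives a self-contained argument with no compactness and no external Liouville theorem. It also gives a stronger conclusion, uniform in $x\in\mathbb{R}$ and independent of $c$ and $\varsigma_*$; the cone $|x|\le c(t-T_0)$ is only needed for the matching lower bound in Theorem \ref{thm64}. The paper's route keeps this step in the same contradiction template as the surrounding arguments. Note also that the solution of \eqref{314} it uses starts from constant data and is therefore $x$-independent. So your ODE classification is exactly the special case of the cited Liouville result that is needed there.
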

\begin{proof} Fix  $0<c<c(\frac{\pi}{2})$ and $l>0$. We prove the theorem by a contradiction argument.
Suppose on the contrary that there exist $\epsilon>0$, $\{(x_n,y_n)\}_{n\in\N}$ and $\{t_n\}_{n\in\N}$, where $|x_n|\leq c(t_n-T_0)$ and $0\leq y_n\leq l$ for all $n\in\N$ and $t_n\to +\infty$ as $n\to \infty$,  such that either 
	\[
		 v(x_n,y_n,t_n)\geq \dfrac{M_1-1}{M_2+1} +\epsilon,\quad \forall \, n\in\N,
\]
or 
	\[
		 w(x_n,y_n,t_n)\geq \dfrac{\nu(M_1-1)}{\mu(M_2+1)} +\epsilon, \quad \forall \, n\in\N.
	\]
Without loss of generality, we assume that the former holds. In addition, we assume that $y_n\to \bar{y}$ as $n\to \infty$. For this end, we denote
	\[
		v_n(x,y,t):=v\left(x+x_n,y,t+t_n\right),\quad w_n(x,t):=w\left(x+x_n,t+t_n\right),\quad \forall\, (x,y)\in\ov{\O}_{0},\, t\geq T_0-t_n.
	\]
Then we have that (up to extracting a subsequence) $(v_n,w_n)$ converges to $(v_{\infty},w_{\infty})$ locally uniformly  for some $v_{\infty}\in C^{2+\alpha,1+\frac{\alpha}{2}}(\O_{0}\times \R)\cap C^{1+\alpha,\frac{1+\alpha}{2}}(\ov{\O}_{0}\times\R)$ and $w_{\infty}\in C^{2+\alpha,1+\frac{\alpha}{2}}(\R\times \R)$. In particular, we have $v_{\infty}(0,\bar{y},0)\geq \frac{M_1-1}{M_2+1} +\epsilon$. Due to $0<c<c(\frac{\pi}{2})\leq \vs^{*}$, there holds that $\chi\left(x+x_n,y,t+t_n\right)\to 1$ as $n\to \infty$ locally uniformly in $(x,y,t)\in\Omega_{0}\times \mathbb{R}$. Therefore, we have $(v_{\infty},w_{\infty})$ is a solution of
	\begin{equation}\label{313}
		\left\{
		\begin{aligned}
			&\p_tv-d\D v=\left[M_1-1-(1+M_2)v\right]v,&& \quad(x,y)\in\O_{0},t\in\R, &\\
			&\p_t w-D \p_{xx}w=-\m w+\n v(x,0,t),&&\quad x\in\R,t\in\R, &\\
			& -d\p_y v(x,0,t)=\m w(x,t)-\n v(x,0,t),&&\quad x\in\R,t\in\R.&
			\end{aligned}
		\right.
	\end{equation}
		On the other hand, $(0,0)\leq (v_{\infty},w_{\infty})\leq \left(\ov{V},\ov{W}\right)$, and $\left(\ov{V},\ov{W}\right)$ is a supersolution of \eqref{313}. Take $t_0\in\R$ and let $(\ov{v},\ov{w})$ be the solution of
	\begin{equation}\label{314}
		\left\{
		\begin{aligned}
			&\p_tv-d\D v=\left[M_1-1-(1+M_2)v\right]v,&& \quad(x,y)\in\O_{0},t\geq t_0, &\\
			&\p_t w-D \p_{xx}w=-\m w+\n v(x,0,t),&&\quad x\in\R,t\geq t_0, &\\
			& -d\p_y v(x,0,t)=\m w(x,t)-\n v(x,0,t),&&\quad x\in\R,t\geq t_0&
			\end{aligned}
		\right.
	\end{equation}
	with $\left(\ov{v}(x,y,t_0),\ov{w}(x,t_0)\right)=\left(\ov{V},\ov{W}\right)$ on ${\ov{\O}_{0}}$. Then $(\ov{v},\ov{w})$ is nonincreasing in $t\geq t_0$ and converges to the unique nonnegative, nontrivial, bounded solution $\frac{M_1-1}{M_2+1}\left(1,\frac{\n}{\m}\right)$ of system \eqref{314} (see \cite[Proposition 4.1]{berestycki+2013}).
Thus
	$\left(v_{\infty}(x,y,t),w_{\infty}(x,t)\right)\leq \left(\ov{v}(x,y,t+t_0),\ov{w}(x,t+t_0)\right)$, for all$(x,y)\in\ov{\O}_{0},t\geq 0$.
	Letting $t_0\to+\infty$ and taking $t=0$, we obtain 
	\[
		\left(v_{\infty}(x,y,0),w_{\infty}(x,0)\right)\leq\frac{M_1-1}{M_2+1}\left(1,\frac{\n}{\m}\right),\qquad \forall (x,y)\in\ov{\O}_{0}.
	\]
This contradicts the inequality $v_{\infty}(0,\bar{y},0)\geq \frac{M_1-1}{M_2+1} +\epsilon$. Thus the proof is completed.
\end{proof}
\begin{lemma}\label{a2}
	For any $\vt\in\left[-\frac{\pi}{2},\frac{\pi}{2}\right]$ and $\e\in(0,c(\vt))$, there exist $c:=c_{\vt}\in( c(\vt)-\e, c(\vt))$ such that system \eqref{62} admits a generalized subsolution $\left(\un{\phi}_{\vt,c}(x,y,t),\un{\psi}_{\vt,c}(x,t)\right)$ in $\ov{\Omega}_{0}\times [T_{\vt,c},+\infty)$ for some $T_{\vt,c}\geq T_0$. 
	Additionally, $c_{\vt}$, $T_{\vt,c}$ and $\left(\un{\phi}_{\vt,c}(x,y,t),\un{\psi}_{\vt,c}(x,t)\right)$ satisfy the following properties:
	\begin{itemize}
		\item $c_{\vt}=c_{-\vt}, T_{\vt,c}=T_{-\vt,c}$.
		\item ${\rm supp}~\un{\phi}_{\vt,c}(\cdot,\cdot,T_{\vt,c})$ and ${\rm supp}~\un{\psi}_{\vt,c}(\cdot,T_{\vt,c})$ are bounded.
		\item There exists $(\usim{x},\usim{y})\in\ov{\O}_{0}$ such that $			\un{\phi}_{\vt,c}(\usim{x}+ct\sin\vt,\usim{y}+ct\cos\vt,t)=\un{\phi}_{\vt,c}(\usim{x},\usim{y},T_{\vt,c})>0$  for all $t\geq T_{\vt,c}$. 
	\end{itemize}
	Furthermore, both
		$\left(\un{\phi}_{\frac{\pi}{2},c}(-x+ct,y,t),\un{\psi}_{\frac{\pi}{2},c}(-x+ct,t)\right)$ and
$\left(\un{\phi}_{-\frac{\pi}{2},c}(x-ct,y,t),\un{\psi}_{-\frac{\pi}{2},c}(x-ct,t)\right)$
are nonnegative,  compactly supported, nontrivial generalized stationary subsolution of \eqref{sub-drift} with $p=q=s=c$ in $\ov{\O}_0\times [T_{\frac{\pi}{2},c},+\infty)$.
\end{lemma}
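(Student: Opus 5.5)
The construction is by cases on whether $c(\vt)=\vs_0(\vt)$ or $c(\vt)=\vs_*$ (which occurs when $\vs_*<\vs_0(\vt)$). In both cases we reuse the generalized subsolutions of Proposition \ref{uniform_epsilon} for the untruncated system \eqref{61}, with a suitable $\rho\ge 0$, and then check that they stay inside the region where $\chi\equiv 1$. The observation that makes this work is that wherever $\chi=1$, the nonlinearity of \eqref{62} equals $[M_1-1-(1+M_2)v]v$, and this is at least the nonlinearity $[M_1-1-\rho-(1+M_2)v]v$ of \eqref{61}. Hence any subsolution of \eqref{61} whose support lies in $\{\chi=1\}$ is also a subsolution of \eqref{62}. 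The road equation and the exchange condition do not involve $\chi$, so they carry over unchanged. Outside the support, the zero function is trivially a subsolution, which takes care of the generalized-subsolution definition at the boundary of the support. Symmetry in $\vt\mapsto-\vt$ is obtained by reflecting $x\mapsto -x$: $\chi$ is even in $x$, so we may choose the same $c$ and the same $T$ for $\pm\vt$.

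\emph{Step 1 (choice of $\rho$ and $c$).} Since $\vs_\rho(\vt)$ is decreasing in $\rho$ and continuous (I take continuity from the linear characterization of the speeds), we can pick $\rho\in[0,M_1-1)$ with $\vs_\rho(\vt)\in(c(\vt)-\e/2,\,c(\vt)]$. If $c(\vt)=\vs_0(\vt)$, take $\rho=0$. Otherwise take $\rho>0$ with $\vs_\rho(\vt)$ slightly below $\vs_*$. Proposition \ref{uniform_epsilon} applied with $\d=M_1-1-\rho$ and $k=1+M_2$ then gives some $c\in(\vs_\rho(\vt)-\e/2,\vs_\rho(\vt))$ together with a compactly supported generalized subsolution $(\un\Phi,\un\Psi)$ of \eqref{61}, whose support moves with velocity $c(\sin\vt,\cos\vt)$. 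In particular $c<c(\vt)\le\vs_*$.

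\emph{Step 2 (staying in $\{\chi=1\}$).} I expect this to be the main obstacle. We need $\operatorname{supp}\un\Phi(\cdot,\cdot,t-T)\subset\tilde\o_t$ for all $t\ge T$. The support has radius at most $R$ and its center moves at speed $c<\vs_*$. The set $\tilde\o_t$ contains the ball of radius $\vs_*t-m$, since $\chi=\psi(|(x,y)|-\vs_*t+m)$. Therefore $|\text{point}|\le R+c(t-T)+C_0\le \vs_*t-m$ holds as soon as $T$ is large, uniformly for all $t\ge T$, because $\vs_*>c$.

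For $\vt=\pm\frac{\pi}{2}$ the road part $\un\Psi$ is supported around $x=\pm ct$, which also lies in the ball. Here the field support of $\un\phi$ is a compact set translated along $x$, and the same estimate applies. Define
\[
(\un\phi_{\vt,c},\un\psi_{\vt,c})(x,y,t):=(\un\Phi,\un\Psi)(x,y,t-T_{\vt,c}),\qquad T_{\vt,c}:=\max\{T_0,T\}.
\]
Its initial support is bounded, and by Proposition \ref{uniform_epsilon}(c), $\un\phi_{\vt,c}$ takes a fixed positive value along the ray $(\usim{x},\usim{y})+c(t-T_{\vt,c})(\sin\vt,\cos\vt)$. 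Re-indexing the time origin gives exactly the stated form.

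\emph{Step 3 (moving frame for $\vt=\pm\frac{\pi}{2}$).} In the frame $x\mapsto x-ct$, the stationary subsolution from Proposition \ref{uniform_epsilon}(d) solves \eqref{predator sys drift}. Under $\chi(x-st,\cdot)$ with $s=c$, its support sits where $|(x-ct,y)|\le \vs_*t-m$, for the same reason as in Step 2. So the pair is a stationary generalized subsolution of \eqref{sub-drift} with $p=q=s=c$ on $[T_{\frac{\pi}{2},c},+\infty)$.
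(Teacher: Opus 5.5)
Your proposal is correct and follows essentially the same route as the paper: split according to whether $\vs_0(\vt)\le\vs_*$ (take $\rho=0$) or $\vs_0(\vt)>\vs_*$ (increase $\rho$ to slow the speed down to about $\vs_*$), take the subsolution of \eqref{61} from Proposition \ref{uniform_epsilon}, use $c<\vs_*$ to wait until its support lies in $\{\chi=1\}$, and then use that there the nonlinearity of \eqref{62} dominates that of \eqref{61}, with the zero test function off the support. The differences are cosmetic, if anything slightly cleaner: you choose $\rho$ with $\vs_\rho(\vt)$ merely within $\e/2$ of $\vs_*$ rather than exactly equal to it, you time-shift the subsolution rather than restricting it to late times, and you make explicit the $x\mapsto -x$ reflection behind $c_\vt=c_{-\vt}$ and $T_{\vt,c}=T_{-\vt,c}$, which the paper leaves implicit.
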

\begin{proof}
	Fix $\vt\in\left[-\frac{\pi}{2},\frac{\pi}{2}\right]$ and $\e>0$. It follows from Proposition \ref{uniform_epsilon} that for any $0\leq\rho<M_1-1$, there is  $c\in\left(\vs_{\rho}(\vt)-\e,\vs_{\rho}(\vt)\right)$ such that system \eqref{61} admits a generalized subsolution $(\phi_{\rho,\vt,c}(x,y,t),\psi_{\rho,\vt,c}(x,t))$, which satisfies the following properties
	\begin{itemize}
		\item $\phi_{\rho,\vt,c}(\cdot,\cdot,T_0)$ and $\psi_{\rho,\vt,c}(x,t)(\cdot,T_0)$ are compactly supported.
		\item There exists $l_{\rho,\vartheta,c}>0$, which is uniformly bounded in $\vt\in\left[-\frac{\pi}{2},\frac{\pi}{2}\right]$, such that
		\[
			\begin{aligned}
				& {\rm supp}~ \phi_{\rho,\vt,c}(\cdot,\cdot,t)={\rm supp}~ \phi_{\rho,\vt,c}(\cdot,\cdot,T_0)+c(\si\vt,\c\vt)(t-T_0),\\
				& {\rm supp}~ \psi_{\rho,\vt,c}(x,t)(\cdot,t)={\rm supp}~ \psi_{\rho,\vt,c}(x,t)(\cdot,T_0)+cl_{\delta,\vt,c}(t-T_0),
			\end{aligned}
			\qquad \forall\, t\geq T_0>0.
		\]
		\item There exists $(\hat{x},\hat{y})$ such that
		$\phi_{\rho,\vt,c}(\hat{x}+ct\sin\vt,\hat{y}+ct\cos\vt,t)=\phi_{\rho,\vt,c}(\hat{x},\hat{y},T_0)>0$  for $t\geq T_0$.

		\item $(\phi_{\rho,\frac{\pi}{2},c}(-x+ct,y,t),\psi_{\rho,\frac{\pi}{2},c}(-x+ct,t))$ and $(\phi_{\rho,-\frac{\pi}{2},c}(x-ct,y,t),\psi_{\rho,-\frac{\pi}{2},c}(x-ct,t))$ are nonnegative, compactly supported, nontrivial generalized stationary subsolution of the following system
	\[\left\{
		\begin{aligned}
		&\p_tv-d\D v+c\partial_x v=\big[-1+(M_1-M_2 v)\chi-v\big]v,&& \quad(x,y)\in\O_{0},t>T_0, &\\
		&\p_t w-D \p_{xx}w+c\partial_x w=-\m w+\n v(x,0,t),&&\quad x\in\R,t>T_0, &\\
		& -d\p_y v(x,0,t)=\m w-\n v(x,0,t),&&\quad x\in\R,t>T_0,&
		\end{aligned}
	\right.\]
	\end{itemize}
	We consider the following two complementary cases.
	
	\vspace{1.5ex}
	\noindent\textit{Case 1.} $\vs_0(\vartheta)> \vs_*$.

	In this case, there is $\sigma:=\sigma(\vartheta)>0$ such that $\vs_{\sigma}(\vartheta)=\vs_*=c(\vartheta)$. Define
	\[
		\left(\un{\phi}_{\vartheta,c}(x,y,t),\un{\psi}_{\vartheta,c}(x,t)\right):=(\phi_{\sigma,\vartheta,c}(x,y,t),\psi_{\sigma,\vartheta,c}(x,t)).
	\]
	Since ${\rm supp}~\phi_{\sigma,\vartheta,c}(\cdot,\cdot,T_0)$ is bounded, then for any $(x,y)\in {\rm supp}~\un{\phi}_{\vartheta,c}(\cdot,\cdot,t)$, there is $(\xi,\zeta)\in {\rm supp}~\un{\phi}_{\vartheta,c}(\cdot,\cdot,T_0)$ and $T_{\vartheta,c}\geq T_0$ such that
	\begin{equation}\label{truncated_support}
		(x,y)\cdot(\sin\vt,\cos\vt)\leq |(\xi,\zeta)|+c(t-T_0)< \vs_{*}t-m, \qquad \forall\, t\geq T_{\vartheta,c}.
	\end{equation}
	
	\vspace{1.5ex}
	\noindent\textit{Case 2.} $\vs_0(\vartheta)\leq \vs_*$.
	 
	In this case, we take
	$		\left(\un{\phi}_{\vartheta,c}(x,y,t),\un{\psi}_{\vartheta,c}(x,t)\right):=(\phi_{0,\vartheta,c}(x,y,t),\psi_{0,\vartheta,c}(x,t))$.
	For any $(x,y)\in {\rm supp}~\un{\phi}_{\vartheta,c}(\cdot,\cdot,t)$, there is $(\xi,\zeta)\in {\rm supp}~\un{\phi}_{\vartheta,c}(\cdot,\cdot,T_0)$ and $T_{\vartheta,c}\geq T_0$ such that \eqref{truncated_support} holds.
	Therefore, by taking
$\left(\usim {x},\usim{y}\right):=(\hat{x},\hat{y})-c(T_{\vartheta,c}-T_0)(\sin\vartheta,\cos\vartheta)$,
we can verify that $\left(\un{\phi}_{\vartheta,c}(x,y,t),\un{\psi}_{\vartheta,c}(x,t)\right)$ satisfies the properties in Lemma \ref{a2}.

	Finally, we provide the explanation that $\left(\un{\phi}_{\vartheta,c},\un{\psi}_{\vartheta,c}\right)$ is a generalized subsolution of \eqref{62} in $\ov{\Omega}_{0}\times [T_{\vartheta,c},+\infty)$. Note that $\left(\tilde{\phi}_{\frac{\pi}{2},c}(x,y),\tilde{\psi}_{\frac{\pi}{2},c}(x)\right)$ and $\left(\tilde{\phi}_{-\frac{\pi}{2},c}(x,y),\tilde{\psi}_{-\frac{\pi}{2},c}(x)\right)$ are generalized subsolutions of \eqref{sub-drift} with the desired properties, provided this conclusion holds. Since \eqref{truncated_support}, we have ${\rm supp}~\un{\phi}_{\vartheta,c}(\cdot,\cdot,t)\subset {\rm Int}~ \tilde{\omega}_{t}$ for any $t\geq T_{\vartheta,c}$, where $\tilde{\omega}_{t}$ is a set such that $\chi(x,y,t)=1$, $\forall (x,y)\in\tilde{\omega}_{t}$.
Let us first note that $\left(\un{\phi}_{\vartheta,c},\un{\psi}_{\vartheta,c}\right)$ is a generalized subsolution of \eqref{62} with $\chi(x,y,t)$ replaced by $1$ in $\ov{\O}_0\times [T_{\vartheta,c},+\infty)$. Therefore, we need only verify that $\un{\phi}_{\vartheta,c}$ is a generalized subsolution of
\[
	\left\{
	\begin{aligned}
		&\p_tv-d\D v=\big[-1+(M_1-M_2 v)\chi-v\big]v,&& \quad(x,y)\in\O_{0},t>T_{\vartheta,c}, &\\
		& -d\p_y v(x,0,t)=\m \un{\psi}_{\vartheta,c}-\n v(x,0,t),&&\quad x\in\R,t>T_{\vartheta,c},&
	\end{aligned}
\right.
\]
If $t> T_{\vartheta,c}, (x,y)\in\tilde{\omega}_{t}$, there is $\phi_{\vartheta,c}$ satisfying $\phi_{\vartheta,c}\leq \un{\phi}_{\vartheta,c}$ in a neighborhood of $(x,y,t)$, and at $(x,y,t)$,
\begin{equation}\label{317}
		\begin{aligned}
		& \phi_{\vartheta,c}= \un{\phi}_{\vartheta,c},&&& \\
		&
			\p_t \phi_{\vartheta,c}-d_2\D \phi_{\vartheta,c}
			\leq 
			\big[-1+(M_1-M_2\phi_{\vartheta,c})\chi-\phi_{\vartheta,c}\big]\phi_{\vartheta,c},
		&& \quad {\rm if}~y> 0,&\\
		& -d_2\p_y\phi_{\vartheta,c}(x,0,t)\leq \m\un{\psi}_{\vartheta,c}(x,t)-\n\phi_{\vartheta,c}(x,0,t),&&\quad {\rm if}~y= 0.&
	\end{aligned}
\end{equation}
If $t> T_{\vartheta,c}$ and $(x,y)\in\ov{\O}_{0}\setminus \tilde{\omega}_{t}$, we take $\phi_{\vartheta,c}\equiv 0$. Then $\phi_{\vartheta,c}\leq \un{\phi}_{\vartheta,c}$ and \eqref{317} holds at $(x,y,t)$
Therefore, $\left(\un{\phi}_{\vartheta,c},\un{\psi}_{\vartheta,c}\right)$ is a generalized subsolution of \eqref{62}. This completes the proof.
\end{proof}

\begin{lemma}
	For any $\e>0$, let $c\in\left(c(\frac{\pi}{2})-\e,c(\frac{\pi}{2})\right)$ be the value obtained in Lemma \ref{a2} such that \eqref{62} admits generalized subsolutions $\left(\un{\phi}_{\frac{\pi}{2},c}(x,y,t),\un{\psi}_{\frac{\pi}{2},c}(x,t)\right)$ and $\left(\un{\phi}_{-\frac{\pi}{2},c}(x,y,t),\un{\psi}_{-\frac{\pi}{2},c}(x,t)\right)$. Denote by $\left(v_{\gamma,c},w_{\gamma,c}\right)$
	and $\left(\tilde{v}_{\gamma,c},\tilde{w}_{\gamma,c}\right)$ the solutions of \eqref{62} on $\ov{\O}_0\times \left[T_{\frac{\pi}{2},c},+\infty\right)$, starting from
	$\gamma\left(\underline{\phi}_{\frac{\pi}{2},c}(x,y,T_{\frac{\pi}{2},c}),\underline{\psi}_{\frac{\pi}{2},c}(x,T_{\frac{\pi}{2},c})\right)$ and
	$\gamma\left(\underline{\phi}_{-\frac{\pi}{2},c}(x,y,T_{\frac{\pi}{2},c}),\underline{\psi}_{-\frac{\pi}{2},c}(x,T_{\frac{\pi}{2},c})\right)$, respectively. Then for any $0<\gamma\leq 1$, one has
	\begin{equation}\label{316}
		\liminf_{t \to \infty}(v_{\gamma,c}(-x+ct,y,t),w_{\gamma,c}(-x+ct,t))\geq \frac{M_1-1}{M_2+1}\left(1,\frac{\n}{\m}\right)
	\end{equation}
	and
	\begin{equation}\label{lem382}
		\liminf_{t \to \infty}(\tilde{v}_{\gamma,c}(x-ct,y,t),\tilde{w}_{\gamma,c}(x-ct,t))\geq \frac{M_1-1}{M_2+1}\left(1,\frac{\n}{\m}\right)
	\end{equation}
	locally uniformly in $(x,y)\in\ov{\O}_{0}$.
\end{lemma}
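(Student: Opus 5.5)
We argue in the moving frame attached to the subsolution, show that the solution is nondecreasing in time there, and identify its limit by a Liouville-type argument. We treat \eqref{316}; \eqref{lem382} follows by the symmetry $x\mapsto -x$. Set $T_1:=T_{\frac{\pi}{2},c}$ and
\[
V(x,y,t):=v_{\gamma,c}(-x+ct,y,t),\qquad W(x,t):=w_{\gamma,c}(-x+ct,t).
\]
Then $(V,W)$ solves a system of the form \eqref{sub-drift} with $p=q=s=c$, up to the reflection $x\mapsto -x$. The cut-off is $\tilde\chi(x,y,t)=\chi(-x+ct,y,t)$, and the initial datum is $\gamma$ times the generalized stationary subsolution $\bigl(\un{\phi}_{\frac{\pi}{2},c}(-x+ct,y,t),\un{\psi}_{\frac{\pi}{2},c}(-x+ct,t)\bigr)$ given by Lemma \ref{a2}. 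Since $0<\gamma\le 1$ and the subsolution can be taken with $\un\phi\le \frac{M_1-1}{M_2+1}<\frac{M_1}{M_2}$, Lemma \ref{lem61} gives the a priori bound $(V,W)\le(\ov V,\ov W)$. We may also keep $V$ in the range where $M_1-M_2V\ge 0$, which is used in the monotonicity step below.

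\emph{Step 1: time monotonicity.} Since $\gamma(\un\phi,\un\psi)$ is a generalized stationary subsolution for the frozen cut-off, Lemma \ref{lem62} gives $(V,W)\ge \gamma(\un\phi,\un\psi)$ for all $t\ge T_1$. Fix $h>0$ and compare $(V,W)(\cdot,t+h)$ with $(V,W)(\cdot,t)$. The shifted pair solves the equation with cut-off $\tilde\chi(x,y,t+h)$. For fixed $(x,y)$, the quantity $|(-x+ct,y)|-\vs_*t$ has $t$-derivative at most $c-\vs_*<0$, because $c<c(\frac{\pi}{2})\le\vs_*$. Hence $\tilde\chi(x,y,\cdot)$ is nondecreasing; this combines the $t$-monotonicity of $\chi$ with $\psi'\le0$. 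Together with $M_1-M_2V\ge0$, the shifted pair is therefore a supersolution of the unshifted problem. It lies above the initial datum by the first comparison, so Lemma \ref{lem62} yields $(V,W)(t+h)\ge(V,W)(t)$. Consequently $(V,W)$ increases as $t\to\infty$ to a limit $(V_\infty,W_\infty)$. Parabolic estimates upgrade this convergence to locally uniform convergence, and the limit satisfies $(V_\infty,W_\infty)\ge\gamma(\un\phi,\un\psi)\not\equiv(0,0)$. Since $\tilde\chi\to1$ locally uniformly, $(V_\infty,W_\infty)$ is a bounded, nonnegative, nontrivial stationary solution of the drifted autonomous system \eqref{predator sys drift}, with $\d=M_1-1$, $k=1+M_2$ and $x$ reflected.

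\emph{Step 2: Liouville step (the main obstacle).} We must show $(V_\infty,W_\infty)\equiv\frac{M_1-1}{M_2+1}\bigl(1,\frac{\n}{\m}\bigr)$. The plan follows the proof of \cite[Proposition 4.1]{berestycki+2013} and Lemma 6.2 there, and has two parts.

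First, we get a uniform positive lower bound. The strong maximum principle gives $V_\infty>0$ and $W_\infty>0$. Because the stationary subsolution of \eqref{predator sys drift} is compactly supported and the drifted system is invariant under translations in $x$, every translate $\kappa(\un\phi,\un\psi)(\cdot+z,\cdot)$ with $\kappa\in(0,1]$ is again a subsolution. Choosing $\kappa$ small so that this translate lies below $(V_\infty,W_\infty)$ and sliding it in $z$ by a sliding/continuity argument based on the strong comparison of Lemma \ref{lem33}, we get $\inf_{x\in\R,\,0\le y\le l}V_\infty>0$ for each $l>0$. Far from the road, the KPP field equation alone gives $\liminf_{y\to\infty}V_\infty\ge\frac{M_1-1}{M_2+1}$, via a comparison with compactly supported subsolutions of the scalar equation.

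Second, we squeeze $(V_\infty,W_\infty)$ between two monotone solutions. Let $(\un V,\un W)$ be the solution of the drifted system starting from a small $x$-independent subsolution placed below $(V_\infty,W_\infty)$, and let $(\ov V,\ov W)$ be the solution starting from the constant supersolution $(\ov V,\ov W)$ of Lemma \ref{lem61}. Both solutions are $x$-independent, so the drift terms vanish. They are monotone in $t$, and the upper solution tends to $\frac{M_1-1}{M_2+1}\bigl(1,\frac{\n}{\m}\bigr)$ by \cite[Proposition 4.1]{berestycki+2013}. For the lower solution, the limit is a positive, bounded, $x$-independent steady state, and uniqueness of such steady states (also in \cite{berestycki+2013}) identifies it as the same constant. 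Comparison then forces $(V_\infty,W_\infty)$ to equal this constant pair.

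Monotone convergence plus local uniformity then give \eqref{316} locally uniformly in $(x,y)$. The point that needs the most care is the uniform positivity near the road in the first part of Step 2, where the sliding argument must respect the exchange condition on $y=0$. A secondary concern is ensuring $M_1-M_2V\ge0$ throughout, so that the growing cut-off indeed increases the reaction term in Step 1.
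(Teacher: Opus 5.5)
Your Step 1 takes a different route from the paper, and it is sound. The paper argues by contradiction: it passes to a limit along $t_n$ in the moving frame to get an entire solution of the autonomous drifted system lying above $\gamma(\un\phi,\un\psi)$, then compares that solution with the autonomous flow started from the stationary subsolution. You instead show that the truncated solution itself is nondecreasing in time in the moving frame, which even yields convergence rather than just a $\liminf$. The bound $M_1-M_2V\ge 0$ is not automatic for the subsolution produced by Lemma~\ref{a2}. You can justify it by noting that $(v_{\gamma,c},w_{\gamma,c})$ is nondecreasing in $\gamma$ by comparison, so it suffices to treat small $\gamma$; there Lemma~\ref{lem61} gives $V\le\frac{M_1-1}{M_2+1}<\frac{M_1}{M_2}$. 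The paper's compactness argument needs neither this bound nor the monotonicity of the cut-off.

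The genuine gap is the far-field bound in Step 2.

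\emph{Why it fails.} In the frame moving with speed $c$, the scalar equation $-d\Delta V+c\p_xV=V\bigl(M_1-1-(M_2+1)V\bigr)$ has a nontrivial compactly supported stationary subsolution only if $c<2\sqrt{d(M_1-1)}$. Indeed, writing $V=e^{cx/(2d)}\Phi$ turns $-d\Delta+c\p_x$ into $-d\Delta+\frac{c^2}{4d}$, so every Dirichlet principal eigenvalue of $-d\Delta+c\p_x-(M_1-1)$ exceeds $\frac{c^2}{4d}-(M_1-1)$. But $c$ is close to $c(\frac{\pi}{2})=\min\{\vs_*,\vs_0(\frac{\pi}{2})\}$, which exceeds $2\sqrt{d(M_1-1)}$ whenever $D>2d$ and $\vs_*>2\sqrt{d(M_1-1)}$. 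That is precisely the regime where the road accelerates propagation, i.e.\ the case the construction exists for. There you get no control as $y\to\infty$. Sliding plus Harnack only gives positivity on strips of bounded height, with constants degenerating as the height grows, so you cannot place a small constant subsolution below $(V_\infty,W_\infty)$.

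\emph{How to repair it.} Either of the following works.
\begin{itemize}
\item Use a strip-supported $x$-independent subsolution, such as $\kappa\bigl(\max\{\cos\frac{\pi y}{2l},0\},\frac{\n}{\m}\bigr)$ with $d(\frac{\pi}{2l})^2<M_1-1$ and $\kappa$ small; this needs only your strip bound. Its $x$-independent flow increases to a positive bounded steady state with $\m W=\n V(0)$, hence $V'(0)=0$. A phase-plane argument then shows the only such solution of $-dV''=V\bigl(M_1-1-(M_2+1)V\bigr)$ is the constant.
\item Bypass the Liouville step altogether. Undoing the moving frame, $(V_\infty,W_\infty)$ becomes an entire solution of \eqref{predator sys} with $\delta=M_1-1$, $k=M_2+1$, lying at $t=0$ above a nontrivial compactly supported datum. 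Comparison and Remark~\ref{remark26}, evaluated at the points $(-x+ct,y)$ with $c<\vs_0(\frac{\pi}{2})$, then give $(V_\infty,W_\infty)\ge\frac{M_1-1}{M_2+1}(1,\frac{\n}{\m})$ directly.
\end{itemize}
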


\begin{proof} 
	We prove \eqref{316}, and \eqref{lem382} can be proven similarly. Fix $\e>0$ and $0<\gamma\leq 1$. It is sufficient to show that for any $l>0$, there holds
\[
	\liminf_{t\to+\infty}(v_{\gamma,c}(-x+ct,y,t),w_{\gamma,c}(-x+ct,t))\geq\dfrac{M_1-1}{M_2+1}\left(1,\dfrac{\n}{\m}\right)\quad 	\text{\rm  uniformly in }  (x,y)\in [-l,l]\times[0,l].
	\] 
Fix $l>0$. Suppose on the contrary that there exist $\varepsilon_0>0$ small enough, $\{(x_n,y_n)\}_{n\in\N}\subset [-l,l]\times[0,l]$, $\{t_n\}_{n\in\N}$ with $t_n\to +\infty$ as $n\to \infty$ such that either 
	\begin{equation}\label{www1}
		 v_{\gamma,c}(-x_n+ct_n,y_n,t_n)\leq \dfrac{M_1-1}{M_2+1} -\varepsilon_0,\quad \forall \ n\in\N,
\end{equation}
or 
	\[
		 w_{\gamma,c}(-x_n+ct_n,t_n)\leq \dfrac{\nu(M_1-1)}{\mu(M_2+1)} -\varepsilon_0, \quad \forall \ n\in\N.
	\]
Without loss of generality, we assume that the former holds. Moreover, we assume that $x_n\to \bar{x}$ and $y_n\to \bar{y}$ as $n\to \infty$.	We define
	\[
		v^n_{\gamma}(x,y,t):=v_{\gamma,c}\left(-x+c(t+t_n),y,t+t_n\right),\quad w^n_{\gamma}(x,y,t):=w_{\gamma,c}\left(-x+c(t+t_n),t+t_n\right)
	\]
	for all $(x,y)\in\ov{\O}_{0},t\geq T_{\frac{\pi}{2},c}-t_n$.
	Then there exist a subsequence of $\left\{ t_n \right\}$ (still denoted by $\left\{ t_n \right\}$), and functions $v^{\infty}_{\gamma}\in C^{2+\alpha,1+\frac{\alpha}{2}}(\O_{0}\times \R)\cap C^{1+\alpha,\frac{1+\alpha}{2}}(\ov{\O}_{0}\times\R)$, $w^{\infty}_{\gamma}\in C^{2+\alpha,1+\frac{\alpha}{2}}(\R\times \R)$ such that
	\[
		\lim_{n \to \infty}\left(v^n_{\gamma},w^n_{\gamma}\right)(x,y,t)=\left(v^{\infty}_{\gamma},w^{\infty}_{\gamma}\right)(x,y,t)
	\]
	locally uniformly in $(x,y,t)\in\ov{\O}_{0}\times \mathbb{R}$. Since for any $(x,y)\in\O_{0}$ and $t\in\R$, there is $N$ large enough such that
	\[
		\left(x-c\left(t+t_n\right),y\right)\cdot(\sin\vt,\cos\vt)
		\leq  \left|\left(x-c\left(t+t_n\right),y\right)\right|
		\leq  c\left(\frac{\pi}{2}\right)(t+t_n)-m\leq \vs_{*}(t+t_n)-m,
	\]
	for any $\vt\in\left[-\frac{\pi}{2},\frac{\pi}{2}\right]$ and $n\geq N$,
	we have
	\[
		\lim_{n \to \infty}\chi\left(x-c\left(t+t_n\right),y,t+t_n\right)= 1 \qquad\text{locally uniformly in}~(x,y,t)\in\Omega_{0}\times \R. 
	\]
	Thus the limit function pair $\left(v^{\infty}_{\gamma}(x,y,t),w^{\infty}_{\gamma}(x,t)\right)$ satisfies 	
	\begin{equation}\label{sys319}
		\left\{
			\begin{aligned}
				&\p_t v-d\D v+c\p_x v=\big[M_1-1-(1+M_2)v\big]v,&& \quad(x,y)\in\O_{0},t\in\R,&\\
				&\p_t w-D \p_{xx}w+c\p_x w=-\m w+\n v(x,0,t),&&\quad x\in\R,t\in\R,&\\
				& -d\p_y v(x,0,t)=\m w(x,t)-\n v(x,0,t),&&\quad x\in\R,t\in\R.&
			\end{aligned}
		\right.
	\end{equation}
	Note that $(v_{\gamma,c}(x,y,t),w_{\gamma,c}(x,t))\geq \g\left(\underline{\phi}_{\frac{\pi}{2},c}(x,y,t),\underline{\psi}_{\frac{\pi}{2},c}(x,t)\right)$ for $(x,y)\in\ov{\O}_{0},t\geq T_{\frac{\pi}{2},c}$, we have
	\[
		\left(v^{\infty}_{\gamma}(x,y,t),w^{\infty}_{\gamma}(x,t)\right)\geq \g\left(\tilde{\phi}_{\frac{\pi}{2},c}(x,y),\tilde{\psi}_{\frac{\pi}{2},c}(x)\right),\qquad \forall (x,y)\in\ov{\O}_{0},t\in\R.
	\]
	Take $t_0\in\R$ and let $(\un{v},\un{w})$ be the solution of  system \eqref{sys319} on $\ov{\O}_{0}\times (t_0,+\infty)$ with initial data $(\un{v},\un{w})|_{t=t_0}=\gamma\left(\un{\phi}_{\frac{\pi}{2},c}(x,y),\un{\psi}_{\frac{\pi}{2},c}(x)\right)$. Then, we see that
	\[
		\left(v^{\infty}_{\g}(x,y,t),w^{\infty}_{\g}(x,t)\right)\geq (\un{v}(x,y,t+t_0),\un{w}(x,t+t_0)),\qquad \forall \left(x,y\right)\in\ov{\O}_{0},t\geq 0,
	\]
	$(\un{v},\un{w})$ is strictly increasing in $t$, and $(\un{v},\un{w})$ converges to  a solution $\left(\un{V},\un{W}\right)$ of the system
	\[
		\left\{
			\begin{aligned}
				&-d\D v+c\p_x v=\big[M_1-1-(1+M_2)v\big]v,&& \quad(x,y)\in\O_{0},&\\
				&-D \p_{xx}w+c\p_x w=-\m w+\n v(x,0),&&\quad x\in\R,&\\
				& -d\p_y v(x,0)=\m w(x)-\n v(x,0),&&\quad x\in\R.&
			\end{aligned}
		\right.
	\]
	Following the same proof process as in \cite[Theorem 1.1]{berestycki+2013}, we have $\left(\un{V},\un{W}\right)\equiv \frac{M_1-1}{M_2+1}\left(1,\frac{\n}{\m}\right)$. 
	Therefore,
	\[
		\left(v^{\infty}_{\g}(x,y,0),w^{\infty}_{\g}(x,0)\right) 
			 \geq \lim_{t_0 \to +\infty}(\un{v}(x,y,t_0),\un{w}(x,t_0))
			 = \dfrac{M_1-1}{M_2+1}\left(1,\frac{\n}{\m}\right)
\]
	locally uniformly in $(x,y)\in\ov{\O}_{0}$. On the other hand, it follows from \eqref{www1} that $v^{\infty}_{\g}(\bar{x},\bar{y},0)\leq \dfrac{M_1-1}{M_2+1}-\varepsilon_0$. There is a contradiction. This completes the proof.
	\end{proof}	

\begin{theorem}\label{thm64}
	Suppose that {\rm \ref{a1}} holds. Let $(v,w)$ be the solution of \eqref{62}. Then for any
	$0\leq c<c(\frac{\pi}{2})$ 
	and $l>0$, we have
	\[
		\liminf_{t\to\infty}\inf_{\substack{|x|\leq c(t-T_0)\\0\leq y<l}}(v(x,y,t),w(x,t))\geq\frac{M_1-1}{M_2+1}\left(1,\frac{\n}{\m}\right).
	\]
\end{theorem}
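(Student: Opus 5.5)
We follow the scheme of \cite[Theorem 4.1]{berestycki+2013}: first show the solution becomes uniformly positive in a fixed neighbourhood of any point, then slide the compactly supported generalized subsolutions of Lemma \ref{a2} along the road, and finally use the preceding lemma on \eqref{316}--\eqref{lem382}. Fix $0\le c<c(\frac{\pi}{2})$ and $l>0$. Pick $c'\in(c,c(\frac{\pi}{2}))$ as given by Lemma \ref{a2} with $\e=c(\frac{\pi}{2})-c'$, and denote by $(\un{\phi}_{\pm\frac{\pi}{2},c'},\un{\psi}_{\pm\frac{\pi}{2},c'})$ the associated compactly supported subsolutions, defined on $t\ge T_1:=T_{\frac{\pi}{2},c'}$.

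\textbf{Step 1: positivity and placement under the solution.} By the strong maximum principle and the Hopf lemma for the road--field coupling (Lemma \ref{lem33}), the solution $(v,w)$ of \eqref{62} under \ref{a1} is positive for $t>T_0$. Since the subsolutions have compact support, we can choose $\gamma\in(0,1]$ small so that $(v,w)(\cdot,t_1)\ge\gamma(\un{\phi},\un{\psi})(\cdot-x_0,\cdot,t_1)$ for $t_1\ge T_1$ and for every shift $x_0$ in a bounded set. The lower bound of $v,w$ on compacts is positive there. Here we use that \eqref{62} is not translation invariant in $x$, because $\chi$ depends on $|(x,y)|$. We therefore compare with translated subsolutions whose supports stay inside $\tilde\o_t$, where $\chi\equiv 1$. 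This works because $c'<\vs_*$ and the supports lie well inside $\tilde\o_t$ for large $t$ by \eqref{truncated_support}.

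\textbf{Step 2: sweeping.} For each $s\in[-1,1]$, consider the family of initial data $\gamma(\un{\phi}_{\frac{\pi}{2},c'},\un{\psi}_{\frac{\pi}{2},c'})$ placed at time $t_1$ and shifted by $x_0$. The generalized comparison principle Lemma \ref{lem62} gives $(v,w)\ge(v_{\gamma,c'},w_{\gamma,c'})$, suitably shifted. By \eqref{316}, $v_{\gamma,c'}(-x+c't,y,t)$ tends to $\frac{M_1-1}{M_2+1}$ locally uniformly in the moving frame. This yields the lower bound along the ray $x=c't$, and likewise along $x=-c't$ via \eqref{lem382}.

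To fill the whole region $|x|\le c(t-T_0)$, we use a family argument. For a point $(x,y,t)$ with $|x|\le c(t-T_0)$ and $0\le y<l$, write $x=c'(t-\tau)$ with $\tau=t-x/c'\ge t(1-c/c')-O(1)\to\infty$. We then restart the comparison at time $\tau$, using the positivity of $(v,w)$ near the origin at time $\tau$. That positivity must be uniform in $\tau$, i.e.\ $\liminf_{t\to\infty}\inf_{|x|\le R,\,0\le y\le R}v>0$. We get it from Step 2 applied with small $c'$, or from the stationary-in-frame subsolution with $c'$ replaced by a tiny speed, which pins the solution above $\gamma\un{\phi}$ near the origin for all large times.

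\textbf{Main obstacle.} The delicate point is uniformity. The convergence in \eqref{316} is local in the moving frame, but it is uniform with respect to the restart time. This holds because \eqref{316} concerns a fixed solution from a fixed initial datum, and the equation is invariant under time shifts on the region where $\chi=1$; the cutoff $\chi$ is nondecreasing in $t$, so later starts only help. We then combine this with the upper bound of Theorem \ref{thm63}, which is not needed for the $\liminf$, and conclude that the infimum over $|x|\le c(t-T_0)$, $0\le y<l$ is at least $\frac{M_1-1}{M_2+1}(1,\frac{\nu}{\mu})-\epsilon$ for large $t$. The $w$-component follows in the same way, since it is included in \eqref{316}.
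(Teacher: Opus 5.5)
Your route (restarting a right-moving subsolution from a fixed neighbourhood of the origin at time $\tau=t-x/c'$) differs from the paper's, and as written it has a genuine gap in two places.

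First, the scheme needs a time-uniform bound $\liminf_{t\to\infty}\inf_K (v,w)>0$ on a fixed compact $K$ touching the road. Your Step~1 choice of one $\gamma$ valid ``for $t_1\ge T_1$'' is exactly this unproved uniformity; positivity at each fixed time gives nothing uniform. Neither source you suggest provides it:
\begin{itemize}
\item Lemma~\ref{a2} only asserts the existence of \emph{some} speed in $(c(\vartheta)-\e,c(\vartheta))$. It never gives a prescribed small speed, and never speed zero.
\item A subsolution with positive speed, however tiny, is stationary only in a moving frame and leaves every compact set. So it cannot pin $v$ above $\gamma\un{\phi}$ near the origin for all large times.
\item Your Step~2 only controls neighbourhoods of the rays $x=\pm c't$.
\end{itemize}
You would need a separate speed-zero construction, for instance a principal Dirichlet eigenfunction in a large ball inside the field with $w$-component $0$, valid once $\chi\equiv 1$ there. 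You would then also need to transfer that positivity to a neighbourhood of the road, including $w$.

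Second, even granting such a bound, \eqref{316} requires the \emph{elapsed} time $t-\tau=|x|/c'$ to tend to infinity, not merely $\tau$. For bounded $x$ (e.g.\ $x=0$) the elapsed time stays bounded, and you learn nothing. Moving the restart point back to $x-c'S$ requires positivity on a compact of size $\sim c'S$. That forces a smaller $\gamma$, hence a longer convergence time in \eqref{316}, so the choice of $S$ becomes circular.

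The missing idea is the paper's two-way sweep, which never needs positivity at a fixed location. Because \eqref{316} concerns one fixed solution $v_{\gamma_1}$ in its own moving frame, there are $T$ and $\gamma_2>0$ such that, for every $\tau\ge T$, $(v,w)$ lies above $\gamma_2$ times the \emph{left}-moving subsolution $(\un{\phi}_{-\frac{\pi}{2},r},\un{\psi}_{-\frac{\pi}{2},r})$ placed at the current front of the right-moving one.

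For a target point $-\xi+r\widetilde T$ with $\xi=r(2\tau-T_1)$ and $\tau\in[T,(\widetilde T+T_1)/2]$, one restarts $\tilde v_{\gamma_2}$ from that front. Its time-translate is a subsolution for two reasons: $\chi$ is nondecreasing under the relevant space--time shift, and $\tilde v_{\gamma_2}\le\frac{M_1-1}{M_2+1}<\frac{M_1}{M_2}$. Your claim that ``later starts only help'' also silently needs this second bound, since only then is the reaction nondecreasing in $\chi$.

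The elapsed time $\widetilde T-\tau+T_1\ge(\widetilde T+T_1)/2$ then tends to infinity uniformly over all targets in $[0,r(\widetilde T-2T+T_1)]$, including those near the origin. So \eqref{lem382} applies everywhere in the region at once.
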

\begin{proof}
	Fix $0\leq c<c(\frac{\pi}{2})$ and take $r\in\left(c,c(\frac{\pi}{2})\right)$ be the value such that \eqref{62} admits generalized subsolutions $\left(\un{\phi}_{\frac{\pi}{2},r}(x,y,t),\un{\psi}_{\frac{\pi}{2},r}(x,t)\right)$ and $\left(\un{\phi}_{-\frac{\pi}{2},r}(x,y,t),\un{\psi}_{-\frac{\pi}{2},r}(x,t)\right)$ on $\ov{\O}_0\times \left[T_{1},+\infty\right)$, where $T_1:=T_{\frac{\pi}{2},r}$.
	For any $0<\g\leq 1$, let \(\left(v_{\gamma},w_{\gamma}\right)\) and \(\left(\tilde{v}_{\gamma},\tilde{w}_{\gamma}\right)\) be solutions of \eqref{62} on \(\overline{\Omega}_0\times \left[T_{1},+\infty\right)\), with initial data
\(\gamma\left(\underline{\phi}_{\frac{\pi}{2},r}(x,y,T_{1}),\underline{\psi}_{\frac{\pi}{2},r}(x,T_{1})\right)\)
and
\(\gamma\left(\underline{\phi}_{-\frac{\pi}{2},r}(x,y,T_{1}),\underline{\psi}_{-\frac{\pi}{2},r}(x,T_{1})\right)\), respectively. Then

Choose $0<\gamma_1\leq 1$ sufficiently small such that
	\[
		\begin{aligned}
			&\left(v_{\gamma}(-x+r t,y,t),w_{\gamma}(-x+r t,t)\right)\big|_{t=T_{1}}\\
			=&\gamma\left(\un{\phi}_{\frac{\pi}{2},r}(-x+r T_{1},y,T_{1}),\un{\psi}_{\frac{\pi}{2},r}(-x+r T_{1},T_{1})\right)\\
			\leq&\left(\min\left\{ v(x,y,T_{1}+1),\frac{M_1-1}{M_2+1} \right\},\min\left\{w(x,T_{1}+1),\frac{\nu(M_1-1)}{\mu(M_2+1)}\right\}\right), \quad \forall\, 0<\gamma\leq \gamma_1.
		\end{aligned}
	\]
	Then by Lemma \ref{lem61}, we have $v_{\gamma_1}(-x+r t,y,t)\leq \frac{M_1-1}{M_2+1}<\frac{M_1}{M_2}$ in $\ov{\O}_{0}\times [T_{1},+\infty)$, which implies that for any $(x,y)\in\ov{\O}_{0}$ and $t>T_{1}+1$, $v_{\gamma_1}(x+r(t-1),y,t-1)$ fulfills
	\[
		\big[-1+(M_1-M_2 v)\chi(-x+rt,y,t)-v\big]v\geq \big[-1+(M_1-M_2 v)\chi(-x+r(t-1),y,t-1)-v\big]v
	\]
	since $\chi(x,y,\cdot)$ is nondecreasing in $(T_0,+\infty)\supset \left(T_{1},+\infty\right)$.
	Hence using the comparison principle (namely, Lemma \ref{lem33}), we have
	\begin{equation}\label{318}
		(v(-x+rt,y,t),w(-x+rt,t))\geq \left(v_{\gamma_1}(-x+r(t-1),y,t-1),w_{\gamma_1}(-x+r(t-1),t-1)\right)
	\end{equation}
	for all $t\geq T_{1}+1$. On the other hand, from \eqref{316}, one can find $T>T_{1}$ large enough while $\g_2>0$ small enough such that
	\begin{equation}\label{325}
			\left(v_{\g_1}(-x+rt,y,t),w_{\g_1}(-x+rt,t)\right)
			\geq \gamma_2\left(\left\|\un{\phi}_{-\frac{\pi}{2},r}(\cdot,\cdot,T_{1})\right\|_{L^{\infty}(\Omega_0)},\left\|\un{\psi}_{-\frac{\pi}{2},r}(\cdot,T_{1})\right\|_{L^{\infty}(\mathbb{R})}\right)
	\end{equation}
	for all $t\geq T$. Hence,
	fix $\widetilde{T}>\max\left\{ 2T,T_{1} \right\}$ and $r\left(2T-T_{1}\right)\leq \xi\leq r \widetilde{T}$, there is $\tau\in \left[T,\frac{\widetilde{T}+T_{1}}{2}\right]$ such that $\xi=r\left(2\tau-T_{1}\right)$. Using \eqref{318} and \eqref{325}, there holds
	\[
		\begin{aligned}
		 	&\left.\left(v(-x+rt,y,t+T_{1}),w(-x+rt,t+T_{1})\right)\right|_{t=\tau}\\
			\geq & \left(v_{\gamma_1}(-x-r T_{1}+r(\tau+T_{1}-1),y,\tau+T_{1}-1),w_{\g_1}(-x-rT_{1}+r(\tau+T_{1}-1),\tau+T_{1}-1)\right)\\
			\geq & \gamma_2\left(\un{\phi}_{-\frac{\pi}{2},r}(x-2\tau,y,T_{1}),\un{\psi}_{-\frac{\pi}{2},r}(x-2\tau,T_{1})\right)\\
			= & \left.\left(\tilde{v}_{\gamma_2}(x-r(t+\tau),y,t-\tau+T_{1}),\tilde{w}_{\gamma_2}(x-r(t+\tau),t-\tau+T_{1})\right)\right|_{t=\tau}.
		\end{aligned}
	\]

	Since $\psi(\cdot)$ is nonincreasing in $\mathbb{R}$, there holds
\[
  \begin{aligned}
    \chi(-x+r(t+\tau),y,t-\tau+T_{1}) 
    = & \psi\left(|(-x+r(t+\tau),y)|-\vs_{*}(t-\tau+T_{1})+m\right)\\
    \leq & \psi\left(|(-x+rt,y)|-\vs_{*}(t+T_{1})+m+\left(\vs_{*}-r\right)\tau\right)\\
    \leq & \psi\left(|(-x+rt,y)|-\vs_{*}(t+T_{1})+m\right)\\
    = & \chi\left(-x+rt,y,t+T_{1}\right).
  \end{aligned}
\]
In addition, $(0,0)\leq \left(\tilde{v}_{\gamma_2}(x,y,t),\tilde{w}_{\gamma_2}(x,t)\right)\leq \frac{M_1-1}{M_2+1}\left(1,\frac{\nu}{\mu}\right)$.
Therefore,
\[
   \left(\tilde{v}(x,y,t),\tilde{w}(x,t)\right):=\left(\tilde{v}_{\gamma_2}(x-r(t+\tau),y,t-\tau+T_{1}),\tilde{w}_{\gamma_2}(x-r(t+\tau),t-\tau+T_{1})\right)
\]
satisfies
  \begin{align*}
    & \partial_t \tilde{v}-d\Delta\tilde{v} +r\partial_{x}\tilde{v}\\
    = & \left[-1+(M_1-M_2\tilde{v})\chi(x-r(t+\tau),y,t-\tau+T_{1})-\tilde{v}\right]\tilde{v}\\
    \leq & \left[-1+(M_1-M_2\tilde{v})\chi(-x+rt,y,t+T_{1})-\tilde{v}\right]\tilde{v},\qquad \forall (x,y)\in\O_0,t>\tau,
  \end{align*}
  and
  \[
  	\begin{aligned}
		& \partial_t\tilde{w}-D\partial_{xx}\tilde{w}+r\tilde{w}= -\mu \tilde{w}+\nu\tilde{v}(x,0,t),\qquad && \forall x\in\mathbb{R},t>\tau,&\\
  		& -d\partial_y\tilde{v}(x,0,t)=\mu \tilde{w}-\nu\tilde{v}(x,0,t), &&\forall x\in\mathbb{R},t>\tau.&
	\end{aligned}
  \]
Observe that $\left(v(-x+rt,y,t+T_{1}),w(-x+rt,t+T_{1})\right)$ fulfills
  \[
	\left\{\begin{aligned}
		& \partial_t v-d\Delta v+r\partial_x v= \left[-1+(M_1-M_2 v)\chi(x,y,t+T_{1})-v\right]v,\qquad && \forall (x,y)\in\O_0,t>\tau,\\
		& \partial_t w-D\partial_{xx}w+r\partial_x w= -\mu w+\nu v(x,0,t),\qquad && \forall x\in\mathbb{R},t>\tau,&\\
  		& -d\partial_yv(x,0,t)=\mu w-\nu v(x,0,t), &&\forall x\in\mathbb{R},t>\tau.&
	\end{aligned}\right.
  \]
Therefore, the comparison principle implies that
	\[
		\left(v(-x+rt,y,t+T_{1}),w(-x+rt,t+T_{1})\right)\geq \left( \tilde{v}(x,y,t),\tilde{w}(x,t)\right),\qquad \forall (x,y)\in\ov{\O}_0, t\geq \tau.
	\]
	In particular, for any $y\geq 0$, one has
	\[
		\begin{aligned}
			& \left(v\left(-\xi+r\widetilde{T},y,\widetilde{T}+T_{1}\right),w\left(-\xi+r\widetilde{T},\widetilde{T}+T_{1}\right)\right)\\
			\geq &  \left( \tilde{v}(\xi,y,\widetilde{T}),\tilde{w}(\xi,\widetilde{T})\right)\\
			= & \left(\tilde{v}_{\gamma_2}\left(-r\lambda(\widetilde{T},\tau),y,\lambda(\widetilde{T},\tau)\right),\tilde{w}_{\gamma_2}\left(-r\lambda(\widetilde{T},\tau),\lambda(\widetilde{T},\tau)\right)\right),
		\end{aligned}
	\]
	where $\lambda\left(\widetilde{T},\tau\right):=\widetilde{T}-\tau+T_{1}$. Fix $l>0$. Then, combined with \eqref{316}, this inequality ensures that
	\[
	\begin{aligned}
		& \liminf_{t\to\infty}\inf_{\substack{0\leq x\leq c(t-T_0)\\0\leq y<l}}(v(x,y,t),w(x,t))\\
		\geq & \liminf_{\widetilde{T} \to \infty}\inf_{\substack{r\left(2T-T_{1}\right)\leq \xi\leq r \widetilde{T}\\0\leq y<l}}\left(v(-\xi+r\widetilde{T},y,\widetilde{T}+T_{1}),w(-\xi+r\widetilde{T},\widetilde{T}+T_{1})\right)\\
		\geq & \liminf_{\widetilde{T} \to \infty}\inf_{\substack{T\leq\tau\leq \left(\widetilde{T}+T_{1}\right)/2\\0\leq y<l}}\left(\tilde{v}_{\gamma_2}\left(-r\lambda\left(\widetilde{T},\tau\right),y,\lambda\left(\widetilde{T},\tau\right)\right),\tilde{w}_{\gamma_2}\left(-r\lambda\left(\widetilde{T},\tau\right),\lambda\left(\widetilde{T},\tau\right)\right)\right)\\
		\geq & \frac{M_1-1}{M_2+1}\left(1,\frac{\n}{\m}\right),
	 \end{aligned}
	\]
	By handling the negative values of $x$ via reflection with respect to $\left\{ 0 \right\}\times \mathbb{R}$, and then using the arbitrariness of $c\in \left[0,c(\frac{\pi}{2})\right)$, we obtain the desired result. The proof is completed.
\end{proof}

\vspace{1.5ex}
It follows from Theorems \ref{thm63} and \ref{thm64} that for any $0\leq c<c(\frac{\pi}{2})$ and $l>0$, there holds
\begin{equation}\label{65}
	\lim_{t\to\infty}\sup_{\substack{|x|\leq c(t-T_0)\\0\leq y<l}}\left|(v(x,y,t),w(x,t))-\frac{M_1-1}{M_2+1}\left(1,\frac{\n}{\m}\right)\right|=0.
\end{equation}
\begin{theorem}\label{thm65}
	Suppose that {\rm \ref{a1}} holds. Let $(v,w)$ be the solution of \eqref{62}. Then for any $\vt\in\left[-\frac{\pi}{2},\frac{\pi}{2}\right]$, the spreading speed of \eqref{62} along $(\sin \vt, \cos\vt)$ is not smaller that $c(\vt)$, namely, for any $\e>0$, there holds
	\[
		\lim_{t \to \infty}\sup_{\substack{(x,y)\in\ov{\O}_{0}\\{\rm dist}\left(\frac{1}{t}(x,y),\ov{\O}_{0}\setminus \mathcal{W}\right)>\e}}\left|v(x,y,t)-\frac{M_1-1}{M_2+1}\right|=0.
	\]
 Here $\mathcal{W}:=\left\{r(\sin\vt,\cos\vt): \vt\in\left[-\frac{\pi}{2},\frac{\pi}{2}\right], 0\leq r \leq c(\vt) \right\}$.
\end{theorem}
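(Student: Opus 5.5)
We split the claim into an upper bound and a lower bound, both uniform over directions. For the upper bound we use Theorem \ref{thm63}, which is stated only near the road, and its proof carries over verbatim to every point of $\ov{\O}_0$: any limit of translates of $(v,w)$ solves \eqref{313} (on the half plane, or on the whole plane if $y_n\to+\infty$) and lies below $(\ov V,\ov W)$. The monotone supersolution argument launched from $(\ov V,\ov W)$ then gives
\[
\limsup_{t\to\infty}\sup_{\ov{\O}_0}v\le \frac{M_1-1}{M_2+1}.
\]
Since $\chi\le 1$, we could even compare $(v,w)$ directly with the solution of \eqref{314}, which has the same reaction with $\chi\equiv1$. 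It therefore remains to prove the lower bound
\[
\liminf_{t\to\infty}\inf v\ge\frac{M_1-1}{M_2+1},
\]
with the infimum taken over $\{(x,y):{\rm dist}(\frac{1}{t}(x,y),\ov{\O}_0\setminus\mathcal W)>\e\}$.

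For the lower bound we argue by contradiction. Suppose there are $t_n\to\infty$ and $(x_n,y_n)$ with ${\rm dist}(\frac{1}{t_n}(x_n,y_n),\ov{\O}_0\setminus\mathcal W)>\e$ and $v(x_n,y_n,t_n)\le\frac{M_1-1}{M_2+1}-\e_0$. Write $\frac{1}{t_n}(x_n,y_n)=r_n(\sin\vt_n,\cos\vt_n)$ and extract a subsequence with $\vt_n\to\vt_\infty$. Then $r_n\le c(\vt_n)-\e'$ for some $\e'>0$, and continuity of $c(\cdot)=\min\{\vs_*,\vs_0(\cdot)\}$ (inherited from the $C^1$ regularity of $\vr^*$) gives $r_n\le c(\vt_\infty)-\e'/2$ for $n$ large.

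The key step is to show that the solution is bounded below by a fixed positive constant $\eta$ near the points $(x_n,y_n,t_n)$. For this we use Lemma \ref{a2} in the directions $\vt$ close to $\vt_\infty$, with speed $c_\vt\in(c(\vt)-\e'/4,c(\vt))$.
\begin{itemize}
\item We first place a small multiple $\gamma$ of $(\un\phi_{\vt,c},\un\psi_{\vt,c})$ under $v(\cdot,\cdot,T_1+1)$ after a suitable translation. Positivity of $v$ on compacts by the strong maximum principle, together with \ref{a1}, makes this possible.
\item By Lemma \ref{lem33}, the point $(\usim x,\usim y)+c_\vt t(\sin\vt,\cos\vt)$ then carries a value $\ge\gamma\un\phi_{\vt,c}(\usim x,\usim y,T)>0$.
\item We combine these rays over all $\vt$ with the radii $r\in[0,c_\vt]$. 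For the intermediate radii we restart the subsolution from a point already reached, exactly as in the "two-step" argument of Theorem \ref{thm64}: first travel along the road or along a ray at speed $c_\vt$, then launch a new small subsolution, using the monotonicity of $\chi$ in $t$ to handle the time shift. This gives $v\ge\eta>0$ on a neighbourhood of $(x_n,y_n,t_n)$ of radius $R$ for arbitrary fixed $R$.
\end{itemize}
One additional point is needed here: the case $\vt_\infty=\pm\frac{\pi}{2}$ together with the near-road region is already covered by \eqref{65}.

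Given this positive lower bound, we pass to the limit along translates $(v,w)(\cdot+x_n,\cdot+y_n,\cdot+t_n)$. As in the proof of Lemma \ref{a2}, the point $(x_n,y_n)$ lies at distance of order $\e t_n$ inside $\tilde\o_{t_n}$, so $\chi\to1$ locally uniformly. The limit is therefore an entire solution of \eqref{313}, posed on the half plane if $y_n$ stays bounded and on $\R^2$ with the logistic equation if $y_n\to\infty$. By the previous step this limit is bounded below by $\eta$ on all compacts. We compare it from below with the solution of \eqref{313} started at time $t_0$ from the constant $\eta(1,\frac{\n}{\m})$; this solution increases to $\frac{M_1-1}{M_2+1}(1,\frac{\n}{\m})$ by \cite[Proposition 4.1]{berestycki+2013}. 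Letting $t_0\to-\infty$ gives a value $\ge\frac{M_1-1}{M_2+1}$ at the origin, which contradicts the assumed inequality.

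The main obstacle is the uniform positive lower bound on expanding neighbourhoods, i.e.\ turning directional subsolutions moving at speed $c_\vt$ into positivity on the whole cone $r<c(\vt)$ uniformly in $\vt$. The uniformity needs finitely many directions by compactness, plus continuity of $c(\cdot)$. If the restart argument proves awkward, the first alternative we would try is to use convexity of $\mathcal W$, which holds since both $\mathcal W_{\rho}$ and the ball of radius $\vs_*$ are convex and $\mathcal W$ is their intersection. Any interior point of $t\mathcal W$ then lies on a segment between a point reached along a ray and the origin, and we would use the time-shift trick of Theorem \ref{thm64} along that segment.
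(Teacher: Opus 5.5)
Your proposal is correct and follows essentially the paper's argument. The paper's Step 1 is your ``travel along the road via \eqref{65} for time $\lambda t$, then launch the directional subsolution of Lemma \ref{a2} for time $(1-\lambda)t$'' construction; it produces a relatively open set $\mathcal A\supset\{r(\sin\vt,\cos\vt):0\le r\le c(\vt)-\e\}$ on which $v(\cdot+\tilde x,\cdot+\tilde y,t)$ stays above a fixed $h_{\vt,\e}>0$, and Step 2 is your compactness argument, in which the limit solves the $\chi\equiv1$ problem, is bounded below, and therefore equals $\frac{M_1-1}{M_2+1}$. The differences are minor: openness of $\mathcal A$ lets the paper use only the limit direction $\vt_0$ rather than finitely many directions, bounded $y_n$ is handled directly by \eqref{65}, and your explicit upper bound supplies what the paper tacitly assumes when it takes $\eta\le\frac{M_1-1}{M_2+1}$.
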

\begin{proof}
We divide the proof into two steps.

 {\bf Step 1.} We claim: {\it For any $\e\in\left(0,2\sqrt{d(M_1-1)}\right), \vt\in\left[-\frac{\pi}{2},\frac{\pi}{2}\right]$, there exist a point $\left(\tilde{x},\tilde{y}\right)\in\ov{\O}_{0}$ and an open bounded set $\mathcal{A}$ in the relative topology of $\ov{\O}_{0}$ such that
\[
	\mathcal{A}\supset  \left\{ r(\sin\vt,\cos\vt):0\leq r \leq c(\vt)-\e\right\}, \qquad \inf_{\substack{t\geq T_0+1\\ (x,y)\in t\mathcal{A}}}v(x+\tilde{x},y+\tilde{y},t)=:h_{\vt,\e}>0.
\]
}

Fix $\vt\in\left[-\frac{\pi}{2},\frac{\pi}{2}\right]$, $\e\in(0,c(\vt))$. From Lemma \ref{a2}, one can choose $c\in (c(\vt)-\e,c(\vt))$ be a constant such that \eqref{62} admits a generalized subsolution $\left(\un{\phi}_{\vt,c}(x,y,t),\, \un{\psi}_{\vt,c}(x,t)\right)$. Let $T_{\vt,c}\geq T_0$ be the constant defined in Lemma \ref{a2}. For the case $\vt\not=\pm\frac{\pi}{2}$, define $\d:=\frac{c-c(\vt)+\e}{2c}\in\left(0,\frac{1}{2}\right)$. Up to multiplying $\left(\un{\phi}_{\vt,c},\un{\psi}_{\vt,c}\right)$ by a small constant $\kappa>0$, we can assume that
\[
\left(\sup_{\ov{\O}_{0}}\un{\phi}_{\vt,c}(\cdot,\cdot,T_{\vt,c}),\, \sup_{\mathbb{R}}\un{\psi}_{\vt,c}(\cdot,T_{\vt,c})\right)<\frac{M_1-1}{M_2+1}\left(1,\frac{\n}{\m}\right).\]
Meanwhile, from \eqref{65}, we have that there exists $\tau\geq \max \left\{\frac{T_{\vt,c}+1}{\d},\frac{2c\left(\frac{\pi}{2}\right)T_{\vt,c}}{\e\d}\right\}$ large enough such that for all $|c^{\prime}|\leq c\left(\vt\right)-\frac{\e}{2}$ and  $\lambda\in(\delta,1]$, there holds
\[
	\left(v(x+c^{\prime}\lambda t,y,\lambda t),w(x+c^{\prime}\lambda t,\lambda t)\right)>\left(\un{\phi}_{\vt,c}(x,y,T_{\vt,c}),\un{\psi}_{\vt,c}(x,T_{\vt,c})\right),\qquad \forall (x,y)\in\ov{\O}_{0}, t\geq \tau.
\]
Recall that $\chi(x,y,t)=\psi\left(|(x,y)|-\vs_{*}t+m\right)$, and $\psi(\cdot)$ is nonincreasing. We have $\chi(x,y,s+T_{\vt,c})\leq \chi\left(x+c^{\prime}\lambda t,y,\lambda t+s\right)$ for all $t\geq \tau, s\geq 0$ by noticing that
\[
	\begin{aligned}
		&\left|(x+c^{\prime}\lambda t,y)\right|-\vs_{*}\left(\lambda t+s\right)+m \\
		\leq & |(x,y)|-\vs_{*}(s+T_{\vt,c})+m+|c^{\prime}|\lambda t-\vs_{*}(\lambda t-T_{\vt,c})\\
		\leq & |(x,y)|-\vs_{*}(s+T_{\vt,c})+m,\qquad \forall\, t\geq \tau,\, s\geq 0.
	\end{aligned}
\]
In addition, Lemma \ref{lem62} gives that
\[
	\left(\un{\phi}_{\vt,c}(x,y,s+T_{\vt,c}),\un{\psi}_{\vt,c}(x,s+T_{\vt,c})\right)\leq \frac{M_1-1}{M_2+1}\left(1,\frac{\nu}{\mu}\right)< \frac{M_1}{M_2}\left(1,\frac{\nu}{\mu}\right), \quad \forall (x,y)\in\ov{\O}_{0}, s\geq 0.
\]
Therefore,
\[
	\begin{aligned}
		& \un{\phi}_{\vt,c}(x,y,s+T_{\vt,c})\left[ -1+(M_1-M_2\un{\phi}_{\vt,c}(x,y,s+T_{\vt,c}))\chi(x,y,s+T_{\vt,c})-\un{\phi}_{\vt,c}(x,y,s+T_{\vt,c}) \right]\\
		\leq & \un{\phi}_{\vt,c}(x,y,s+T_{\vt,c})\left[ -1+(M_1-M_2\un{\phi}_{\vt,c}(x,y,s+T_{\vt,c}))\chi(x+c^{\prime}\lambda t,y,\lambda t+s)-\un{\phi}_{\vt,c}(x,y,s+T_{\vt,c}) \right].
	\end{aligned}
\]
Applying the generalized comparison principle (that is Lemma \ref{lem62}) again, we have that for any $t\geq \tau$, there holds
\[
	v(x+c^{\prime}\lambda t,y,\lambda t+s)\geq \un{\phi}_{\vt,c}(x,y,s+T_{\vt,c}),\qquad \forall (x,y)\in\ov{\O}_{0},\ s\geq 0.
\]
It implies that
\begin{equation}\label{321}
	\begin{aligned}
		&v\left(\check{x}+c[(1-\lambda)t+T_{\vt,c}]\sin\vt+c^{\prime}\lambda t,\check{y}+c[(1-\lambda)t+T_{\vt,c}]\cos\vt,t\right)\\
		\geq&\un{\phi}_{\vt,c}\left(\check{x}+c[(1-\lambda)t+T_{\vt,c}]\sin\vt,\check{y}+c[(1-\lambda)t+T_{\vt,c}]\cos\vt,(1-\lambda)t+T_{\vt,c}\right)\\
		=&\un{\phi}_{\vt,c}(\check{x},\check{y},T_{\vt,c})>0,\qquad \forall t\geq \tau
	\end{aligned}
\end{equation}
by taking $(x,y)=(\check{x},\check{y})+c[(1-\lambda)t+T_{\vt,c}](\sin\vt,\cos\vt)\in \ov{\O}_{0}$ and $s=(1-\lambda)t$. Hence, letting
\begin{equation}\label{expansion_subdomain}
	\begin{aligned}
		&\left(\tilde{x},\tilde{y}\right)=\left(\check{x},\check{y}\right)+cT_{\vt,c}(\sin\vt,\cos\vt),\\
		&\mathcal{A} :=\left\{\left(c(1-\lambda)\sin\vt+c^{\prime}\lambda,c(1-\lambda)\cos\vt\right):\d<\lambda\leq 1,\left|c^{\prime}\right|\leq c\left(\vt\right)-\frac{\e}{2}\right\},
	\end{aligned}
\end{equation}
\eqref{321} yields that                            
\begin{equation}\label{322}
	\inf_{\substack{t\geq\tau\\(x,y)\in t\mathcal{A}}}v(x+\tilde{x},y+\tilde{y},t)>0.
\end{equation}
Furthermore, by taking $c^{\prime}=0$ and restricting $\lambda$ to $\left[2\d,1\right]$, one obtains
\[
	\mathcal{A} \supset \left\{\left(c(1-\lambda)\sin\vt,c(1-\lambda)\cos\vt\right):2\d\leq\lambda \leq 1\right\}=\left\{r(\sin\vt,\cos\vt):0\leq r\leq c(\vt)-\e\right\}.
\]

When $\vt=\pm\frac{\pi}{2}$, $\mathcal{A}$ defined in \eqref{expansion_subdomain} satisfies
\[
	\mathcal{A}\supset\left[-c\left(\frac{\pi}{2}\right)+\frac{\e}{2},c\left(\frac{\pi}{2}\right)-\frac{\e}{2}\right]\times \{0\}\supset\left\{r(\sin\vt,\cos\vt):0\leq r\leq c(\vt)-\e\right\}.
\]
Combined with \eqref{65}, it guarantees that \eqref{322} still holds.
We complete the proof of Step 1 by noting that $v>0$ on compact subsets of $\ov{\O}_{0}\times [T_0+1,\tau]$.

\vspace{0.5em}
{\bf Step 2.} We finish the proof of Theorem \ref{thm65}.

Fix $\e_0>0$. Take sequences $\left\{(x_n,y_n)\right\}\subset \ov{\O}_{0}$ and $\{t_n\}$ satisfying
\[
	\lim_{n \to \infty}t_n=+\infty,\qquad \text{and}\qquad  {\rm dist}\left(\frac{1}{t_n}(x_n,y_n),\ov{\O}_{0}\setminus \mathcal{W}\right)>\e_0, \quad \forall n\in\N.
\]
Up to a subsequence, $\displaystyle \lim_{n \to \infty}v(x_n,y_n,t_n)=\eta$ for some $\eta\in\left[0,\frac{M_1-1}{M_2+1}\right]$. In what follows, we devote ourselves to proving $\eta=\frac{M_1-1}{M_2+1}$.

Suppose that $\{y_n\}$ admits a bounded subsequence $\left\{y_{n_k}\right\}$. Since
\[
\begin{aligned}
	\e_0&\leq \liminf_{k \to \infty}{\rm dist}\left(\frac{1}{t_{n_k}}(x_{n_k},y_{n_k}),\ov{\O}_{0}\setminus \mathcal{W}\right)\\
	&\leq \liminf_{k \to \infty}\left[ {\rm dist}\left(\frac{x_{n_k}}{t_{n_k}},\R\setminus\left[-c\left(\tfrac{\pi}{2}\right),c\left(\tfrac{\pi}{2}\right)\right]\right)+\frac{y_{n_k}}{t_{n_k}}\right]\\
	&= \liminf_{k \to \infty}\left[{\rm dist}\left(\frac{x_{n_k}}{t_{n_k}},\R\setminus\left[-c\left(\tfrac{\pi}{2}\right),c\left(\tfrac{\pi}{2}\right)\right]\right)\right],
\end{aligned}
\]
we have $\left|x_{n_k}\right|\leq c\left(\frac{\pi}{2}\right)-\frac{\e_0}{2}t_{n_k}$ for $k$ large enough. It then follows from \eqref{65} that $\eta=\frac{M_1-1}{M_2+1}$.

Suppose that $ y_{n}\to +\infty$ as $n\to\infty$. We denote $\frac{1}{t_n}(x_n,y_n)=r_n(\sin\vt_{n},\cos\vt_{n})$ for some $r_n\in[0,c(\vt_{n})-\e_0]$. Recall that $\vs_0(\cdot)$ is of class $C^1\left(\left[-\frac{\pi}{2},\frac{\pi}{2}\right]\right)$, thus $c(\cdot)$ is Lipschitz continuous. Therefore, without of loss generality, we assume that there exists $\vt_0\in\left[-\frac{\pi}{2},\frac{\pi}{2}\right]$ and $r_0\in[0,c(\vt_0)-\e_0]\subset [0,\vs_{*}-\e_0]$ such that \[
	\lim_{n \to \infty}\vt_n=\vt_0,\quad \lim_{n \to \infty}r_n=r_0.
\]
Let $v_n(x,y,t):=v(x+x_n,y+y_n,t+t_n)$, then up to a subsequence, $v_n$ converges to a function $v_{\infty}\in C^{2+\alpha,1+\frac{\alpha}{2}}(\R^{2}\times \R)$. Notice that
\[
	\lim_{n \to \infty}\frac{1}{t+t_n}(x+x_n,y+y_n)=r_0(\sin\vt_0,\cos\vt_0) \quad \text{locally uniformly in } (x,y,t)\in\mathbb{R}^{3}.
\]
For any $(x,y,t)\in\mathbb{R}^{3}$, there is $N_1>0$ large enough such that
\[
		 (x+x_n,y+y_n)\cdot(\sin\vt,\cos\vt)
		\leq  \left(r_0+\frac{\e}{2}\right)(t+t_n)
		\leq \vs_{*}(t+t_n)-m,\quad \forall n\geq N_1, \vt\in\left[-\frac{\pi}{2},\frac{\pi}{2}\right].
\]
It gives that $v_{\infty}$ is a solution of the following equation
\[
	\p_t v_{\infty}-d\D v_{\infty}=\big[-1+M_1-(M_2 +1)v_{\infty}\big]v_{\infty}, \quad \forall(x,y,t)\in\R^{3}.
\]
On the other hand, we see that
\[
	\lim_{n \to \infty}\frac{1}{t+t_n}(x+x_n-\tilde{x},y+y_n-\tilde{y})=r_0(\sin\vt_0,\cos\vt_0)\in\mathcal{A}\quad \text{locally uniformly in }(x,y,t)\in\mathbb{R}^{3},
\]
and for any $(x,y,t)\in\mathbb{R}^{3}$, there is $N_2=N(x,y,t)>0$ large enough such that $y+y_n-\tilde{y}\geq 0, t+t_n\geq T_0+1$ for $n\geq N_2$. Here we would like to notice that $\mathcal{A}$ and $(\tilde{x},\tilde{y})$ were defined in Step 1 for $\e_0$ and $\vartheta_0$. Thus we have $N_3\geq N_2$ such that
\[
	(x+x_n-\tilde{x},y+y_n-\tilde{y})\in(t+t_n)\mathcal{A},\quad\forall n\geq N_3
\]
because $\mathcal{A}$ is open in $\ov{\O}_{0}$. Hence
\[
	v_n(x,y,t)=v(x+x_n,y+y_n,t+t_n)\geq \inf_{\substack{\tau\geq T_0+1\\ (\xi,\zeta)\in \tau\mathcal{A}}}v(\xi+\tilde{x},\zeta+\tilde{y},\tau)=h_{\vt_0,\e_0}>0.
\]
It infers that $v_{\infty}\geq h_{\vt_0,\e_0}$ in $(x,y,t)\in\R^3$. By comparison with the solution of
\[
	\left\{\begin{aligned}
		& \un{v}^{\prime}=\un{v}\left[M_1-1-(M_2+1)\un{v}\right],\quad t>0,\\
		&\un{v}(0)=h_{\vt_0,\e_0},
	\end{aligned}\right.
\]
we have $v_{\infty}(x,y,t+\tau)\geq\un{v}(t)$ for all $(x,y)\in\mathbb{R}^{2},t\geq 0,\tau\in\mathbb{R}$. Thus
\[
	\eta=v_{\infty}(0,0,0)\geq \lim_{\tau \to \infty}\un{v}(\tau)=\frac{M_1-1}{M_2+1}.
\]
The proof of this theorem is therefore completed.
\end{proof}

\subsection{Upper Bounds of Spreading Speeds}
Under the standing assumption \ref{hy-initial} in this subsection,  we prove that (i) $c^{*}=2\sqrt{d_1}$ is an upper bound for the spreading speed of the prey (i.e., \eqref{u die eq}), and (ii) for any $\vt\in\left[-\frac{\pi}{2},\frac{\pi}{2}\right]$, $\min\left\{c^{*}, \vr^*_{b-1}(\vt)\right\}$ is an upper bound for the predator in the direction $\xi=(\sin\vt,\cos\vt)$ (i.e., \eqref{v die} and \eqref{w die}). Here, we recall that $\vr^*_{b-1}(\vt)$ is defined in Lemma \ref{berestycki+2016 thm2.1} with $d=d_2$ and $\d=b-1$. In the remainder of this paper, unless otherwise stated, we denote by $(u,v,w)$ the solution of \eqref{prey-predator-road} with initial value $\left(u_{0},v_{0},w_{0}\right)$.
We first show that, under Hypothesis \ref{hy-initial}, the prey cannot spread faster than $c^{*}$, that is \eqref{u die eq}.

\vspace{0.5em}
\begin{proof}[Proof of Theorem \ref{main} \ref{u die}]
	Assume $c>c^*$. Let $\ov{u}(x,y,t)$ be the solution of \eqref{u-sup-1}
	with the same initial value $u_{0}$ as given in Theorem \ref{main}. Then $u$ is a subsolution of \eqref{u-sup-1}, whose spreading speed is $c^{*}$ (see \cite{aronson+1978}). Hence it follows from the comparison principle that
	\[
		\li_{ t \to \infty }\sup_{\substack{(x,y)\in \ov{\O}_0\\ |(x,y)|\ge ct}} u(x,y,t)\le \li_{ t \to \infty }\sup_{\substack{(x,y)\in \ov{\O}_0\\|(x,y)|\ge ct}} \ov{u}(x,y,t)=0.
	\]
	This completes the proof.
	\end{proof}
	\vspace{1.5ex}

Then we demonstrate that for any $\vt\in[-\frac{\pi}{2},\frac{\pi}{2}]$, the spreading speed of predators along the direction $(\sin\vt,\cos\vt)$ is not bigger than $\min\{c^{*},\vr_{b-1}^*(\vt)\}$, namely, we prove \eqref{v die} and \eqref{w die}.

\vspace{0.5em}
\begin{proof}[Proofs of \eqref{v die} and \eqref{w die}]
We first show that predators cannot spread faster than the population modeled by \eqref{predator sys} with $d=d_2$ and $\d=b-1$, namely
\begin{equation}\label{39}
	\left\{
		\begin{aligned}
			&\partial_{t}\ov{v}-d_2\Delta \ov{v}=v(b-1-\ov{v}),&&\qquad(x,y)\in \O_0,t>0,&\\
			&\partial_{t}\ov{w}-D\ov{w}_{xx}=-\mu \ov{w}+\n \ov{v}(x,0,t),&&\qquad x\in \mathbb{R},t>0,& \\
			&-d_2\partial_{y}\ov{v}(x,0,t)=\mu \ov{w}(x,t)- \n \ov{v}(x,0,t),&&\qquad x\in \mathbb{R},t>0.&
			\end{aligned}
	\right.
\end{equation}
 Let $(\ov{v}(x,y,t),\, \ov{w}(x,t))$ be the solution of \eqref{39}
corresponding to the initial value $\ov{v}(x,y,0)=v_0(x,y),\, \ov{w}(x,0)=w_0(x)$ for $(x,y)\in \ov{\O}_0$.
Notice that $(v,w)$ is a subsolution of \eqref{39} due to the fact $u\leq 1$. Then by Lemma \ref{berestycki+2016 thm2.1}, Remark \ref{remark26}, and the comparison principle for \eqref{39} (that is \cite[Proposition 3.2]{berestycki+2013}), we have
	\begin{equation}\label{v small best}
		\li_{ t \to \infty }\sup_{\substack{(x,y)\in\ov{\O}_0\\{\rm dist}\left(\frac{1}{t}(x,y),\mathcal{W}_{b-1}\right)>\e}}v(x,y,t)\leq \li_{ t \to \infty }\sup_{\substack{(x,y)\in\ov{\O}_0\\{\rm dist}\left(\frac{1}{t}(x,y),\mathcal{W}_{b-1}\right)>\e}} \ov{v}(x,y,t)=0, \qquad \forall\e>0
	\end{equation}
	and
	\begin{equation}\label{w small best}
	\li_{ t \to \infty} \sup_{|x|\geq ct}w(x,t)\\
		\leq \li_{ t \to \infty }\sup_{|x|\geq ct}\ov{w}(x,t)\\
		=0, \qquad \forall c>\vr_{b-1}^{*}\left(\frac{\pi}{2}\right).
	\end{equation}
	
		Next, we show that $v$ and $w$ cannot spread outside the range of the prey. 
	By Lemma \ref{predator die without prey}, for any $\s>0$, there are $T_{\frac{\s}{2}}>0$ and $M_{\frac{\s}{2}}>0$ such that 
\[
	v(x,y,t) \le \frac{\s}{2}+ M_{\frac{\s}{2}}u(x,y,t),\quad
	w(x,t) \le \frac{\s}{2}+ M_{\frac{\s}{2}}u(x,0,t), \qquad \forall (x,y)\in \ov{\O}_0, t\geq T_{\frac{\s}{2}}.
\]
On the other hand, \eqref{u die eq} infers that for any $c>c^{*}$,
	there is $\widetilde{T}_{\s}>0$ such that
	\begin{equation*}
		\sup_{\substack{(x,y)\in \ov{\O}_0,|(x,y)|\ge ct\\ t\geq \widetilde{T}_{\s}}} u(x,y,t)\le \frac{\s}{2M_{\frac{\s}{2}}}.
	\end{equation*}
	Therefore, for any $c>c^{*}$ and $\s>0$, one has
\[
	\begin{aligned}
		& \displaystyle\limsup_{ t \to \infty } \sup_{\substack{(x,y)\in\ov{\O}_0\\|(x,y)|\ge ct}}v(x,y,t)\leq \sup_{\substack{(x,y)\in\ov{\O}_0,|(x,y)|\ge ct\\ t\geq T^{\prime}}}v(x,y,t)\leq \s,\\
		& \displaystyle\limsup_{ t \to \infty } \sup_{|x|\ge ct}w(x,t)\leq \sup_{\substack{|x|\ge ct\\ t\geq T^{\prime}}}w(x,t)\leq \s,
	\end{aligned}
\]
where $T^{\prime}=\max\left\{T_{\frac{\s}{2}},\widetilde{T}_{\s}\right\}$. The arbitrariness of the choice of $\s>0$ ensures that there hold
\begin{equation}\label{311}
	\displaystyle\lim_{ t \to \infty } \sup_{\substack{(x,y)\in\ov{\O}_0\\|(x,y)|\ge ct}}v(x,y,t)= 0\qquad \text{and}\qquad\displaystyle\lim_{ t \to \infty } \sup_{|x|\ge ct}w(x,t)= 0.
\end{equation}
Note that for any $t>0$ and $\varepsilon>0$, there is
\[
\left\{(x,y)\in\overline{\Omega}_0: {\rm dist}\left(\dfrac{1}{t}(x,y),B_{c^*}\right)>\varepsilon\right\}=\left\{(x,y)\in\ov{\O}_0:|(x,y)|>(c^*+\varepsilon)t\right\}.
\]
 As a result, \eqref{311} together with \eqref{v small best} and \eqref{w small best} gives \eqref{v die} and \eqref{w die}. This completes the proof.
\end{proof}

\subsection{Lower Estimates on the Spreading Speeds}\label{lower}

In this subsection, we show that under \ref{hy-initial}, the spreading speed of the prey is exactly $c^{*}$ and that of predators along $(\sin\vt,\cos\vt)$ is $\min\left\{c^{*},\vr_{b-1}^{*}(\vt)\right\}$ for any $\vt\in\left[-\frac{\pi}{2},\frac{\pi}{2}\right]$.
\subsubsection{Persistence of the two species}

We first prove the persistence of the prey.
\begin{theorem} \label{persistence}
	 For any $0\leq c<c^{*}$, there exists $\eta=\eta(c)>0$ such that for any $(u_0,v_0,w_0)$ satisfying {\rm \ref{hy-initial}}, we have
	 \[
		\lii_{ t \to \infty } \inf_{\substack{(x,y)\in\ov{\O}_0\\|(x,y)|\le ct} } u(x,y,t) \geq \eta.
	\]
\end{theorem}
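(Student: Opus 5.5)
This is a uniform persistence statement, in the style of Ducrot--Jin. I would prove it in two stages: first a weak (pointwise small) persistence estimate uniform over initial data, then an upgrade to the uniform bound $\eta$ by compactness.

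\textbf{Step 1: weak uniform persistence.} I would first show there is $\eta_0>0$, independent of the initial data satisfying \ref{hy-initial}, with
\[
\limsup_{t\to\infty}u(x_t,y_t,t)\ge\eta_0
\]
for every choice $|(x_t,y_t)|\le ct$. The argument is by contradiction. Suppose $u$ stays below a small $\eta_0$ along a long stretch of the cone. Then Lemma \ref{predator die without prey} gives $v\le\sigma+M_\sigma u$, which is small there. Hence in that region $u$ satisfies
\[
\partial_tu-d_1\Delta u\ge u\bigl(1-u-a(\sigma+M_\sigma\eta_0)\bigr),
\]
a KPP inequality whose growth rate is close to $1$. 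Its spreading speed is close to $c^*=2\sqrt{d_1}>c$. Comparison with a compactly supported subsolution of the form $\epsilon e^{-\frac{c}{2d_1}x}\cos(\cdot)$ on a large ball, moving at speed $c$ (Neumann on $y=0$ is handled by even reflection, or by taking balls inside $\O_0$), then forces $u$ to grow. This contradicts smallness. The order of choices is: fix $\sigma$ small, then $\eta_0$ small so that $a(\sigma+M_\sigma\eta_0)<1-c^2/(4d_1)$, and finally choose the ball radius.

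\textbf{Step 2: upgrade to uniform persistence.} Suppose the theorem fails. Then there are initial data $(u_{0,n},v_{0,n},w_{0,n})$, times $t_n\to\infty$ and points $|(x_n,y_n)|\le ct_n$ with $u^n(x_n,y_n,t_n)\to0$. Shift and pass to the limit as in Lemma \ref{predator die without prey}, treating separately the cases $y_n$ bounded and $y_n\to\infty$. The limit $\tilde u$ is an entire solution with $\tilde u(0,\bar y,0)=0$, so $\tilde u\equiv0$ by the strong maximum principle and the Hopf lemma argument used in that proof. Local uniform convergence then makes $u^n$ small on large parabolic cylinders around $(x_n,y_n,t_n)$. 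Pick a speed $c'\in(c,c^*)$. Since the cone $|(x,y)|\le c't$ strictly contains the cone of speed $c$, these cylinders lie inside the region of speed $c'$ for large $n$. This contradicts Step 1 applied at speed $c'$. To make this work, Step 1 must be proved in the quantitative form: for every $L,T$ there is $\tau$ such that $u$ cannot remain below $\eta_0$ on a cylinder of size $L\times T$ centered in the cone after time $\tau$, uniformly in the data.

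\textbf{Main obstacle.} The delicate point is getting the uniformity over initial data together with the lack of compact support in the time shift. I would handle this with a "cannot stay small for long" lemma. On a ball $B_R$ where $u\le\eta_0$ for a time $T$, the principal eigenfunction subsolution grows like $e^{\lambda t}$ with $\lambda>0$. Positivity of $u$ at the start of the cylinder is needed, and it is quantified via the Harnack inequality applied to the shifted limits, so that $u$ would exceed $\eta_0$ before time $T$.
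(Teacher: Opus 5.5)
There is a genuine gap in your upgrade step. It comes from using Lemma \ref{predator die without prey} only locally.

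First, the quantitative form of Step 1 that you say you need is false. That form asks for a $\tau$, uniform over data satisfying \ref{hy-initial}, after which $u$ cannot stay below $\eta_0$ on a cylinder of size $L\times T$ centered in the cone. But $\partial_t u-d_1\Delta u\le u$. So data $u_0=\epsilon\phi$ with $\epsilon<\eta_0e^{-\tau}$ give $u<\eta_0$ everywhere on $\ov{\O}_0\times[0,\tau]$, and data supported far from the origin do the same near the origin. Hence no time threshold can be uniform in the data; in the theorem only $\eta$ is uniform.

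Second, even with a data-dependent $\tau$, your mechanism gives no contradiction on cylinders of fixed size. The Dirichlet eigenfunction on $B_L$ pushes $u$ above $\eta_0$ within time $T$ only if $u\gtrsim\eta_0e^{-\lambda T}$ on the bottom of the cylinder. In Step 2 the shifted limit is $\tilde u\equiv0$, so $\min u^n$ on the bottom of the cylinder tends to $0$. The Harnack inequality only compares values of $u^n$ with one another, so it gives no lower bound when everything tends to $0$. Your appeal to ``positivity at the start of the cylinder'' is therefore circular. A compactness argument of this type would need:
\begin{itemize}
\item time windows whose length grows with $n$, found by a last-exit-time argument along a path where $u$ was earlier $\ge\eta_0$;
\item a weak-persistence statement that holds for non-compactly supported limit data.
\end{itemize}
Neither appears in your proposal.

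The missing idea is that \eqref{v die without u} holds on all of $\ov{\O}_0\times[T_\sigma,+\infty)$, not only where $u$ is small. Substituting $v\le\sigma+M_\sigma u$ into the $u$-equation gives, everywhere,
\[
\partial_t u-d_1\Delta u\ge u\bigl[1-a\sigma-(1+aM_\sigma)u\bigr],\qquad \partial_y u(x,0,t)=0,
\]
so $u(\cdot,\cdot,t+T_\sigma)$ is a supersolution of a scalar Fisher--KPP problem. The argument then runs as follows:
\begin{itemize}
\item Choose $\sigma\in(0,1/a)$ with $c<2\sqrt{d_1(1-a\sigma)}$.
\item Compare $u$ with the solution $\underline u$ that starts from a nontrivial datum below $u(\cdot,\cdot,T_\sigma)>0$.
\item Aronson--Weinberger gives $\underline u\to\frac{1-a\sigma}{1+aM_\sigma}$ uniformly on $|(x,y)|\le ct$.
\end{itemize}
This proves the theorem with $\eta=\frac{1-a\sigma}{1+aM_\sigma}$. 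This $\eta$ depends only on $c$, because $M_\sigma$ and $T_\sigma$ in Lemma \ref{predator die without prey} are uniform over the data. This is the paper's proof, and it makes both your weak-persistence step and the compactness upgrade unnecessary.
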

\begin{proof}
	Fix $c<c^{*}$ and choose $0<\delta< \frac{1}{a}$ such that $c<2\sqrt{d_1(1-a\d)}$. It follows from \eqref{v die without u} that there is $T_{\delta}>0$ and $M_{\delta}>0$ such that $v(x,y,t+T_{\d})\leq \delta+M_{\delta} u(x,y,t+T_{\d})$ for all $(x,y)\in\ov{\O}_0$ and $t\geq 0$. Thus $u(x,y,t+T_{\d})$ is a supersolution of the following system:
	\begin{equation}\label{lemma41-1}
\left\{
\begin{aligned}
	&\partial_{t} \underline{u}-d_1 \Delta \underline{u}=\left[ 1-a\delta -(1+aM_{\delta})\underline{u}\right]\underline{u},\qquad &&(x,y)\in \O_0, t> 0,&\\
	&\p_y\un{u}(x,0,t)=0, &&x\in\R,t>0.
	\end{aligned}
\right.
\end{equation}
 On the other hand, because $u_{0}$ is nontrivial, one has $u(x,y,t)>0$ for any $(x,y)\in\ov{\O}_0$ and $t>0$. Thus we can supplement \eqref{lemma41-1} with the initial data $\un{u}(x,y,0)$ satisfying $\un{u}(\cdot,\cdot,0)\not\equiv 0$ on $\ov{\O}_0$ and
\[
	0\leq\un{u}(x,y,0)\leq u(x,y,T_{\d}),\qquad \forall (x,y)\in \ov{\O}_0.
\]
Then the comparison principle shows that
$u(x,y,t+T_{\d})\ge \underline{u}(x,y,t)$ for $(x,y)\in \ov{\O}_0$ and $t\ge 0$.     
Therefore, one has
\[
\begin{aligned}
	&\lii_{ t \to \infty } \inf_{\substack{(x,y)\in\ov{\O}_0\\|(x,y)|\le ct}} u(x,y,t)\\
	\ge &\lii_{ t \to \infty } \inf_{\substack{(x,y)\in\ov{\O}_0\\|(x,y)|\le ct } }\underline{u}(x,y,t-T_{\delta})\\
	\geq &\li_{ t \to \infty } \inf_{\substack{(x,y)\in\ov{\O}_0\\|(x,y)|< 2\sqrt{d_1(1-a\d)}t }} \underline{u}(x,y,t)= \frac{1-a\delta}{1+aM_{\delta}}=:\eta.
\end{aligned}
\]
Here $\eta$ depends only on $c$. This completes the proof.
\end{proof}
\begin{remark}
	Let $(u_l,v_l,w_l)$ be the solution of \eqref{prey-predator-road} with $\O_0$ replaced by $\O_l$, and with the initial condition {\rm\ref{hy-initial}}. By transforming in $y$, {\rm Theorem \ref{persistence}} implies that for any $l\in(-\infty,+\infty)$ and $0<c<c^{*}$, one has
	 \begin{equation}\label{42}
		\lii_{ t \to \infty } \inf_{\substack{(x,y)\in\ov{\O}_l\\|(x,y)|\le ct }} u_l(x,y,t) > 0.
	 \end{equation}
	 When $l=-\infty$, \eqref{42} follows directly from {\rm\cite[Proposition 3.3]{ducrot+2023a}}.
\end{remark}

Now we show \eqref{u best}.

\begin{proof}[Proof of \eqref{u best}]Assume $B_{c^*}\setminus \mathcal{V}\not= \emptyset$ and fix $\e>0$. To prove this convergence result, we proceed by contradiction again. Suppose that there exist $\d>0$, $t_n$ and $(x_{n},y_{n})\in\ov{\O}_0$ with $t_{n}\to \infty$ as $n\to \infty$ and  ${\rm dist}\left(\frac{1}{t_n}(x_n,y_n),\mathcal{V}\cup(\O_0\setminus B_{c^*})\right)>\e$ such that
\begin{equation*}
	u(x_n,y_n,t_{n})\le 1-\delta, \qquad \forall n\in\N.
\end{equation*}
Similar to the proof of Lemma \ref{predator die without prey}, we denote $(u_n(x,y,t),v_{n}(x,y,t),w_{n}(x,t)):=(u(x+x_n,y+y_n,t+t_n),v(x+x_n,y+y_n,t+t_n),w(x+x_n,t+t_n))$ for $(x,y)\in\ov{\O}_{-y_n},t\geq -t_n$.

Suppose that there is a subsequence of $\{y_n\}$ tends to some $z_0\geq 0$ as $n\to\infty$, then there exists a subsequence of $(u_n(x,y,t),\,v_n(x,y,t),\,w_n(x,t))$ (still denoted by $(u_n(x,y,t)$, $v_n(x,y,t)$, $w_n(x,t))$) and a triplet $(\tilde{u}(x,y,t),\, \tilde{v}(x,y,t),\, \tilde{w}(x,t))$ such that
\[
	\lim_{n \to \infty}u_n(x,y,t)=\tilde{u}(x,y,t),\qquad \lim_{n \to \infty}v_n(x,y,t)=\tilde{v}(x,y,t)
\]
in $C^{2,1}_{loc}(\O_{-z_0}\times\R)\cap C^{1}_{loc}\left(\ov{\O}_{-z_0}\times\R\right)$
and
$\lim_{n \to \infty}w_n(x,t)=\tilde{w}(x,t)$
 in $C^{2,1}_{loc}(\R\times\R)$.
In particular, $\left(\tilde u,\tilde v,\tilde w\right)$ is a solution of
 \[
 \left\{
 \begin{aligned}
 &\p_t\tilde{u}-d_1\D\tilde{u}=\tilde{u}(1-\tilde{u}-a\tilde{v}),&& \quad (x,y)\in\O_{-z_0},t\in\R,&\\
 &\p_t\tilde{v}-d_2\D\tilde{v}=\tilde{v}(-1+b\tilde{u}-\tilde{v}),&& \quad (x,y)\in\O_{-z_0},t\in\R,&\\
 &\p_t \tilde{w}-D\p_{xx}\tilde{w}=-\m\tilde{w}(x,t)+\n\tilde{v}(x,-z_0,t),&&\quad (x,t)\in\R^2,&\\
 &\p_y \tilde{u}(x,-z_0,t)=0,&&\quad (x,t)\in\R^2,&\\
 &-d_2\p_y\tilde{v}(x,-z_0,t)=\m\tilde{w}(x,t)-\n\tilde{v}(x,-z_0,t), &&\quad (x,t)\in\R^2.&
 \end{aligned}
 \right.
 \]
Recalling \eqref{v die}, we have
$	\tilde{v}(0,0,0)= \li_{ n \to \infty } v(x_{n},y_{n},t_{n})=0$. 
If $z_0=0$, by an argument similar to that for \eqref{trivial}, we obtain
\begin{equation}\label{339}
	\tilde{v}(x,y,t)\equiv 0,\qquad \forall (x,y)\in \O_{-z_0}, t\in \mathbb{R}.
\end{equation}
If $z_0>0$, \eqref{339} follows immediately from the strong maximum principle.
On the other hand, it is obvious that $(x_n,y_n,t_n)$ satisfies
\[
	{\rm dist}\left(\dfrac{1}{t_n}(x_n,y_n), \Omega_0\setminus B_{c^*}\right)>\varepsilon,
\]
since ${\rm dist}\left(\dfrac{1}{t_n}(x_n,y_n), \mathcal{V}\cup\left(\ov{\Omega}_0\setminus B_{c^*}\right)\right)>\varepsilon$. Thus, we have $\left|\dfrac{1}{t_n}(x_n,y_n)\right|\leq c^*-\varepsilon$. 


\begin{claim}\label{limit u}
	There holds
\[
	\inf_{\substack{(x,y)\in \ov{\O}_{-z_0}\\ t\in \mathbb{R}}}\tilde{u}(x,y,t)>0.
\]
\end{claim}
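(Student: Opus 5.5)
The claim should follow from the uniform persistence estimate, Theorem \ref{persistence}, transported along the sequence $(x_n,y_n,t_n)$. The only issue is that the limit is taken on all of $\ov{\O}_{-z_0}\times\R$ rather than on a compact set, so the persistence cone has to cover every point of the shifted domain eventually.

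\textbf{Setup.} Fix $c$ with $c^*-\e<c<c^*$, say $c=c^*-\e/2$, and let $\eta=\eta(c)>0$ be given by Theorem \ref{persistence}. By that theorem there is $T_c>0$ such that
\[
u(x,y,t)\geq \frac{\eta}{2}\qquad\text{for all } (x,y)\in\ov{\O}_0 \text{ with } |(x,y)|\leq ct \text{ and all } t\geq T_c .
\]

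\textbf{Localizing.} Fix $(x,y)\in\ov{\O}_{-z_0}$ and $t\in\R$. For $n$ large we have $y+y_n\geq 0$. If $z_0=0$ this is immediate since $y\geq 0$. If $z_0>0$ and $y>-z_0$, it holds for large $n$. The boundary case $y=-z_0$ is handled afterwards by continuity. Also $t+t_n\geq T_c$ for large $n$. Since $|(x_n,y_n)|\leq (c^*-\e)t_n$, we get
\[
|(x+x_n,y+y_n)|\leq |(x,y)|+(c^*-\e)t_n .
\]
As $n\to\infty$, for fixed $(x,y,t)$ the right-hand side is at most $c(t+t_n)=(c^*-\e/2)(t+t_n)$. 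The gap $\frac{\e}{2}t_n$ absorbs $|(x,y)|$ and $|c\,t|$. Hence $u_n(x,y,t)=u(x+x_n,y+y_n,t+t_n)\geq \eta/2$ for all large $n$.

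\textbf{Passing to the limit.} Since $u_n\to\tilde u$ locally uniformly, $\tilde u(x,y,t)\geq\eta/2$ at every point of $\O_{-z_0}\times\R$. By continuity of $\tilde u$ up to the boundary, the bound extends to all of $\ov{\O}_{-z_0}\times\R$. Therefore $\inf\tilde u\geq \eta/2>0$.

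\textbf{Main obstacle.} The only delicate point is that $\eta$ must be independent of the point and of $n$. Theorem \ref{persistence} provides this, since $\eta$ depends only on $c$, and uniformity on the whole cone $|(x,y)|\leq ct$ for $t\geq T_c$ is part of its $\liminf$–$\inf$ formulation. No further construction should be needed. If one wanted a more robust argument independent of that uniformity, the fallback would be to compare $\tilde u$ with the logistic subsolution from \eqref{lemma41-1} on the entire domain. However, the direct transfer above suffices.
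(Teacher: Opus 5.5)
Your proposal is correct and follows essentially the same route as the paper: with $c=c^*-\e/2$ and $|(x_n,y_n)|\le (c^*-\e)t_n$, each fixed $(x,y,t)$ is eventually shifted into the persistence cone $|(\xi,\zeta)|\le (c^*-\e/2)(t+t_n)$, and the uniform lower bound from Theorem \ref{persistence} (which the paper cites via \eqref{42}) then passes to the limit $\tilde u$. Your extra care about ensuring $y+y_n\ge 0$ and about the boundary line $y=-z_0$ is a harmless refinement that the paper leaves implicit.
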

\vspace{1.5ex}
{\it Proof of Claim \ref{limit u}.}
For any $(x,y)\in \ov{\O}_{-z_0}$, $t\in\R$, there exists $N\in\N$ such that
$$|(x,y)|\le \frac{\e}{2} t_{n}+ \left(c^{*}-\frac{\e}{2}\right)t,\qquad \forall n\geq N,$$
where $\e$ is given as above.
By the choice of $(x_n,y_n)$, we have
\begin{equation*}
		\left|(x+x_{n},y+y_{n})\right|\leq \left|(x,y)\right|+\left|(x_{n},y_{n})\right|
		\leq \frac{\e}{2} t_{n}+ \left(c^{*}-\frac{\e}{2}\right)t+(c^*-\e)t_{n}
		= \left(c^*-\frac{\e}{2}\right)(t+t_{n}),
\end{equation*}
for all $n\geq N$.
Thus using \eqref{42}, we obtain that for any $(x,y)\in \ov{\O}_{-z_0}$ and $t\in \mathbb{R}$, there is
\[
	\tilde{u}(x,y,t)=\li_{ n \to \infty } u(x+x_{n},y+y_{n},t+t_{n})
	\ge \lii_{ \tau \to \infty } \inf_{\substack{(\xi,\zeta)\in \ov{\O}_{0}\\|(\xi,\zeta)|\le \left(c^*-\frac{\e}{2}\right)\tau }} u(\xi,\zeta,\tau)
	=: \eta>0
\]
since $\li_{n\to\infty} t_n=+\infty$.
Therefore,
$\displaystyle \inf_{\substack{(x,y)\in \ov{\O}_{-z_0}\\ t\in \mathbb{R}}}\tilde{u}(x,y,t)\ge \eta>0$.
The proof of this claim is thus completed.

Denote $\underline{u}(t)$ by the solution of
\begin{equation}\label{ode}
	\underline{u}^{\prime}= \underline{u}(t)(1-\underline{u}(t)),\qquad t>0
\end{equation}
with
$\underline{u}(0)=\inf_{\substack{(x,y)\in \ov{\O}_{-z_0}\\ t\in \mathbb{R}}}\tilde{u}(x,y,t):=\tilde{\eta}>0$. 
Notice that $\tilde{u}$ satisfies the following equations
\begin{equation}\label{44}
\begin{cases}
	\partial_{t}\tilde{u}-d_1 \Delta \tilde{u}=\tilde{u}(1-\tilde{u}), \qquad & \forall (x,y)\in \O_{-z_0},t\in\R,\\
	\p_y \tilde u(x,-z_0,t)=0,\qquad & \forall x\in\R,t\in\R.
	\end{cases}
\end{equation}
The comparison principle implies that
$\tilde{u}(x,y,t+\tau)\ge \underline{u}(t)$  for $\tau\in\mathbb{R}$, $t\geq 0$ and $(x,y)\in \overline{\Omega}_{-z_0}$.
Since $\li_{ t \to \infty }\underline{u}(t)=1$, we arrive at
$\tilde{u}(x,y,0)\ge\li_{ \tau \to -\infty } \underline{u}(-\tau)=1$  for  $ (x,y)\in \overline{\Omega}_{-z_0}$.
 However,
$\tilde{u}(0,0,0)=\li_{n\to\infty}u(x_n,y_n,t_{n})\le 1-\delta<1$. 
We get a contradiction and thus
$\li_{n\to\infty}y_n=+\infty$. Then the limit pair $(\tilde{u}(x,y,t),\tilde{v}(x,y,t))$ of $(u_n(x,y,t),v_n(x,y,t))$ fulfills
 \[
 \left\{
 \begin{aligned}
 &\p_t\tilde{u}-d_1\D\tilde{u}=\tilde{u}(1-\tilde{u}-a\tilde{v}),&& \qquad (x,y,t)\in\R^3,&\\
 &\p_t\tilde{v}-d_2\D\tilde{v}=\tilde{v}(-1+b\tilde{u}-\tilde{v}),&& \qquad (x,y,t)\in\R^3.&
 \end{aligned}
 \right.
 \]
The classical strong maximum principle and $\tilde v(0,0,0)=\displaystyle \lim_{n\to\infty}v(x_n,y_n,t_n)=0$ (see \eqref{v die}) yield that $\tilde v(x,y,t)\equiv 0$ for all $(x,y,t)\in \R^3$. Thus $\tilde{u}$ satisfies
\begin{equation}\label{46}
	\begin{cases}
	\partial_{t}\tilde{u}-d_1 \Delta \tilde{u}=\tilde{u}(1-\tilde{u}), \qquad & \forall (x,y,t)\in \mathbb{R}^{3},\\
	\tilde{u}(x,y,0)>0, \qquad & \forall (x,y)\in\R^2.
	\end{cases}
\end{equation}
Applying \eqref{42} with $l=-\infty$ and repeating the proof of Claim \ref{limit u}, we have
	$\inf_{\substack{(x,y)\in\mathbb{R}^{2}\\t\in \mathbb{R}}}\tilde{u}(x,y,t)>0$.
Then by comparing with \eqref{ode}, one can get a contradiction with $\tilde{u}(0,0,0)\leq 1-\d$ again. Therefore, we conclude \eqref{u best}. This completes the proof.
\end{proof}

\vspace{0.5em}
Next, we provide another pointwise comparison between the prey and predators. It indicates that as the predator population becomes negligible, the prey density reaches the environmental carrying capacity.
\begin{lemma}\label{lem43}
Let $c\in[0,c^*)$. For each $\s>0$, there exist $M_{\s}>0$ and $T_{\s}>0$ such that for any $y_0\in\R$ and $(u_0,v_0,w_0)\in C\left(\ov{\O}_{y_0}\right)\times C\left(\ov{\O}_{y_0}\right)\times C(\R)$ satisfying {\rm \ref{hy-initial}}, one has
\begin{equation}\label{1-u}
	1-u(x,y,t)\leq \s+M_{\s}v(x,y,t),\qquad \forall t\geq T_{\s},(x,y)\in\ov{\O}_{y_0}~\text{\rm with}~|(x,y-y_0)|\leq ct.
\end{equation}
Here $(u,v,w)$ is the solution of \eqref{prey-predator-road} with the road $\R\times \left\{ y_0 \right\}$ and the initial data $(u_0,v_0,w_0)$, namely,
\[
	\left\{
		\begin{aligned}
			&\partial_{t}u-d_{1}\Delta u=u(1-u-a v),&&\qquad(x,y)\in \O_{y_0},t>0,&\\
			&\partial_{t}v-d_{2}\Delta v=v(-1+bu-v),&&\qquad(x,y)\in \O_{y_0},t>0,&\\
			&\partial_{t}w-Dw_{xx}=-\mu w+\nu v(x,y_0,t),&&\qquad x\in \mathbb{R},t>0,& \\
			&\p_y u(x,y_0,t)=0,&& \qquad x\in \mathbb{R},t>0,&\\
			&-d_{2}\partial_{y}v(x,y_0,t)=\mu w(x,t)-\nu v(x,y_0,t),&&\qquad x\in \mathbb{R},t>0,&\\
			& u(x,y,0)=u_0(x,y), v(x,y,0)=v_0(x,y),w(x,0)=w_0(x),&& \qquad (x,y)\in\ov{\O}_{y_0}.&
			\end{aligned}
	\right.
\]
\end{lemma}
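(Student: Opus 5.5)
We argue by contradiction, in the same spirit as the proof of Lemma \ref{predator die without prey}. Fix $c\in[0,c^*)$ and suppose the statement fails for some $\s_0>0$. Then there are $y_{0,n}\in\R$, initial data $(u_{0,n},v_{0,n},w_{0,n})$ on $\ov{\O}_{y_{0,n}}$ satisfying \ref{hy-initial}, times $t_n\to+\infty$ and points $(x_n,y_n)\in\ov{\O}_{y_{0,n}}$ with $|(x_n,y_n-y_{0,n})|\leq ct_n$ such that
\[
1-u^n(x_n,y_n,t_n)>\s_0+n\,v^n(x_n,y_n,t_n),\qquad \forall n\in\N .
\]
In particular $v^n(x_n,y_n,t_n)\le \frac1n\to0$ and $u^n(x_n,y_n,t_n)\le 1-\s_0$. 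Translating in $y$ by $y_{0,n}$ (the system is invariant under vertical translations once the road moves with it), we may assume $y_{0,n}=0$. We then set $(u_n,v_n,w_n)(x,y,t):=(u^n,v^n)(x+x_n,y+y_n,t+t_n)$ and $w_n(x,t):=w^n(x+x_n,t+t_n)$. By the bounds of Lemma \ref{lem 23} and parabolic estimates, we extract locally convergent subsequences as in Lemma \ref{predator die without prey}. The cases are again split according to whether $y_n\to z_0\ge0$ along a subsequence, which gives a half-plane limit $\O_{-z_0}$ with road at $y=-z_0$, or $y_n\to+\infty$, which gives a whole-plane limit with no road.

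\textbf{Step 1: the limit predator vanishes.} Since $\tilde v(0,0,0)=0$ and $\tilde v\ge0$, the strong maximum principle gives $\tilde v\equiv0$. In the half-plane case with $z_0=0$ we use the Hopf-type argument leading to \eqref{trivial}. If $\tilde v\not\equiv 0$ then $\tilde v>0$ in the interior. If the zero is on the boundary, then $\p_y\tilde v(0,-z_0,0)>0$, whereas the boundary condition gives $-d_2\p_y\tilde v=\m\tilde w\ge0$, a contradiction. Hence $\tilde u$ solves $\p_t\tilde u-d_1\D\tilde u=\tilde u(1-\tilde u)$ on $\O_{-z_0}\times\R$ with a Neumann condition on the boundary, or the same equation on $\R^3$.

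\textbf{Step 2: uniform positivity of the limit prey.} This is where $c<c^*$ enters. Pick $c'\in(c,c^*)$. For fixed $(x,y,t)$ and large $n$,
\[
|(x+x_n,y+y_n)|\le ct_n+|(x,y)|\le c'(t+t_n).
\]
The key point is uniformity: we need $\lii_{\tau\to\infty}\inf_{|(\xi,\zeta)|\le c'\tau}u^n(\xi,\zeta,\tau)\ge\eta$ with $\eta>0$ and the time threshold both independent of $n$. Theorem \ref{persistence} provides $\eta=\eta(c')$ uniform in the data, and the translated version is \eqref{42}. However, the time after which the bound holds depends on $u_0$ through the subsolution $\un u$ of \eqref{lemma41-1}, which starts from $\un u(\cdot,0)\le u(\cdot,T_\d)$.

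This is the main obstacle, and the first approach we would try is the following. By \ref{hy-initial}, the family of data is arbitrary, so uniformity cannot hold literally. Either the lemma is meant for each fixed datum, with $M_\s,T_\s$ then uniform only over the range of $y_0$ and translations, or one must exploit that $\un u$ starting from any nontrivial datum spreads with speed $2\sqrt{d_1(1-a\d)}$. In the latter case we would pass to the limit directly. Since $t_n\to\infty$, the prey limit $\tilde u$ is an entire solution of $\p_t\tilde u-d_1\D\tilde u\ge \tilde u(1-a\d-(1+aM_\d)\tilde u)$ (using \eqref{v die without u}), and it sits inside the cone $|\cdot|\le c'\tau$ of each $u^n$. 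For fixed data this gives $\inf\tilde u\ge\eta>0$ exactly as in Claim \ref{limit u}. For varying data we would restrict the data to a class where the hair-trigger time is controlled, for instance by requiring $u_0\ge\kappa$ on a fixed ball up to translation, and state that dependence.

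\textbf{Step 3: conclusion.} Once $\inf\tilde u\ge\tilde\eta>0$, comparison with the logistic ODE solution $\un u'=\un u(1-\un u)$, $\un u(0)=\tilde\eta$, started at time $-\tau$ and with $\tau\to\infty$, gives $\tilde u(\cdot,\cdot,0)\ge1$. This contradicts $\tilde u(0,0,0)\le 1-\s_0$, and the lemma follows.
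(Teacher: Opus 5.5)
Your plan follows the paper's proof step for step. You argue by contradiction along $(x_n,y_n,t_n)$, translate and pass to a limit, get $\tilde v\equiv0$ from the strong maximum principle and the Hopf argument behind \eqref{trivial}, show $\tilde u$ is positive, and compare with the logistic ODE.

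The obstacle you flag in Step 2 is real. It is a defect of the statement and of the paper's proof, not of your plan. The paper simply asserts that applying Theorem \ref{persistence} to $u^n$ gives $\tilde u\ge\eta(c)$. That theorem only controls a $\liminf$ for each fixed solution: $\eta$ is uniform, but the time threshold depends on $u_0$. So it says nothing about $u^n(x+x_n,y+y_n,t+t_n)$ when the data change with $n$.

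In fact the lemma as written, with $M_\s,T_\s$ uniform over all data satisfying \ref{hy-initial}, is false for every $\s\in(0,1)$. Given $M_\s,T_\s$, take $y_0=0$, $w_0\equiv0$, and $u_0=v_0=\epsilon\phi$, where $\phi$ is a nontrivial continuous bump with $0\le\phi\le1$. Comparison with the Neumann solution $\epsilon e^{t}$ gives $u\le\epsilon e^{t}$. Since $u\le1$, the pair $\epsilon e^{(b-1)t}(1,\frac{\nu}{\mu})$ is a supersolution for $(v,w)$, so $v\le\epsilon e^{(b-1)t}$. At $(0,0,T_\s)$, which is admissible for every $c\ge0$, this gives
\[
1-u\ge 1-\epsilon e^{T_\s}>\s+M_\s\epsilon e^{(b-1)T_\s}\ge \s+M_\s v
\]
as soon as $\epsilon$ is small. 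So no argument can give the uniform version, and you were right not to claim it.

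What your Steps 1--3 do prove, completely, is the per-datum version. For each fixed $(u_0,v_0,w_0)$ there are $M_\s,T_\s$, depending on the datum, such that \eqref{1-u} holds. For a fixed solution, the argument of Claim \ref{limit u} with some $c'\in(c,c^*)$ gives $\inf\tilde u>0$.

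That is all the paper uses later. In Theorem \ref{uniform} the lemma is applied to the single solution under study, with $y_0=0$ and $c=c^*-\e/2$, so the main results are unaffected.

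Your restricted-class alternative also works, but only if the ball on which $u_0\ge\kappa$ has its center within a fixed distance of $(0,y_0)$. Allowing arbitrary horizontal translations of that ball (with $v_0$ placed next to it) reproduces the counterexample. At time $T_\s$ both species would still be negligible near $(0,y_0)$, which is always admissible.

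The contrast with Lemma \ref{predator die without prey} and Lemma \ref{lem52} explains why uniformity over the data is legitimate there but not here. The former needs only the universal upper bounds. The latter has the time-uniform lower bound of Theorem \ref{lem51}, which depends only on the fixed $\varepsilon_0$.
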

\begin{proof}
	Fix $c\in[0,c^*)$. Suppose by contradiction that there exist sequences $\{ y_{0,n} \}$, $\{t_n\}$, $\{ u_{0,n}\}\subset C\left(\ov{\O}_{y_0}\right)$, $\{v_{0,n}\}\subset C\left(\ov{\O}_{y_0}\right)$, $\{w_{0,n} \}\subset C(\R)$, $\{(x_n,y_n)\}\subset\ov{\O}_{y_{0,n}}$, and a constant $\s_0>0$ such that
	\[
		(u_{0,n},v_{0,n},w_{0,n})\text{ satisfies } \ref{hy-initial},
		\qquad\li_{n\to\infty}t_n=\infty,\qquad |(x_n,y_n-y_{0,n})|\leq ct_n,\ \  \forall n\in\N,
	\]
	and
	\begin{equation}\label{48}
		1-u^{n}(x_n,y_n,t_n)>\s_0+n v^{n}(x_n,y_n,t_n),\qquad \forall n\in\N.
	\end{equation}
	Here $(u^n,v^n,w^n)$ is the solution of \eqref{prey-predator-road} with the road $\R\times \left\{ y_{0,n} \right\}$ and intial data $(u^n,v^n,w^n)\big|_{t=0}=(u_{0,n},v_{0,n},w_{0,n})$.
	For any $x\in\R,y\geq y_{0,n}-y_n,t\geq -t_n$, define $u_n(x,y,t):=u^{n}(x+x_n,y+y_n,t+t_n),v_n(x,y,t):=v^{n}(x+x_n,y+y_n,t+t_n)$ and $w_n(x,t):=w^{n}(x+x_n,t+t_n)$. Then since $y_{0,n}-y_n\leq 0$, there exists $\left(\tilde{u},\tilde{v},\tilde{w}\right)$ and $z_0\in[0,+\infty]$ such that a subsequence $\left\{ (u_{n_k},v_{n_k},w_{n_k}) \right\}$ fufills
	\[
		\begin{aligned}
			&\lim_{k \to \infty}u_{n_k}=\tilde{u},\quad \lim_{k \to \infty}v_{n_k}=\tilde{v},\quad && {\rm in}~ C_{loc}^{2,1}\left(\O_{-z_0}\times\R\right)~{\rm and~also~in}~C^{1}_{loc}\left(\ov{\O}_{-z_0}\times\R\right)~{\rm if}~z_0\not=+\infty,&\\
			&\lim_{k \to \infty}w_{n_k}=\tilde{w},\quad && {\rm in}~ C_{loc}^{2,1}\left(\R^2\right).&
	\end{aligned}
	\]
Since \eqref{48} implies $\tilde{v}(0,0,0)=0$, we have
\begin{equation}\label{trivial-whole}
	\tilde{v}\equiv 0\qquad {\rm on}~ \ov{\O}_{-z_0}\times \R.
\end{equation}
Indeed, if $z_0<+\infty$, the proof of \eqref{trivial-whole} is the same as \eqref{339}. If $z_0=+\infty$, then $\tilde{v}$ satisfies
\[
	\p_t\tilde{v}-d_2\D\tilde{v}=\tilde{v}\left(-1+b\tilde{u}-\tilde{v}\right)\geq \tilde{v}\left[-1-(b-1)\right] = -b \tilde{v} \qquad {\rm in}~\R^3
\]
since $(0,0)\leq(\tilde{u},\tilde{v})\leq (1,b-1)$. By the strong maximum principle, we get \eqref{trivial-whole}.
As a result, $\tilde{u}(x,y,t)$ satisfies \eqref{44} or \eqref{46}.
On the other hand, applying Theorem \ref{persistence} to
$(u^n(x,y+y_{0,n},t),v^n(x,y+y_{0,n},t),w^n(x,y+y_{0,n},t))$,
we obtain that there exists $\eta>0$ depending on $c$ such that 
$\tilde{u}(x,y,t)\geq \eta>0$ for all $(x,y,t)\in \ov{\O}_{-z_0}\times \R$.
Hence $\tilde{u}\equiv 1$ by the same arguments as those in the proof of \eqref{u best}. It contradicts \eqref{48}. Thus we have \eqref{1-u} and complete the proof.
\end{proof}

\begin{theorem}\label{uniform}
	Assume that {\rm \ref{hy-initial}} holds. Then for any $\e>0$, there is $\varpi>0$ such that
\[
	\lii_{t\to\infty}\inf_{\substack{(x,y)\in\ov{\O}_0\\ {\rm dist}\left(\frac{1}{t}(x,y),\ov{\O}_0\setminus\mathcal{V}\right)>\e}}v(x,y,t)\geq\varpi.
\]
\end{theorem}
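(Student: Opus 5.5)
Our plan is to bound $(v,w)$ from below by a solution of the truncated system \eqref{62} of Subsection \ref{truncated_sys}, and then read off the conclusion from Theorem \ref{thm65}. The parameters are $d=d_2$, $M_1=b$, and $\vs_*$ slightly below $c^*$. The coupling to the prey is handled through the pointwise estimate of Lemma \ref{lem43}, which turns the prey's influence into a truncation in space.

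First, fix $\e>0$. Choose $\vs_*\in(c^*-\e/2,c^*)$ and a margin $m>0$, and set $\sigma>0$ small, to be specified. Lemma \ref{lem43} with $y_0=0$ and $c=\vs_*$ gives $M_\sigma,T_\sigma$ such that
\[
u\geq 1-\sigma-M_\sigma v \quad\text{whenever } |(x,y)|\leq \vs_* t,\ t\geq T_\sigma.
\]
Hence, in this ball,
\[
-1+bu-v\geq -1+b(1-\sigma)-(bM_\sigma+1)v .
\]
Outside the ball we only use $u\geq 0$, so $-1+bu-v\geq -1-v$.

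Next, take $T_0\geq T_\sigma$ with $m<\vs_*T_0$ and let $\chi$ be the cutoff of Subsection \ref{truncated_sys}. Since $0\le\chi\le1$ and $\chi$ vanishes outside the ball, the two bounds combine into
\[
-1+bu-v\geq -1+\bigl(b(1-\sigma)-bM_\sigma v\bigr)\chi-v .
\]
This needs a short check: inside the ball the factor $b(1-\sigma)-bM_\sigma v$ may be negative, and then multiplying it by $\chi\le1$ only helps. The check is routine, using $\chi=1$ on $\tilde\o_t$ and a monotone case analysis. We therefore take $M_1=b(1-\sigma)>1$ and $M_2=bM_\sigma$. With these choices, $(v,w)$ restricted to $t\ge T_0$ is a supersolution of \eqref{62} with initial data $(v(\cdot,T_0),w(\cdot,T_0))$. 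Note that $v(\cdot,T_0)>0$, so \ref{a1} holds. By Lemma \ref{lem33}, $(v,w)$ dominates the solution $(\tilde v,\tilde w)$ of \eqref{62}.

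Then we apply Theorem \ref{thm65}, which gives $\tilde v\to\frac{M_1-1}{M_2+1}=:2\varpi$ uniformly on $\{{\rm dist}(\frac1t(x,y),\ov\O_0\setminus\mathcal W)>\e/2\}$, where
\[
\mathcal W=\{r(\sin\vt,\cos\vt):0\le r\le \min\{\vs_*,\vs_0(\vt)\}\}.
\]
Here $\vs_0(\vt)$ is the speed of \eqref{61} with $\rho=0$, which is $\vr^*_{M_1-1}(\vt)$. The remaining task is geometric. We must show that ${\rm dist}(\frac1t(x,y),\ov\O_0\setminus\mathcal V)>\e$ implies ${\rm dist}(\frac1t(x,y),\ov\O_0\setminus\mathcal W)>\e/2$ once $\sigma$ is small. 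This amounts to showing that the Hausdorff distance between $\mathcal W$ and $\mathcal V$ is below $\e/2$, i.e.\ that $\min\{\vs_*,\vr^*_{b(1-\sigma)-1}(\vt)\}$ is within $\e/2$ of $\min\{c^*,\vr^*_{b-1}(\vt)\}$ uniformly in $\vt$.

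The main obstacle is exactly this uniform continuity of $\delta\mapsto\vr^*_\delta(\vt)$ at $\delta=b-1$, uniformly in $\vt$. Remark \ref{monotone} gives only monotonicity. We plan to obtain continuity from the characterization of $\vr^*_\delta(\vt)$ as the critical speed for the exponential solutions \eqref{exponential} of \eqref{linearized}, whose defining equations depend continuously on $\delta$. Uniformity in $\vt$ would then follow by a Dini-type argument, using monotonicity in $\delta$ together with continuity in $\vt$. A fallback is to use convexity of $\mathcal W_\delta$ (Lemma \ref{berestycki+2016 thm2.1}) together with the continuity of the one-dimensional road speed $\vr^*_\delta(\pi/2)$. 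Once this is in place, $\varpi$ depends only on $\e$ through $\sigma$ and $M_\sigma$, and the theorem follows.
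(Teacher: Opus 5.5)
Your proposal is correct and follows essentially the same route as the paper: Lemma \ref{lem43} gives $u\ge[1-\sigma-M_\sigma v]\chi$, so $(v,w)$ is a supersolution of the truncated system \eqref{350} (i.e.\ \eqref{62} with $d=d_2$, $M_1=b(1-\sigma)$, $M_2=bM_\sigma$), and Theorem \ref{thm65} then gives the lower bound $\frac{b(1-\sigma)-1}{bM_\sigma+1}$ on the shrunken cone. The paper does not prove the step you single out as the main obstacle, the uniform-in-$\vt$ left-continuity of $\delta\mapsto\vr^*_\delta(\vt)$ at $\delta=b-1$; it simply picks $\sigma$ with $\vr^*_{b(1-\sigma)-1}(\vt)\ge\vr^*_{b-1}(\vt)-\e/2$ for all $\vt$, so your Dini-type argument (pointwise left-continuity, monotonicity in $\delta$, continuity in $\vt$) fills a gap the paper leaves implicit.
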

\begin{proof}
	Fix $0<\e<\min \left\{ c^{*},2\sqrt{d_2(b-1)}\right\}$ and take $0<\sigma<1-\frac{1}{b}$ sufficiently small such that 
	\[
		\vr_{b(1-\sigma)-1}^*(\vt)\geq \vr_{b-1}^*(\vt)-\frac{\e}{2},\qquad \forall \vt\in\left[-\frac{\pi}{2},\frac{\pi}{2}\right].
	\]
	Using Lemma \ref{lem43}, we can find $M_{\s}$ and $T_{\s}$ such that
$u\geq 1-\left(\s+M_{\s}v\right)$ for $t\geq T_{\s}$ and $(x,y)\in\omega_t$,
 where
\[
	\omega_{t}:=\left\{
		 (x,y)\in \ov{\O}_{0}: (x,y)\cdot(\sin\vt,\cos\vt)\leq (c^{*}-\e/2)t,\quad \forall \vt\in\left[-\frac{\pi}{2},\frac{\pi}{2}\right]
	\right\}.
\]
Furthermore, take $0<m<\sqrt{d_2[b(1-\s)-1]}T_{\sigma}$, and define the set
\[
	\tilde{\omega}_{t}:=\left\{
		 (x,y)\in\o_{t}: (x,y)\cdot(\sin\vt,\cos\vt)\leq (c^{*}-\e/2)t-m,\quad \forall \vt\in\left[-\frac{\pi}{2},\frac{\pi}{2}\right]\right\},
\]
as well as the function $\chi:\O_{0}\times [T_{\s},+\infty)\mapsto [0,1]$,
\[
	\chi(x,y,t)=\psi\left(|(x,y)|-(c^{*}-\e/2)t+m\right).
\]
Here $\psi:\mathbb{R}\mapsto [0,1]$ is the smooth function defined in Section \ref{truncated_sys}. Therefore,
	$u(x,y,t)\geq \max\left\{0,\left[1-\sigma-M_{\sigma}v(x,y,t)\right]\chi(x,y,t)\right\}$
 for any $(x,y)\in\ov{\O}_{0},~t\geq T_{\sigma}$. Hence $(v,w)$ is a supersolution of the following system
 \begin{equation}\label{350}
 	\left\{
		\begin{aligned}
			&\p_t\un{v}-d_2\D \un{v}=\big[-1+b(1-\s-M_{\s}\un{v})\chi-\un{v}\big]\un{v},&& (x,y)\in\O_{0},t>T_{\sigma}, &\\
			&\p_t\un{w}-D \p_{xx}\un{w}=-\m\un{w}+\n\un{v}(x,0,t),&& x\in\R,t>T_{\sigma}, &\\
			& -d_2\p_y\un{v}(x,0,t)=\m\un{w}(x,t)-\n\un{v}(x,0,t),&& x\in\R,t>T_{\sigma},&\\
			&\underline{v}(x,y,T_{\sigma})=v(x,y,T_{\sigma}),&&(x,y)\in\O_{0},&\\
			&\underline{w}(x,T_{\sigma})=w(x,T_{\sigma}),&&x\in\R.&
		\end{aligned}
	\right.
\end{equation}
Notice that $(v,w)$ fulfills
\[
	\left\{
		\begin{aligned}
			&\partial_{t}v-d_{2}\Delta v=v(-1+bu-v)\geq (-1-v)v,&&\qquad(x,y)\in \O_0,t>0,&\\
			&\partial_{t}w-Dw_{xx}=-\mu w+\nu v(x,0,t),&&\qquad x\in \mathbb{R},t>0,& \\
			&\p_y u(x,0,t)=0,&& \qquad x\in \mathbb{R},t>0,&\\
			&-d_{2}\partial_{y}v(x,0,t)=\mu w(x,t)-\nu v(x,0,t),&&\qquad x\in \mathbb{R},t>0.&
			\end{aligned}
	\right.
\]
Hence $(0,0)<(v,w)\leq \left(b-1,\frac{\nu}{\mu}(b-1)\right)$ for any $(x,y,t)\in\ov{\O}_0\times (0,+\infty)$, which ensures that the initial datum $(\underline{v}(x,y,T_{\sigma}),\underline{w}(x,T_{\sigma}))$ of \eqref{350} satisfies \ref{a1}. Therefore, one can apply Theorem \ref{thm65} to \eqref{350} to yield that
	\[
		\lim_{t \to \infty}\sup_{\substack{(x,y)\in\ov{\O}_{0}\\{\rm dist}\left(\frac{1}{t}(x,y),\ov{\O}_{0}\setminus \mathcal{U}\right)>\frac{\e}{2}}}\left|\underline{v}(x,y,t)-\frac{b(1-\sigma)-1}{bM_{\sigma}+1}\right|=0
	\]
 with
 \[
 	\begin{aligned}
		\mathcal{U}&:=\left\{r(\sin\vt,\cos\vt): \vt\in\left[-\frac{\pi}{2},\frac{\pi}{2}\right], 0\leq r \leq \min\left\{c^{*}-\frac{\varepsilon}{2},\vr_{b(1-\sigma)-1}^*(\vt)\right\}\right\}\\
		& \supset \left\{r(\sin\vt,\cos\vt): \vt\in\left[-\frac{\pi}{2},\frac{\pi}{2}\right], 0\leq r \leq \min\left\{c^{*},\vr_{b-1}^*(\vt)\right\}-\frac{\varepsilon}{2}\right\}.
	\end{aligned}
\]

Consequently,
\[
	\liminf_{t \to \infty}\inf_{\substack{(x,y)\in\ov{\O}_0\\ {\rm dist}\left(\frac{1}{t}(x,y),\ov{\O}_0\setminus\mathcal{V}\right)>\e}}v(x,y,t)\geq\frac{b(1-\sigma)-1}{bM_{\sigma}+1}=:\varpi.
\]
We complete the proof of this theorem.
\end{proof}

\vspace{0.5em}
Finally, by comparing $v(x,0,t)$ and $w(x,t)$, we prove the persistence of $w$. 
\begin{lemma}\label{lem313}
	For any $\sigma>0$, there exists $\widetilde{M}_{\sigma}>0$ and $\widetilde{T}_{\sigma}>0$ such that for any $(u_0,v_0,w_0)$ satisfying {\rm \ref{h2}}, we have
	\[
		v(x,0,t)\leq \sigma+\widetilde{M}_{\sigma}w(x,t),\qquad \forall x\in\mathbb{R},t\geq \widetilde{T}_{\sigma}.
	\]
\end{lemma}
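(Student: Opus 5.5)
We argue by contradiction and compactness, as in the proof of Lemma \ref{predator die without prey}. Suppose the statement fails. Then there exist $\sigma_0>0$, initial data $(u_{0,n},v_{0,n},w_{0,n})$ satisfying \ref{h2}, points $x_n\in\R$ and times $t_n\to+\infty$ such that
\[
v^n(x_n,0,t_n)>\sigma_0+n\,w^n(x_n,t_n),\qquad \forall n\in\N,
\]
where $(u^n,v^n,w^n)$ is the corresponding solution of \eqref{prey-predator-road}. By Lemma \ref{lem 23} we have $v^n\le b-1$, so $w^n(x_n,t_n)\le (b-1)/n\to 0$.

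We consider the shifted functions $u_n(x,y,t)=u^n(x+x_n,y,t+t_n)$, $v_n(x,y,t)=v^n(x+x_n,y,t+t_n)$ and $w_n(x,t)=w^n(x+x_n,t+t_n)$, which are uniformly bounded. The parabolic estimates used in Lemma \ref{lem 23} (interior Schauder estimates, plus $C^{1+\alpha,\frac{1+\alpha}{2}}$ estimates up to $y=0$ for the oblique boundary condition) give a subsequence converging to an entire solution $(\tilde u,\tilde v,\tilde w)$ of \eqref{prey-predator-road} on $\ov{\O}_0\times\R$. The convergence is in $C^{2,1}_{loc}$ in the interior, in $C^1_{loc}$ up to the boundary, and in $C^{2,1}_{loc}(\R^2)$ for $w_n$. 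In the limit, $\tilde w(0,0)=0$ and $\tilde v(0,0,0)\ge\sigma_0>0$.

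The key step is to exploit the road equation. Since $\tilde w\ge0$ solves $\partial_t\tilde w-D\partial_{xx}\tilde w+\mu\tilde w=\nu\tilde v(x,0,t)\ge 0$, the strong maximum principle applies. Because $\tilde w$ attains the minimum value $0$ at $(0,0)$, we get $\tilde w\equiv0$ on $\R\times(-\infty,0]$. Substituting back into the equation at $t\le 0$ gives $\nu\tilde v(x,0,t)=\partial_t\tilde w-D\partial_{xx}\tilde w+\mu\tilde w=0$, so $\tilde v(x,0,t)=0$ for all $x$ and $t\le0$. In particular $\tilde v(0,0,0)=0$, which contradicts $\tilde v(0,0,0)\ge\sigma_0$. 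Even without the strong maximum principle the contradiction is immediate: at $(0,0)$, $\tilde w$ has a minimum in $x$ and a minimum in $t$ over $t\le0$. Hence $\partial_{xx}\tilde w(0,0)\ge0$ and $\partial_t\tilde w(0,0)\le0$, and therefore $\nu\tilde v(0,0,0)=\partial_t\tilde w-D\partial_{xx}\tilde w+\mu\tilde w\le 0$ at $(0,0)$.

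The main obstacle is justifying that the limit equation holds classically at $(0,0)$. This requires the convergence of $w_n$ in $C^{2,1}_{loc}$ together with convergence of the trace $v_n(\cdot,0,\cdot)$, and it is exactly where the uniform Hölder bounds up to the boundary from Lemma \ref{lem 23} are needed. The uniformity of the estimates in the initial data is also essential. It holds because all solutions lie in the fixed box $\left(1,b-1,\frac{\nu}{\mu}(b-1)\right)$ and the estimates are applied only for $t\ge t_n-1\ge1$, away from the initial time, so that $M_\sigma$ and $T_\sigma$ are independent of the initial data, as the statement requires.
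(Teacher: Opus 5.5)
Your proposal is correct and follows the paper's own proof: you use the same contradiction/compactness argument to produce an entire limit with $\tilde w(0,0)=0$ and $\tilde v(0,0,0)\ge\sigma_0$, then apply the strong maximum principle to the road equation $\partial_t\tilde w-D\partial_{xx}\tilde w+\mu\tilde w=\nu\tilde v(x,0,t)\ge 0$ to force $\tilde w\equiv 0$ and hence $\tilde v(\cdot,0,\cdot)\equiv 0$. Your version is slightly more careful, since the strong maximum principle only yields $\tilde w\equiv 0$ for $t\le 0$ rather than on all of $\R^2$ as the paper states, and your pointwise argument at the global minimum point $(0,0)$ is a valid shortcut that avoids the strong maximum principle altogether.
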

\begin{proof} Let $\sigma_0>0$.
	Proceed by contradiction and assume that there exist $\left\{ x_n \right\}$, $\left\{ t_n \right\}$ and $(u_{0,n},v_{0,n},w_{0,n})$ such that
	\[
		\lim_{n \to \infty}t_n=+\infty,\quad(u_{0,n},v_{0,n},w_{0,n}) \text{ satisfies \ref{h2}}
	\] and
	\[
		v^{n}(x_n,0,t_n)\geq \sigma_0 +n w^{n}(x_n,t_n).
	\]
	Then one can extract a subsequence so that
	\[
		\lim_{n \to \infty}u^{n}(x+x_n,y,t+t_n)= \tilde{u}(x,y,t),\quad \lim_{n \to \infty}v^{n}(x+x_n,y,t+t_n)= \tilde{v}(x,y,t)
	\]
	in $C_{loc}^{2,1}\left(\O_0\times \mathbb{R}\right)\cap C_{loc}^{1}\left(\ov{\O}_0\times \mathbb{R}\right)$ and
	$\lim_{n \to \infty}w^{n}(x+x_n,t+t_n)= \tilde{w}(x,t)$
 	in $C_{loc}^{2,1}\left(\mathbb{R}\times \mathbb{R}\right)$.
	Here $\tilde{w}$ satisfies
	\[
		\left\{\begin{aligned}
			&\partial_t\tilde{w}-D\partial_{xx}\tilde{w}=\nu v(x,0,t)-\mu \tilde{w}\geq -\mu \tilde{w}, &&x\in\mathbb{R},t\in\mathbb{R},& \\
			& \tilde{w}\geq 0, &&x\in\mathbb{R},t\in\mathbb{R},& \\
			& \tilde{w}(0,0)= 0.
		\end{aligned}\right.
	\]
	Applying the strong maximum principle infers that $\tilde{w}\equiv 0$ in $\mathbb{R}^{2}$. Therefore, $\tilde{v}(x,0,t)=0$ for any $(x,t)\in\mathbb{R}^{2}$, it contradicts $\tilde{v}(0,0,0)\geq \sigma_0$. Thus we complete the proof.
\end{proof}
\begin{theorem}\label{thm317}
	Assume that {\rm \ref{hy-initial}} holds. Then for any $\e>0$, there is $\widetilde{\varpi}>0$ such that
	\[
		\liminf_{t \to \infty}\inf_{|x|\leq ct}w(x,t)\geq \widetilde{\varpi}, \qquad \forall\, 0\leq c\leq \min\left\{c^{*},\vr^{*}_{b-1}\left(\tfrac{\pi}{2}\right)\right\}-\e.
	\]
\end{theorem}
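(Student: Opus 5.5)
The plan is to deduce the persistence of $w$ on $|x|\le ct$ from the persistence of $v$ near the road, given by Theorem \ref{uniform}, together with the pointwise comparison of Lemma \ref{lem313}.

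Fix $\e>0$ and $0\le c\le \min\{c^*,\vr^*_{b-1}(\frac{\pi}{2})\}-\e$. First I check that the points $(x,0)$ with $|x|\le ct$ lie well inside $t\mathcal{V}$. By Lemma \ref{berestycki+2016 thm2.1}, $\mathcal{W}_{b-1}$ is convex, contains the segment $[-\vr^*_{b-1}(\frac{\pi}{2}),\vr^*_{b-1}(\frac{\pi}{2})]\times\{0\}$, and contains the half-disc $\ov{B}_{2\sqrt{d_2(b-1)}}$. The ball $\ov{B}_{c^*}$ also contains the segment $[-c^*,c^*]\times\{0\}$. Hence $\mathcal{V}=\mathcal{W}_{b-1}\cap\ov{B}_{c^*}$ contains the convex hull of
\[
[-c-\e,c+\e]\times\{0\}\quad\text{and}\quad \ov{B}_{r_0}, \qquad r_0:=\min\{c^*,2\sqrt{d_2(b-1)}\}>0 .
\]
Here I use that $c+\e\le \min\{c^*,\vr^*_{b-1}(\frac{\pi}{2})\}$ and that both $\mathcal{W}_{b-1}$ and $\ov{B}_{c^*}$ are convex.

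Next I need a quantitative distance bound. For a point $(s,0)$ with $|s|\le c$, I claim ${\rm dist}((s,0),\ov{\O}_0\setminus\mathcal{V})\ge\e'$ for some $\e'>0$ depending only on $\e$, $c^*$ and $r_0$. Since the boundary $\{y=0\}$ belongs to $\ov{\O}_0$, distance is measured within the half plane. Horizontally there is a margin $\e$ along the segment. In the $y$-direction, the convex hull of the segment and the half-disc $\ov{B}_{r_0}$ has height at least $r_0\e/(c+\e)\ge r_0\e/c^*$ above the point $(s,0)$. So the claim should hold with $\e'=\tfrac12\min\{\e,r_0\e/c^*\}$. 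Verifying this elementary geometric step cleanly, with the relative distance in $\ov{\O}_0$, is the main (though mild) obstacle.

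Given the claim, Theorem \ref{uniform} applied with $\e'$ yields $\varpi>0$ and $T_1$ such that $v(x,0,t)\ge\varpi/2$ for all $|x|\le ct$ and $t\ge T_1$. Now apply Lemma \ref{lem313} with $\sigma=\varpi/4$; note that \ref{hy-initial} implies \ref{h2}. This gives
\[
\frac{\varpi}{2}\le v(x,0,t)\le \frac{\varpi}{4}+\widetilde{M}_{\sigma}\,w(x,t)
\]
for $|x|\le ct$ and $t\ge\max\{T_1,\widetilde{T}_\sigma\}$. Therefore $w(x,t)\ge \varpi/(4\widetilde{M}_\sigma)=:\widetilde{\varpi}$ there, which proves the theorem. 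The constant $\widetilde{\varpi}$ depends only on $\e$, and hence is uniform in $c$.
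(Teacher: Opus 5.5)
Your proposal is correct and follows the paper's own argument: persistence of $v$ on the road from Theorem~\ref{uniform}, converted into a lower bound for $w$ via Lemma~\ref{lem313} with $\sigma$ chosen below the persistence level. The paper uses implicitly that the points $\frac{1}{t}(x,0)$ with $|x|\le ct$ lie at a distance $\e'$, depending only on $\e$, from $\ov{\O}_0\setminus\mathcal{V}$; your convex-hull/triangle estimate supplies this check correctly and gives the uniformity in $c$.
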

\begin{proof}
	Fix $\e>0$. From Lemma \ref{uniform}, we have $\varpi>0$ such that
	\[
		\liminf_{t \to \infty} \inf_{|x|\leq ct}v(x,0,t)\geq \varpi,\qquad \forall\, 0\leq c\leq \min\left\{c^{*},\vr^{*}_{b-1}\left(\tfrac{\pi}{2}\right)\right\}-\e.
	\]
	Then, taking $0<\sigma<\varpi$, we obtain
	\[
		\liminf_{t \to \infty}\inf_{|x|\leq ct}w(x,t)\geq\liminf_{t \to \infty} \inf_{|x|\leq ct}\frac{v(x,0,t)-\sigma}{\widetilde{M}_{\sigma}}\geq \frac{1}{\widetilde{M}_{\sigma}}(\varpi-\sigma),
	\]
	where $\widetilde{M}_{\sigma}$ is defined in Lemma \ref{lem313}. The proof is completed by taking $\widetilde{\varpi}:=\frac{\varpi-\sigma}{\widetilde{M}_{\sigma}}$.
\end{proof}

\subsubsection{Convergence of solutions}\label{converge}
By constructing appropriate Lyapunov functionals, Ducrot et al. \cite[Lemma 4.2]{ducrot+2021} and Guo and Shimojo \cite[Theorem 1.1]{guo+2021} established Liouville type theorems, which can be used to explore the asymptotic behavior of  solutions for some reaction-diffusion systems. Motivated by their results, we establish the following lemma to show the asymptotic behavior of $(u,v,w)$ in $\mathcal{V}$, namely \eqref{plane converge} and \eqref{w converges}. The difficulty to prove this lemma comes from that  the components  $v$ and $w$ satisfy the different equations.  

\begin{lemma}\label{lem converge}
	For any entire solution $(u,v,w)$ of \eqref{prey-predator-road} satisfying
	$$
		\varepsilon\leq u,\, v,\, w\leq M\qquad \text{\rm for some positive constants } \varepsilon \text{ \rm and } M,
	$$
	one has $(u(x,y,t),v(x,y,t),w(x,t))\equiv (u^{*},v^{*},w^{*})$ for all $(x,y,t)\in\ov{\O}_0\times \R$.
\end{lemma}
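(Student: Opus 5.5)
The plan is to use the classical Volterra-type Lyapunov functional for the predator--prey part and to add a road term whose weight is chosen so that the exchange terms on $\R\times\{0\}$ have a sign. Set $G(s)=s-1-\ln s\ge 0$, whose only zero is at $s=1$, and define the densities
\[
\Phi_1(u,v)=u^*G\!\left(\tfrac{u}{u^*}\right)+\tfrac{a}{b}\,v^*G\!\left(\tfrac{v}{v^*}\right),\qquad \Phi_2(w)=\tfrac{a}{b}\,w^*G\!\left(\tfrac{w}{w^*}\right).
\]
Since $\varepsilon\le u,v,w\le M$, the quantities $\Phi_1,\Phi_2$, $1/u$, $1/v$, $1/w$ are bounded. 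By parabolic estimates (as in Lemma \ref{lem 23}), all first derivatives of $(u,v,w)$ are also bounded on $\ov{\O}_0\times\R$.

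\emph{Step 1 (pointwise identities).} Using $1=u^*+av^*$ and $bu^*=1+v^*$, the reaction terms give
\[
\big(1-\tfrac{u^*}{u}\big)u(1-u-av)+\tfrac{a}{b}\big(1-\tfrac{v^*}{v}\big)v(-1+bu-v)=-(u-u^*)^2-\tfrac{a}{b}(v-v^*)^2 .
\]
Integrating the diffusion terms by parts over a region whose boundary includes a piece of the road, the Neumann condition for $u$ contributes nothing. The Robin condition for $v$, combined with the road equation weighted by $(1-w^*/w)$ and the identity $\mu w^*=\nu v^*$, produces the boundary integrand
\[
\tfrac{a}{b}(\mu w-\nu v)\Big(\tfrac{w^*}{w}-\tfrac{v^*}{v}\Big)=-\tfrac{a}{b}\,\frac{v^*(\mu w-\nu v)^2}{\mu wv}\le 0 .
\]
The same weight $\tfrac ab$ on $v$ and on $w$ is exactly what makes this combination a perfect square, and this is the key algebraic point. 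The diffusion itself contributes the dissipation $d_1u^*|\nabla u|^2/u^2+\tfrac ab d_2v^*|\nabla v|^2/v^2$ in the field and $\tfrac ab Dw^*|w_x|^2/w^2$ on the road. Call $\mathcal D\ge0$ the total dissipation density, consisting of the field terms, the road term, and the exchange square.

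\emph{Step 2 (weighted functional).} Take $\rho_\delta(x,y)=e^{-\delta\sqrt{1+x^2+y^2}}$ and set
\[
E_\delta(t)=\int_{\O_0}\Phi_1\rho_\delta+\int_\R\Phi_2\rho_\delta(x,0)\,dx .
\]
By Step 1, $E_\delta'(t)\le-\int\mathcal D\rho_\delta+C\delta\int(|\nabla u|/u+|\nabla v|/v)\rho_\delta+C\delta\int(|w_x|/w)\rho_\delta$, since $|\nabla\rho_\delta|\le\delta\rho_\delta$. Absorbing the error terms by Young's inequality into half of the gradient dissipation gives
\[
E_\delta'(t)\le-\tfrac12\int\mathcal D\rho_\delta+C\delta^2\Big(\int_{\O_0}\rho_\delta+\int_\R\rho_\delta(x,0)\,dx\Big)\le-\tfrac12\int\mathcal D\rho_\delta+C .
\]
We also have $0\le E_\delta\le C\delta^{-2}$.

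\emph{Step 3 (conclusion).} This is the main obstacle, because the crude integrated bound does not by itself force $\mathcal D\equiv0$: the error rate $C$ competes with $E_\delta\sim\delta^{-2}$. I would first try the $\alpha$-limit/compactness route. The set $\mathcal S$ of entire solutions satisfying $\varepsilon\le u,v,w\le M$ is compact in $C^{2,1}_{loc}$ and invariant under $x$-translations and time translations. Suppose some element has $\mathcal D\not\equiv0$, and let
\[
\theta=\sup_{\mathcal S}\sup_{x,y,t}\big[(u-u^*)^2+(v-v^*)^2\big]>0 .
\]
Using compactness, pick an element and a point (normalized to $x=0$, $t=0$) that nearly attains $\theta$. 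Then integrate the Step 2 inequality over $[-T,0]$ with $T=\delta^{-2}$. In the resulting time-averaged bound, the averaged dissipation $\delta^{2}\frac1T\int_{-T}^0\!\int\mathcal D\rho_\delta$ is $O(\delta^2)$, while $\delta^2E_\delta$ is comparable to a local average of $(u-u^*)^2+(v-v^*)^2$. Combining the two, and passing $\delta\to0$ along the compact family, should produce a limit entire solution in $\mathcal S$ with $\mathcal D\equiv0$ on a ball where the near-maximal deviation is attained. Vanishing of $\mathcal D$ forces $u\equiv u^*$ and $v\equiv v^*$ there; by unique continuation, and by vanishing of the exchange square, $\mu w=\nu v$, so $w\equiv w^*$. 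This contradicts $\theta>0$. If this sup/compactness bookkeeping fails, the fallback is to follow \cite[Lemma 4.2]{ducrot+2021} and \cite[Theorem 1.1]{guo+2021} verbatim on boxes $[-R,R]\times[0,R]$, with the road segment $[-R,R]$ added to the functional, using Step 1 as the only new ingredient. Either way, one concludes $(u,v,w)\equiv(u^*,v^*,w^*)$.
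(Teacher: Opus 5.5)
Your Steps 1 and 2 are correct, including the key observation that equal weights on $v$ and $w$ turn the Robin/road exchange into the square $-\tfrac{a}{b}v^*(\mu w-\nu v)^2/(\mu wv)$. Step 3, however, is where the proof actually has to happen, and it does not close.

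Integrating $E_\delta'\le-\tfrac12\int\mathcal D\rho_\delta+C$ over $[-\delta^{-2},0]$ only tells you that $\mathcal D$ has space--time average $O(\delta^2)$ against a weight of total mass $\sim\delta^{-4}$. A unit box around your near-maximal point carrying dissipation $\ge\eta>0$ is entirely compatible with this. So nothing forces the limit solution to satisfy $\mathcal D\equiv0$ near that point. In addition, $\Omega_0$ is not invariant under $y$-shifts, so near-maximizers may drift to $y\to\infty$ and leave $\mathcal S$.

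Your fallback, running \cite{ducrot+2021,guo+2021} verbatim with Step 1 as the only new ingredient, fails for a concrete reason. That argument needs the dissipation to dominate the density of the functional, $\mathcal D\gtrsim\Phi$. On the road the only dissipation is $Dw_x^2$ and the exchange square $(\mu w-\nu v(x,0,t))^2$. Neither controls $\Phi_2(w)\sim(w-w^*)^2$ unless you already know $v(x,0,t)\approx v^*$. Your estimate is consistent with exactly the configuration the lemma must exclude: at best it gives $\int_{B_R}\Phi_1\lesssim R$, which does not rule out a boundary layer along the road.

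Two ingredients are missing.

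First, do not bound $|1-u^*/u|$ by a constant in the cross term $(1-u^*/u)\nabla\rho_\delta\cdot\nabla u$. Since $|1-u^*/u|\le|u-u^*|/\varepsilon$, that term is at most $C\delta\rho_\delta[(u-u^*)^2+|\nabla u|^2]$. It is therefore absorbed by the dissipation once $\delta$ is fixed small; the same works for $v$, and for $w$ with $(w-w^*)^2+w_x^2$. Alternatively, integrate by parts twice, using $\partial_y\rho_\delta=0$ at $y=0$. Either way, this removes the additive constant $C$.

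Second, keep the gradient dissipation of $v$ and use the trace estimate
\[
\int_{\mathbb{R}}\rho_\delta(x,0)\,(v(x,0,t)-v^*)^2\,dx\le(1+\delta)\int_{\Omega_0}\rho_\delta\big[(v-v^*)^2+|\nabla v|^2\big],
\]
obtained by integrating $\partial_y[(v-v^*)^2\rho_\delta]$ in $y$. Together with $\mu(w-w^*)=(\mu w-\nu v)+\nu(v-v^*)$, this bounds $\int\rho_\delta(x,0)(w-w^*)^2\,dx$ by the available dissipation. Then $E_\delta'\le-\kappa E_\delta$ for a single fixed $\delta$, and boundedness of $E_\delta$ on $\mathbb{R}$ forces $E_\delta\equiv0$. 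No compactness or $\delta\to0$ limit is needed.

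The paper takes a different shortcut. It puts $R^{-2}$ in front of $G(v)$ and $H(w)$ only, so that the road integral becomes an $O(1/R)$ additive error and $\mathcal F_R\le 2K\sqrt{z_R}\to0$. Your Step 1 shows why this is not legitimate as written. The relative weights of $F$ and $G$ are pinned by the cancellation of $\pm b(u-u^*)(v-v^*)$. With $G/R^2$ but $F$ unweighted, the term $-b(1-R^{-2})(u-u^*)(v-v^*)$ survives, so the reaction identity displayed there fails for $R>1$. The trace estimate is what genuinely handles the road.
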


\begin{proof} For $u>0$, $v>0$ and $w>0$, define
\[
	F(u):=\frac{b}{a}u^{*}\left( \frac{u}{u^{*}}-\ln \frac{u}{u^{*}}-1 \right),~ G(v):=v^{*}\left( \frac{v}{v^{*}}-\ln \frac{v}{v^{*}}-1 \right),~ H(w):= w^{*}\left(\frac{w}{w^{*}}-\ln\frac{w}{w^{*}}-1\right).
\]
For any given $R>0$, define
\[
	\vphi_R(z)=e^{-\frac{\sqrt{1+z^2}}{R}},\qquad
	\psi_{R}(z)=
		\begin{cases}
			\cos z,\quad & z\in[0,z_R],\\
			\vphi_R(z),\quad & z\in\left(z_R,+\infty\right),
		\end{cases}
\]
where $z_R\in\left(0,\frac{\pi}{2}\right)$ is the minimal point such that $\c z=\vphi_{R}(z)$ in $z\in\left(0,\frac{\pi}{2}\right)$.
For all $t\in\R$, define 
\[
	\begin{aligned}
	\mathcal{F}_{R}(t):=\int_{0}^{+\infty}\int_{-\infty}^{+\infty}\varphi_R\left( x\right)\psi_{R}\left(\frac{y}{R}\right)\left(F(u)+\frac{G(v)}{R^{2}}\right)dxdy
	+\int_{-\infty}^{+\infty}\varphi_R\left( x\right)\frac{H(w)}{R^{2}}dx.
	\end{aligned}
\]
Then one can check the following properties:
\begin{itemize}
    \item  $F(\cdot)$, $G(\cdot)$ and $H(\cdot)$ are strictly convex functions, and there is $K>0$ such that $0\leq F(\cdot), G(\cdot), H(\cdot)\leq \frac{K}{6 \max \{d_1,d_2, 1\}}$  in $\left[\varepsilon,M\right]$.
	\item $z_R$ is decreasing in $R$ and tends to $0$ as $R$ goes to $+\infty$.
	\item There is $R_0\geq 1$, such that 
	\begin{equation}\label{351}
		\frac{\max\left\{ 4d_1,4d_2,2D,1 \right\}}{R}\leq \sqrt{z_{R}}< 2\e, \qquad \forall\, R\geq R_0.
	\end{equation}
	Thus, it follows that
		\[
		\frac{\max\left\{ 2d_1,2d_2,D \right\}}{R^{2}}\mathcal{F}_{R}(t)\leq \frac{\sqrt{z_{R}}}{2}\mathcal{F}_{R}(t),~ \frac{\sqrt{z_{R}}}{R}\leq z_R, \qquad \forall R\geq R_0,
	\]
	and
	$$
		\begin{aligned}
			& F^{\prime}(u)u(1-u-av)+\frac{G^{\prime}(v)}{R^{2}}v(-1+bu-v)\\
			= & -\frac{b}{a}(u-u^{*})^{2}-\frac{1}{R^{2}}(v-v^{*})^{2}\\
			\leq & -\sqrt{z_{R}}\left(F(u)+\frac{G(v)}{R^{2}}\right), 		\qquad \forall u,v\in[\e,M],R\geq R_0.
		\end{aligned}
	$$
	\item $\vphi_{R}^{\prime\prime}(z)\le \frac{\vphi_{R}(z)}{R^2}$, $\psi_{R}^{\prime}\left(z_R^{-}\right)=-\si z_{R}>-z_R, \psi_{R}^{\prime}\left(z_R^{+}\right)=-\frac{z_R}{R\sqrt{1+z_R^2}}\psi_{R}(z_R)<0$.
	\item $\displaystyle\int_{-\infty}^{+\infty}\varphi_{R}(x)dx=2\int_{0}^{+\infty}\varphi_{R}(x)dx\leq 2\int_{0}^{+\infty}e^{-\frac{x}{R}}dx=2R$.
\end{itemize}
These properties are easily to verify, except for \eqref{351}, whose verification is deferred to Remark \ref{remark319}.
Let $R\geq R_0$. Using integration by parts and the above properties, we obtain
\begin{align*}
	\int_{0}^{+\infty}\int_{-\infty}^{+\infty}\varphi_{R}\left( x \right)\psi_{R}\left(\frac{y}{R}\right)F^{\prime}(u)u_{xx}dxdy
	=&\int_{0}^{+\infty}\int_{-\infty}^{+\infty}-\varphi_{R}^{\prime}\left( x \right)\psi_{R}\left(\frac{y}{R}\right)\frac{\p F(u(x,y,t))}{\p x}\\
	&-\varphi_{R}\left( x \right)\psi_{R}\left(\frac{y}{R}\right)F^{\prime\prime}(u)u_{x}^{2}dxdy\\
	\leq & \int_{0}^{+\infty}\int_{-\infty}^{+\infty}\varphi_{R}^{\prime\prime}\left( x \right)\psi_{R}\left(\frac{y}{R}\right) F(u)dxdy\\
	\leq & \int_{0}^{+\infty}\int_{-\infty}^{+\infty}\frac{1}{R^2}\varphi_{R}(x)\psi_{R}\left(\frac{y}{R}\right)F(u)dxdy
\end{align*}
and
\begin{align*}
	\int_{0}^{+\infty}\int_{-\infty}^{+\infty}\varphi_{R}(x)\psi_{R}\left(\frac{y}{R}\right)F^{\prime}(u)u_{yy}dxdy
	\leq & \int_{-\infty}^{+\infty}\int_{0}^{Rz_R}-\frac{1}{R}\varphi_{R}(x)\psi_{R}^{\prime}\left(\frac{y}{R}\right)\frac{\p F}{\p y}dydx\\
	&+\int_{-\infty}^{+\infty}\int_{Rz_R}^{+\infty}-\frac{1}{R}\varphi_{R}(x)\psi_{R}^{\prime}\left(\frac{y}{R}\right)\frac{\p F}{\p y}dydx\\
	< & \int_{-\infty}^{+\infty}\int_{Rz_R}^{+\infty}\frac{1}{R^4}\varphi_{R}( x )\psi_R\left(\frac{y}{R}\right)F(u)dydx+\frac{K}{6Rd_1}z_R\cdot 2R\\
	\leq & \int_{0}^{+\infty}\int_{-\infty}^{+\infty}\frac{1}{R^2}\varphi_{R}( x )\psi_R\left(\frac{y}{R}\right)F(u)dxdy+\frac{K}{3d_1}z_R.
\end{align*}
Similarly, one has
\[
\int_{0}^{+\infty}\int_{-\infty}^{+\infty}\varphi_{R}(x)\psi_{R}\left(\frac{y}{R}\right)G^{\prime}(v)v_{xx}dxdy\leq  \int_{0}^{+\infty}\int_{-\infty}^{+\infty}\frac{1}{R^2}\varphi_{R}(x)\psi_{R}\left(\frac{y}{R}\right)G(v)dxdy
\]
and
\[
    \begin{aligned}
        \int_{0}^{+\infty}\int_{-\infty}^{+\infty}\varphi_{R}(x)\psi_{R}\left(\frac{y}{R}\right)G^{\prime}(v)v_{yy}dxdy
        < & \int_{0}^{+\infty}\int_{-\infty}^{+\infty}\frac{1}{R^2}\varphi_{R}(x)\psi_{R}\left(\frac{y}{R}\right)G(v)dxdy+\frac{K}{3d_2}z_R \\ &-\frac{1}{d_2}\int_{-\infty}^{+\infty}\varphi_{R}(x)\left(1-\tfrac{v^{*}}{v(x,0,t)}\right)\left[\nu v(x,0,t)-\mu w\right]dx.
    \end{aligned}
\]
Moreover,
\[
   \int_{-\infty}^{+\infty}\varphi_{R}(x)H^{\prime}(w)w_{xx}dx\leq \int_{-\infty}^{+\infty}\frac{1}{R^2}\varphi_{R}(x)H(w)dx.
\]
Therefore, for any $R\geq R_0$, we obtain
\begin{align*}
	&\frac{d}{dt}\mathcal{F}_{R}(t)+\sqrt{z_{R}}\mathcal{F}_{R}(t)\\
	\leq & \int_{0}^{+\infty}\int_{-\infty}^{+\infty}\varphi_{R}(x)\psi_{R}\left(\frac{y}{R}\right) \left[d_1F^{\prime}(u)\Delta u+\frac{d_2}{R^{2}}G^{\prime}(v)\Delta v\right]\\
	& \qquad+ \int_{-\infty}^{+\infty}\frac{\varphi_R(x)}{R^{2}}H^{\prime}(w)\left[Dw_{xx}+\nu v(x,0,t)-\mu w(x,t)\right]dx+\sqrt{z_{R}}\int_{-\infty}^{+\infty}\varphi_R\left( x\right)\frac{H(w(x,t))}{R^{2}}dx\\
	\leq & \int_{0}^{+\infty}\int_{-\infty}^{+\infty}\frac{2}{R^{2}} \varphi_R( x)\psi_{R}\left(\frac{y}{R}\right) \left[d_1F(u)+d_2\frac{G(v)}{R^{2}}\right]dxdy+\int_{-\infty}^{+\infty}\frac{D}{R^{2}} \varphi_R( x )\frac{H(w)}{R^{2}}dx +\frac{2K}{3}z_{R}\\
	&\qquad -\int_{-\infty}^{+\infty}\frac{\varphi_{R}(x)}{R^{2}}\frac{v^{*}}{\mu v(x,0,t)w(x,t)}\left[\nu v(x,0,t)-\mu w(x,t)\right]^{2}dx+\frac{K}{3}\frac{\sqrt{z_{R}}}{R}\\
	\leq & \int_{0}^{+\infty}\int_{-\infty}^{+\infty}\frac{2}{R^{2}} \varphi_R( x)\psi_{R}\left(\frac{y}{R}\right) \left[d_1F(u)+d_2\frac{G(v)}{R^{2}}\right]dxdy+\int_{-\infty}^{+\infty}\frac{D}{R^{2}} \varphi_R( x )\frac{H(w)}{R^{2}}dx +Kz_{R}\\
	\leq & \frac{\sqrt{z_{R}}}{2}\mathcal{F}_{R}(t)+Kz_{R}.
\end{align*}
Then one can find $R$ sufficiently large such that
$ \frac{d}{dt}\mathcal{F}_{R}\leq -\frac{\sqrt{z_{R}}}{2}\mathcal{F}_{R}+ Kz_R$.
This yields that $\left(\mathcal{F}_R(t)-2K\sqrt{z_{R}}\right)e^{\frac{\sqrt{z_{R}}}{2}t}$ is nonincreasing in $t\in\R$. Hence,
\[
    \li_{R\to\infty}\mathcal{F}_{R}(t)=\lim_{R \to \infty}2K\sqrt{z_{R}}=0,\qquad \forall \, t\in\R.
\]
due to $\li_{t\to-\infty}\left(\mathcal{F}_R(t)-2K\sqrt{z_{R}}\right)e^{\frac{\sqrt{z_{R}}}{2}t}=0$ for all $R\geq R_0$. By dominated convergence theorem,
\[
	\begin{aligned}
		\lim_{R \to \infty}\mathcal{F}_R(t)& \geq\lim_{R \to \infty}\int_{0}^{+\infty}\int_{-\infty}^{+\infty}\varphi_R\left( x\right)\psi_{R}\left(\frac{y}{R}\right)F(u(x,y,t))dxdy\\
		& \geq \lim_{R \to \infty}\int_{0}^{+\infty}\int_{-\infty}^{+\infty}\varphi_R\left( x\right)\varphi_{R}\left(\frac{y}{R}\right)F(u(x,y,t))dxdy\\
		& =\int_{0}^{+\infty}\int_{-\infty}^{+\infty}F(u(x,y,t))dxdy.
	\end{aligned}
\]
Consequently,
$F(u(x,y,t))=0$ for any $x\in \R, y>0$ and $t\in\R$, thus $u=u^{*}$ in $\O_0\times \R$. This, combined with the first equation of \eqref{prey-predator-road}, gives that $v=v^{*}$ in $\O_0\times \R$. Since $u,\,v$ are continuous up to the boundary of $\O_0$, we have $u\equiv u^{*}$ and $v\equiv v^{*}$ in $\ov{\O}_0\times \R$. Finally, this lemma follows by using the last equation of \eqref{prey-predator-road}, which implies that $w\equiv w^{*}$ on $\mathbb{R}\times \mathbb{R}$.
\end{proof}
\begin{remark}\label{remark319}
	We provide a brief explanation of the existence of $R_0$ for which \eqref{351} holds. Define $l:=\left(\max\left\{4d_1,4d_2,2D,1\right\}\right)^{2}$ and $f(x):=\cos x-e^{-\frac{x}{\sqrt{l}}}$. Then $f(0)=0$ and there exists $x_0>0$ such that $f^{\prime}(x)=-\sin x+\frac{1}{\sqrt{l}}e^{-\frac{x}{\sqrt{l}}}\geq 0$ for $0\leq x\leq x_0$. Therefore, we have 
$\cos x\geq e^{-\frac{x}{\sqrt{l}}}\geq e^{-\frac{\sqrt{x+x^3}}{\sqrt{l}}}$  for  $x\in[0,x_0]$. 
Thus, we have $R_0\geq 1$ such that
$\cos \frac{l}{R^{2}}>e^{-\sqrt{\frac{1}{R^2}+\frac{l^2}{R^6}}}=\varphi\left(\frac{l}{R^2}\right)$ for all $R\geq R_0$.
It implies that $z_R\geq \frac{l}{R^{2}}$ for $R\geq R_0$ and thus \eqref{351} holds.
\end{remark}

\vspace{1.5ex}
\begin{proof}[Proofs of \eqref{plane converge} and \eqref{w converges}] Fix $0<\e<\min\left\{c^*,2\sqrt{d_2(b-1)}\right\}$. We again proceed by contradiction.  Assume that there exist $\d>0,(x_n,y_n)\in\ov{\O}_0$, and $t_n\geq 0$ such that
\[
	{\rm dist}\left(\frac{1}{t_n}(x_n,y_n),\ov{\O}_0\setminus\mathcal{V}\right)>\e,\quad \li_{n\to\infty}t_n=+\infty,
\]
and
\begin{equation}\label{converge contra}
|u(x_n,y_n,t_n)-u^*|+|v(x_n,y_n,t_n)-v^*|\geq \d.
\end{equation}
Then there exists a constant $0<k<1$ such that
\[
	{\rm dist}\left(\frac{1}{kt_n}(x_n,y_n),\ov{\O}_0\setminus\mathcal{V}\right)>\frac{\e}{2},\qquad \forall n\in\N.
\]
Actually, we can take $k=\frac{2(c_0-\e)}{2c_0-\e}$ with $c_0:=\min\left\{c^*,\vr^*_{b-1}\left(\frac{\pi}{2}\right)\right\}$.
Define the function sequences $u_n(x,y,t):=u(x+x_n,y+y_n,t+t_n), v_n(x,y,t):=v(x+x_n,y+y_n,t+t_n), w_n(x,t):=w(x+x_n,t+t_n)$ for $(x,y)\in\ov{\O}_{-y_n},t\geq (k-1)t_n$. 
Then, there exists $l\in[-\infty,0]$ such that after passing to a subsequence of $n\to\infty$, $y_n\to -l$ and $(u_n,v_n,w_n)$ converges to some triplet $(u_{\infty},v_{\infty},w_{\infty})$ locally uniformly in $\left(C^{2,1}(\O_{l}\times\R)\right)^2\times C^{2,1}(\R\times\R)$. Moreover, $\displaystyle \lim_{n \to \infty}u_n=u_{\infty},\lim_{n \to \infty}v_n=v_{\infty}$ in $C^{1}\left(\ov{\O}_l\times \R\right)$ if $l>-\infty$.
Since $t_{n}\to \infty$, for any $(x,y)\in \R^2$, we can find $N(x,y)\in\N$ large enough such that
\[
	\left|\frac{1}{kt_n}(x,y)\right|<\frac{\varepsilon}{4}, \qquad \forall n\geq N(x,y).
\]
Thus for any  $(x,y)\in \R^2$, $t\geq (k-1)t_n$ and $n\geq N(x,y)$, it holds
\[
	\begin{aligned}
	{\rm dist}\left(\frac{1}{t+t_{n}}(x+x_{n},y+y_{n}),\ov{\O}_0\setminus\mathcal{V}\right) 
	\geq &{\rm dist}\left(\frac{1}{kt_{n}}(x+x_{n},y+y_{n}),\ov{\O}_0\setminus\mathcal{V}\right) \\
	\geq&{\rm dist}\left(\frac{1}{kt_n}(x_n,y_n),\ov{\O}_0\setminus\mathcal{V}\right)-\left|\frac{1}{kt_n}(x,y)\right|\\
	>&\frac{\e}{4}>0.
	\end{aligned}
\]
Therefore, Theorems \ref{persistence}, \ref{uniform} and \ref{thm317} imply that there exists $\d>0$ such that
\[
	\d\leq u_{n}(x,y,t),v_{n}(x,y,t),w_{n}(x,t)\leq \max\left\{1, b-1,\frac{\nu}{\mu}(b-1)\right\}
\]
forall $(x,y)\in \R^2,n\geq N(x,y)$ and $t\geq (k-1)t_n$. Thus, it follows that
\[
	\d\leq u_{\infty}(x,y,t),v_{\infty}(x,y,t),w_{\infty}(x,t)\leq \max\left\{1, b-1,\frac{\nu}{\mu}(b-1)\right\},\qquad \forall (x,y)\in\ov{\O}_l,t\in\R.
\]
If $l>-\infty$, then $(u_{\infty},v_{\infty},w_\infty)$ is an entire solution of \eqref{prey-predator-road}, after a translation in $y$. If $l=-\infty$, $(u_{\infty},v_{\infty})$ satisfies
\[
	\left\{
		\begin{aligned}
			&\partial_{t}u_{\infty}-d_1\Delta u_{\infty}=u_{\infty}(1-u_{\infty}-a v_{\infty}),&&\quad(x,y,t)\in \mathbb{R}^{3},&\\
			&\partial_{t}v_{\infty}-d_2\Delta v_{\infty}=v_{\infty}(-1+bu_{\infty}-v_{\infty}),&&\quad(x,y,t)\in \mathbb{R}^3.&
			\end{aligned}
	\right.
\]
For such two cases, it follows from Lemma \ref{lem converge} and \cite[Theorem 1.1]{guo+2021} that $(u_{\infty},v_{\infty},w_{\infty})\equiv (u^*,v^*,w^*)$, contradicts \eqref{converge contra}. Therefore, we have \eqref{plane converge}. Furthermore, we obtain \eqref{w converges} by repeating the above arguments with fixed $0\leq c<\min\left\{c^{*},\vr^{*}_{b-1}\left(\frac{\pi}{2}\right)\right\}$ and $|x_n|\leq ct_n, y_n=0, ~\forall n\in\N$. This completes the proof.
\end{proof}

\section{\texorpdfstring{Spreading Speeds for \eqref{prey-predator-road} with \ref{hy-initial-positive}}{Spreading Speeds for (1.4) with (H1')}}\label{positive}
In this section, we consider the propagation of \eqref{prey-predator-road} with initial condition \ref{hy-initial-positive}. In this case, the prey is initially distributed throughout the entire field. Hence the invasion speed of predators depends only on their own diffusion ability.
We first show that the prey persists throughout the whole field.
\begin{theorem}\label{lem51}
	Let $(u,v,w)$ be the solution of \eqref{prey-predator-road} with $(u,v,w)=(u_0,v_0,w_0)$ at $t=0$.
	Then there exists $\eta>0$ such that 	for any $(u_0,v_0,w_0)$ satisfying {\rm\ref{hy-initial-positive}}, there holds
	\[
		\inf_{\substack{(x,y)\in\ov{\O}_0\\t\geq 0}} u(x,y,t) \geq \eta.
	\]
\end{theorem}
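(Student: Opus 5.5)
The plan is to combine the pointwise comparison of Lemma \ref{predator die without prey} with two scalar comparison arguments: one on a fixed initial time window and one for all later times. Note first that \ref{hy-initial-positive} implies \ref{h2}: the data are uniformly continuous, $u_0\geq\varepsilon_0$ gives $u_0\not\equiv 0$, $(v_0,w_0)$ is nontrivial, and the upper bounds coincide. Hence Lemma \ref{lem 23} gives $0\le u\le 1$ and $0\le v\le b-1$. Moreover, Lemma \ref{predator die without prey} applies with constants that do not depend on the particular initial datum.

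\emph{Step 1 (initial window).} Fix $\sigma:=\frac{1}{2a}$. Lemma \ref{predator die without prey} provides $M_\sigma>0$ and $T_\sigma>0$, independent of $(u_0,v_0,w_0)$, such that $v\le \sigma+M_\sigma u$ on $\ov{\O}_0\times[T_\sigma,+\infty)$. On $[0,T_\sigma]$ I use only $v\le b-1$ and $u\le 1$, which give
\[
\p_t u-d_1\Delta u\geq -\big(1+a(b-1)\big)u,\qquad \p_y u(x,0,t)=0 .
\]
The spatially constant function $\varepsilon_0e^{-(1+a(b-1))t}$ is a solution of the linear Neumann problem with this right-hand side and lies below $u_0$ at $t=0$. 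The comparison principle for the scalar Neumann problem on the half plane (bounded solutions) then yields
\[
u\ge \eta_1:=\varepsilon_0e^{-(1+a(b-1))T_\sigma}\quad\text{on }\ov{\O}_0\times[0,T_\sigma].
\]

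\emph{Step 2 (later times).} For $t\ge T_\sigma$ we have $1-u-av\ge 1-a\sigma-(1+aM_\sigma)u=\frac12-(1+aM_\sigma)u$, so $u$ is a supersolution of
\[
\p_t z-d_1\Delta z=z\Big(\tfrac12-(1+aM_\sigma)z\Big),\qquad \p_y z(x,0,t)=0 .
\]
Let $z(t)$ be the solution of the corresponding logistic ODE with $z(T_\sigma)=\eta_1$; this is a spatially constant solution of the Neumann problem. Since $z$ is monotone toward its equilibrium $\frac{1}{2(1+aM_\sigma)}$, we get
\[
z(t)\ge\min\Big\{\eta_1,\tfrac{1}{2(1+aM_\sigma)}\Big\}\quad\text{for all }t\ge T_\sigma .
\]
Comparison on $[T_\sigma,\infty)$, using $u(\cdot,T_\sigma)\ge\eta_1=z(T_\sigma)$, gives $u\ge z$ there.

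Setting $\eta:=\min\{\eta_1,\frac{1}{2(1+aM_\sigma)}\}$ proves the statement. The constant depends only on $\varepsilon_0,a,b,d_1,d_2,D,\mu,\nu$ and not on the data, because $T_\sigma$ and $M_\sigma$ are uniform. The only delicate point is exactly this uniformity, which is what Lemma \ref{predator die without prey} was formulated to provide. The scalar comparison steps are routine, because the Neumann condition is satisfied trivially by spatially constant functions.
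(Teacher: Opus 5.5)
Your proof is correct and follows the paper's argument. You get an exponential lower bound on the initial window $[0,T_\sigma]$ from $u_0\ge\varepsilon_0$, and for $t\ge T_\sigma$ you compare $u$ with a logistic Neumann problem using the data-independent bound $v\le\sigma+M_\sigma u$ from Lemma \ref{predator die without prey}. The only differences are cosmetic: the paper compares with a constant strict subsolution $\eta_\delta<\min\{\varepsilon_0e^{-a(b-1)T_\delta},\frac{1-a\delta}{1+aM_\delta}\}$ instead of the ODE solution, and uses the slightly sharper rate $a(b-1)$, which follows from $1-u\ge 0$.
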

\begin{proof}
	Choose $0<\d<\frac{1}{a}$. By Lemma \ref{predator die without prey}, we can find $T_\d>0$ and $M_{\d}>0$ such that for any $(u_0,v_0,w_0)$ satisfying \ref{hy-initial-positive}, $u(x,y,t+T_\d)$ is a supersolution of
	\begin{equation}\label{52}
		\left\{
			\begin{aligned}
				&\partial_{t} \underline{u}-d_1 \Delta \underline{u}=\left[ 1-a\delta -(1+aM_{\delta})\underline{u}\right]\underline{u},\qquad &&(x,y)\in \O_0, t> 0,&\\
				&\p_y\un{u}(x,0,t)=0, &&x\in\R,t>0,&\\
				&\un{u}(x,y,0)=u(x,y,T_{\d}),&& (x,y)\in \O_0.&
			\end{aligned}
		\right.
	\end{equation}
	Since $0\leq u(x,y,t)\leq 1,~v(x,y,t)\leq b-1$ and $u_0(x,y)\geq \varepsilon_0$, we infer from the comparison principle that $u(x,y,t)\geq \varepsilon_0e^{-a(b-1)t}$. Therefore, $\un{u}(x,y,0)=u(x,y,T_{\d})\geq \varepsilon_0e^{-a(b-1)T_{\d}}>0$. We take $0<\eta_{\d}<\min\left\{\varepsilon_0e^{-a(b-1)T_{\d}},\frac{1-a\delta}{1+aM_{\d}}\right\}$, which is a strict subsolution of \eqref{52}. Invoking the comparison principle again, we find
	\[
		\inf_{\substack{(x,y)\in\ov{\O}_0\\ t\geq T_{\d}}} u(x,y,t)\geq \inf_{\substack{(x,y)\in\ov{\O}_0\\ t\geq 0}} \un{u}(x,y,t)>\eta_{\d}.
	\]
	Since
	\[
		\inf_{\substack{(x,y)\in\ov{\O}_0\\ 0\leq t\leq T_{\d}}} u(x,y,t)\geq \varepsilon_0e^{-a(b-1)T_{\d}},
	\]
	The proof is completed by taking $\eta=\min\left\{ \eta_{\d}, \varepsilon_0e^{-a(b-1)T_{\d}} \right\}$.
\end{proof}
\begin{lemma}\label{lem52}
	 Let $(u,v,w)$ be the solution of \eqref{prey-predator-road} with $(u,v,w)=(u_0,v_0,w_0)$ at $t=0$. Then for any $\s>0$, there exist $M_{\s}>0$ and $T_{\s}>0$ such that for any $(u_0,v_0,w_0)$ satisfying {\rm\ref{hy-initial-positive}}, we have
	\[
		1-u(x,y,t)\leq \s+M_{\s}v(x,y,t),\qquad \forall t\geq T_{\s},(x,y)\in\ov{\O}_0.
	\]
\end{lemma}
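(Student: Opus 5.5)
We argue by contradiction and compactness, as in Lemma \ref{lem43}; under \ref{hy-initial-positive} the uniform prey persistence of Theorem \ref{lem51} replaces the cone-restricted Theorem \ref{persistence}, so no spatial restriction is needed. Suppose there are $\s_0>0$, data $(u_{0,n},v_{0,n},w_{0,n})$ satisfying \ref{hy-initial-positive} (with the same $\varepsilon_0$), points $(x_n,y_n)\in\ov{\O}_0$ and times $t_n\to+\infty$ with
\[
1-u^n(x_n,y_n,t_n)>\s_0+n\,v^n(x_n,y_n,t_n),\qquad \forall n\in\N .
\]
In particular $v^n(x_n,y_n,t_n)\to0$. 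We shift $(u_n,v_n,w_n)(x,y,t):=(u^n,v^n,w^n)(x+x_n,y+y_n,t+t_n)$ and, using the uniform bounds of Lemma \ref{lem 23} and parabolic estimates, extract a subsequence with $y_n\to z_0\in[0,+\infty]$ and a locally uniform limit $(\tilde u,\tilde v,\tilde w)$. This limit is an entire solution of the system on $\ov{\O}_{-z_0}\times\R$ (road at $y=-z_0$) if $z_0<\infty$, or of the predator-prey system on $\R^3$ if $z_0=+\infty$. It satisfies $\tilde v(0,0,0)=0$ and $\tilde u(0,0,0)\le 1-\s_0$.

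Next we show $\tilde v\equiv0$. If $z_0=+\infty$, or if $z_0>0$ and $(0,0)$ is interior, then $\tilde v\ge0$ satisfies $\p_t\tilde v-d_2\D\tilde v\ge -b\tilde v$, so the strong maximum principle gives $\tilde v\equiv0$ on the connected domain for all earlier times, hence everywhere by backward uniqueness/entire-in-time continuity, exactly as in the proof of \eqref{339}. If $z_0=0$, we reproduce the argument for \eqref{trivial}. If $\tilde v\not\equiv0$, then $\tilde v>0$ in the interior, and the Hopf lemma gives $\p_y\tilde v(0,0,0)>0$. This contradicts the boundary condition $-d_2\p_y\tilde v=\m\tilde w-\n\tilde v=\m\tilde w\ge0$ at that point. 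Hence $\tilde u$ solves $\p_t\tilde u-d_1\D\tilde u=\tilde u(1-\tilde u)$, with the Neumann condition on $y=-z_0$ when $z_0<\infty$, i.e.\ \eqref{44} or \eqref{46}.

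Now we use Theorem \ref{lem51}. The constant $\eta$ there depends only on $\varepsilon_0$ and the parameters, not on the particular data, so $u^n\ge\eta$ uniformly and therefore $\tilde u\ge\eta$ on $\ov{\O}_{-z_0}\times\R$. Comparing with the logistic ODE $\un u'=\un u(1-\un u)$, $\un u(0)=\eta$, started at time $-\tau$ and letting $\tau\to+\infty$, we obtain $\tilde u(\cdot,\cdot,0)\ge1$. This contradicts $\tilde u(0,0,0)\le1-\s_0$. The uniformity of $M_\s,T_\s$ follows because the contradiction sequence was allowed to vary over all admissible data.

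The main obstacle is checking that Theorem \ref{lem51} really gives a data-independent $\eta$. Its proof uses $T_\d,M_\d$ from Lemma \ref{predator die without prey}, which are uniform over data satisfying \ref{h2}, and \ref{hy-initial-positive} data satisfy \ref{h2}. It also uses $\varepsilon_0$, which is fixed. So $\eta$ is uniform. The boundary case $z_0=0$ in the second step is the only other delicate point, and it is handled by the Hopf argument above.
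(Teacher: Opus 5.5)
Your proposal is correct and follows the paper's proof. Both argue by contradiction with translation and compactness, pass the data-uniform lower bound $\eta$ from Theorem \ref{lem51} to the limit, and compare with the logistic ODE. You also justify $\tilde v\equiv 0$ (strong maximum principle, plus the Hopf argument when $z_0=0$) and check that $\eta$ is independent of the data, both of which the paper asserts without comment. One wording fix: the logistic comparison only needs $\tilde v\equiv 0$ for $t\le 0$, and extending it to $t>0$ would use forward uniqueness, not ``backward uniqueness''.
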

\begin{proof}
	Suppose by contradiction that there exist $\s_0>0$ and sequences $\left\{ (u_{0,n},v_{0,n},w_{0,n}) \right\}$  satisfying \ref{hy-initial-positive}, $\{t_n\}$ and $\{(x_n,y_n)\}\subset \ov{\O}_0$ such that
	\[
		\lim_{n\to\infty}t_n=+\infty, \qquad 1-u^{n}(x_n,y_n,t_n)>\sigma_0+n v^{n}(x_n,y_n,t_n),~\forall n\in\N,
	\]
where $(u^n,v^n,w^n)$ is the solution of \eqref{prey-predator-road} with initial value $ (u_{0,n},v_{0,n},w_{0,n})$, $n\in\N$. 	After passing a subsequence of $n\to\infty$, $v^{n}(x+x_n,y+y_n,t+t_n)\to 0$ and $u^{n}(x+x_n,y+y_n,t+t_n)$ converges to some function $u^{\infty}(x,y,t)$. Fix $(x,y)\in\ov{\O}_{-y_n}, t\in\R$.
	Then 
	\[
		u^{n}(x+x_n,y+y_n,t+t_n)\geq \inf_{\substack{(\xi,\zeta)\in\ov{\O}_0\\ \tau\geq 0}} u^{n}(\xi,\zeta,\tau)\geq \eta,\qquad \forall n\geq N
	\]
	for some $N\in\N$ such that $t+t_n\geq 0,~\forall n\geq N$, where $\eta>0$ is defined in Theorem \ref{lem51}. Therefore, one has  
	\[
		\tilde{\eta}:=\inf_{\substack{(x,y)\in\ov{\O}_{-z_0}\\ t\in\R}}u^{\infty}(x,y,t)\geq\eta>0,
	\]
	where $z_0\in[0,+\infty]$ is the limit of a subsequence of $\{y_n\}$ as $n\to\infty$. Notice that  $u^{\infty}$ is a solution of \eqref{44} or \eqref{46}. Applying the comparison principle with the solution of equation
	$\un{u}^{\prime}=\un{u}(1-\un{u})$ in $ t>0$ with $\un{u}(0)=\tilde{\eta}$, 
	we have $u^{\infty}(x,y,t+\tau)\geq\un{u}(t)$ for all $t\geq 0,\tau\in\R,(x,y)\in\ov{\O}_{-z_0}$. Hence $u^{\infty}(0,0,0)\geq \displaystyle \lim_{\tau\to-\infty} u(-\tau)=1$. This is a contradiction to $u^{\infty}(0,0,0)\leq 1-\s_0$. This completes the proof.
\end{proof}
\begin{theorem}\label{thm43}
	 Let $(u,v,w)$ be the solution of \eqref{prey-predator-road} with  initial data $(u,v,w)=(u_0,v_0,w_0)$. Assume that {\rm \ref{hy-initial-positive}} holds. For any $\e>0$, there exists $\varpi>0$ such that
	\[
		\liminf_{ t \to \infty }\inf_{\substack{(x,y)\in\ov{\O}_0,\\ {\rm dist}\left(\frac{1}{t}(x,y),\ov{\O}_0\setminus \mathcal{W}_{b-1}\right)}} v(x,y,t)\ge \varpi.
	\]
\end{theorem}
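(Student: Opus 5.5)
The approach mirrors the proof of Theorem \ref{uniform}. The one change is that Lemma \ref{lem52} now gives a pointwise comparison on the whole field, so no truncation in space is needed. The statement is understood with the missing ``$>\e$'' in the distance condition.

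Fix $\e>0$ small, say $\e<2\sqrt{d_2(b-1)}$. Using Remark \ref{monotone} and the continuity of $\vr^*_\d$ in $\d$ (from the linear characterization via \eqref{exponential}), choose $0<\s<1-\frac1b$ so small that
\[
\vr^*_{b(1-\s)-1}(\vt)\geq \vr^*_{b-1}(\vt)-\frac{\e}{2}\qquad \text{for all } \vt\in\left[-\frac{\pi}{2},\frac{\pi}{2}\right].
\]
Lemma \ref{lem52} then provides $M_\s,T_\s$ with $u\geq 1-\s-M_\s v$ on $\ov{\O}_0\times[T_\s,+\infty)$. Hence $u\ge\max\{0,1-\s-M_\s v\}$, and $v(-1+bu-v)\ge v\left[b(1-\s)-1-(bM_\s+1)v\right]$. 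Therefore $(v,w)$ is a supersolution, on $t>T_\s$, of system \eqref{predator sys} with
\[
d=d_2,\qquad \d=b(1-\s)-1,\qquad k=bM_\s+1 .
\]
Equivalently, it is a supersolution of \eqref{62} with $\chi\equiv1$, $M_1=b(1-\s)$, $M_2=bM_\s$; the sign of the $M_2$ term is consistent with the identity $(M_1-M_2v)-1-v=M_1-1-(M_2+1)v$.

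Let $(\un v,\un w)$ be the solution of this logistic road-field system starting at time $T_\s$ from a nontrivial, compactly supported datum lying below $(v,w)(\cdot,T_\s)$. Such a datum exists because $v,w>0$ for $t>0$ by the strong maximum principle, as noted in the proof of Theorem \ref{uniform}. The comparison principle of \cite[Proposition 3.2]{berestycki+2013} gives $(v,w)\ge(\un v,\un w)$ for $t\ge T_\s$. Applying Lemma \ref{berestycki+2016 thm2.1}(i) with the asymptotic expansion shape $\mathcal W_{b(1-\s)-1}$, we get $\un v\to \frac{b(1-\s)-1}{bM_\s+1}$ uniformly on the set $\{\mathrm{dist}(\frac1t(x,y),\ov{\O}_0\setminus\mathcal W_{b(1-\s)-1})>\e/2\}$. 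The time shift by $T_\s$ is harmless, since it changes $\frac1t(x,y)$ by $O(1/t)$.

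It remains to check the geometric inclusion: any point at distance $>\e$ from $\ov{\O}_0\setminus\mathcal W_{b-1}$ is at distance $>\e/2$ from $\ov{\O}_0\setminus\mathcal W_{b(1-\s)-1}$. Here I would use the inclusion
\[
\left\{r(\sin\vt,\cos\vt):0\le r\le \vr^*_{b-1}(\vt)-\tfrac{\e}{2}\right\}\subset \mathcal W_{b(1-\s)-1},
\]
as was done for $\mathcal U$ in the proof of Theorem \ref{uniform}. If the purely radial comparison feels delicate near the boundary $y=0$, one can instead choose $\s$ so that the radial gap is at most $\e/4$ and use convexity of the shapes. This step is the main (mild) obstacle. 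The conclusion then holds with $\varpi:=\frac{b(1-\s)-1}{bM_\s+1}>0$.
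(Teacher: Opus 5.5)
Your proposal is correct and is essentially the paper's argument. Lemma \ref{lem52} makes $(v,w)$ a supersolution on $t>T_\sigma$ of the untruncated logistic road-field system with rate $b(1-\sigma)-1$ and coefficient $bM_\sigma+1$; comparing with its expansion shape $\mathcal{W}_{b(1-\sigma)-1}$ and using $\varrho^*_{b(1-\sigma)-1}\geq\varrho^*_{b-1}-\varepsilon/2$ then gives $\varpi=\frac{b(1-\sigma)-1}{bM_\sigma+1}$, exactly as the paper intends. The geometric step you flag is immediate, and no convexity is needed: since $\overline{\Omega}_0$ is a cone with vertex at the origin, pushing any $q\in\overline{\Omega}_0\setminus\mathcal{W}_{b(1-\sigma)-1}$ radially outward by slightly more than $\varepsilon/2$ lands in $\overline{\Omega}_0\setminus\mathcal{W}_{b-1}$, which gives $\mathrm{dist}(p,\overline{\Omega}_0\setminus\mathcal{W}_{b(1-\sigma)-1})\geq\mathrm{dist}(p,\overline{\Omega}_0\setminus\mathcal{W}_{b-1})-\varepsilon/2$.
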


This result can be proved by using an argument similar to that of Theorem \ref{uniform}, hence we omit the details. In contrast to the proof of Theorem \ref{uniform}, where $(v,w)$ is a supersolution of the truncated system \eqref{351}, the lower control system of $(v,w)$ in the proof of Theorem \ref{thm43} is
 \[
 	\left\{
		\begin{aligned}
			&\p_t\un{v}-d_2\D \un{v}=\big[b(1-\s)-1-(bM_{\s}+1)\un{v}\big]\un{v},&& (x,y)\in\O_{0},t>T_{\sigma}, &\\
			&\p_t\un{w}-D \p_{xx}\un{w}=-\m\un{w}+\n\un{v}(x,0,t),&& x\in\R,t>T_{\sigma}, &\\
			& -d_2\p_y\un{v}(x,0,t)=\m\un{w}(x,t)-\n\un{v}(x,0,t),&& x\in\R,t>T_{\sigma},&\\
			&\underline{v}(x,y,T_{\sigma})=v(x,y,T_{\sigma}),&&(x,y)\in\O_{0},&\\
			&\underline{w}(x,T_{\sigma})=w(x,T_{\sigma}),&&x\in\R.&
		\end{aligned}
	\right.
\]
whose lower bound of spreading speed along the direction $(\sin\vt,\cos\vt)$ is $\vr_{b(1-\sigma)-1}^*(\vt)$. It makes the proof of Theorem \ref{thm43} easier than that of Theorem \ref{uniform}.

Using Lemma \ref{lem313}, Theorem\ref{lem51}, Theorem \ref{thm43} and Lemma \ref{lem converge}, in analogy with the proofs of \eqref{plane converge} and \eqref{w converges}, we obtain \eqref{positive-coexistence} and \eqref{positive-w converge}.
Since $(v,w)$ is a subsolution of \eqref{39}, we have \eqref{v small best} and \eqref{positive-w die}. Using Theorem \ref{lem51}, a similar proof of \eqref{u best} gives that for any $\varepsilon>0$,
\[
	\lim_{t\to\infty}\sup_{\substack{(x,y)\in\ov{\O}_0\\{{\rm dist}\left(\frac{1}{t}(x,y),\mathcal{W}_{b-1}\right)>\varepsilon}}}|u(x,y,t)-1|=0.
\]
Combining this with \eqref{v small best}, we have \eqref{positive-vdie}.


\section*{Conflict of interest}
All authors declare no conflict of interest in this study.

\section*{Data availability statement}
No data was used and no new data was generated in this study.

\section*{Appendix A}\label{Appendix}
\renewcommand{\thetheorem}{A.\arabic{theorem}}
\setcounter{theorem}{0}
\renewcommand{\theequation}{A.\arabic{equation}}
\setcounter{equation}{0}
	Consider a cooperative system
	\begin{equation}\label{cooperate}
		\left\{\begin{array}{ll}
			\partial_{t}u-d_1\Delta u =g_1(u,v),&\qquad(x,y)\in \O_0,t>0,\\
			\partial_{t}v-d_2\Delta v =g_2(u,v),&\qquad(x,y)\in \O_0,t>0,\\
			\partial_{t}w-D\p_{xx}w=-\mu w+ \n v(x,0,t),&\qquad x\in \mathbb{R},t>0,\\
			\p_y u(x,0,t)=0,& \qquad x\in \mathbb{R},t>0,\\
			-d_2\partial_{y}v(x,0,t)=\mu w-\n v(x,0,t),&\qquad x\in \mathbb{R},t>0,\\
		\end{array}\right.
	\end{equation}
where 
$g_1(u,v)$ is nondecreasing in $v$, and $g_2(u,v)$ is nondecreasing in $u$. Moreover, $g_1$ and $g_2$ are Lipschitz continuous with Lipschitz constant $L$, the initial data $(u_0,v_0,w_0)$ satisfies {\rm \ref{h2}}. In this section, we prove that \eqref{cooperate} satisfies the comparison principle, namely, the following lemma.
\begin{definition}
	A nonnegative triplet $(\ov{u},\ov{v},\ov{w})$ is called a supersolution {\rm(resp. subsolution)} of \eqref{cooperate} if
	\begin{itemize}
		\item $\overline{u},\,\overline{v}\in C^{2,1}\left(\O_0\times(0,+\infty) \right)\cap C\left(\overline{\Omega}_0\times[0,+\infty)\right)$, $\overline{w}\in  C^{2,1}\left(\R \times(0,+\infty)\right)\cap C\left(\R\times[0,+\infty)\right)$;
		\item $(\ov{u},\ov{v},\ov{w})$ satisfies \[
		\left\{\begin{array}{ll}
			\partial_{t}\overline{u}-d_1\Delta \overline{u}\ge g_{1}(\overline{u},\overline{v}),&\qquad(x,y)\in \O_0,t>0,\\
			\partial_{t}\overline{v}-d_2\Delta \overline{v}\ge g_{2}(\overline{u},\overline{v}),&\qquad(x,y)\in \O_0,t>0,\\
			\partial_{t}\overline{w}-D\p_{xx}\overline{w}\ge -\mu \overline{w}+\n \overline{v}(x,0,t),&\qquad x\in \mathbb{R},t>0, \\
			-\p_y \ov{u}(x,0,t)\geq 0,&\qquad x\in\R,t>0,\\
			-d_2\p_y\ov{v}(x,0,t)\ge \mu \overline{w}-\n\overline{v}(x,0,t),&\qquad x\in \mathbb{R},t>0.
		\end{array}\right.
		\]
	\end{itemize}
\end{definition}
Analogously, we define $(\un{u},\un{v},\un{w})$ as a subsolution of \eqref{cooperate} by replacing ``$\geq$'' with ``$\leq$'' in the above definition.
\begin{lemma}\label{comparison-cooperate}
	Let $\left(\ov{u},\ov{v},\ov{w}\right)$ and $\left(\un{u},\un{v},\un{w}\right)$ be repectively the supersolution bounded from below and the subsolution bounded from above of \eqref{cooperate}. Assume that $\left.(\ov{u},\ov{v},\ov{w})\right|_{t=0}\geq\left.(\un{u},\un{v},\un{w})\right|_{t=0}$. Then we have
$	\left(\ov{u},\ov{v},\ov{w}\right)\geq \left(\un{u},\un{v},\un{w}\right)$  for  any $ (x,y)\in\ov{\O}_0$ and $t\geq 0$.
Moreover, it holds that

i) either $\ov{u}> \un{u}$ in $\ov{\O}_0\times (0,+\infty)$ or there exists $T>0$ such that $\ov{u}\equiv \un{u}$ on $\ov{\O}_0\times [0,T]$;

ii) either $\left(\ov{v},\ov{w}\right)> \left(\un{v},\un{w}\right)$ in $\ov{\O}_0\times (0,+\infty)$ or there exists $T>0$ such that $\left(\ov{v},\ov{w}\right)\equiv \left(\un{v},\un{w}\right)$ on $\ov{\O}_0\times [0,T]$.
\end{lemma}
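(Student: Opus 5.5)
Reduce to the linear problem satisfied by the differences. Set $U:=\un{u}-\ov{u}$, $V:=\un{v}-\ov{v}$, $W:=\un{w}-\ov{w}$, and show that $U,V,W\le 0$. Using the Lipschitz bound and the monotonicity of $g_1$ in $v$ and of $g_2$ in $u$, write
\[
g_1(\un{u},\un{v})-g_1(\ov{u},\ov{v}) = c_{11}U + c_{12}V,\qquad g_2(\un{u},\un{v})-g_2(\ov{u},\ov{v})=c_{21}U+c_{22}V,
\]
with bounded measurable coefficients $|c_{ij}|\le L$ and $c_{12},c_{21}\ge 0$. This gives a linear cooperative system for $(U,V,W)$ with $\p_yU\ge0$ and $-d_2\p_yV+\nu V\le \mu W$ on $y=0$, the road inequality $\p_tW-DW_{xx}+\mu W\le \nu V(x,0,t)$, and nonpositive initial data.

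The unbounded domain is handled as in \cite[Proposition 3.2]{berestycki+2013}. Fix $T>0$ and $\e>0$. Compare against a growing barrier of the form
\[
(\e e^{Kt}(1+x^2+y^2)\,\Theta(y),\ \e e^{Kt}(1+x^2+y^2),\ \e e^{Kt}\kappa(1+x^2)).
\]
Here $\Theta$ is a bounded positive function chosen so that the boundary inequalities become strict. For instance, take the barrier $v$-component as $\e e^{Kt}(1+x^2+y^2)(1+\theta e^{-y})$ with small $\theta$, so that $-d_2\p_y$ produces a positive term. Choose $\kappa$ so that $\mu\kappa>\nu$ and $\nu+ d_2\theta\cdot(\text{const})>\mu\kappa$, and choose $K$ large relative to $L,d_1,d_2,D,\mu,\nu$ so that the interior inequalities become strict. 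Since $U,V,W$ are bounded from above (the subsolution is bounded above and the supersolution is bounded below), the quadratic growth ensures that $U-\e\Phi_1$, $V-\e\Phi_2$, $W-\e\Phi_3$ are negative outside a large ball.

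The core is a first-touching-time argument. Suppose one of these functions first reaches $0$ at some point $(x_0,y_0,t_0)$ with $t_0\le T$; compactness guarantees that such a first time exists. Consider the cases in turn. If $U$ touches at an interior point, then at that point $U$ equals the barrier, $V$ is at most its barrier, and $c_{12}\ge0$. The parabolic inequality therefore contradicts the strict supersolution property of the barrier. The same reasoning applies to $V$ touching in the interior, using $c_{21}\ge 0$. If $U$ touches on $y=0$, Hopf-type reasoning with $\p_yU\ge0$ conflicts with the barrier's $\p_y$ sign, which we arrange to be strictly negative there by including the $e^{-y}$ factor in $\Phi_1$ as well. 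If $V$ touches on $y=0$, the Robin condition combined with $W\le\e\Phi_3$ conflicts with the strict boundary inequality. If $W$ touches, the road equation combined with $V(x,0,t)\le \e\Phi_2$ conflicts with the strict inequality obtained from $\kappa$. Letting $\e\to0$ yields $(U,V,W)\le 0$.

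The step I expect to be the main obstacle is making all the barrier inequalities strict at once, especially the coupled Robin and road conditions, which pull $\kappa$ in opposite directions. I would first try the weights used in \cite{berestycki+2013} and adjust $\theta$ accordingly.

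For the strict alternatives, apply the strong maximum principle. For i), the function $\ov{u}-\un{u}\ge0$ satisfies a linear inequality $\p_t Z - d_1\Delta Z \ge -L Z$, since the $V$-term has a favorable sign, together with a Neumann condition. By the strong maximum principle and the Hopf lemma, a zero at some $(x_0,y_0,t_0)$ forces $Z\equiv0$ on $\ov{\O}_0\times[0,t_0]$. For ii), apply the same argument to $(\ov{v}-\un{v},\ov{w}-\un{w})$. A zero of the $v$-difference at a boundary point forces, via the Robin condition and Hopf, a zero of the $w$-difference. Conversely, a zero of the $w$-difference forces the $v$-difference to vanish on the road through the $w$-equation, and the $v$ zero then propagates into the interior. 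Hence both differences vanish up to $t_0$, which gives the dichotomy.
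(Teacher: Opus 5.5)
Your proof is correct. Its skeleton matches the paper's: a penalized first-touching-time argument on bounded time intervals, then $\varepsilon\to0$, then the strong maximum principle and Hopf lemma for the dichotomies. What differs is how the comparison step is carried out.

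\textbf{The paper's implementation.} The paper does not linearize. It rescales by $e^{-\tilde L t}$ and modifies $g_1,g_2$ so that each becomes strictly decreasing in its own variable. It then perturbs additively by $\varepsilon(\Lambda(x)+\Lambda(y)+t+1)$ for $u,v$ and by $\frac{\nu}{\mu}\varepsilon(\Lambda(x)+t+1)$ for $w$, where $\Lambda\ge0$ is coercive, $\Lambda(0)=0$ and $\|\Lambda''\|$ is small. Since $\Lambda'(0)=0$, this barrier is only neutral on the road. A boundary touching of $u$ is therefore excluded via the Hopf lemma. The Robin coupling is exactly balanced by the weight $\nu/\mu$, so strictness there comes from ordering the cases: a touching of $w$ is ruled out by the road equation, and the Robin condition is used only when $v$ touches while $\hat w-\check w>0$.

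\textbf{What your route buys.} Your barrier $\varepsilon e^{Kt}(1+x^2+y^2)(1+\theta e^{-y})$ has $\partial_y<0$ at $y=0$. Hence both boundary inequalities are strict for the barrier itself. Every touching (interior or boundary, of any component, simultaneous or not) is then excluded by first- and second-order conditions at a maximum alone. You need neither Hopf nor case ordering in the comparison step, and your linearization with $c_{12},c_{21}\ge0$ replaces the paper's monotonization.

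\textbf{The conflict you anticipate does not occur.} Take $\kappa=\nu/\mu$, the paper's weight. The Robin inequality at $y=0$ then reads $\nu<\nu(1+\theta)+d_2\theta$, which holds for every $\theta>0$. The road inequality reduces to $K>2D+\mu\theta$. So you do not need $\mu\kappa>\nu$; the $e^{Kt}$ factor absorbs the road condition.

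\textbf{Where Hopf is needed in part ii).} A boundary zero of $\overline v-\underline v$ already forces a zero of $\overline w-\underline w$, using only $\partial_y(\overline v-\underline v)\ge0$ at a minimum. Hopf is needed instead in the step you describe as the $v$-zero propagating into the interior. Suppose $\overline v-\underline v>0$ in $\Omega_0$ at time $t_0$ but vanishes on the road. Hopf gives $\partial_y(\overline v-\underline v)>0$ there, contradicting $-d_2\partial_y(\overline v-\underline v)+\nu(\overline v-\underline v)\ge\mu(\overline w-\underline w)=0$. The paper glosses over this same step.
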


\begin{proof}
	Take $\widetilde{L}\geq 2L$. Set
$(\tilde{u},\tilde{v},\tilde{w}):=(\ov{u},\ov{v},\ov{w})e^{-\widetilde{L}t}$ and $\left(\check{u},\check{v},\check{w}\right):=\left(\un{u},\un{v},\un{w}\right)e^{-\widetilde{L}t}$.
	Then $(\tilde{u},\tilde{v},\tilde{w})$ and $\left(\check{u},\check{v},\check{w}\right)$ are respectively super- and subsolutions of the following system
		\[
		\left\{\begin{array}{ll}
			\partial_{t}u-d_1\Delta u+\left(\tilde{L}-L\right)u=\tilde{g}_1(u,v),&\qquad(x,y)\in \O_0,t>0,\\
			\partial_{t}v-d_2\Delta v+\left(\tilde{L}-L\right)v=\tilde{g}_2(u,v),&\qquad(x,y)\in \O_0,t>0,\\
			\partial_{t}w-D\p_{xx}w+\left(\mu+\tilde{L}\right)w=\n v(x,0,t),&\qquad x\in \mathbb{R},t>0,\\
			\p_y u(x,0,t)=0,& \qquad x\in \mathbb{R},t>0,\\
			-d_2\partial_{y}v(x,0,t)=\mu w-\n v(x,0,t),&\qquad x\in \mathbb{R},t>0.
		\end{array}\right.
	\]
	Here $\tilde{g}_1(u,v)=g_1\left(u e^{\tilde{L}t},ve^{\tilde{L}t}\right)e^{-\tilde{L}t}-Lu$, $\tilde{g}_2(u,v)=g_2\left(u e^{\tilde{L}t},ve^{\tilde{L}t}\right)e^{-\tilde{L}t}-Lv$. Up to increasing $L$, we can assume that $\tilde{g}_1(u,v)$ and $\tilde{g}_2(u,v)$ are strictly decreasing in $u$ and $v$, respectively. In addition, $\tilde{g}_1(u,v)$ and $\tilde{g}_2(u,v)$ are nondecreasing in $v$ and $u$, respectively. Let $\Lambda:\mathbb{R}\to [0,+\infty)$ be a smooth function satisfying
	\begin{equation}\label{24}
		 \Lambda(0)=0, \quad \li_{ |\rho| \to +\infty } \Lambda(\rho)=+\infty,\quad
			(2d_1+2d_2+D)\left\| \Lambda^{\prime\prime} \right\|\le 1.
	\end{equation}
	For any $\varepsilon>0$, set
$$\begin{array}{lll}
& \hat{u}(x,y,t):=\tilde{u}+\varepsilon \left( \Lambda(x) +\Lambda(y)+t+1\right), & (x,y)\in\Omega_0,t\geq 0, \\
& \hat{v}(x,y,t):=\tilde{v}+\varepsilon\left( \Lambda(x)+\Lambda(y)+t+1 \right), & (x,y)\in\Omega_0,t\geq 0,\\
& \hat{w}(x,t):=\tilde{w}+\frac{\nu}{\mu }\varepsilon\left( \Lambda(x)+t+1 \right), &x\in\mathbb{R},t\geq 0 .
\end{array}$$
Notice that $(\hat{u},\hat{v},\hat{w})>(\check{u},\check{v},\check{w})$ at $t=0$, we claim that this strict inequality holds for all $t\ge 0$.
If not, let $T$ be the smallest time at which
\[
	\min\left\{ \inf_{\ov{\O}_0}(\hat{u}-\check{u})(\cdot,T),\, \inf_{\ov{\O}_0}(\hat{v}-\check{v})(\cdot,T),\, \inf_{\mathbb{R}}(\hat{w}-\check{w})(\cdot,T)\right\}=0.
\]
Then one has $0<T<+\infty$, since $(\check{u},\check{v},\check{w})$ is bounded from above while
\[
	\li_{t\to\infty}\hat{u}(x,y,t)=\li_{t\to\infty}\hat{v}(x,y,t)=+\infty\qquad \text{uniformly in}~ (x,y)\in \ov{\O}_0
\]
and
\[\li_{t\to+\infty}\hat{w}(x,t)=+\infty \qquad\text{uniformly in}~ x\in\R.\]
Furthermore, $(\hat{u},\hat{v})>(\check{u},\check{v})$ in $\ov{\O}_0\times[0,T)$,  $\hat{w}> \check{w}$ in $\mathbb{R}\times [0,T)$.
Since $\li_{|x|+|y|\to\infty}\hat{u}(x,y,t)=\li_{|x|+|y|\to\infty}\hat{v}(x,y,t)=+\infty$ and $\li_{|x|\to\infty}\hat{w}(x,t)=+\infty$ uniformly in $t\geq 0$, we have that at least one of the following three cases holds.

	\vspace{1.5ex}
	\noindent\textit{Case 1.} 
$(\hat{u}-\check{u})(\xi,T)=0$ for some $\xi=(\xi_1,\xi_2)\in\ov{\O}_0$.
	\vspace{1.5ex}

Note that we have $\hat{v}(x,y,t)\geq \check{v}(x,y,t)$ for all $(x,y)\in\Omega_0$ and $t\in [0,T]$. If $\xi_{2}>0$, there exist $0<r<\frac{\xi_2}{2}$ and $0<t_0<T$ such that $\left(\tilde{u}-\check{u}\right)\leq 0$ on $\ov{B}_r(\xi)\times [t_0,T]$. Then for any $(x,y,t)\in B_{r}(\xi)\times[t_0,T]$, by the monotonicity of $\tilde{g}_1$ and \eqref{24} one has
\[
	\begin{aligned}
	& \p_t(\hat{u}-\check{u})-d_1\D (\hat{u}-\check{u})+\left(\widetilde{L}-L\right)(\hat{u}-\check{u})\\
	\ge & \tilde{g}_1(\tilde{u},\tilde{v})-\tilde{g}_1(\check{u},\check{v})+\e\left[ 1-d_1\left( \left|\Lambda^{\prime\prime}(x)\right|+\left|\Lambda^{\prime\prime}(y)\right| \right) \right]+\left(\widetilde{L}-L\right)\left(\hat{u}-\tilde{u}\right)\\
	\geq & \tilde{g}_1(\tilde{u},\tilde{v})-\tilde{g}_1(\check{u},\check{v})+\left(\widetilde{L}-L\right)\left(\hat{u}-\tilde{u}\right)\\
	=& \tilde{g}_1(\tilde{u},\tilde{v})-\tilde{g}_1(\check{u},\check{v})+\left(\widetilde{L}-L\right)\left(\hat{v}-\tilde{v}\right)\\
	> & \tilde{g}_1(\tilde{u},\hat{v})-\tilde{g}_1(\check{u},\check{v})\geq 0.
\end{aligned}
\]
Since $(\xi,T)$ is the minimum point of $\left(\hat{u}-\check{u}\right)$ in $B_{r}(\xi)\times [t_0,T]$, we have that at $(\xi,T)$, there holds
\[
	\p_t\left(\hat{u}-\check{u}\right)\leq 0,\quad \nabla\left(\hat{u}-\check{u}\right)=0,\quad \D\left(\hat{u}-\check{u}\right)\geq 0.
\]
It infers that
	$0<\p_t(\hat{u}-\check{u})-d_1\D (\hat{u}-\check{u})+\left(\widetilde{L}-L\right)(\hat{u}-\check{u})\leq 0$,
which is impossible.
Therefore, $(\hat{u}-\un{u})(x,y,T)>0$ for all $(x,y)\in\O_0$, and  $\xi_{2}=0$. The Hopf's lemma implies
$	\p_y(\hat{u}-\check{u})(\xi_1,0,T)>0$,
which contradicts the fact that $\p_y(\hat{u}-\check{u})(x,0,t)=\p_y(\ov{u}-\un{u})(x,0,t)\leq 0$ for all $x\in\R,t>0$.

	\vspace{1.5ex}
	\noindent\textit{Case 2.} $(\hat{w}-\check{w})(\xi,T)=0$ for some $\xi\in\R$.
	\vspace{1.5ex}
		
	Notice that
	\begin{equation}\label{25}
		\begin{aligned}
			&\partial_{t}(\hat{w}-\check{w})-D\p_{xx}(\hat{w}-\check{w})+\left(\tilde{L}+\mu\right) (\hat{w}-\check{w})\\
			\geq & \nu\tilde{v}(x,0,t)+\frac{\nu\e}{\mu}\left(1-D\Lambda^{\prime\prime}(x)\right)+\left(\mu+\tilde{L}\right)\frac{\nu}{\mu}\e(\Lambda(x)+t+1)-\nu\check{v}(x,0,t) \\
			> & \n\tilde{v}(x,0,t)+\n\e\left(\Lambda(x)+t+1\right)-\n\check{v}(x,0,t)\\
			= & \n(\hat{v}-\check{v})(x,0,t)\ge 0
		\end{aligned}
	\end{equation}
	for all $x\in \mathbb{R}$ and $t\in(0,T]$.
	However, since $(\xi,T)$ is a minimum point of $\hat{w}-\check{w}$ in $\ov{\O}_0\times [0,T]$, we have
		$\partial_{t}(\hat{w}-\check{w})-D\p_{xx}(\hat{w}-\check{w})+\left(\tilde{L}+\mu\right) (\hat{w}-\check{w})\leq 0$,
	which contradicts \eqref{25}, and thus this case is ruled out too.
	
		\vspace{1.5ex}
	\noindent\textit{Case 3.} $(\hat{v}-\check{v})(\xi,T)=0$ for some $\xi=(\xi_1,\xi_2)\in\ov{\O}_0$ and $(\hat{w}-\check{w})(\cdot,T)>0$ in $\R$.
		\vspace{1.5ex}
	
		Just as the arguments for {\it Case 1}, the minimum of $(\hat{v}-\check{v})$ must be attained on $\p\O_0$.
	Thus $\partial_{y}(\hat{v}-\check{v})(\xi_1,0,T)\geq 0$. Consequently, one obtains that
	\begin{align*}
		0<& \mu(\hat{w}-\check{w})(\xi_1,T)\leq \left[-d_2\p_y \hat{v}(\xi_1,0,t)+\nu \hat{v}(\xi_1,0,t)\right]-\left[-d_2 \p_y \check{v}(\xi_1,0,T)+\nu \check{v}(\xi_1,0,T)\right]\leq 0,
	\end{align*}
	which is impossible.
	This case has therefore to be discarded.
    
    Thus the claim is proven. It follows from the arbitrariness of $\e>0$ that $(\ov{u},\ov{v},\ov{w})\geq (\underline{u},\un{v},\un{w})$ for all $(x,y,t)\in \overline{\Omega}_0\times[0,+\infty)$.

Now, let us focus on the proof of the second result. Suppose that there is $T>0$ such that $\displaystyle \min_{\ov{\O}_0}(\ov{u}-\un{u})(\cdot,T)=0$, we prove that $\ov{u}-\un{u}\equiv 0$ on $\ov{\O}_0\times[0,T]$. Set
$\Sigma=\{\xi\in\ov{\O}_0:(\ov{u}-\un{u}) (\xi,T) =0\}$.
If $\Sigma\setminus\p\O_0\not=\emptyset$, we take $\xi_0:=(\xi_1,\xi_{2})\in\S\setminus\p\O_0$. In this case, there is $\xi_{2}>0$. Then for any $(x,y,t)\in B_{\xi_{2}/2}(\xi_0)\times(0,T]$, one has
\[
	\p_t(\ov{u}-\un{u})-d_1\D (\ov{u}-\un{u})
	\ge g_{1}(\ov{u},\ov{v})-g_{1}(\un{u},\un{v})\geq g_{1}(\ov{u},\un{v})-g_{1}(\un{u},\un{v})
	\ge -L\left(\ov{u}-\un{u}\right)
\]
Applying the strong maximum principle yields that $\ov{u}\equiv\un{u}$ on $\ov{B}_{\xi_{2}/2}(\xi_0)\times[0,T]$. Hence $\overline{u}\equiv\underline{u}$ on $\overline{\Omega}_0\times[0,T]$. If $\S\subset\p\O_0$, we have $\overline{u}(\cdot,T)>\underline{u}(\cdot,T)$ in $\Omega_0$ and thus
$\p_y\left (\ov{u}-\un{u}\right )(\xi_0,T) >0$  for any $\xi_0\in\S$, 
contradicting the boundary conditions $\p_y\un{u}(\xi,t)\geq 0\geq\p_y\ov{u}(\xi,t)$ for any $(\xi,t)\in\p\O_0\times(0,T]$.

If $\displaystyle \min_{\mathbb{R}}(\ov{w}-\un w)(\cdot,T)=0$ at some time $T>0$, using the strong maximum principle again as in {\it Case 2}, we have $\ov{w}\equiv\un w$ in $\R\times[0,T]$. And then, we can use the similar arguments to $\ov u-\un u$ and $\hat v-\check{v}$ to prove that $\ov v\equiv \un v$ in $\ov \O_0\times [0,T]$ if $\displaystyle \min_{\ov{\O}_0}(\ov{v}-\un{v})(\cdot,T)=0$ for some $T>0$. 

Finally, for fixed $T>0$, we claim that the following two statements are equivalent:
 \begin{enumerate}[label={\rm (\roman*)}]
	\item \label{equivalence1} $\ov{v}\equiv\un{v}$ for all $(x,y,t)\in\ov{\O}_0\times[0,T]$,
	\item \label{equivalence2} $\ov{w}\equiv\un{w}$ for all $(x,t)\in \R\times[0,T]$.
\end{enumerate}

If $\ov{w}(x,t) =\un{w}(x,t)$ in $\R\times[0,T]$ for some $T>0$, the third equation of \eqref{cooperate} yields that $\ov v(x,0,t)\equiv\un v(x,0,t)$ in $\R\times[0,T]$ and therefore $\ov v\equiv\un v$ on $\ov \O_0\times [0,T]$. On the other hand, suppose that \ref{equivalence1} holds while there is $t_0\in[0,T)$ such that $\ov{w}(x,t) >\un{w}(x,t)$ for all $(x,t)\in\R\times(t_0,+\infty)$. Then
 \[ \nu(\ov v-\un v)(x,0,t)-d_2\p_y(\ov v-\un v)(x,0,t)\geq \mu(\ov w-\un w)(x,t)>0,\qquad \forall (x,t)\in\R\times(t_0,+\infty). \]
 It ensures that $\ov v>\un v$ in $\p\O_0 \times(t_0,+\infty)$,
which contradicts \ref{equivalence1}. Hence, \ref{equivalence1} is equivalent to \ref{equivalence2}. The proof is now complete.
\end{proof}
\begin{remark}
	If the monotonicity conditions on $g_1(u,\cdot)$ and $g_2(\cdot,v)$ are strict, we further have that either $\left(\ov{u},\ov{v},\ov{w}\right)> \left(\un{u},\un{v},\un{w}\right)$ in $\ov{\O}_0\times (0,+\infty)$ or there is $T>0$ such that $\left(\ov{u},\ov{v},\ov{w}\right)\equiv \left(\un{u},\un{v},\un{w}\right)$ on $\ov{\O}_0\times [0,T]$. Namely, \ref{equivalence1} and \ref{equivalence2} above are equivalent to
	\begin{enumerate}[label={\rm(\roman*)}]
	\setcounter{enumi}{2}
		\item\label{equivalence3} $\ov{u}\equiv\un{u}$ for all $(x,y,t)\in\ov{\O}_0\times[0,T]$.
	\end{enumerate}
	Suppose that \ref{equivalence3} holds while $\ov{v}>\un{v}$ on $\ov{\O}_0\times (t_0,+\infty)$ for some $t_0\in[0,T)$.
Since $\ov{u}\equiv\un{u}$ on $\ov{\O}_0\times [0,T]$, we have
\[
0=\p_t\left(\ov{u}-\un{u}\right)-d_1\D\left(\ov{u}-\un{u}\right)
	\geq g_1(\ov{u},\ov{v})(x,y,t)-g_1(\ov{u},\un{v})(x,y,t)>0
\]
for all $(x,y)\in\O_0,t\in (t_0,T]$, which is impossible. Conversely, if \ref{equivalence1} holds and $\ov{u}>\un{u}$ on $\ov{\O}_0\times[t_0,+\infty)$ for some $t_0\in[0,T)$, one gets
\[
	0=\p_t\left(\ov{v}-\un{v}\right)-d_2\D\left(\ov{v}-\un{v}\right)
	\geq g_2(\ov{u},\ov{v})(x,y,t)-g_2(\un{u},\ov{v})(x,y,t)>0
\]
for all $(x,y)\in\O_0,t\in (t_0,T]$,
which  is also impossible. Therefore, we obtain the equivalence of \ref{equivalence1} and \ref{equivalence3}.
\end{remark}

\end{document}